\def\fg{\fg\xspace}
\def\N{\mathbb N}
\def\Z{\mathbb Z}
\def\F{\mathbb F}
\def\Q{\mathbb Q}
\def\R{\mathbb R}
\def\C{\mathbb C}
\def\O{\mathcal O}
\def\m{\mathfrak m}
\def\D{\mathcal D}
\def\G{\mathcal G}
\def\Fp{\F_p}
\def\Zp{\Z_p}
\def\Qp{\Q_p}
\def\Qpbar{\bar \Q_p}
\def\Cp{\C_p}
\def\calX{\mathcal X}
\def\act#1#2{{}^{#1}\hspace{-0.4ex}#2}
\def\mod{\:\textrm{\rm mod}\:}
\def\Card{\textrm{\rm Card}\:}
\def\Tr{\textrm{\rm Tr}}
\def\Hom{\textrm{\rm Hom}}
\def\GL{\textrm{\rm GL}}
\def\det{\textrm{\rm det}}
\def\Frac{\textrm{\rm Frac}\:}
\def\id{\textrm{\rm id}}
\def\Spec{\textrm{\rm Spec}\:}
\def\Frob{\textrm{\rm Frob}}
\def\Gal{\textrm{\rm Gal}}
\def\cycl{\textrm{\rm cycl}}
\def\pcycl{p\textrm{\rm -cycl}}
\def\ppcycl{p'\textrm{\rm -cycl}}
\def\Rep{\textrm{\rm Rep}}
\def\adm{\textrm{\rm adm}}
\def\aadm{\textrm{\rm -adm}}
\def\ur{\textrm{\rm ur}}
\def\tr{\textrm{\rm tr}}
\def\fin{\textrm{\rm f}}
\def\Sen{\textrm{\rm Sen}}
\def\OCpflat{\mathcal R}
\def\Acrys{A_\crys}
\def\Ainf{A_\inf}
\def\Amu{A_\mu}
\def\Amax{A_\max}
\def\Bpinf{B^+_\inf}
\def\Bpcrys{B^+_\crys}
\def\Bpmu{B^+_\mu}
\def\Bpmax{B^+_\max}
\def\BpdR{B^+_\dR}
\def\Bcrys{B_\crys}
\def\Bmu{B_\mu}
\def\Bmax{B_\max}
\def\BdR{B_\dR}
\def\AcrysK{A_{\crys,K}}
\def\AinfK{A_{\inf,K}}
\def\AmuK{A_{\mu,K}}
\def\AmaxK{A_{\max,K}}
\def\BpinfK{B^+_{\inf,K}}
\def\BpcrysK{B^+_{\crys,K}}
\def\BpmuK{B^+_{\mu,K}}
\def\BpmaxK{B^+_{\max,K}}
\def\BcrysK{B_{\crys,K}}
\def\BmuK{B_{\mu,K}}
\def\BmaxK{B_{\max,K}}
\def\NP{\textrm{\rm NP}}
\def\Ueps{\underline \varepsilon}
\def\Up{p^\flat}
\def\Upi{\pi^\flat}
\def\Fil{\textrm{\rm Fil}}
\def\MF{\textrm{\rm MF}}
\def\gr{\textrm{\rm gr}}
\def\et{\textrm{\rm ét}}
\def\crys{\textrm{\rm crys}}
\def\inf{\textrm{\rm inf}}
\def\max{\textrm{\rm max}}
\def\st{\textrm{\rm st}}
\def\dR{\textrm{\rm dR}}
\def\HT{\textrm{\rm HT}}
\def\Vect{\textrm{\rm Vect}}
\def\ph{\vphantom{$\Fil^2_\mu$}}
\newtheorem{theo}{Theorem}[subsection]
\newtheorem{lem}[theo]{Lemma}
\newtheorem{prop}[theo]{Proposition}
\newtheorem{cor}[theo]{Corollary}
\newtheorem{conj}[theo]{Conjecture}
\theoremstyle{definition}
\newtheorem{deftn}[theo]{Definition}
\theoremstyle{remark}
\newtheorem{rem}[theo]{Remark}
\newtheorem{ex}[theo]{Example}
\title{An introduction to $p$-adic period rings}
\author{Xavier Caruso}
\date\today
\begin{document}

\maketitle

\begin{abstract}
This paper is the augmented notes of a course I gave jointly
with Laurent Berger in Rennes in 2014.
Its aim was to introduce the periods rings $\Bcrys$ and $\BdR$
and state several comparison theorems between étale and crystalline
or de Rham cohomologies for $p$-adic varieties.
\end{abstract}

\setcounter{tocdepth}{2}
\tableofcontents

%%%%%%%%%%%%%%%%%%%%%%%%%%%%%%%%%%%%%%%%%%%%%%%%%%%%%%%%%%%%%%%%%%%%%%

\section*{Introduction}

In algebraic geometry, the word \emph{period} often refers to a 
complex number that can be expressed as an integral of an algebraic 
function over an algebraic domain. One of the simplest periods is
$2i\pi = \int_{\gamma} \frac {dt}t$, where $\gamma$ is the unit
circle in the complex plane.
Equivalently, a period can be seen as an entry of the matrix (in
rational bases) of the de Rham isomorphism:
\begin{equation}
\label{eq:dRcomplex}
\C \otimes_\Q H^r_{\text{sing}}(X(\C), \Q) \simeq
\C \otimes_K H^r_\dR(X)
\end{equation}
for an algebraic variety $X$ defined over a number field~$K$. (Here
$H^r_{\text{sing}}$ is the singular cohomology and $H^r_\dR$
denotes the \emph{algebraic} de Rham cohomology.)

The initial motivation of $p$-adic Hodge theory is the will to
design a relevant $p$-adic analogue of the notion of 
periods. To this end, our first need is to find a suitable $p$-adic 
generalization of the isomorphism~\eqref{eq:dRcomplex}. In the $p$-adic 
setting, the singular cohomology is no longer relevant; it has to be 
replaced by the étale cohomology.
Thus, what we need is a ring $B$ allowing for a canonical isomorphism:
\begin{equation}
\label{eq:dRpadic}
B \otimes_{\Qp} H^r_\et(X_{\bar K}, \Qp) \simeq
B \otimes_K H^r_\dR(X)
\end{equation}
when $K$ is now a finite extension of $\Qp$ and $X$ is a variety 
defined over $K$. Of course, the first natural candidate one thinks at 
is $B = \Cp$, the $p$-adic completion of an algebraic closure $\bar K$
of $K$. 
Unfortunately, this first period ring does not totally fill
our requirements.
More precisely, it turns out that $\Cp \otimes_{\Qp} H^r_\et(X_{\bar K}, 
\Qp)$ is isomorphic to the graded module (for the de Rham filtration) of 
$\Cp \otimes_K H^r_\dR(X)$ but not to $\Cp \otimes_K H^r_\dR(X)$ itself.
The main objective of this lecture is to detail the construction of 
two periods rings, namely $\Bcrys$ and $\BdR$, allowing for the
isomorphism~\eqref{eq:dRpadic} under some additional assumptions on 
the variety~$X$. The ring $\BdR$ (which is the bigger one) is 
often called the \emph{ring of $p$-adic periods}.

Another important aspect of $p$-adic period rings concerns the Galois 
structure of $H^r_\et(X_{\bar K}, \Qp)$. Indeed, we shall see that the 
mere existence of the isomorphism~\eqref{eq:dRpadic} usually has strong 
consequences on the Galois module $H^r_\et(X_{\bar K}, \Qp)$.
In order to give depth to this observation, Fontaine developed a 
general formalism for studying and classifying general Galois 
representations through the notion of period rings. A large part 
of this article focuses on the Galois aspects.

\paragraph{Structure of the article.}

\S\ref{sec:longintro} serves as a second long introduction to this 
article; two results which can be considered as the seeds of $p$-adic 
Hodge theory are presented and discussed. The first one is due to Tate 
and provides a Hodge-like decomposition of the Tate module of a 
$p$-divisible group in the spirit of the isomorphism~\eqref{eq:dRpadic}. 
The second result is a classification theorem of $p$-divisible groups by 
Fontaine. 
Fontaine's general formalism for studying Galois representations is 
also introduced in this section.

In \S\ref{sec:Cp}, we investigate to what extent $\Cp$ meets the
expected properties of a period ring.
We adopt the point of view of Galois representations, which means 
concretely that we will concentrate on isolating those Galois 
representations that are susceptible to sit in an isomorphism of the 
form~\eqref{eq:dRpadic} when $B = \Cp$.
This study will lead eventually to the notion of Hodge--Tate 
representations, which is related to the Hodge-like decompositions of 
cohomology presented in~\S\ref{sec:longintro}.

In \S\ref{sec:dRcrys}, we review the construction of the period rings 
$\Bcrys$ and $\BdR$; it is the heart of the article but also its most 
technical part.
Finally, in \S\ref{sec:crysdRrep}, we state several comparison
theorems between étale and de Rham cohomologies. We also show how
the rings $\Bcrys$ and $\BdR$ intervene in the classification of
Galois representations, through the notions of crystalline and
de Rham representations.

\paragraph{Some advice to the reader.}

Although we will give frequently reminders, we assume that the reader is 
familiar with the general theory of local fields as presented 
in~\cite{serre}, Chapter 1--4. A minimal knowledge of local class field 
theory~\cite{serre} and of the theory of $p$-adic analytic 
functions~\cite{lazard} is also welcome, while not rigourously needed.

To the impatient reader who is afraid by the length of this article
and is not interested in the details of the proofs (at least in first
reading) but only by a general outline of $p$-adic Hodge theory, we 
advise to read \S\ref{sec:longintro}, then the introduction of
\S\ref{sec:dRcrys} until \S\ref{ssec:Ainf} and then finally 
\S\ref{sec:crysdRrep}.

\paragraph{Acknowledgement.}

The author is grateful to the editorial board of 
\emph{Panoramas et Synthèses}, and especially to Ariane Mézard, 
who encourages him to finalize this article after many years.
He also warmly thanks Olivier Brinon for sharing with him his
notes of a master course on $p$-adic Hodge theory he taught in
Bordeaux.

\paragraph{Notations.}

Throughout this article, the letter $p$ will refer to a fixed
prime number. We use the notation $\Zp$ (resp. $\Qp$) for the 
ring of $p$-adic integers (resp. the field of $p$-adic numbers).
We recall that $\Qp = \Frac \Zp = \Zp[\frac 1 p]$. Let also $\Fp$
denote the finite field with $p$ elements, \emph{i.e.} $\Fp = 
\Z/p\Z$.

If $\mathfrak A$ in a ring, we denote by $\mathfrak A^\times$ the 
multiplicative group of invertible elements in $\mathfrak A$.

\section{From Hodge decomposition to Galois representations}
\label{sec:longintro}

After having recalled some basic facts about local fields in 
\S\ref{ssec:setting}, we discuss in \S\ref{ssec:motivations} two 
families of results which are the seeds of $p$-adic Hodge theory. Both 
of them are of geometric nature. The first one concerns the 
classification of $p$-divisible groups over the ring of integers of a 
local field, while the second one concerns the Hodge-like decomposition 
of the étale cohomology of varieties defined over local fields. From 
this presentation, the need to have a good tannakian formalism emerges.

Carried by this idea, we move from geometry to the theory of 
representations and focus on tensor products and scalar extensions. 
Eventually, this will lead us to the notion of $B$-admissibility, 
which is the key concept in Fontaine's vision of $p$-adic Hodge 
theory.
Finally, we briefly discuss the applications we will develop in the 
forthcoming sections: using $B$-admissibility, we introduce the notions 
of crystalline, semi-stable and de Rham representations and explain 
rapidly how the general theory can help for studying these classes of 
representations.

\subsection{Setting and preliminaries}
\label{ssec:setting}

Let $K$ be a finite extension\footnote{We could have considered a more 
general setting where $K$ is a complete discrete valued field of 
characteristic $0$ with perfect residue field of characteristic $p$.
All the results presented in the paper extend to this more general
setting. However the case of finite extensions of $\Qp$ is the main 
case of interest and restricting to this case simplifies the exposition 
at several points.} of $\Qp$.
Let $v_p : K \to \Q \sqcup \{+\infty\}$ be the valuation on $K$
normalized by $v_p(p) = 1$. By our assumptions, $v_p(K^\times)$
is a discrete subgroup of $\Q$ containing $\Z$; hence it is equal 
to $\frac 1 e \Z$ for some positive integer $e$. We recall that
this integer $e$ is called the \emph{absolute ramification index}
of $K$. A \emph{uniformizer} of $K$ is an element
of minimal positive valuation, that is of valuation $\frac 1 e$.
We fix a uniformizer $\pi$ of $K$.

Let $\O_K$ be the ring of integers of $K$, that is the subring of
$K$ consisting of elements with nonnegative valuation. We recall that
$\O_K$ is a local ring whose maximal ideal $\m_K$ consists of 
elements with positive valuation. The residue field $k$ of $K$
is, by definition, the quotient $\O_K/\m_K$. Under our assumptions,
$k$ is a finite field of characteristic $p$.

Let $W(k)$ denote the ring of Witt vectors with coefficients in $k$. Set 
$K_0 = \Frac W(k)$. By the general theory of Witt vectors, there exists 
a canonical embedding $K_0 \to K$. Moreover, through this embedding, 
$K$ appears as a finite totally ramified extension of $K_0$ of degree 
$e$. Therefore, $K_0$ is the maximal subextension of $K$ which is
unramified over $\Qp$.

\subsubsection{The absolute Galois group of $K$}

We choose and fix once for all an algebraic closure $\bar K$ of $K$. We 
recall that the valuation $v_p$ extends uniquely to $\bar K$, so that 
we can talk about the ring of integers $\O_{\bar K}$ of $\bar K$. 
This ring is a local ring whose maximal ideal will be denoted by
$\m_{\bar K}$. The quotient $\O_{\bar K}/\m_{\bar K}$ is identified
with an algebraic closure of $k$; it will be denoted $\bar k$ in
the sequel.

Let $G_K = \Gal(\bar K/K)$ be the absolute Galois group of $K$. Any 
element of $G_K$ acts by isometry on $\bar K$ and therefore stabilizes 
$\O_{\bar K}$ and $\m_{\bar K}$. It thus acts on the residue field 
$\bar k$. This defines a group homomorphism $G_K \to \Gal(\bar k/k)$,
which is surjective. 
The kernel of this morphism is the \emph{inertia} subgroup; we shall
denote it by $I_K$ in the sequel. The subextension of $\bar K$ 
cut out by $I_K$ is the maximal unramified extension of $K$; we
will denote it by $K^\ur$. Summarizing the above discussion, we
find that $G_K$ sits in the following exact sequence:
$$1 \longrightarrow I_K \longrightarrow G_K \longrightarrow
\Gal(\bar k/k) \to 1.$$
The structure of $\Gal(\bar k/k)$ is also known: if $k$ has cardinality 
$q$, $\Gal(\bar k/k)$ is the profinite group generated by the Frobenius 
$\Frob_q : x \mapsto x^q$.

The structure of $I_K$ can be further precised. Indeed a simple 
application of Hensel's lemma shows that any finite extension of 
$K^\ur$ whose degree is not divisible by $p$ has the form $K^\ur
[\sqrt[n]{\pi}]$. The union of all these extensions is called
$K^\tr$; it is the \emph{maximal tamely ramified} extension of~$K$.
Since $K^\ur$ contains all $n$-th roots of unity for $n \nmid p$
(cf the paragraph \emph{The cyclotomic extension} below for more
details),
the extension $K^\tr/K^\ur$ is Galois and its Galois group is
identified with
$\varprojlim_{n, p\nmid n} \Z/n\Z \simeq \prod_{\ell \neq p}
\Z_\ell$. Moreover, any finite extension of $K^\tr$ has degree
$p^m$ for some integer $m$.
On the Galois side, these properties imply that the closed subgroup of 
$I_K$ corresponding to the extension $K^\tr$ is the unique pro-$p$-Sylow 
of $I_K$ (which is then a normal subgroup) and that $I_K$ sits in the
following exact sequence:
$$1 \longrightarrow P_K \longrightarrow I_K \longrightarrow
\varprojlim_{n, p\nmid n} \Z/n\Z \to 1$$
where $P_K$ denotes the pro-$p$-Sylow of $I_K$.

\subsubsection{The cyclotomic extension}
\label{sssec:cyclo}

The cyclotomic extension of $K$ plays a quite important role in
$p$-adic Hodge theory. So we take some time to recall its most 
important properties.
Let $\mu_n \in \bar K$ be a primitive $n$-th root of unity. We recall 
that, by definition, the cyclotomic extension of $K$ is the subextension 
$K_\cycl$ of $\bar K$ generated by the $\mu_n$'s.

The extension $K(\mu_n)/K$ is Galois and its Galois group 
canonically embeds into $(\Z/n\Z)^\times$ through the map
$\chi_n : \Gal(K(\mu_n)/K) \to (\Z/n\Z)^\times$ defined by the 
relation $\chi_n(\mu_n) = \mu_n^{\chi_n(g)}$
for all $g \in \Gal(K(\mu_n)/K)$.
We draw the reader's attention to the fact that $\chi_n$ is in general
not surjective although it is for all $n$ when $K = \Qp$.

When $n$ is coprime with $p$, the extension $K(\mu_n)/K$ is
unramified since the polynomial $X^n - 1$ splits over $\bar k$.
In this case, $K(\mu_n)$ appears as a subextension of $K^\ur$. 
On the other hand, when $n= p^r$ is a power of $p$, the extension 
$K(\mu_{p^r})/K$ is totally ramified. This dichotomy motivates
the introduction of the two following infinite extensions of $K$:
$$K_{\ppcycl} = \bigcup_{n, p\nmid n} K(\mu_n)
\quad \text{and} \quad 
K_{\pcycl} = \bigcup_{r \geq 0} K(\mu_{p^r}).$$
The first one is actually equal to $K^\ur$ since, at the level of
residue fields, $\bar k$ is obtained by $k$ by adding all $p^n$-th 
roots of unity for $p \nmid n$.
As for $K_{\pcycl}$, it is linearly disjoint from $K^\ur$. It
is sometimes called the \emph{$p$-cyclotomic} extension of $K$.
Clearly, the cyclotomic extension of $K$ is the compositum of 
$K_\ur$ and $K_{\pcycl}$.

Let us review briefly the Galois properties of $K_{\pcycl}$. 
First of all, we notice that $K_{\pcycl}/K$ 
is Galois. Its Galois group is equipped with an injective group
homomorphism
$\chi_{p^\infty} : \Gal(K_{\pcycl}/K) \to \Zp^\times$
which is characterized by the relation
$g \mu_{p^m} = \mu_{p^m}^{\chi_{p^\infty}(g)}$
(for all $g \in \Gal(K_{\pcycl}/K)$ and $m \geq 1$).
Let $\chi_\cycl : G_K \to \Zp^\times$ be the homomorphism obtained by 
precomposing $\chi_{p^\infty}$ with the canonical surjection $G_K \to 
\Gal(K_{\pcycl}/K)$. We shall often see $\chi_\cycl$ as a character
and will call it the \emph{($p$-adic) cyclotomic character}. 
As $\chi_{p^\infty}$, it is determined by the relation:
$$g \mu_{p^m} = \mu_{p^m}^{\chi_\cycl(g)}
\quad \text{for all $g \in G_K$ and $m \geq 1$.}$$
By construction, the extension corresponding to $\ker \chi_\cycl$
is $K_{\pcycl}$ and, more generally, for all positive integer $r$,
the extension corresponding to $\ker (\chi_\cycl \mod {p^r})$ is
$K(\mu_{p^r})$.

The logarithm defines a group morphism $\Zp^\times \to \Zp$ where 
the group structure on the target is given by the addition. It sits
in the exact sequence:
\begin{equation}
\label{eq:exactseqlog}
1 \longrightarrow 
\Fp^\times \stackrel{[\cdot]}{\longrightarrow}
\Zp^\times \stackrel{\log}{\longrightarrow}
\Zp \longrightarrow 1
\end{equation}
where $[\cdot]$ denotes the Teichmuller representative function. This 
sequence is split since a retraction of $\Fp^\times \to \Zp^\times$ is 
simply the canonical projection. Therefore $\Zp^\times$ is canonically 
isomorphic to $\Fp^\times \times \Zp$.
Restricting \eqref{eq:exactseqlog} to the image of $\chi_\cycl$, we
find that $\Gal(K_{\pcycl}/K)$ sits in another exact sequence which 
reads as follows:
$$1 \longrightarrow 
H \longrightarrow
\Gal(K_{\pcycl}/K) \stackrel{\log \chi_\cycl}{\longrightarrow}
p^{r_0} \Zp \longrightarrow 1.$$
Here $r_0$ is a nonnegative integer and $H$ can be identified as a 
subgroup of $\Fp^\times$ and thus is cyclic of order divisible by
$p{-}1$. The above sequence splits, so that $\Gal(K_{\pcycl}/K)$
is canonically isomorphic to a direct product $H \times p^{r_0}\Zp
\simeq H \times \Zp$. 

The subextension of $K_{\pcycl}$ cut out by the factor $\Zp$ is nothing 
but $K(\mu_p)$. It is also the maximal tamely ramified subextension of 
$K_{\pcycl}$. 
The Galois group of $K_{\pcycl}/K(\mu_p)$ is canonically isomorphic to
$\Zp$ \emph{via} the additive character $p^{-r_0} \log \chi_\cycl$.
We say that $K_{\pcycl}/K(\mu_p)$ is a $\Zp$-extension. The fact that
$\Gal(K_{\pcycl}/K)$ splits as a direct product means that this 
extension descends to $K$; in particular, $K$ itself admits a 
$\Zp$-extension.

\subsubsection{Characters of $G_{\Qp}$}

The representation theory of $G_K$ is the main object of interest in 
this article. Among all representations of $G_K$, the simplest ones are 
of course characters, which are representations of dimension $1$. We 
have actually already seen an example of such character: the cyclotomic 
character $\chi_\cycl$. From $\chi_\cycl$, we can build the following 
other character:
$$\omega_\cycl :
G_K \stackrel{\chi_\cycl}{\longrightarrow} 
\Zp^\times \stackrel{\mod p}{\longrightarrow} 
\Fp^\times \stackrel{[\cdot]}{\longrightarrow} 
\Zp^\times$$
where the last map takes an element to its Teichmüller representative.
We observe that $\omega_\cycl$ is a finite order character, whose order
divides $p{-}1$. When $K = \Qp$, the order of $\omega_\cycl$ is
exactly $p{-}1$.

Another quite important family of characters are unramified characters,
that are those characters which are trivial on the inertia subgroup.
Since $G_K/I_K \simeq \Gal(\bar k/k)$ is procyclic, continuous 
unramified characters are easy to describe: they are all of the form
$$\mu_\lambda :
G_K \longrightarrow
G_K/I_K \simeq \Gal(\bar k/k) 
\stackrel{\Frob_q \mapsto \lambda}{\longrightarrow} \Zp^\times$$
for $\lambda$ varying in $\Zp^\times$.

Using local class field theory (\emph{cf} \cite{serre}), it is 
possible to describe explicitely all characters of $G_K$. Indeed
such characters all factor through the abelianization of $G_K$, which 
is closely related to $K^\times$ through the Artin reciprocity map.
When $K = \Qp$, this answer is given by the following proposition.

\begin{prop}
\label{prop:charGqp}
We assume $p > 2$.
Let $\chi$ be a character of $G_K$ with values in $\Qp^\times$.

\noindent
Then, there exist unique $\lambda \in \Zp^\times$, $a \in \Zp$ and
$b \in \Z/(p{-}1)\Z$ such that
$\chi = \mu_\lambda \cdot \chi_\cycl^a \cdot \omega_\cycl^b$.
\end{prop}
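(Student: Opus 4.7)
The plan is to combine a structural analysis of $\Zp^\times$ with a decisive input from local class field theory. Since $G_\Qp$ is compact and $\Qp^\times = p^\Z \times \Zp^\times$ with $p^\Z$ discrete, the image of $\chi$ automatically lands in $\Zp^\times$. For $p > 2$ one has $\Zp^\times = \mu_{p-1} \times U^{(1)}$ where $U^{(1)} := 1 + p\Zp$ is pro-$p$ and $p$-torsion-free (isomorphic to $\Zp$ via $\log$). The three characters in the statement are designed to match this structure: $\mu_\lambda$ is unramified and covers the Frobenius direction, $\omega_\cycl$ contributes the $\mu_{p-1}$-valued part on inertia, and $\chi_\cycl$ contributes the $U^{(1)}$-valued part on inertia.

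The decisive ingredient is the local Kronecker--Weber theorem, which asserts that $\Qp^{\text{ab}} = \Qp^\ur \cdot \Qp(\mu_{p^\infty})$, whence $G_\Qp^{\text{ab}} \simeq \hat\Z \times \Zp^\times$. Since $\Qp^\ur/\Qp$ is unramified while $\Qp(\mu_{p^\infty})/\Qp$ is totally ramified, the image of $I_\Qp$ in $G_\Qp^{\text{ab}}$ is exactly the $\Zp^\times$-factor; and for $K = \Qp$ the restriction $\chi_\cycl|_{I_\Qp}$ realizes this identification (using that $\chi_{p^\infty}$ is surjective for $K = \Qp$, as recalled in \S\ref{sssec:cyclo}). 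Since $\chi$ is abelian-valued, it factors through $G_\Qp^{\text{ab}}$, so $\chi|_{I_\Qp} = \psi \circ \chi_\cycl|_{I_\Qp}$ for a unique continuous $\psi : \Zp^\times \to \Zp^\times$.

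I would then classify such $\psi$: since $\mu_{p-1}$ is the unique torsion subgroup and $U^{(1)}$ the unique maximal pro-$p$ subgroup of $\Zp^\times$, the restrictions $\psi|_{\mu_{p-1}}$ and $\psi|_{U^{(1)}}$ land in $\mu_{p-1}$ and $U^{(1)}$ respectively. Using $\log$ to identify $U^{(1)}$ with $\Zp$ (this is where the hypothesis $p > 2$ is used), a short computation shows that $\psi(x) = x^a [\bar x]^b$ for unique $a \in \Zp$ and $b \in \Z/(p{-}1)\Z$. This translates into $\chi|_{I_\Qp} = \chi_\cycl^a \omega_\cycl^b|_{I_\Qp}$. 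The twist $\chi \cdot \chi_\cycl^{-a} \omega_\cycl^{-b}$ is then trivial on $I_\Qp$, hence unramified, and therefore equals $\mu_\lambda$ for the unique $\lambda \in \Zp^\times$ given by its value on a Frobenius lift.

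For uniqueness, suppose $\mu_\lambda \chi_\cycl^a \omega_\cycl^b = 1$. Restricting to $I_\Qp$ kills $\mu_\lambda$, and under the identification via $\chi_\cycl$ the relation becomes $x^a [\bar x]^b = 1$ in $\Zp^\times$ for every $x \in \Zp^\times$. Substituting $x \in U^{(1)}$ (so $[\bar x] = 1$) gives $x^a = 1$, forcing $a = 0$ since $U^{(1)} \simeq \Zp$ is torsion-free; then $[\bar x]^b = 1$ for all $\bar x \in \Fp^\times$ forces $b = 0$ in $\Z/(p{-}1)\Z$; and finally $\mu_\lambda = 1$ yields $\lambda = 1$. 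The main obstacle is the appeal to Kronecker--Weber---without it, one cannot guarantee that every abelian character of $I_\Qp$ factors through $\chi_\cycl|_{I_\Qp}$; everything else amounts to bookkeeping with the pro-$p$ and torsion parts of $\Zp^\times$.
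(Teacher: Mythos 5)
Your proof is correct and follows essentially the same route as the paper: reduce to $\Zp^\times$-valued by compactness, invoke local Kronecker--Weber to factor $\chi$ through the cyclotomic extension, classify the resulting endomorphism of $\Zp^\times$ via the splitting $\Zp^\times \simeq \mu_{p-1}\times(1+p\Zp)$ (valid for $p>2$), and absorb the unramified remainder into $\mu_\lambda$. The one substantive addition is that you actually check uniqueness, which the paper's proof leaves implicit.
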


\begin{proof}
We first observe that, by compacity, $\chi$ must take its values
in $\Zp^\times$.
By the Kronecker--Weber theorem, we know that the maximal abelian
extension of $\Qp$ is the cyclotomic extension. Therefore $\chi$
has to factor through $\Gal(\Q_{p,\cycl}/\Qp)$. In particular,
$\chi_{|I_{\Qp}}$ factors through $\Gal(\Q_{p,\cycl}/\Qp^\ur)$
which is isomorphic to $\Zp^\times$ by the cyclotomic character.
Consequently, $\chi_{|I_{\Qp}} = h \circ \chi_\cycl$ for some
group homomorphism $h : \Zp^\times \to \Zp^\times$.
Moreover, when $p > 2$, we have an isomorphism $\Zp^\times
\simeq \Fp^\times \times \Zp$, $x \mapsto (x \mod p, \log x)$,
the inverse being given by $(a,b)\mapsto [a]{\cdot}\exp b$ where
$[a]$ denotes the Teichmuller representative of $a$. 
From this description, we derive that there exist $a \in \Zp$ 
and $b \in \Z/(p{-}1)\Z$ such that
$h(x) = x^a \cdot [x \mod p]^b$ for $x \in \Zp^\times$.
Thus $\chi_{I_{\Qp}} = \chi_\cycl^a \cdot \omega_\cycl^b$.
The character $\chi \cdot \chi_\cycl^{-a} \cdot \omega_\cycl^{-b}$
is then unramified. Thus it must be of the form $\mu_\lambda$ for
some $\lambda \in \Zp^\times$. The proposition is proved.
\end{proof}

\subsection{Motivations: $p$-divisible groups and étale cohomology}
\label{ssec:motivations}

The starting point of $p$-adic Hodge theory is Tate's paper of 1966 
on $p$-divisible groups~\cite{tate}. In this seminal article, Tate 
establishes a Hodge-like decomposition of the Tate module of a 
$p$-divisible group on $\O_K$.
More precisely, let $\G$ be a $p$-divisible group on
$\O_K$. 
We define the Tate module of $\G$ by $T_p \G = 
\varprojlim_n \G[p^n](\bar K)$. Observe that $T_p
\G$ is naturally endowed with an action of $G_K$. 
The algebraic structure of $T_p \G$ is well-known: it is
a free module of finite rank over $\Zp$. Set $V_p \G = 
\Qp \otimes_{\Zp} T_p \G$. Then, Tate proves the 
following Hodge-like $G_K$-equivariant decomposition:
\begin{equation}
\label{eq:tatehodge}
\Cp \otimes_{\Qp} V_p \G
\,\simeq\, \big(\Cp \otimes_{\O_K} \omega_{\G^\vee}\big)
\oplus \big(\Cp(\chi_\cycl^{-1}) \otimes_{\O_K} \omega_{\G}^\vee\big).
\end{equation}
Here $\G^\vee$ is the Cartier dual of $\G$, the
construction $\omega_{-}$ refers to the cotangent space at the
origin and $\Cp(\chi_\cycl^{-1})$ is $\Cp e$ endowed with the 
action $g(\lambda e) = g \lambda \cdot \chi_\cycl^{-1}(g) \cdot e$
(for $g \in G_K$ and $\lambda \in \Cp$). Note that the Galois action
is trivial over $\omega_{\G^\vee}$ and $\omega_{\G}^\vee$. The
isomorphism \eqref{eq:tatehodge} then reveals the Galois action
on the Tate module. Tate's theorem implies in particular that,
when $A$ is an 
abelian variety over $K$ with good reduction, 
the étale cohomology of $A$ admits the following decomposition:
\begin{equation}
\label{eq:hodgeabelian}
\Cp \otimes_{\Qp} H^1_\et(A_{\bar K}, \Qp) 
\, \simeq \, \big(\Cp \otimes_K H^1(A, \O_A)\big) \oplus
\big(\Cp(\chi_\cycl^{-1}) \otimes_K H^0(A, \Omega_{A/K})\big).
\end{equation}
were $A_{\bar K} = \Spec\:\bar K \times_{\Spec\:K} A$, $\O_A$ is
the structural sheaf of $A$ and $\Omega_{A/K}$ is the sheaf of Kähler
differentials of $A$ over $K$.
We refer to Freixas' lecture in this volume~\cite{freixas} for a 
more detailed discussion---including a sketch of the proof---about
Tate's theorem.

After Tate's results, $p$-divisible groups over various bases were 
studied intensively. In the 1970's, Fontaine~\cite{fontaine-pdiv} 
obtained a complete classification of $p$-divisible groups and finite 
flat group schemes over $\O_K$ when $K/\Qp$ is unramified.
The starting point of Fontaine's theorem is the classification 
of $p$-divisible groups over perfect fields of characteristic $p$ in terms of 
Dieudonné modules~\cite{demazure}. Let us recall briefly how it works. 
If $\G_k$ is a $p$-divisible group over $k$, we define
\begin{equation}
\label{eq:dieudonne}
M(\G_k) = \Hom_{\text{gr}} (\G_k, CW_k)
\end{equation}
where $CW_k$ is the functor of \emph{Witt covectors} and the
notations $\Hom_{\text{gr}}$ means that we are considering the set of 
all natural transformations preserving the group structure. The
space $M(\G_k)$ is a Dieudonné module. This means that it is a module 
over $W(k)$ endowed with a Frobenius $F$ (which is a semi-linear
endomorphism with respect to the Frobenius on $W(k)$) and a 
Verschiebung $V$ (which is a semi-linear endormorphism with respect
to the inverse of the Frobenius) with the property that $FV = VF = p$.
One can show that $M$ realizes an anti-equivalence of categories 
between the category of $p$-divisible groups over $k$ and that of
finite free Dieudonné modules over~$W(k)$, the inverse functor being
given by the formula
$$\G_k(A) = \Hom_{W(k),F,V}\big(M, CW(A)\big)
\quad \text{for any $k$-algebra $A$}$$
which is quite similar to \eqref{eq:dieudonne}.
Now, if $\G$ is a $p$-divisible over $\O_K$ with special fibre $\G_k$, 
Fontaine constructs a submodule $L(\G) \subset M(\G_k)$ and demonstrates 
that
it obeys to a certain list of properties. Taking these properties as axioms, 
Fontaine introduces the notion of \emph{Honda systems} and proves that
the association $\G \mapsto (M(\G_k), L(\G))$ is an anti-equivalence
of categories between the category of $p$-divisible groups over $\O_K$
and the category of finite free Honda systems over $W(k)$.
Moreover, Fontaine establishes a compact formula for the inverse
functor. This formula reads:
\begin{equation}
\label{eq:honda}
V_p \G = \Qp \otimes_{\Zp} \Hom_{\text{honda}} 
\big( (M(\G_k), L(\G)), \, (\mathcal B, L_{\mathcal B})\big)
\end{equation}
where the notation $\Hom_{\text{honda}}$ means that we are taking
the morphisms in the category of Honda system and the target
$(\mathcal B, L_{\mathcal B})$ is a special Honda system\footnote{Its 
construction is subtle and we will not
give it here. However, we would like to encourage the reader to
look at it in Fontaine's paper \cite[Chap. V, \S1]{fontaine-pdiv} 
because it is instructive to realize that it is actually quite
close to the construction of the periods $\BdR$ and $\Bcrys$
we shall detail in \S\ref{sec:dRcrys}.} (the letter $\mathcal B$
refers to the mathematician Barsotti, who first studied 
$p$-divisible groups using this kind of techniques).
Moreover, $(\mathcal B, L_{\mathcal B})$ is endowed with an
action of $G_K$, from which we can recover the $G_K$-action on
$V_p \G$.
Compared to Tate's decomposition formula \eqref{eq:tatehodge},
Fontaine's result is more precise because it describes the
Tate module $V_p \G$ itself, whereas Tate's result only concerns 
its scalar extension to $\Cp$.
For many complements about Fontaine's classification results, we
refer to~\cite{fontaine-pdiv,conrad}.

About ten years later, in 1981, Fontaine came back to Tate's 
decomposition isomorphism~\eqref{eq:hodgeabelian}and gave a different 
proof of it (which is sketched in Freixas' lecture in this 
volume~\cite{freixas}), relaxing at the same time the assumption of 
good reduction. He also became interested in generalizing Tate's 
decomposition theorem to higher cohomology group (\emph{i.e.} 
$H^r_\et(A_{\bar K}, \Qp)$ with $r > 1$) and other types of varieties.
Moreover, noticing that the right hand side of \eqref{eq:hodgeabelian} 
is the graded module of the de Rham cohomology, one may wonder if one 
can make the isomorphism \eqref{eq:hodgeabelian} more precise and relate 
the étale cohomology with the de Rham cohomology equipped with its 
filtration.
All these questions had been a strong motivation for the development of 
$p$-adic Hodge theory for many years. Nowadays, all of them are solved: 
it has been proved independently by Faltings~\cite{faltings} and 
Tsuji~\cite{tsuji} that $\BdR \otimes_{\Qp} H^r_\et(X_{\bar K}, \Qp) 
\simeq \BdR \otimes_{\Qp} H^r_\dR(X)$ whenever $X$ is a proper smooth 
variety over $K$ and $r$ is a nonnegative integer. Here $\BdR$ is the 
so-called \emph{field of $p$-adic periods}. We will introduce it in this 
article in \S\ref{sec:dRcrys}. Taking the grading in the above 
isomorphism, we get the following Hodge-like decomposition: 
$$\Cp \otimes_{\Qp} H^r_\et(X_{\bar K}, \Qp) \, \simeq \, 
\bigoplus_{a+b=r} \Cp(\chi_\cycl^{-a}) \otimes_K H^b(X, 
\Omega^a_{X/K}).$$
We will come back to these results in \S \ref{ssec:comparison}.

Finally, it is interesting to confront the two directions of research 
discussed above, namely classification of $p$-divisible groups and 
Hodge-like decomposition theorems.
As already mentioned, one important feature of the isomorphism
\eqref{eq:honda} is the fact that it gives a complete description
of the Galois action on the Tate module. 
On the other hand, it is apparent that Honda systems have important 
limitations: by design, they can only deal with Tate modules, that
is, roughly speaking, with the first cohomology group. Analyzing
carefully the situation, Fontaine realized that what is missing
to Honda systems is a good tannakian formalism (which is, of
course, a key point in the line of Hodge-like decomposition
theorems).
In more crude terms, the fact that we are limited to the $H^1_\et$ 
should be understood as a reflection of the fact that we are missing a 
good notion of tensor product on $p$-divisible groups.
As explained in the introduction of~\cite{fontaine-rennes}, the period 
ring~$\Bcrys$ and the afferent notion of crystalline representations 
actually emerge when trying to conceal the theory of Honda systems with 
the tannakian formalism inspired from the Hodge-like decomposition 
theorems we have presented above. 

All the developments we will present in the sequel are stamped by this 
simple idea that one wants to keep apparent the tannakian structure 
(\emph{i.e.} the tensor product) everywhere and, even, to use it as 
a main tool. The natural framework in which the theory grows is 
then that of Galois representations, which has a strong tannakian 
structure.

\subsection{Notion of semi-linear representations}

The Hodge-like decomposition theorems discussed previously motivate the 
study of representations of the form $W = \C_p \otimes_{\Q_p} V$ where 
$V$ is a given $\Q_p$-representation of $G_K$. Since $G_K$ does act on 
$\C_p$, we observe that $W$ is \emph{not} a $\C_p$-linear representation 
in the usual sense. Instead, it is a so-called \emph{semi-linear 
representation}. The aim of this subsection is to introduce and study
this notion.

\subsubsection{Definitions}

In what follows, we let $G$ be a topological group\footnote{In the 
application we have in mind, $G$ will be the absolute Galois group
of a $p$-adic field. However, for now, it is better to allow more
flexibility and let $G$ be an arbitrary topological group.} and 
$B$ be a topological ring equipped with a continuous\footnote{By 
continuous, we mean that the map $G \times B \to B, (g,x)\mapsto gx$ is 
continuous.} action of $G$, which is compatible with the ring structure, 
\emph{i.e.} $g \cdot (a+b) = ga + gb$ and $g \cdot (ab) = ga \: gb$ for 
all $g \in G$ and $a,b \in B$.

\begin{deftn}
A \emph{$B$-semi-linear representation of $G$} is the datum of a 
$B$-module $W$ equipped with a continuous action of $G$ such that:
$$g \cdot (x+y) = gx + gy 
\quad \text{and} \quad
g \cdot (ax) = ga \cdot gx$$
for all $g \in G$, $a \in B$ and $x,y \in W$.
\end{deftn}

Clearly, if $G$ acts trivially on $B$, the notion of $B$-semi-linear 
representation of $G$ agrees with the usual notion of $B$-linear 
representation of $G$.

By our assumptions, $B$ itself (endowed with its $G$-action) is a 
$B$-semi-linear representation of $G$. Similarly we can turn $B^n$ into 
a $B$-semi-linear representation by letting $G$ act coordinate by 
coordinate. The latter representation will be called the \emph{trivial 
representation} of dimension $n$.

If $W_1$ and $W_2$ are two $B$-semi-linear representations of $G$, a 
morphism $W_1 \to W_2$ is a $B$-linear mapping which 
commutes with the action of $G$. With this definition, we can form the 
category of $B$-semi-linear representations of $G$ (for $G$ and $B$ 
fixed). In the sequel we will simply denote it $\Rep_B(G)$. It is easily 
seen that $\Rep_B(G)$ is an abelian category. It is moreover endowed 
with a notion of tensor product and internal hom: if $W_1$ and $W_2$ are 
objects of $\Rep_B(G)$, then $W_1 \otimes_B W_2$ (equipped with the 
action $g \cdot (x\otimes y) = gx \otimes gy$) and $\Hom_B(W_1,W_2)$ 
(equipped with the action $g \varphi : x \mapsto g \varphi(g^{-1} x)$) 
are also.

\paragraph{Scalar extension}

There is also a natural notion of scalar extension in the framework of 
semi-linear representations. To explain it, let us consider a closed 
subring $C$ of $B$, which is stable under the action of $G$. Then the 
notion of $C$-semi-linear representations of $G$ makes sense and there 
is a canonical functor $\Rep_C(G) \to \Rep_B(G)$ taking $W$ to $B 
\otimes_C W$.

The latter construction is quite interesting because it allows us to 
build semi-linear representations from classical representations. 
Indeed, assume that we are given a field $E$ and we have chosen $G$ and 
$B$ is such a way that $B$ is an algebra over $E$ and $G$ acts trivially 
on $E$. (As an example, $B$ could be a Galois extension of $E$ with $G = 
\Gal(B/E)$.)
The scalar extension then defines a functor
$\Rep_E(G) \to \Rep_B(G)$.
Moreover, since the action of $G$ on $E$ is trivial, the category 
$\Rep_E(G)$ is just the category of $E$-linear representations of $G$. 
In more concrete terms, if $V$ is a classical representation of $G$ 
defined over $E$, then $W = B \otimes_E V$ is a $B$-semi-linear 
representation. This is actually the prototype of all the
semi-linear representations we are going to consider in this 
article.

Specializing the previous recipe to $1$-dimensional representations,
we obtain a way to construct semi-linear representations of $G$ from
characters of $G$.
Concretely, if $\chi : G \to E^\times$ is a multiplicative character,
we will denote by $B(\chi)$ the $1$-dimensional representation
generated by a vector $e_\chi$ on which $G$ acts by $g e_\chi =
\chi(g) \cdot e_\chi$ for all $g \in G$.
By semi-linearity, we then have
$$g (a \: e_\chi) = g a \cdot \chi(g) \: e_\chi$$
for all $g\in G$ and $a \in B$.

\subsubsection{Recognizing the trivial representation}
\label{sssec:trivialrep}

We keep the setup of the previous subsection: $G$ is a topological
group which acts continuously on a topological ring $B$.

\begin{deftn}
\label{def:trivialrep}
A $B$-semi-linear representation of $G$ is \emph{trivial} if
it is isomorphic to the trivial representation $B^d$ for some
positive integer $d$.
\end{deftn}

While it is in general easy to recognize when a linear representation is 
trivial (it suffices to check that $G$ acts trivially on each vector of 
the representation), the task becomes more complicated in the context of 
semi-linear representations.
Indeed, coming back to the definition, we see that a $B$-semi-linear
representation of $G$ is trivial if and only if it admits a basis of
vectors which are fixed by $G$. 
In particular, it is quite possible that a nontrivial semi-linear 
representation becomes trivial after scalar extension. The latter remark 
is in fact the starting point of Fontaine's strategy for classifying 
Galois representations.

We will discuss Fontaine's strategy in much more details in 
\S\ref{ssec:Fontainestrategy}.
Before this, we have to introduce further notations.
Given $W \in \Rep_B(G)$, we denote by $W^G$ the subset of $W$ consisting 
of fixed points under $G$, that is the subset of elements $x \in W$ such 
that $gx = x$ for all $g \in G$. Clearly $W^G$ is a module over $B^G$.
Moreover scalar extension provides a canonical morphism in $\Rep_B(G)$:
$$\alpha_W : B \otimes_{B^G} W^G \longrightarrow W.$$
This morphism is useful for recognizing trivial representations.
Indeed it is clearly an isomorphism when $W$ is trivial in the sense 
of Definition~\ref{def:trivialrep} (since $(B^d)^G = (B^G)^d$) and 
the converse also holds true when $W$ and $W^G$ are free of finite 
rank over $B$ and $B^G$ respectively.

\subsubsection{Hilbert's theorem 90}

As an introduction to Fontaine's strategy, we propose to discuss an
easy case where trivial semi-linear representations do appear, while
they were not expected at first glance.
The setting here is the following.
We assume that $B$ is a field and, in order to limit confusion, we will
call it $L$. We assume also that $G$ is a finite group, endowed with the 
discrete topology. Under these assumptions, $L^G$ is a subfield of $L$ 
and the extension $L/L^G$ is finite and Galois with Galois group~$G$.

\begin{theo}
\label{theo:H90}
We keep the notations and assumptions above.

\noindent
For all $W \in \Rep_L(G)$, the following assertions hold:

\vspace{-2mm}

\begin{enumerate}
\renewcommand{\itemsep}{0pt}
\item the morphism $\alpha_W$ is surjective,
\item if $W$ is finite dimensional over $L$, then 
$\alpha_W$ is an isomorphism, \emph{i.e.} $W$ is trivial.
\end{enumerate}
\end{theo}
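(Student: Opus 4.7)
The plan is to prove this via the classical argument of Artin–Dedekind on linear independence of characters. The key auxiliary construction is, for $w \in W$ and $x \in L$, the averaging element
$$u_w(x) \;=\; \sum_{g \in G} g(x) \cdot g(w) \;=\; \sum_{g \in G} g(xw),$$
which a one-line computation shows lies in $W^G$: any $h \in G$ permutes the summands. Thus $u_w$ defines a $L^G$-linear map $L \to W^G$, and I will use a cleverly chosen family of inputs to recover $w$ itself from such elements.

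For the surjectivity statement (1), enumerate $G = \{g_1,\ldots,g_n\}$. By Artin's theorem on linear independence of characters, the elements $g_1,\ldots,g_n$ are $L$-linearly independent as functions $L \to L$. By a standard argument (if no choice of $x_1,\ldots,x_n$ made the matrix $M = (g_i(x_j))_{i,j}$ invertible in $M_n(L)$, then the column vectors $(g_i(x))_i \in L^n$ would all lie in a common hyperplane, producing a nontrivial relation $\sum c_i g_i = 0$), one can choose $x_1,\ldots,x_n \in L$ so that $M$ is invertible. The $n$ equations $u_w(x_j) = \sum_i g_i(x_j) \cdot g_i(w)$ then form a linear system which, upon inverting $M^T$, expresses each $g_i(w)$ as an $L$-linear combination of the elements $u_w(x_1),\ldots,u_w(x_n) \in W^G$. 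Taking $g_i$ to be the identity shows $w \in L \cdot W^G$, i.e., $w$ lies in the image of $\alpha_W$.

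For (2), suppose moreover that $\dim_L W = d < \infty$. By (1), we may select $w_1,\ldots,w_d \in W^G$ that generate $W$ as an $L$-module; since $d = \dim_L W$, they are $L$-linearly independent, and \emph{a fortiori} $L^G$-linearly independent. To see that they span $W^G$ over $L^G$, take any $x \in W^G$ and write uniquely $x = \sum_i a_i w_i$ with $a_i \in L$. Applying $g \in G$ and using $g(w_i) = w_i$ together with uniqueness of coordinates, one deduces $g(a_i) = a_i$, hence $a_i \in L^G$. Therefore $W^G$ is free of rank $d$ over $L^G$, the source $L \otimes_{L^G} W^G$ has $L$-dimension $d$, and the surjection $\alpha_W$ onto a space of the same dimension is an isomorphism.

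The only genuine obstacle is the extraction of invertibility from Artin's theorem in the first step; once that is secured, everything else is bookkeeping. I would expect to state and quickly justify the auxiliary matrix fact explicitly, since it is the precise input that allows semi-linear descent and foreshadows the role Galois invariants will later play in the theory of $B$-admissibility.
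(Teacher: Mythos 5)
Your proof is correct. For part (1), your averaging construction $u_w(x) = \sum_g g(xw) \in W^G$ and the invertibility of a matrix $(g_i(x_j))$ via linear independence of characters is essentially the same move as the paper makes: there, the author picks a basis $\lambda_1, \ldots, \lambda_n$ of $L/L^G$ and, by Artin's theorem, constants $\mu_i$ with $\sum_i \mu_i g(\lambda_i) = \delta_{g,\id}$, then checks $\sum_i \mu_i T(\lambda_i w) = w$ where $T(x) = \sum_g gx$; this is exactly one row of your inverted matrix, packaged slightly differently. For part (2), however, your route is genuinely different and worth noting. The paper proves injectivity of $\alpha_W$ head-on, by showing that any family of $W^G$-elements linearly independent over $L^G$ stays linearly independent over $L$, via the classical minimal-length-relation argument (apply $(g-\id)$ to a shortest nontrivial dependence, shorten it, deduce the coefficients lie in $L^G$). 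You instead leverage the surjectivity you have just proved: pick $d$ elements of $W^G$ forming an $L$-basis of $W$, observe by the uniqueness-of-coordinates trick that these already form an $L^G$-basis of $W^G$, and then conclude by a dimension count that a surjection between $L$-spaces of the same finite dimension is an isomorphism. Your argument trades the somewhat delicate minimality induction for two cheaper facts (a spanning set of a $d$-dimensional space contains a basis; a surjection between equidimensional finite-dimensional spaces is bijective), at the price of relying structurally on part (1). Both are clean; the paper's version has the minor advantage that its injectivity lemma (``$L^G$-independence implies $L$-independence'') is a portable statement that gets reused later (it is the argument under Fontaine's hypotheses (H1)--(H2) in \S\ref{sssec:criterium}), whereas your dimension-count shortcut does not give that reusable lemma directly.
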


\begin{proof}
Let $\lambda_1, \ldots, \lambda_n$ be a basis of $L$ over $L^G$.
By Artin's linear independence theorem, there exist constants
$\mu_1, \ldots, \mu_n \in L$ such that $\sum_{i=1}^n \mu_i 
g(\lambda_i)$ is $1$ if $g$ is the identity and $0$ otherwise.
Define the trace function $T : W \to W$ by $T(x) = \sum_{g \in 
G} gx$. One easily checks that $T$ takes its values in $W^G$.
Moreover, for the particular $\mu_i$'s we have introduced earlier, 
we have $\sum_{i=1}^n \mu_i T(\lambda_i x) = x$ for all $x \in W$.
This shows the surjectivity of $\alpha_W$.

We now assume that $W$ is finite dimensional over $L$. 
The proof of injectivity is quite similar to the proof of Artin's
linear independence theorem. It is enough to check that every finite 
family of elements of $W^G$ which is linearly independent over $L^G$ 
remains linearly independent over $L$. Let then $(x_1, \ldots, x_m)$
be a linearly independent family over $L^G$ with $x_i \in W^G$ for
all $i$. We assume by contradiction that there exists a nontrivial
relation of linear dependance of the form:
\begin{equation}
\label{eq:lindep}
a_1 x_1 + a_2 x_2 + \cdots + a_m x_m = 0
\end{equation}
with $a_i \in L$. 
We choose such a relation so that the number of 
nonzero $a_i$'s is minimal. Up to reindexing the $a_i$'s and rescaling 
the relation, we may assume that $a_1 = 1$. Let $g \in G$.
Applying $(g-\id)$ to \eqref{eq:lindep}, we get the relation
$(ga_2 - a_2) x_2 + \cdots + (ga_m - a_m) x_m = 0$
which is shorter than \eqref{eq:lindep}. 
From our minimality assumption, we deduce that $ga_i = a_i$ for all 
$i \geq 2$. Since this is valid for all $g \in G$, we deduce that
the linear dependance relation \eqref{eq:lindep}
has coefficients in $L^G$. This is a contradiction since we have
assumed that the family $(x_1, \ldots, x_m)$ is linearly independent
over~$L^G$.
\end{proof}

\begin{rem}
Theorem~\ref{theo:H90} is often referred to as Hilbert's theorem 90. The 
reason is that it can be rephrased in the language of group cohomology, 
then asserting that $H^1(G, \GL_d(L))$ is reduced to one element. This 
latter statement is an extension of the classical Hilbert's theorem 90 
to higher $d$.
\end{rem}

\begin{ex}
We emphasize that Theorem~\ref{theo:H90} does not hold in general
when $G = \Gal(L/K)$ where $L/K$ is an \emph{infinite} extension and
$G$ is equipped with its natural profinite topology.
As an example, take $G = 
G_{\Qp} = \Gal(\Qpbar/\Qp)$ and let it act on $L = \Qpbar$. The fixed 
subfield $L^G$ is $\Qp$. Consider the semi-linear representation 
$\Qp(\chi_\cycl)$ where we recall that $\chi_\cycl$ denotes the 
cyclotomic character $\Gal(\Qpbar/\Qp) \to \Zp^\times \subset 
\Qp^\times$. We claim that $\Qp(\chi_\cycl)$ is not isomorphic to $\Qp$ 
in the category $\Rep_{\Qpbar}(G_{\Qp})$. Indeed, assume by contraction that 
there exists a $G$-equivariant isomorphism $\Qpbar \simeq 
\Qpbar(\chi_\cycl)$. Then there should exist an element $x \in \Qpbar$ 
such that
\begin{equation}
\label{eq:pi}
g x = \chi(g) \: x \quad \text{for all } g \in G_{\Qp}.
\end{equation}
Since $x$ is in $\Qpbar$, it belongs to a finite extension $L$
of $\Qp$. Let $N_{L/\Qp} : L \to \Qp$ be the norm map from $L$
to $\Qp$. Applying it to \eqref{eq:pi}, we get the relation
$N_{L/\Qp}(x) = \chi(g)^{[L:\Qp]} \cdot N_{L/\Qp}(x)$. Since
$N_{L/\Qp}(x)$ does not vanish, we end up with $\chi(g)^{[L:\Qp]} 
= 1$ for all $g \in G_{\Qp}$, which is a contradiction.
\end{ex}

\begin{rem}
Similarly, we shall see later (\emph{cf} Proposition~ 
\ref{prop:HTweights}) that $\Cp$ is not isomorphic to $\Cp(\chi_\cycl)$ 
in the category $\Rep_{\Cp}(G_{\Qp})$.
\end{rem}

\subsection{Fontaine's strategy}
\label{ssec:Fontainestrategy}

We are now ready to explain the general principles of Fontaine's 
strategy for isolating the most interesting representations of the 
Galois group of a $p$-adic field and studying them.
The material presented in this subsection comes from~\cite[Chap. 
II]{fontaine-periodes}.

As before, let $G$ be a topological group. Let also $E$ be a fixed 
topological field. We consider a topological $E$-algebra $B$ on which 
$G$ acts and assume that the $G$-action on $E$ is trivial. Under our 
assumptions, the category $\Rep_E(G)$ is the category of $E$-linear 
representations of~$G$.

\begin{rem}
In fact, in what follows, the topology on $B$ will play no role
since all the forthcoming definitions and results will be purely
algebraic.
Nevertheless, we prefer keeping the datum of the topology on $B$
as it is more natural and all the rings $B$ we shall consider later
on will come equipped with a canonical topology.
\end{rem}

The following definition is due to Fontaine.

\begin{deftn}
Let $V \in \Rep_E(G)$ be finite dimensional over $E$.

\noindent
We say that $V$ is \emph{$B$-admissible} if the $B$-semi-linear
representation $B \otimes_E V$ is trivial.
\end{deftn}

We denote by $\Rep_E^{B\aadm}(G)$ the full subcategory of $\Rep_E(G)$ 
consisting of finite dimensional representations of $E$ which are 
$B$-admissible. It is easy to check that $\Rep_E^{B\aadm}(G)$ is stable 
by direct sums, tensor products, and duals.
Moreover the association $B \mapsto \Rep_E^{B\aadm}(G)$ is increasing 
in the following sense: any 
$B_1$-admissible representation is automatically $B_2$-admissible
as soon as $B_2$ appears as an algebra over $B_1$.

\begin{ex}
Let $L$ be a finite extension of $E$. Take $G = \Gal(L/E)$ and let
it act naturally on $L$. Hilbert's theorem 90 (\emph{cf} Theorem
\ref{theo:H90}) shows that all finite dimension $E$-representation
of $G$ is $L$-admissible.
\end{ex}

\subsubsection{A criterium for $B$-admissibility}
\label{sssec:criterium}

The aim of this paragraph is to establish a numerical criterium
for recognizing $B$-admissible representations. In order to do so,
we make the following assumptions\footnote{Our assumptions are a bit 
stronger than Fontaine's ones. We chose these stronger hypothesis 
because they simplify the exposition and are sufficient for the 
applications we want to discuss here.} on the $E$-algebra $B$:

\vspace{-2mm}

\begin{enumerate}[(H1)]
\renewcommand{\itemsep}{0pt}
\item $B$ is a domain,
\item $(\Frac B)^G = B^G$,
\item if $b \in B$, $b \neq 0$ and the $E$-line $Eb$ is stable
under $G$, then $b \in B^\times$.
\end{enumerate}

\noindent
It is easily seen that the assumption (H3) implies that $B^G$
is a field. Indeed for any $b \in E$, $b \neq 0$, the line $Eb$
is clearly stable under $G$. Thus $b$ has to be invertible in 
$B$. Now we conclude by noticing that its inverse is also fixed
by $G$.
Moreover, by copying the proof of the second part of 
Theorem~\ref{theo:H90}, one shows that the assumptions (H1) and
(H2) ensure that the morphism
$\alpha_W : B \otimes_{B^G} W^G \to W$
is injective for all $W \in \Rep_B(G)$ which are free of finite
rank over $B$. In particular, this property holds true for $W$ of
the form $B \otimes_E V$ where $V$ is finite dimensional $E$-linear
representation of $G$.

\begin{prop}
We assume that $B$ satisfies (H1), (H2) and (H3)

\noindent
Let $V \in \Rep_E(G)$ and set $W = B \otimes_E V$.
We assume that $V$ is finite dimensional over $E$.
Then the following assertions are equivalent:

\vspace{-2mm}

\begin{enumerate}[(i)]
\renewcommand{\itemsep}{0pt}
\item $W$ is trivial,
\item the morphism $\alpha_W$ is an isomorphism,
\item $\dim_{B^G} W^G = \dim_E V$.
\end{enumerate}
\end{prop}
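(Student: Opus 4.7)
The plan is to prove the cycle of implications (i) $\Rightarrow$ (ii) $\Rightarrow$ (iii) $\Rightarrow$ (i). The first implication is immediate from the definition: a trivial representation $W \simeq B^d$ satisfies $W^G = (B^G)^d$, and $\alpha_W$ is just the canonical identification $B \otimes_{B^G} (B^G)^d \simeq B^d$. For (ii) $\Rightarrow$ (iii), I would observe that $W = B \otimes_E V$ is a free $B$-module of rank $d := \dim_E V$, so if $\alpha_W$ is an isomorphism then $B \otimes_{B^G} W^G$ is also free of rank $d$ over $B$; since $B^G$ is a field (by the remark following (H3)), this forces $\dim_{B^G} W^G = d$.

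The heart of the proof is (iii) $\Rightarrow$ (i). The main preliminary input is the injectivity of $\alpha_W$, which was noted in the excerpt to follow from (H1) and (H2) by adapting the argument of Theorem~\ref{theo:H90}. I would fix a basis $v_1, \ldots, v_d$ of $V$ over $E$ and set $e_i = 1 \otimes v_i$, a basis of $W$ over $B$; let $\rho : G \to \GL_d(E)$ denote the matrix of the $G$-action of $V$ in this basis. Assuming (iii), pick a basis $x_1, \ldots, x_d$ of $W^G$ over $B^G$; by injectivity of $\alpha_W$ these remain $B$-linearly independent in $W$, so we can write $x_j = \sum_i a_{ij} e_i$ with $A = (a_{ij}) \in M_d(B)$ invertible over $\Frac B$. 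To conclude, it is enough to show that $A$ is invertible over $B$, since then the $x_j$'s form a basis of $W$ over $B$ consisting of $G$-fixed vectors.

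The key step, where hypothesis (H3) finally enters, is the control of $\delta := \det A \in B$. Translating the fixed-point condition $g x_j = x_j$ into matrices yields the identity $\rho(g) \cdot g(A) = A$ in $M_d(B)$, and taking determinants gives $g(\delta) = \det\rho(g)^{-1} \cdot \delta$ for every $g \in G$. Since $\det\rho(g)^{-1} \in E^\times$, the $E$-line $E\delta \subset B$ is stable under $G$; moreover $\delta \neq 0$ because $A$ is invertible over $\Frac B$. Applying (H3) then gives $\delta \in B^\times$, hence $A \in \GL_d(B)$ and (i) follows. I expect the determinant/line-stability argument via (H3) to be the only substantive step; the rest is a combination of linear algebra and the injectivity of $\alpha_W$ already established in \S\ref{sssec:criterium}.
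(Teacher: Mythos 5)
Your proof is correct and follows essentially the same strategy as the paper's: the substantive step in both is to reduce (iii) $\Rightarrow$ (i)/(ii) to showing that the determinant $\delta$ of the base-change matrix from a $B^G$-basis of $W^G$ to an $E$-basis of $V$ is a unit of $B$, which is done by proving $\delta \neq 0$ (via injectivity of $\alpha_W$) and that the line $E\delta$ is $G$-stable, then invoking (H3). The paper phrases this with top exterior powers rather than the explicit matrix relation $\rho(g)\cdot g(A) = A$, but this is only a cosmetic difference.
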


\begin{proof}
Since $B^G$ is a field, the equivalence between (i) and (ii)
is obvious. Moreover the fact that (ii) implies (iii) is also
clear. We then just have to prove that (iii) implies (ii). 

We assume (iii) and denote by $d$ the common dimension of $V$
over $E$ and $W^G$ over $B^G$. The morphism
$\alpha_W : B \otimes_{B^G} W^G \to B \otimes_E V$
is a $B$-linear morphism between two finite free $B$-modules of
rank $d$. It is then enough to prove that its determinant 
is an isomorphism.
Let $v_1, \ldots, v_d$ be a $E$-basis of $V$ and let $w_1, \ldots, 
w_d$ be a $B^G$-basis of $W^G$. Let $b$ be the unique element
of $B$ such that:
\begin{equation}
\label{eq:delta}
\alpha_W(v_1) \wedge \cdots \wedge \alpha_W(v_d)
= b \cdot w_1 \wedge \cdots \wedge w_d.
\end{equation}
From the injectivity of $\alpha_W$, we derive $b \neq 0$. 
Let now $g \in G$. Applying $g$ to \eqref{eq:delta}, we get
$g b = \eta \cdot b$ where $\eta$ is defined by the
identity
$\alpha_W(gv_1) \wedge \cdots \wedge \alpha_W(gv_d) = \eta \cdot
\alpha_W(v_1) \wedge \cdots \wedge \alpha_W(v_d)$. From the fact 
that the $E$-span of $v_1, \ldots, v_d$ (which is $V$) is stable under 
the action of $G$, we deduce that $\eta$ lies in $E$. Hence $g b
\in Eb$. Consequently, the $E$-line $Eb$ is stable by the 
action of $G$. Thanks to hypothesis (H3), we conclude that $b \in 
B^\times$ as wanted.
\end{proof}

\begin{cor}
Under the assumptions (H1), (H2) and (H3), the category 
$\Rep_E^{B\aadm}(G)$ is stable by subobjects and quotients.
\end{cor}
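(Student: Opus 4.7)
The plan is to reduce stability by subobjects and quotients to a dimension count using the numerical criterion (iii) of the previous proposition. Given a short exact sequence $0 \to V' \to V \to V'' \to 0$ in $\Rep_E(G)$ with $V$ being $B$-admissible, I would set $W^{(\bullet)} = B \otimes_E V^{(\bullet)}$ and combine two exactness properties with one general inequality.

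First I would observe that $B$ is flat over $E$ (since $E$ is a field), so the functor $V \mapsto B \otimes_E V$ preserves exactness, yielding a short exact sequence $0 \to W' \to W \to W'' \to 0$ in $\Rep_B(G)$. Next I would apply the left-exact functor of $G$-invariants to obtain the exact sequence of $B^G$-vector spaces $0 \to (W')^G \to W^G \to (W'')^G$, from which
$$\dim_{B^G}(W')^G + \dim_{B^G}(W'')^G \geq \dim_{B^G} W^G = \dim_E V = \dim_E V' + \dim_E V'',$$
the last equality being the $B$-admissibility of $V$ via the proposition.

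The second ingredient, which is where (H1) and (H2) intervene, is the converse inequality $\dim_{B^G}(W^{(\bullet)})^G \leq \dim_E V^{(\bullet)}$ for $\bullet \in \{\prime, \prime\prime\}$. This follows from the injectivity of $\alpha_{W^{(\bullet)}}$, recalled in the text as a consequence of (H1) and (H2): any $B^G$-linearly independent family in $(W^{(\bullet)})^G$ then remains $B$-linearly independent in the finite free $B$-module $W^{(\bullet)}$ of rank $\dim_E V^{(\bullet)}$. Combining both inequalities forces equality throughout, and criterion (iii) of the proposition yields the $B$-admissibility of both $V'$ and $V''$. I do not expect any genuine obstacle here; the only mild point to check is the upper bound coming from injectivity of $\alpha_W$, which is immediate once one recalls that $B^G$ is a field.
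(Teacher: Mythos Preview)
Your proof is correct and follows essentially the same approach as the paper: tensor the short exact sequence by $B$, apply the left-exact functor $(-)^G$ to obtain $\dim_{B^G} W^G \leq \dim_{B^G}(W')^G + \dim_{B^G}(W'')^G$, combine this with the general upper bounds $\dim_{B^G}(W^{(\bullet)})^G \leq \dim_E V^{(\bullet)}$ coming from the injectivity of $\alpha_{W^{(\bullet)}}$, and use the admissibility of $V$ to force equality throughout. Your inequality is oriented correctly; the paper's printed version has the sign reversed by a typo, but the intended argument is identical to yours.
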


\begin{proof}
Assume that we are given an exact sequence $0 \to V_1 \to V \to V_2 
\to 0$ in the category $\Rep_E(G)$ and assume that $V$ is 
$B$-admissible. Tensoring by $B$ and taking the $G$-invariants,
we obtain the exact sequence
$0 \to (B \otimes_E V_1)^G \to (B \otimes_E V)^G \to
(B \otimes_E V_2)^G$
from which we derive the inequality:
\begin{equation}
\label{eq:dimgeq}
\dim_{B^G} (B \otimes_E V)^G 
 \geq \dim_{B^G} (B \otimes_E V_1)^G + \dim_{B^G} (B \otimes_E V_2)^G.
\end{equation}
Moreover we know that $\dim_{B^G} (B \otimes_E V_i)^G \leq \dim_E V_i$
for $i \in \{1,2\}$. Therefore we get:
\begin{equation}
\label{eq:dimleq}
\dim_{B^G} (B \otimes_E V)^G \leq \dim_E V_1 + \dim_E V_2 = \dim_E V.
\end{equation}
We know also that $\dim_{B^G} (B \otimes_E V)^G = 
\dim_E V$ thanks to the $B$-admissibility of $V$. Combining the
inequalities~\eqref{eq:dimgeq} and~\eqref{eq:dimleq}, we find that
$\dim_{B^G} (B \otimes_E V_i)^G$ has to be equal to $\dim_E V_i$, 
which proves that $V_i$ (for $i \in \{1,2\})$ is $B$-admissible.
\end{proof}

\subsubsection{What's next?}

Until now, we have spent a lot of time at defining a general abstract 
formalism whose main achievement is the notion of $B$-admissibility.
This is certainly nice but still seems to be quite far from the 
applications.
We devote this subsection to our readers who are impatient to connect 
the notion of $B$-admissibility to concrete properties of Galois
representations and cohomology of algebraic varieties.

In the sequel, we will often use the locution \emph{period rings} to 
refer to various rings $B$. This terminology is motivation by the role 
those rings $B$ play in geometry (they often appear in comparison 
theorem between various cohomologies).

\smallskip

From now, we go back to the setting of \S \ref{ssec:setting}.
Precisely, we let $K$ be a finite extension of $\Qp$. We let
$\bar K$ denote a fixed algebraic closure of $K$ and we set
$G_K = \Gal(\bar K/K)$. Let $\Cp$ be the completion of $\bar K$.
The action of $G_K$ on $\bar K$ extends to a continuous action 
of $G_K$ on $\Cp$. 
Finally, we recall that $\chi_\cycl : G_K \to \Zp^\times$ denotes the
$p$-adic cyclotomic character of $G_K$.

\paragraph{$\Cp$-admissible representations.}

The first ring of periods we will consider is $\Cp$ itself,
equipped with the $p$-adic topology and its natural action of
$G_K$. The question of $\Cp$-admissibility of representations
of $G_K$ will be studied in details in \S\ref{sec:Cp};
we will notably prove the following result (\emph{cf} 
Theorem~\ref{theo:Cpadm}).

\begin{theo}
\label{theointro:Cpadm}
Let $V$ be a $\Qp$-linear finite dimensional representation
of $G_K$. Then $V$ is $\Cp$-admissible if and only if the
inertia subgroup of $G_K$ acts on $V$ through a finite
quotient.
\end{theo}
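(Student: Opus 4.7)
The plan is to prove both implications separately. The easier direction is that potentially unramified representations (i.e., those where $I_K$ acts through a finite quotient) are $\Cp$-admissible; for this I would reduce to the unramified case and then apply Hilbert's theorem~90. The harder direction---that $\Cp$-admissibility forces $I_K$ to act through a finite quotient---is where the core content lies and requires the deeper computation of $\Cp^H$ for various closed subgroups $H \subset G_K$ that will be developed in \S\ref{sec:Cp}.

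For the sufficiency direction, assume $I_K$ acts on $V$ through a finite quotient, and choose a finite Galois extension $L/K$ such that $V\vert_{G_L}$ is unramified. I would first show that an unramified $\Qp$-representation of $G_L$ is $\Cp$-admissible: such a $V$ is determined by the action of a Frobenius element on the underlying $\Qp$-vector space, and since its characteristic polynomial splits over $\Qpbar \subset \Cp$ and the completion of the maximal unramified extension of $L$ already provides enough $G_L$-fixed scalars in $\Cp$, one can explicitly produce a basis of $(\Cp \otimes_{\Qp} V)^{G_L}$ of dimension $\dim_{\Qp} V$. Using the fact that $\Cp^{G_L} = L$, the space $D_L := (\Cp \otimes_{\Qp} V)^{G_L}$ is then a $d$-dimensional $L$-vector space carrying a semi-linear action of the finite group $\Gal(L/K)$. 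Applying Theorem~\ref{theo:H90} to this finite Galois action produces an $L$-basis of $\Gal(L/K)$-fixed vectors inside $D_L$, and this is exactly a $K$-basis of $(\Cp \otimes_{\Qp} V)^{G_K}$ of the expected dimension, proving $\Cp$-admissibility.

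For the necessity direction, suppose $V$ is $\Cp$-admissible. Fixing a $\Qp$-basis of $V$, a trivializing basis of $(\Cp \otimes_{\Qp} V)^{G_K}$ gives a matrix $P \in \GL_d(\Cp)$ satisfying $\rho(g) = P \cdot g(P)^{-1}$ for every $g \in G_K$, where $\rho$ denotes the representation. Since $\rho(g)$ has $\Qp$-valued entries, the relation $g(P) = \rho(g)^{-1} \cdot P$ forces the $G_K$-orbit of each entry of $P$ to span a finite-dimensional $\Qp$-subspace of $\Cp$. The goal is to deduce that $\rho(I_K)$ is finite. For this I would invoke the Ax--Sen--Tate theorem and its refinements computing $\Cp^H$ for open subgroups $H \subset I_K$: these results should imply that any element of $\Cp$ whose $I_K$-orbit spans a finite-dimensional $\Qp$-subspace is already fixed by some open subgroup of $I_K$. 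Applied simultaneously to all entries of $P$, this shows that $P$ is fixed by a common open subgroup $H \subset I_K$, and then the identity $\rho(g) = P \cdot g(P)^{-1}$ forces $\rho$ to be trivial on $H$, so $\rho(I_K)$ is indeed finite.

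The main obstacle is clearly the necessity direction, whose genuine content is encapsulated in the Tate--Sen type computations of the Galois invariants of $\Cp$ under closed subgroups of $I_K$. The sufficiency direction, by contrast, reduces rapidly to the finite-extension situation of Hilbert's theorem~90 (Theorem~\ref{theo:H90}) combined with the elementary fact that unramified representations are trivialized by the completion of the maximal unramified extension.
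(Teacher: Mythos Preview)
Your sufficiency direction is correct and essentially matches the paper's approach: both combine Hilbert's theorem~90 over a finite extension with the fact that unramified representations become trivial over $\hat K^\ur$ (the paper isolates the latter as Proposition~\ref{prop:H90unram}; your sketch via the Frobenius action is the right idea, though the non-diagonalizable case genuinely needs the successive-approximation argument given there rather than just splitting the characteristic polynomial).

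The necessity direction has a real gap. Your reduction is fine: from $g(P)=\rho(g)^{-1}P$ every entry of $P$ has finite-dimensional $\Qp$-orbit, and if each entry were fixed by an open subgroup of $I_K$ you would be done. But the assertion ``an element of $\Cp$ whose $I_K$-orbit spans a finite-dimensional $\Qp$-space is fixed by an open subgroup of $I_K$'' is essentially the theorem itself, and merely knowing $\Cp^H$ for closed $H$ (Ax--Sen--Tate) does not yield it. The paper's mechanism is more specific and is what your plan is missing: one first uses the \emph{quantitative} Ax--Sen--Tate theorem (Theorem~\ref{theo:axsentate2}) to approximate $P$ by a matrix $P_0$ with entries in a finite extension, and then sets $N=\log(P^{-1}P_0)$ to linearize the cocycle relation, obtaining $N-gN=\log U_g\in M_d(\Qp)$ for $g$ in an open subgroup $H$. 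Each entry $\xi$ of $N$ then satisfies $g\xi-\xi\in\Zp$, so $\alpha(g)=g\xi-\xi$ is an additive character $H\to\Zp$, and the extension cut out by $\ker\alpha$ is (at most) a $\Zp$-extension. The trace estimates for ramified $\Zp$-extensions (Proposition~\ref{prop:trFinf}) are then used to force $\alpha$ to be unramified. None of this is ``computing $\Cp^H$'': the two essential ingredients you should add to your plan are the logarithm trick reducing to additive $\Zp$-valued characters, and the ramification theory of $\Zp$-extensions developed in \S\ref{sssec:Zpext}.
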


\noindent
In other words, $\Cp$-admissibility detects those representations 
which are potentially unramified. This notion has then a strong
arithmetical meaning.

\paragraph{Hodge--Tate representations.}

Theorem~\ref{theointro:Cpadm} shows that the notion of 
$\Cp$-admissibility is too strong and does not capture all interesting 
representations; for instance, the cyclotomic character is \emph{not} 
$\Cp$-admissible.
A larger class of representations is given by the notion of Hodge--Tate 
representations. By definition, a $\Qp$-linear representation of $G_K$ 
is Hodge--Tate if $\Cp \otimes_{\Qp} V$ decomposes as:
\begin{equation}
\label{eq:HT}
\Cp \otimes_{\Qp} V = 
\Cp(\chi_\cycl^{n_1}) \oplus
\Cp(\chi_\cycl^{n_2}) \oplus \cdots \oplus
\Cp(\chi_\cycl^{n_d})
\end{equation}
for some integers $n_1, \ldots, n_d$.
The condition actually fits very well in the framework of
$B$-admissibility as introduced above. Indeed, set 
$B_\HT = \Cp[t, t^{-1}]$ ($\HT$ stands for Hodge--Tate) and let
$G_K$ act on it by the formula
$g \cdot (a t^i) = g a \cdot \chi_\cycl(g)^i \cdot t^i$
for $g \in G_K$, $i \in \Z$ and $a \in \Cp$. One checks that $V$ is 
Hodge--Tate if and only if it is $B_\HT$-admissible.
Besides, Theorem~\ref{theointro:Cpadm} is the starting point for 
studying Hodge--Tate representations. For example, it implies that the 
integers $n_i$'s that appeared in \eqref{eq:HT} are uniquely determined 
up to permutation (\emph{cf} Proposition \ref{prop:HTweights}). 
They are called the \emph{Hodge--Tate weights} of 
the representation~$V$.

Finally, Hodge-like decomposition theorems show that many 
representations coming from geometry are Hodge--Tate. This class of 
representations then seems particularly interesting.

\paragraph{De Rham and crystalline representations.}

Unfortunately, Hodge--Tate representations have several defaults.
First, they are actually too numerous and, for this reason, it is
difficult to describe them precisely and design tools to work with
them efficiently. 
The second defect of Hodge--Tate representations is of geometric nature. 
Indeed, tensoring the étale cohomology with $\Cp$ (or equivalently, with 
$B_\HT$) captures the \emph{graded} module of the de Rham cohomology. 
However, it does not capture the entire complexity of de Rham 
cohomology, the point being that the de Rham filtration does not split 
canonically in the $p$-adic setting.

In order to work around this issues, Fontaine defined other period 
rings $B$ ``finer'' than $B_\HT$.
The most classical period rings introduced by Fontaine are 
$\Bcrys \subset B_\st \subset \BdR$; the corresponding admissible
representations are called \emph{crystalline}, \emph{semi-stable}
and \emph{de Rham} respectively. Moreover, $\BdR$ is a filtered field 
whose graded ring can be canonically identified with $B_\HT$. This 
property, together with the aforementionned inclusions, imply the 
following implications:
$$\text{crystalline} \,\Longrightarrow\,
\text{semi-stable} \,\Longrightarrow\,
\text{de Rham} \,\Longrightarrow\,
\text{Hodge--Tate.}$$
In~\S\ref{sec:dRcrys}, we will discuss the construction of $\BdR$ and 
$\Bcrys$, while the arithmetical and geometrical interest of these 
refined period rings will be presented in~\S\ref{sec:crysdRrep}.
Rapidly, let us say here that representations coming from the geometry,
\emph{i.e.} of the form $H^r_\et(X_{\bar K}, \Qp)$ where $X$
is a smooth projective algebraic variety over $\Qp$, are all
de Rham.
By definition, this means that the space
$\big(\BdR \otimes_{\Qp} H^r_\et(X_{\bar K}, \Qp)\big)^{G_K}$
has the correct dimension. It turns out that this space has a
very pleasant cohomological interpretation: it is canonically
isomorphic to the de Rham cohomology of $X$, namely $H^r_\dR(X)$.
We thus get an isomorphism:
$$B_\dR \otimes H^r_\dR(X)
\stackrel\sim\longrightarrow
B_\dR \otimes H^r_\et(X_{\bar K}, \Qp)$$
which is a the right $p$-adic analogue of the de Rham comparison 
theorem.
Besides, the de Rham filtration on $H^r_\dR(X)$ can be rebuilt from the 
filtration on $\BdR$. This is the first apparition of the yoga of 
additional structures, which actually is ubiquitous in $p$-adic Hodge 
theory.

The introduction of $\BdR$ resolves elegantly the geometric 
issue we have pointed out earlier. 
However, the class of $\BdR$-admissible representations is still rather 
large and not easy to describe. The ring $\Bcrys$ is a subring of 
$\BdR$ which is equipped with more structures and provides very 
powerful 
tools for describing crystalline representations. On the geometric
side, crystalline representations correspond to the étale cohomology
of varieties with good reduction and the space
$\big(\Bcrys \otimes_{\Qp} H^r_\et(X_{\bar K}, \Qp)\big)^{G_K}$
is related to the crystalline cohomology of (the special fibre of a
proper smooth model of) $X$, equipped with its Frobenius endomorphism.
All in all, we will obtain powerful methods for describing the étale
cohomology of $X$ with comparatively down-to-earth objects.

%%%%%%%%%%%%%%%%%%%%%%%%%%%%%%%%%%%%%%%%%%%%%%%%%%%%%%%%%%%%%%%%%%%%%%

\section{The first period ring: $\Cp$}
\label{sec:Cp}

After Tate and Fontaine's results on Hodge-like decompositions of 
cohomology, the first natural period ring to consider is $\Cp$ itself. 
In this section, we first study $\Cp$-admissibility and prove 
Theorem~\ref{theointro:Cpadm}. The proof requires some preparation and 
occupies the first two subsections. The last subsection 
(\S\ref{ssec:Sen}) is devoted to expose Sen's theory, as developed in 
\cite{sen}, whose aim is to go further than $\Cp$-admissibly and 
classify general $\Cp$-semi-linear representations.

Our approach differs a bit from usual presentations in that we will not 
use the langage of group cohomology but instead will keep working with 
(semi-linear) representations and vectors throughout the exposition.

\subsection{Ramification in $\Zp$-extensions}

A first important ingredient in the proof of Theorem 
\ref{theointro:Cpadm} is the study of ramification in Galois extensions 
over $K$ whose Galois group is isomorphic to $\Zp$.

Throughout this section, if $L$ is an algebraic extension of $K$, we 
shall denote by $\O_L$ its ring of integers, by $\m_L$ the maximal ideal 
of $\O_L$ and by $k_L = \O_L/\m_L$ its residue field.
If moreover $L/K$ is finite, we shall denote by $v_L : L \to \Z \cup 
\{+\infty\}$ the valuation on $L$ normalized by $v_L(L^\times) = \Z$.
Set $e_L = v_L(p)$; it is the ramification index of the extension
$L/\Qp$.
If $F$ and $L$ are two algebraic extensions of $K$ with $F \subset L$
and $[L:F] < \infty$, we shall use the notation $e_{L/F}$ for the
ramification index of $L/F$ and the notation $\Tr_{L/F}$ for the 
trace map of $L$ over $F$. When $F$ is a finite extension of $K$,
we have $e_{L/F} = \frac{e_L}{e_F}$.

In what follows, it is convenient to allow flexibility and work 
over a base $F$ which is not necessarily $K$ but a finite extension of
it.

\subsubsection{Higher ramification groups}

We first recall briefly the theory of higher ramification groups as 
exposed in \cite{serre}.

Let $L/F$ be a finite Galois extension with Galois group $G$.
For any nonnegative integer $i$, we define the $i$-th higher group of 
ramification of $L/K$ as the subgroup $G_i$ of $G$ consisting of 
elements $g \in G$ such that $g$ acts trivially on the quotient
$\O_L/\m_L^{i+1}$.
One easily checks that the $G_i$'s form a nonincreasing sequence of
normal subgroups of $G$ and that $G_0$ is the inertia subgroup of
$G$. Besides, one proves that
the quotient $G_0/G_1$ naturally embeds into $k_F^\times$ and 
thus is a cyclic of order prime to $p$, and, for $i > 0$, the 
quotient $G_i/G_{i+1}$ embeds into 
$\m_L^i/\m_L^{i+1}$ and thus is a commutative $p$-group killed
by $p$.

The ramification filtration we have just defined is not compatible 
with subextensions. However, we can recover some compatibility after
a suitable reindexation. In order to do so, we first
define the Herbrand function $\varphi_{L/F} : \R^+ \to \R^+$ by:
\begin{equation}
\label{eq:phi}
\varphi_{L/F}(u) = \frac 1 {e_{L/F}} \cdot 
\int_0^u \Card G_t \cdot dt
\end{equation}
where we agree that $G_t = G_{\lceil t \rceil}$ when $t$ is a 
nonnegative real number and $\lceil \cdot \rceil$ is the ceiling
function. Clearly $\varphi_{L/F}$ is increasing and defines a
bijection from $\R^+$ to itself. Let $\psi_{L/F}$ be its inverse.
For $u \in \R^+$, we set $G^u = G_{\psi_{L/F}(u)}$.
One can then show the following property. If $L_1$ and $L_2$ are
two finite Galois extensions of $K$ with $L_1 \subset L_2$, then
the canonical projection $\Gal(L_2/F) \to \Gal(L_1/F)$ maps
surjectively $\Gal(L_2/F)^u$ onto $\Gal(L_1/F)^u$ for all $u \in
\R^+$.
This compatibility allows us the define ramification subgroups
for infinite extensions: if $L$ is a infinite Galois extension
of $K$, we put $\Gal(L/F)^u = \varprojlim_F \Gal(L'/F)^u$
where $L'$ runs over all finite extensions of $F$ included in $L$.

Moreover the $\varphi$'s and $\psi$'s functions verify very pleasant
composition formulae: if $F$, $L_1$ and $L_2$ are extensions of $K$
as above, we have $\varphi_{L_2/F} = \varphi_{L_1/F} \circ
\varphi_{L_2/L_1}$ and thus, passing to the inverse, $\psi_{L_2/F}
= \psi_{L_2/L_1} \circ \psi_{L_1/F}$.

Finally, we observe that the knowledge of the upper numbering of the 
ramification filtration is equivalent to that of the lower numbering. 
Indeed the function $\psi_{L/F}$ can be recovered for the $G^u$'s thanks 
to the formula:
\begin{equation}
\label{eq:psi}
\psi_{L/F}(t) = e_{L/F} \cdot \int_0^u \frac{du}{\Card G^u}.
\end{equation}
Now, taking the inverse of $\psi_{L/F}$, we reconstruct the 
function $\varphi_{L/F}$ and we can finally recover the lower
numbering of the filtration ramification by letting $G_t = 
G^{\varphi_{L/F}(t)}$.

\paragraph{Relation with the different.}

We will often use the higher ramification groups in order to compute (or 
estimate) the different of the extension $L/F$. Let us first recall that 
the different $\D_{L/F}$ of $L/F$ is the ideal of $\O_L$ characterized
by the property that $\Tr_{L/F}(x \O_L) \subset \O_F$ if and only if
$x \in \D_{L/F}^{-1}$, \emph{i.e.} $v_p(x) + v_p(\D_{L/F}) \geq 0$.
Here we have introduced the valuation of an ideal, which is defined
as the minimal valuation of one of its elements (or, equivalently, if
it is generated by one element, the valuation of any generator).
The following formula relates $\D_{L/F}$ with the size of the $G_i$'s:
$$v_L(\D_{L/F}) = \sum_{i\geq 0} (\Card G_i - 1).$$
From this relation, we derive easily the following formulae:
\begin{equation}
\label{eq:difframif}
v_F(\D_{L/F}) 
= \lim_{t \to \infty} \left(\varphi_{L/F}(t) - \frac t{e_{L/F}}\right)
= \lim_{u \to \infty} \left(u - \frac {\psi_{L/F}(u)}{e_{L/F}}\right).
\end{equation}

\paragraph{Ramification and class field theory.}

When the extension $L/F$ is finite and 
abelian, local class field theory gives a nice interpretation
of the ramification filtration. More precisely, recall first
that Artin reciprocity map provides an isomorphism
$\Gal(L/F) \simeq F^\times / N_{L/F}(L^\times)$
where $N_{L/F}$ is the norm of $L$ over $F$. 
Under this isomorphism, the ramification subgroup $\Gal(L/F)^u$ 
corresponds to the image in $F^\times / N_{L/F}(L^\times)$ of
the congruence subgroup:
$$U_F^u = \big\{ \, x \in \O_F^\times \quad \text{s.t.} \quad
x \equiv 1 \pmod{\m_F^u} \, \big\} \, \subset \, F^\times.$$
A nontrivial consequence of this result is the Hasse--Arf
theorem which states that the jumps of the filtration ramification
in upper numbering (\emph{i.e.} the real numbers $u$ for which 
$G^{u+\varepsilon} \neq G^u$ for all $\varepsilon > 0$) are all
integers.

\subsubsection{The case of $\Zp$-extensions}
\label{sssec:Zpext}

We now consider a Galois extension $F_\infty$ of $F$. We assume that 
$F_\infty/F$ is ramified and that we are given an isomorphism $\alpha : 
\Gal(F_\infty/F) \simeq \Zp$. We remark that an extension with these 
properties always exists; it can be cooked up from the cyclotomic 
extension of $F$ as discussed in \S\ref{sssec:cyclo}.

For $r \geq 0$, let $\gamma_r = \alpha^{-1}(p^r) \in \Gal(F_\infty/F)$
and $F_r$ be the finite extension of $F$ cut out by the closed 
subgroup generated by $\gamma_r$ (that is the subgroup $\alpha^{-1}
(p^r\Zp)$). The $F_r$'s then form a tower of 
extensions, in which each $F_{r+1}/F_r$ is a cyclic extension of order 
$p$. More generally, if $s \geq r$, the extension $F_s/F_r$ is cyclic of 
order $p^{s-r}$ and its Galois group is generated by the class of~$\gamma_r$.

Let also $e_F$ be the absolute index of ramification of $F$ defined
as $e_F = v_F(p)$.

\begin{prop}
\label{prop:ramZp}
With the previous notations,
there exists $a \in \Z$ such that, for $u$ large enough,
$\Gal(F_\infty/F)^u $ is the closed subgroup generated by
$\gamma_{\lceil \frac{u-a}{e_F} \rceil}$.
\end{prop}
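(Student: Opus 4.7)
My plan is to convert the question, via local class field theory, into a calculation with the filtration of $F^\times$ by the congruence subgroups $U_F^u$, and then to use the $p$-adic logarithm to transfer the problem to the additive side, where a trace formula is available.

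First I would set up a map $\phi : F^\times \twoheadrightarrow \Gal(F_\infty/F) \simeq \Zp$ as the inverse limit of Artin reciprocity maps. For each finite sub-extension $F_s/F$, local class field theory yields an isomorphism $\Gal(F_s/F) \simeq F^\times / N_{F_s/F}(F_s^\times)$ under which $\Gal(F_s/F)^u$ corresponds to the image of $U_F^u$; these isomorphisms are compatible with the projections $\Gal(F_s/F) \to \Gal(F_r/F)$ for $s \geq r$, so in the limit one obtains a continuous surjection $\phi$ satisfying $\Gal(F_\infty/F)^u = \phi(U_F^u)$ for every $u \in \R^+$. The hypothesis that $F_\infty/F$ is ramified says precisely that $\phi$ is non-trivial on $\O_F^\times$.

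Next I would exploit the $p$-adic logarithm. For $u > e_F/(p-1)$, the usual series $\log$ and $\exp$ provide an isomorphism of topological groups $U_F^u \simeq \m_F^u$, and the composition $\phi \circ \exp : \m_F^u \to \Zp$ is a continuous additive map. Extending by $\Qp$-linearity one obtains a continuous $\Qp$-linear form $\ell : F \to \Qp$, and the non-degeneracy of the trace pairing on the finite extension $F/\Qp$ yields a unique $\alpha \in F$ with $\ell(x) = \Tr_{F/\Qp}(\alpha x)$ for every $x \in F$. Note that $\alpha \neq 0$ since $\phi$ is non-trivial on $\O_F^\times$.

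The proof then reduces to an explicit computation of the trace of a fractional ideal. Writing $m = v_F(\alpha)$ and $d = v_F(\D_{F/\Qp})$, the defining property of the different gives $\Tr_{F/\Qp}(\m_F^n) = p^{\lfloor (n+d)/e_F \rfloor}\,\Zp$ for every integer $n$. Consequently $\phi(U_F^u) = \Tr_{F/\Qp}(\m_F^{u+m}) = p^{k(u)}\,\Zp$, where $k(u)$ can be put in the form $\lceil (u-a)/e_F \rceil$ for a suitable integer $a$ (the passage between floor and ceiling, and between real and integer values of $u$, is absorbed into $a$ and is legitimate for large $u$, using the Hasse--Arf theorem to reduce to integer $u$). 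Since $p^{k(u)}\Zp$ corresponds under $\phi$ to the closed subgroup generated by $\gamma_{k(u)}$, the desired description of $\Gal(F_\infty/F)^u$ follows. In my view, the most delicate point in this plan is the first step: assembling the class-field-theoretic data for the finite sub-extensions into a single statement for the infinite $\Zp$-extension requires a careful check of compatibilities in upper numbering; by contrast, the logarithm and trace calculations are completely routine.
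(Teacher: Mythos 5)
Your proof is correct, and it takes a genuinely different route from the paper's. Both arguments start from the same local-class-field-theoretic input, namely that $\Gal(F_\infty/F)^u$ is the image of the congruence subgroup $U_F^u$ under the (inverse limit of) Artin map(s), and both invoke Hasse--Arf. After that the two diverge. The paper stays on the multiplicative side: it observes that, on the Galois side, the subgroup of $p$-th powers of $\Gal(F_s/F)^u$ is generated by $\gamma_{\rho(u)+1}$ while, on the unit side, $(U_F^u)^p = U_F^{u+e_F}$ once $u > e_F/(p-1)$; matching the two yields the functional equation $\rho(u+e_F) = \rho(u)+1$, from which the shape $\lceil (u-a)/e_F\rceil$ is read off by monotonicity and left-continuity. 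You instead linearize via the $p$-adic exponential, turning $\phi|_{U_F^u}$ into a continuous $\Zp$-linear form on $\m_F^u$, extend it to a $\Qp$-linear form $\ell$ on $F$, and write $\ell = \Tr_{F/\Qp}(\alpha\,\cdot\,)$ by nondegeneracy of the trace pairing; the explicit formula $\Tr_{F/\Qp}(\m_F^n) = p^{\lfloor (n+d)/e_F\rfloor}\Zp$ then produces the answer directly, with $a = e_F - 1 - v_F(\alpha) - v_F(\D_{F/\Qp})$. Your version buys an explicit identification of the constant $a$ (in terms of the element $\alpha$ dual to $\phi$) and makes its integrality manifest, at the cost of invoking $p$-adic analysis (convergence of $\log$/$\exp$) and trace duality; the paper's version is more elementary and self-contained, at the cost of obtaining $a$ indirectly and needing a separate appeal to Hasse--Arf to see it is an integer. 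Two minor points to tighten: the limit map $\phi : F^\times \to \Gal(F_\infty/F)$ need not be surjective when $F_\infty/F$ is not totally ramified (the image of $\O_F^\times$ is only the inertia subgroup), but this is harmless since you only use $\phi(U_F^u)$; and when you write $\phi(U_F^u) = \Tr_{F/\Qp}(\m_F^{u+m})$ you should say explicitly that $U_F^u = U_F^{\lceil u\rceil}$ so that the trace formula, valid for integer exponents, applies.
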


\begin{proof}
For $u \in \R^+$, let $\rho(u)$ be the unique element $r$ of $\N
\cup \{+\infty\}$ for which
$\Gal(F_\infty/F)^u$ is topologically generated by $\gamma_r$. This 
definition yields a function $\rho : \R^+ \to \N \cup \{+\infty\}$ 
which is nondecreasing and left-continuous. The fact that $F_\infty/F$ 
is ramified shows that $\rho(0)$ is finite. By the Hasse--Arf 
theorem, the points of discontinuity of $\rho$ are all integers. 
Moreover there must be infinitely many of them since the successive
quotients of the ramification filtration are all killed by $p$. This 
implies that $\rho$ takes finite values everywhere.

Let $s$ be a positive integer.
By local class field theory, we know that Artin's isomorphism 
$\Gal(F_s/F) \simeq F^\times / N_{F_r/F}(F_r^\times)$ maps the subgroup 
$\Gal(F_s/F)^u$ onto $U_F^u / (U_F^u \cap N_{F_r/F}(F_r^\times))$.
Note that the group $\Gal(F_s/F)^u$ is generated by the class of
$\gamma_{\rho(u)}$. Its subgroup of $p$-th powers is then generated
by $\gamma_{\rho(u)+1}$. On the other hand, a simple computation shows
that the subgroup of $p$-th powers of $U_F^u$ is equal to $U_F^{u+e_F}$ 
as soon as $u > \frac{e_F}{p-1}$.
Comparing the subgroup of $p$-th powers of both sides, we obtain:
$$\min(s,\, \rho(u) + 1) = \min(s,\, \rho(u+e_F))$$
whenever $u > \frac{e_F}{p-1}$. Letting $s$ go to infinity, we end up 
with $\rho(u + e_F) = \rho(u) + 1$ for $u > \frac{e_F}{p-1}$.
This relation, combined with the facts that $\rho$ is nondecreasing,
left-continuous and takes integral values, implies that there exists
a real constant $a$ such that $\rho(u) =
\lceil \frac{u-a}{e_F} \rceil$ for $u > \frac{e_F}{p-1}$.
The fact that $a$ is indeed an integer is a consequence of the
Hasse--Arf theorem.
\end{proof}

\begin{figure}
\hfill
\begin{tikzpicture}[xscale=2,yscale=0.4]
\draw[->] (0,0)--(4.2,0);
\draw[->] (0,0)--(0,17);
\draw[thick] (0,0)--(1,1)--(2,3)--(3,7)--(4,15)--(4.125,17);
\begin{scope}[dotted]
\draw (1,0)--(1,1)--(0,1);
\draw (2,0)--(2,3)--(0,3);
\draw (3,0)--(3,7)--(0,7);
\draw (4,0)--(4,15)--(0,15);
\end{scope}
\node[scale=0.8,below] at (1,0) { $1$ };
\node[scale=0.8,below] at (2,0) { $2$ };
\node[scale=0.8,below] at (3,0) { $3$ };
\node[scale=0.8,below] at (4,0) { $4$ };
\node[scale=0.8,left] at (0,1) { $1$ };
\node[scale=0.8,left] at (0,3) { $p+1$ };
\node[scale=0.8,left] at (0,7) { $p^2+p+1$ };
\node[scale=0.8,left] at (0,15) { $p^3+p^2+p+1$ };
\end{tikzpicture}
\hfill\null

\caption{The graph of the function $\psi_r$ ($r \geq 4$)}
\label{fig:psir}
\end{figure}
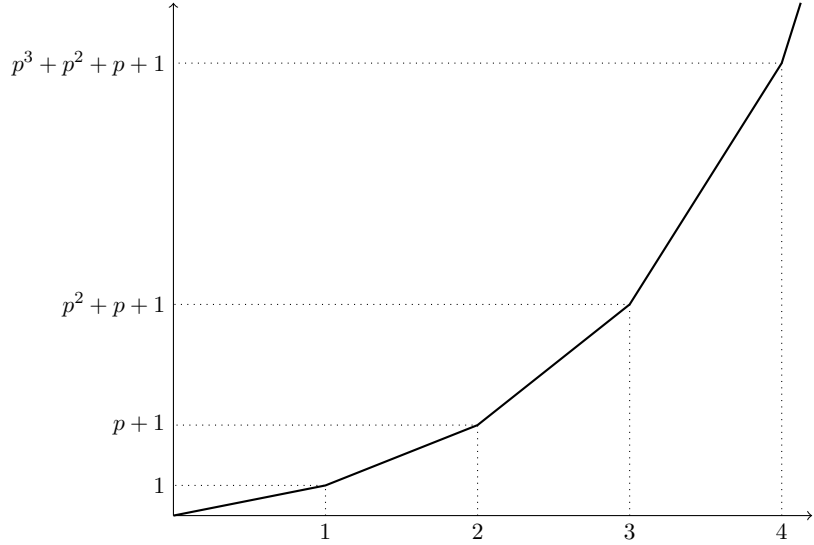

\begin{rem}
\label{rem:ramZp}
Using formula \eqref{eq:psi}, one can rephrase Proposition 
\ref{prop:ramZp} as follows. For a positive integer $r$, let 
$\psi_r : \R^+ \to \R^+$ be the function defined by:
$$\begin{array}{r@{\hspace{0.5ex}}ll}
\psi_r(u) 
 & = u 
 & \text{if } 0 \leq u < 1 \\
 & = p (u - 1) + 1
 & \text{if } 1 \leq u < 2 \\
 & \hspace{1em}\vdots \\
 & = p^{r-1} (u - r + 1) + (1 + p + \cdots + p^{r-2}) 
 & \text{if } r-1 \leq u < r \\
 & = p^r (u - r) + (1 + p + \cdots + p^{r-1})
 & \text{if } u \geq r
\end{array}$$
(\emph{cf} Figure~\ref{fig:psir}).
Then, there exist $u_0 \in \R^+$ and two constants $a$ and $b$ 
such that $\psi_{F_r/F}(u) = e_F \cdot \psi_r(\frac{u-a}{e_F}) 
+ b$ for all integer $r$ and all $u \geq u_0$.
\end{rem}

Proposition \ref{prop:ramZp} has several interesting corollaries that we 
will derive below. We begin with two of them that give information about 
the behavior of the trace map.
The first one (Proposition \ref{prop:trFinf}) concerns extensions living 
inside $F_\infty$ and shows that traces in such extensions tend to
decrease the norm by a large factor.
On the contrary, the second one (Proposition \ref{prop:trFinfperp})
concerns extensions which are ``orthogonal'' to $F_\infty$ and shows
that traces in such extensions have a norm which is close to $1$.
The conceptual meaning of these results is that the extension 
$F_\infty/F$ captures almost all the ramification of $\bar K/F$.

\begin{prop}
\label{prop:trFinf}
There exists a constant $c_1$ (depending only on $F$ and $F_\infty$)
for which the following property holds:
for any positive integers $r$ and $s$ with $r \leq s$ and
for any $x \in \O_{F_s}$, we have:
$$v_p\big(\Tr_{F_s/F_r}(x)\big) \geq v_p(x) + s - r - c_1.$$
\end{prop}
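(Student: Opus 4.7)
The plan is to reduce the estimate to a lower bound on the different $\D_{F_s/F_r}$ and then compute this different using the upper ramification filtration furnished by Proposition~\ref{prop:ramZp}. The preliminary ingredient I would establish is the general inequality
$$v_p(\Tr_{L/F}(x)) \geq v_p(x) + v_p(\D_{L/F}) - \frac{1}{e_F}$$
for any finite extension $L/F$ of $p$-adic fields and any $x \in L$, using the characterization of the codifferent $\D_{L/F}^{-1} = \{y \in L : \Tr_{L/F}(y\O_L) \subset \O_F\}$: one writes $x = \pi_F^k y$ with $k$ maximal so that $y \in \D_{L/F}^{-1}$, applies the trace, and converts back to $v_p$. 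Specialized to $L = F_s$, $F = F_r$ (and using $e_{F_r} \geq 1$), this reduces the statement to proving $v_p(\D_{F_s/F_r}) \geq (s - r) - c$ for some constant depending only on $F$ and $F_\infty$. By transitivity of the different in towers, $v_p(\D_{F_s/F}) = v_p(\D_{F_s/F_r}) + v_p(\D_{F_r/F})$, so it suffices in turn to prove that $v_p(\D_{F_t/F}) = t + O(1)$ uniformly in~$t$.

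To obtain this asymptotic I would rewrite \eqref{eq:difframif} in the integral form
$$v_F(\D_{F_t/F}) = \int_0^\infty \bigl(1 - 1/\Card \Gal(F_t/F)^u\bigr)\,du,$$
and plug in the description of the upper ramification filtration supplied by Proposition~\ref{prop:ramZp}. Since upper numbering is compatible with quotients, for $u$ above a fixed threshold $u_0$ depending only on $F_\infty/F$ one has $\Card \Gal(F_t/F)^u = p^{\max(t - \lceil (u-a)/e_F\rceil,\,0)}$. The integral then splits into a bounded initial piece on $[0,u_0]$, a union of intervals of length $e_F$ (indexed by the value $m = \lceil (u-a)/e_F\rceil$ with $m < t$) on which the integrand equals $1 - p^{m-t}$, and a vanishing tail once $m \geq t$. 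Summing the resulting geometric series yields $v_F(\D_{F_t/F}) = e_F t + O(1)$, and dividing by $e_F$ gives $v_p(\D_{F_t/F}) = t + O(1)$, as required.

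The main subtlety lies in this last step: one has to verify that $u_0$, $a$, and the constants coming out of the geometric summation depend only on $F$ and $F_\infty$ and not on the layer $F_t$. This is precisely what Proposition~\ref{prop:ramZp} buys us, since it pins down the asymptotic linear shape of the ramification filtration of $F_\infty/F$ uniformly across all finite subextensions $F_t$.
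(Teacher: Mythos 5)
Your proposal is correct and follows essentially the same route as the paper: both use Proposition~\ref{prop:ramZp} together with~\eqref{eq:difframif} to show $v_p(\D_{F_t/F}) = t + O(1)$ uniformly, then transitivity of the different to get $v_p(\D_{F_s/F_r}) \geq s - r - c$, and finish by converting this bound on the different into the trace estimate. Recasting~\eqref{eq:difframif} in integral form and summing the geometric series is a cosmetic repackaging of the closed-form computation in Remark~\ref{rem:ramZp}; the underlying argument is identical.
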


\begin{proof}
Fix a positive integer $r$.
By the reformulation of Proposition~\ref{prop:ramZp} given in 
Remark~\ref{rem:ramZp}, we have:
$$\frac{\psi_{F_r/F}(u)}{e_F p^r} = u - r - \frac a {e_F} + \frac 
b{p^r e_F} - \frac{p^r-1}{p^r(p-1)}$$
when $u$ is sufficiently large. Thanks to formula \eqref{eq:difframif}, 
we find:
$$v_p(\D_{F_r/F}) = r + \frac a {e_F} - \frac
b{p^r e_F} + \frac{p^r-1}{p^r(p-1)}.$$
Given now two integers $r$ and $s$ with $r \leq s$, the transitivity
property of the different implies that:
$$v_p(\D_{F_s/F_r}) = v_p(\D_{F_s/F}) - v_p(\D_{F_r/F}) = s - r
- \frac b{p^s e_F} + \frac b{p^r e_F} + \frac {p^s-1}{p^s(p-1)}
- \frac {p^r-1}{p^s(p-1)}.$$
Then there exists $c \in \N$, not depending on $r$ and $s$,
such that $v_p(\D_{F_s/F_r}) \geq s - r - c$. Going back to the
definition of the different, we obtain the inclusion
$\Tr_{F_s/F_r}(p^{r-s+c} \O_{F_s}) \subset \O_{F_r}$.
Let now $x \in F_s$ and let $v$ be the integer part of $v_p(x)$.
Then $p^{-v} x$ falls in $\O_{F_s}$, so that we get
$\Tr_{F_s/F_r}(p^{r-s+c-v} x) \in \O_{F_r}$, \emph{i.e.}
$\Tr_{F_s/F_r}(x) \in p^{s-r-c+v} \O_{F_r}$. Consequently:
$$v_p(\Tr_{F_s/F_r}(x)) \geq s - r - c + v \geq v_p(x) + s - r - c -1.$$
We can then take $c_1 = c+1$.
\end{proof}

\begin{rem}
\label{rem:normalizedtraces}
For a fixed integer $r$, we can glue the $\Tr_{K_s/K_r}$ (for $s$
varying) and define a function $R_r : K_\infty \to K_r$ by
$R_r(x) = p^{r-s} \: \Tr_{K_s/K_r}(x)$ for $x \in K_s$.
We notice that the above definition makes sense because if $s \leq
t$, the functions $p^{r-s}\:\Tr_{K_s/K_r}$ and $p^{r-t}\:\Tr_{K_t/K_r}$
coincide on $K_s$. Proposition~\ref{prop:trFinf} shows that the
function $R_r$ obtained this way is uniformly continuous. It then
extends (uniquely) to the completion $\hat K_\infty$ of $K_\infty$.

The functions $R_r : \hat K_\infty \to K_r$ are called the
\emph{Tate's normalized traces}.
\end{rem}

\begin{prop}
\label{prop:trFinfperp}
Let $L$ be a finite Galois extension of $F$.
For all $\varepsilon > 0$, there exist a positive integer~$r$
and an element $x \in \O_{L{\cdot}F_r}$ such that
$v_p(\Tr_{L{\cdot}F_r/F_r}(x)) \leq \varepsilon$.
\end{prop}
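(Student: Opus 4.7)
The plan is to bound $v_p(\D_{L \cdot F_r/F_r})$ and convert this into a bound on $v_p(\Tr_{L \cdot F_r/F_r}(x))$ for a carefully chosen $x \in \O_{L \cdot F_r}$.

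First, I would establish a general lemma relating trace to different: for any finite extension $M/K$ of $p$-adic fields, there exists $x \in \O_M$ with
\[v_p(\Tr_{M/K}(x)) \leq v_p(\D_{M/K}) + \frac{1}{e_K}.\]
This follows from the defining property of the different, namely $\Tr_{M/K}(\D_{M/K}^{-1}) = \O_K$: pick $y \in \D_{M/K}^{-1}$ with $\Tr(y) = 1$, write $y = \pi_M^{-d} z$ with $z \in \O_M$ and $d = v_M(\D_{M/K})$, and rescale by $\pi_K^{\lceil d/e_{M/K}\rceil}$ to land inside $\O_M$ while keeping the $v_K$-valuation of the trace at most $\lceil d/e_{M/K}\rceil$.

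Next I would reduce to proving $v_p(\D_{L\cdot F_r/F_r}) \to 0$ as $r \to \infty$. Letting $r_0$ be such that $L \cap F_\infty = F_{r_0}$, and using that $L \cap F_\infty$ is stable under $\Gal(L/F)$ (so that $L/F_{r_0}$ is again Galois), I may replace $F$ by $F_{r_0}$ and assume $L \cap F_\infty = F$. Then $L \cdot F_\infty/L$ is itself a ramified $\Zp$-extension with $L\cdot F_r$ as its $r$-th layer, so Proposition \ref{prop:ramZp} applies to it. Multiplicativity of the different along the two towers $F \subset L \subset L\cdot F_r$ and $F \subset F_r \subset L\cdot F_r$ gives
\[v_p(\D_{L \cdot F_r/F_r}) = v_p(\D_{L\cdot F_r/L}) + v_p(\D_{L/F}) - v_p(\D_{F_r/F}).\]
Applying the asymptotic computation carried out in the proof of Proposition \ref{prop:trFinf} to both $\Zp$-extensions $F_\infty/F$ and $L\cdot F_\infty/L$, I would obtain expansions of the form $v_p(\D_{F_r/F}) = r + c_F + O(p^{-r})$ and $v_p(\D_{L \cdot F_r/L}) = r + c_L + O(p^{-r})$ for suitable constants $c_F, c_L$; substituting yields
\[v_p(\D_{L \cdot F_r/F_r}) = \bigl(c_L - c_F + v_p(\D_{L/F})\bigr) + O(p^{-r}).\]

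The hard part will be verifying the identity $c_L - c_F + v_p(\D_{L/F}) = 0$. My plan is to derive it from the compatibility of Herbrand $\psi$-functions along the two towers: both $\psi_{L/F} \circ \psi_{L\cdot F_r/L}$ and $\psi_{F_r/F}\circ\psi_{L\cdot F_r/F_r}$ compute $\psi_{L\cdot F_r/F}$, so equating the linear asymptotic parts (which become valid once all relevant ramification filtrations have died, by Hasse--Arf combined with Proposition \ref{prop:ramZp}) should force this relation between the constants; the main technical burden is the careful bookkeeping of slopes and intercepts. Once the identity is established, $v_p(\D_{L\cdot F_r/F_r}) \to 0$ as $r \to \infty$, and combining with the first step produces the required $x \in \O_{L\cdot F_r}$ satisfying $v_p(\Tr_{L\cdot F_r/F_r}(x)) \leq \varepsilon$ for any prescribed $\varepsilon > 0$ and $r$ sufficiently large.
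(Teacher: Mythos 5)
Your proposal follows the paper's proof closely: reduce to the case where $L$ and $F_\infty$ are linearly disjoint over $F$, compare the asymptotic (affine) forms of the two Herbrand decompositions of $\psi_{L\cdot F_r/F}$ to derive the relation between the constants that forces $v_p(\D_{L\cdot F_r/F_r}) \to 0$, and then produce $x$ from the definition of the different. The only differences are cosmetic — you package the last step as a standalone lemma (observing also that the extra $1/e_{F_r} = 1/(e_F p^r)$ term tends to $0$) where the paper does it inline, and you have the Herbrand composition order reversed: it should be $\psi_{L_2/F} = \psi_{L_2/L_1}\circ\psi_{L_1/F}$, so the two composites to equate are $\psi_{L\cdot F_r/L}\circ\psi_{L/F}$ and $\psi_{L\cdot F_r/F_r}\circ\psi_{F_r/F}$.
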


\begin{proof}
Up to replacing $F$ by $F_\infty \cap L$, we may assume that
$F_\infty$ and $L$ are linearly disjoint over $F$.
% and that the extension $L/F$ is totally ramified.
We set $L_\infty = L{\cdot}K_\infty$ and $L_r = L{\cdot}F_r$ for
all $r$. The extension $L_\infty/L$ is then a $\Zp$-extension and
the $L_r$'s correspond to the subgroups $p^r \Zp$.

By formula \eqref{eq:difframif} and Remark~\ref{rem:ramZp}, there
exist $u_0 \in \R^+$ and $a,b,a',b' \in \R$ for which:
\begin{align*}
\psi_{L/F}(u) & = e_{L/F} \cdot \big(u - v_F(\D_{L/F})\big) \\
\psi_{L_r/F_r}(u) & = e_{L/F} \cdot \big(u - v_{F_r}(\D_{L_r/F_r})\big) \\
\psi_{F_r/F}(u) & = e_F \cdot \psi_r\Big(\frac{u-a}{e_F}\Big) + b \\
\psi_{L_r/L}(u) & = e_F \cdot \psi_r\Big(\frac{u-a'}{e_L}\Big) + b'
\end{align*}
for all $u \geq u_0$ and all positive integer $r$. 
Writing
$\psi_{L_r/L} \circ \psi_{L/F} = \psi_{L_r/F_r} \circ \psi_{F_r/F}$,
we obtain:
$$e_L \cdot \psi_r\Big(\frac u{e_F} - \frac a{e_F}\Big) 
+ e_{L/F} \cdot \big(b - v_{F_r}(\D_{L_r/F_r})\big) =
e_L \cdot \psi_r\Big(\frac u{e_F} - \frac {v_F(\D_{L/F})}{e_F} - 
\frac{a'}{e_L}\Big) + b'$$
for $u \geq u_0$. When $r$ is sufficiently large, the above identity
of functions implies, by comparing slopes, that 
$\frac a{e_F} = \frac {v_F(\D_{L/F})}{e_F} +
\frac{a'}{e_L}$ and $b - v_{F_r}(\D_{L_r/F_r}) = b'$.
From the latter equality, we derive $v_p(\D_{L_r/F_r}) = \frac{b - 
b'}{e_F p^r}$.

Let $\pi_{F_r}$ be a uniformizer of $F_r$. Let $y$ be an element of 
$\D_{L_r/F_r}$ with $v_p(y) = \frac{b - b'}{e_F p^r}$.
By definition of the different, there exists $z \in \O_{L_r}$ such that 
$\Tr_{L_r/F_r}(\frac z {\pi_{F_r} y}) \not\in \O_{F_r}$. In other words,
$v_p(\Tr_{L_r/F_r}(\frac z y)) < \frac 1{e_F p^r}$. Set $n = \lceil
b{-}b'\rceil$ and $x = \pi_{F_r}^n \: \frac z y$. 
We have $v_p(x) \geq \frac{n - (b-b')}{e_F p^r} \geq 0$; hence 
$x \in \O_{L_r}$. Moreover:
$$v_p(\Tr_{L_r/F_r}(x)) = 
v_p\big(\pi_{F_r}^n \Tr_{L_r/F_r}({\textstyle \frac z y})\big) = 
\frac n{e_F p^r} + v_p\big(\Tr_{L_r/F_r}({\textstyle \frac z y})\big) < 
\frac{n+1}{e_F p^r} < \frac{b-b'+2}{e_F p^r}.$$
We conclude the proof by noticing that, when $r$ goes to infinity, the 
upper bound $\frac{b-b'+2}{e_F p^r}$ converges to~$0$.
\end{proof}

We shall also need the following result which is a refinement of the 
classical additive Hilbert's theorem 90, allowing in addition some control 
on the valuation.

\begin{prop}
\label{prop:H90cont}
There exists a constant $c_2$ (depending only on $F$ and $F_\infty$)
for which the following property holds:
for any positive integers $r$ and $s$ with $r \leq s$ and
for any $x \in \O_{F_s}$ with $\Tr_{F_s/F_r}(x) = 0$, there
exists $y \in F_s$ such that (i)~$\Tr_{F_s/F_r}(y) = 0$, 
(ii)~$x = \gamma_r y - y$ and (iii)~$v_p(y) \geq v_p(x) - c_2$.

Moreover $y$ is uniquely determined by the conditions~(i) and~(ii).
\end{prop}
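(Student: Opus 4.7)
My plan is to dispatch uniqueness by a direct argument, then produce $y$ by a telescoping construction along the tower $F_r \subset F_{r+1} \subset \cdots \subset F_s$ that reduces the global inversion of $\gamma_r - 1$ to step-by-step degree-$p$ Lagrange resolvents. The uniform valuation bound will fall out of Proposition~\ref{prop:trFinf}, which controls each piece of the decomposition. \emph{Uniqueness} is immediate: if $y$ and $y'$ both satisfy (i) and (ii), then $y-y'$ is $\gamma_r$-fixed, hence lies in $F_s^{\gamma_r} = F_r$; the trace condition then gives $p^{s-r}(y-y') = \Tr_{F_s/F_r}(y-y') = 0$ and so $y = y'$.

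For existence I would introduce, for $0 \leq k \leq s-r$, the projectors $P_k := p^{-(s-r-k)}\Tr_{F_s/F_{r+k}} : F_s \to F_{r+k}$; a direct computation gives the compatibility $P_k \circ P_\ell = P_{\min(k,\ell)}$. Since $P_0(x) = p^{-(s-r)}\Tr_{F_s/F_r}(x) = 0$ by hypothesis and $P_{s-r} = \mathrm{id}$, telescoping yields
$$x = \sum_{k=1}^{s-r} z_k, \qquad z_k := (P_k - P_{k-1})(x) \in F_{r+k}.$$
Using the identity $\Tr_{F_{r+k}/F_{r+k-1}}|_{F_{r+k}} = p\cdot P_{k-1}|_{F_{r+k}}$ one checks that $\Tr_{F_{r+k}/F_{r+k-1}}(z_k) = 0$, and Proposition~\ref{prop:trFinf} applied to $\Tr_{F_s/F_{r+k}}$ gives the uniform bound $v_p(P_k(x)) \geq v_p(x) - c_1$, whence $v_p(z_k) \geq v_p(x) - c_1$ for every $k$.

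Since $F_{r+k}/F_{r+k-1}$ is cyclic of degree $p$ with generator $\gamma_{r+k-1} = \gamma_r^{p^{k-1}}$, the classical Lagrange resolvent
$$w_k := \frac{1}{p}\sum_{i=0}^{p-1} i\,\gamma_{r+k-1}^i(z_k) \in F_{r+k}$$
satisfies $(\gamma_{r+k-1}-1)w_k = z_k$, automatically has $\Tr_{F_{r+k}/F_{r+k-1}}(w_k) = 0$ (each $\gamma_{r+k-1}^i(z_k)$ has the same trace as $z_k$, namely zero), and obeys $v_p(w_k) \geq v_p(z_k) - 1$. Exploiting the factorization $\gamma_{r+k-1} - 1 = (\gamma_r - 1)(1 + \gamma_r + \cdots + \gamma_r^{p^{k-1}-1})$, I would set
$$y_k := \bigl(1 + \gamma_r + \cdots + \gamma_r^{p^{k-1}-1}\bigr)w_k \in F_{r+k},$$
so that $(\gamma_r - 1)y_k = z_k$ and $v_p(y_k) \geq v_p(w_k)$. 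An interchange-of-summation argument shows $\Tr_{F_{r+k}/F_{r+k-1}}(y_k) = \Tr_{F_{r+k}/F_r}(w_k)$, which vanishes by transitivity of traces combined with $\Tr_{F_{r+k}/F_{r+k-1}}(w_k) = 0$; consequently $\Tr_{F_s/F_r}(y_k) = p^{s-r-k}\Tr_{F_{r+k}/F_r}(y_k) = 0$. Summing, $y := \sum_{k=1}^{s-r} y_k$ satisfies (i) and (ii) with $v_p(y) \geq \min_k v_p(y_k) \geq v_p(x) - c_1 - 1$, so $c_2 := c_1 + 1$ works.

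The only delicate step is the trace-vanishing check for the individual $y_k$'s, and this is where the specific $\Zp$-tower structure really enters: it ensures that the operator $1 + \gamma_r + \cdots + \gamma_r^{p^{k-1}-1}$ interacts cleanly with the step-wise trace $\Tr_{F_{r+k}/F_{r+k-1}}$, so no \emph{a posteriori} correction in $F_r$ is required. Such a correction would be natural but would reintroduce a loss scaling with $s-r$ (since $\Tr_{F_s/F_r}$ restricted to $F_r$ is multiplication by $p^{s-r}$), and would destroy the uniformity of $c_2$.
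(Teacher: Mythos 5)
Your proof is correct, and by the uniqueness you yourself establish it necessarily produces the same element $y$ as the paper's construction. The paper also uses the normalized projections $p^{-(s-r-m)}\Tr_{F_s/F_{r+m}}(x)$ (your $P_m x$) and the same input, Proposition~\ref{prop:trFinf}, to control their valuations; the difference is purely organizational. The paper writes down the global Lagrange resolvent $y = \frac{1}{p^{s-r}}\sum_{i=0}^{p^{s-r}-1} i\,\gamma_r^i(x)$ at once and then obtains the valuation bound \emph{a posteriori} by comparing the partial resolvents $y_m = \frac{1}{p^m}\sum_{i<p^m} i\,\gamma_r^i(x_m)$ and estimating the differences $y_m - y_{m-1}$ inductively. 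You telescope $x = \sum_k z_k$ from the outset, solve each step by a degree-$p$ resolvent $w_k$ composed with the geometric sum $1+\gamma_r+\cdots+\gamma_r^{p^{k-1}-1}$, and observe that each piece $y_k$ already carries the uniform bound $v_p(y_k)\geq v_p(x)-c_1-1$; this avoids the inductive bookkeeping and makes the trace-zero normalization of each piece more transparent. Both give $c_2 = c_1 + 1$. The only thing worth flagging is cosmetic: the paper's displayed formula carries a spurious minus sign (its $y$ as written solves $\gamma_r y - y = -x$), whereas your signs are consistent.
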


\begin{proof}
We set $d = s-r$ and:
$$y = -\frac 1{p^d} \cdot \sum_{i=0}^{p^d-1} i \: \gamma_r^i(x).$$
Noticing that $\gamma_r^{p^d}$ is the identity on $F_s$, we find that
$\gamma_r y - y = x + \frac 1{p^d} \Tr_{F_s/F_r}(x) = x$. Moreover
the assumption on the trace of $x$ implies that $\Tr_{F_s/F_r}(y)$
vanishes as well.

Let now $c_1$ be the constant of Proposition~\ref{prop:trFinf} and
set $c_2 = c_1 + 1$.
For $m \in \{0, \ldots, d\}$, we define
$x_m = \frac 1{p^{d-m}} \Tr_{F_s/F_{r+m}}(x)$ and 
$y_m = -\frac 1{p^m} \cdot \sum_{i=0}^{p^m-1} i \: \gamma_r^i(x_m)$.
Obviously $x_d = x$ and $y_d = y$.
Moreover, noticing that any integer between $0$ and $p^m{-}1$ can
be uniquely written as $a + p^{m-1}b$ with $0 \leq a < p^{m-1}$ and
$0 \leq b < p$, we obtain:
$$y_{m-1} - y_m \,\,=\,\, \frac 1 p \cdot \sum_{a=0}^{p^{m-1}-1}\,
\sum_{b=0}^{p-1} \, b \:\gamma_r^{a + p^{m-1}b}(x_m).$$
Therefore $v_p(y_m) \geq \min(v_p(x_m){-}1, \, v_p(y_{m-1}))$ and so
$v_p(y) = v_p(y_d) \geq \min_{1 \leq m \leq d} v_p(x_m) - 1$.
By Proposition~\ref{prop:trFinf}, we end up with $v_p(y) \geq v_p(x) - 
c_2$. The element $y$ we have constructed then satisfies the
requirements (i), (ii) and (iii).

It remains to prove unicity. Assume that we have given $y_1$ and
$y_2$ such that $\Tr_{F_s/F_r}(y_1) = \Tr_{F_s/F_r}(y_2)$ and
$x = \gamma_r y_1 - y_1 = \gamma_r y_2 - y_2$. Set $z = y_1 - y_2$.
The second condition implies that $z$ is fixed by $\gamma_r$. Hence
$z \in F_r$ and $\Tr_{F_s/F_r}(z) = p^{s-r} z$. On the
other hand, one has $\Tr_{F_s/F_r}(z) = \Tr_{F_s/F_r}(y_1) -
\Tr_{F_s/F_r}(y_2) = 0$. We conclude that $p^{s-r} z = 0$ and
hence $y_1 = y_2$.
\end{proof}

\begin{rem}
\label{rem:H90Rr}
Using Tate's normalized traces (\emph{cf} 
Remark~\ref{rem:normalizedtraces}), one may extend 
Proposition~\ref{prop:H90cont} for $x \in \hat K_\infty$.
The result we obtain reads as follows: for all $x \in \hat K_\infty$
with $R_r(x) = 0$, there exists a unique $y \in K_r$ such that
$x = \gamma_r y - y$ and $R_r(y) = 0$. Moreover, this element
$y$ satisfies $v_p(y) \geq v_p(x) - c_2$.
In other terms, the function $(\gamma_r - \id)$ is bijective
on the kernel of $R_r$ and its inverse is continuous.
\end{rem}

\subsection{$\Cp$-admissibility}

We now come to the study of $\Cp$-admissibility of representations of 
$G_K$. The main objective of this subsection is to prove 
Theorem~\ref{theointro:Cpadm} whose statement is recalled below.

\begin{theo}
\label{theo:Cpadm}
Let $V$ be a $\Qp$-linear finite dimensional representation
of $G_K$. Then $V$ is $\Cp$-admissible if and only if the
inertia subgroup of $G_K$ acts on $V$ through a finite
quotient.
\end{theo}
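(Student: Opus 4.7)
I would prove the two directions separately.

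For $(\Leftarrow)$, suppose $I_K$ acts through a finite quotient; a routine Galois-theoretic argument then produces a finite Galois extension $L/K$ over which $V$ becomes unramified. I would first check that an unramified $\Qp$-representation is $\Cp$-admissible: since $V|_{G_L}$ factors through $\Gal(\bar k_L/k_L) \simeq \hat\Z$ acting on $\widehat{L^\ur} \subset \Cp$ with fixed field $L$, a Hilbert-90-type argument for continuous $\hat\Z$-cocycles with values in $\GL_d(\widehat{L^\ur})$ (using the existence in $\widehat{L^\ur}^\times$ of Frobenius eigenvectors for every $\lambda \in \Zp^\times$) furnishes a $G_L$-fixed basis of $\widehat{L^\ur} \otimes V$, hence of $\Cp \otimes V$. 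To pass from $G_L$ to $G_K$, I would apply Theorem~\ref{theo:H90} to the finite Galois extension $L/K$: the module $W := (\Cp \otimes V)^{G_L}$ is free of rank $d$ over $\Cp^{G_L} = L$ (by Ax--Sen--Tate) and carries a semilinear $\Gal(L/K)$-action, so Theorem~\ref{theo:H90} yields a $\Gal(L/K)$-fixed basis, which is automatically fixed by all of $G_K$.

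Direction $(\Rightarrow)$ is the substantive one. Assume $V$ is $\Cp$-admissible; fix a $K$-basis $(\epsilon_i)$ of $(\Cp \otimes V)^{G_K}$ and a $\Qp$-basis $(e_j)$ of $V$, and write $\epsilon_i = \sum_j U_{ji}\,e_j$ with $U \in \GL_d(\Cp)$. A direct computation yields the cocycle identity $\rho(g) = U \cdot g(U)^{-1} \in \GL_d(\Qp)$ for all $g \in G_K$. The plan is in two steps: \emph{(i)} using the cyclotomic $\Zp$-extension $K_\infty = K(\mu_{p^\infty})$, with $H_K = \Gal(\bar K/K_\infty)$ and $\Gamma_K = \Gal(K_\infty/K)$, modify $U$ by right-multiplication by a constant matrix in $\GL_d(\Qp)$ (which does not disturb the cocycle) so that the new $U$ lies in $\GL_d(\hat K_\infty)$; \emph{(ii)} deduce from $\Cp$-admissibility that the induced representation $\bar\rho : \Gamma_K \to \GL_d(\Qp)$ has finite image on the $\Zp$-factor of $\Gamma_K$.

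For (i), I would run an iterative scheme anchored on Tate's normalized traces $R_r : \hat K_\infty \to K_r$ (Remark~\ref{rem:normalizedtraces}) and the controlled inverse of $\gamma_r - \id$ on $\ker R_r$ (Remark~\ref{rem:H90Rr}): project the entries of $U$ onto $K_r$ via $R_r$, correct the resulting cocycle error by inverting $\gamma_r - \id$, and pass to the $p$-adic limit. The ``orthogonal'' ramification estimate Proposition~\ref{prop:trFinfperp} handles the tail outside $K_\infty$, while Proposition~\ref{prop:trFinf} governs convergence along the tower. Once $U \in \GL_d(\hat K_\infty)$, the identity $\rho(g) = U g(U)^{-1}$ forces $\rho|_{H_K} = 1$, so $\rho$ factors through $\Gamma_K$; since $K_\infty/K$ is totally ramified, $\Gamma_K$ is a quotient of $I_K$, hence $I_K$ acts through a finite quotient if and only if $\bar\rho$ does. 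For (ii), decompose $\Gamma_K \simeq H \times \Zp$ with $H$ finite (cf.\ \S\ref{sssec:cyclo}), and let $\gamma$ generate the $\Zp$-factor topologically; if $\rho(\gamma)$ had infinite order, some eigenvalue of $\rho(\gamma)$ over $\Qpbar$ would not be a root of unity, producing a $\Cp$-admissible character of $G_K$ with nontrivial cyclotomic component (via the classification in the spirit of Proposition~\ref{prop:charGqp}). This would contradict the fact, mentioned in the remark following the example after Theorem~\ref{theo:H90}, that $\Cp(\chi_\cycl)$ is not isomorphic to $\Cp$ in $\Rep_{\Cp}(G_K)$.

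The main obstacle is the iterative descent in step (i), that is, converting $U \in \GL_d(\Cp)$ into $\GL_d(\hat K_\infty)$ while preserving the cocycle class. Its $p$-adic convergence rests delicately on the uniformity of the valuation bounds provided by the ramification estimates of \S\ref{sssec:Zpext}, which is precisely why those estimates were developed first.
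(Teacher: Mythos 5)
Your direction $(\Leftarrow)$ is essentially the paper's argument: reduce to an unramified representation, trivialize it over $\hat K^\ur$ via Proposition~\ref{prop:H90unram}, and handle a finite Galois layer by Theorem~\ref{theo:H90}; you merely descend from a finite extension $L$ of $K$ rather than from a finite extension of $K^\ur$, which is a harmless reorganization.

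Your direction $(\Rightarrow)$, however, has a genuine gap in step~(i). You want to replace $U$ by a matrix in $\GL_d(\hat K_\infty)$ still satisfying $\rho(g)=U\,g(U)^{-1}$. But any two such matrices differ by right multiplication by a constant: if $U'$ also works, then $g(U^{-1}U')=U^{-1}U'$ for all $g$, so $U^{-1}U'\in\GL_d(\Cp^{G_K})=\GL_d(K)$ by Ax--Sen--Tate. Since $K\subset\hat K_\infty$, right multiplication by a matrix in $\GL_d(\Qp)$ (or even in $\GL_d(K)$) cannot change whether $U$ has entries in $\hat K_\infty$. Worse, if some trivializing matrix $U$ did lie in $\GL_d(\hat K_\infty)$, then for $h\in H_K=\Gal(\bar K/K_\infty)$ we would get $h(U)=U$, hence $\rho(h)=I_d$, i.e.\ $\rho|_{H_K}=1$. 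This is false in general: take $V=\Qp(\mu_\lambda)$ with $\lambda\neq 1$, which is unramified and hence $\Cp$-admissible. Since $K_\infty/K$ is totally ramified, $H_K$ contains Frobenius lifts (e.g.\ a Frobenius of $\Gal(\bar K/K_\infty\cdot K^\ur)\subset H_K$), on which $\mu_\lambda$ is nontrivial. So the conclusion of your step~(i) fails already for the simplest $\Cp$-admissible representations. The normalized-trace machinery of \S\ref{sssec:Zpext} descends \emph{semi-linear} data (as in Proposition~\ref{prop:descendKinfty}); it does not make the $\Qp$-valued cocycle $\rho$ vanish on $H_K$.

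A secondary issue: in step~(ii) you infer that $\rho(\gamma)$ of infinite order must have an eigenvalue that is not a root of unity. This fails for unipotent matrices (cf.\ Example~\ref{ex:HTdim2}, where the obstruction is precisely nilpotent), so the argument would not close even granting step~(i).

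The paper's proof avoids both problems by never attempting to trivialize $U$ over $\hat K_\infty$. It approximates the trivializing matrix $P$ only $p$-adically by a matrix $P_0$ over a finite extension $F$ of $K$ (Theorem~\ref{theo:axsentate2}), sets $M=P^{-1}P_0\equiv I_d\pmod p$ and $N=\log M$, and observes that each entry $\xi$ of $N$ yields a continuous additive character $\alpha(g)=g\xi-\xi\in\Zp$ on $G_F$. The crux is then to show that $\alpha$ is trivial on inertia, which is done by the trace estimate of Proposition~\ref{prop:trFinf} applied along the putative $\Zp$-extension cut out by $\ker\alpha$, showing that $\alpha$ would have to vanish if that extension were ramified. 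This multiplicative-to-additive linearization, plus the quantitative trace estimate, is the step your proposal is missing.
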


\noindent
As an application, in \S\ref{sssec:HT}, we will explain how 
Theorem~\ref{theo:Cpadm} can be used to understand better the internal 
structure of Hodge--Tate representations.

\subsubsection{Preliminaries}

Before giving the proof of Theorem~\ref{theo:Cpadm}, we need some 
preparation. The first input that we shall use is Ax--Sen--Tate theorem, 
whose purpose is to compute the fixed subspace of $\Cp$ under the action 
of $G_K$.
As in the previous subsection, we shall work over a base $F$ which
is itself a finite extension of $K$. For convenience, we set $G_F = 
\Gal(\bar K/F)$.

\begin{theo}[Ax--Sen--Tate]
\label{theo:axsentate}
We have $\Cp^{G_F} = F$.
\end{theo}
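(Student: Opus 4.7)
The inclusion $F \subseteq \Cp^{G_F}$ is clear. For the reverse, my plan is to combine density of $\bar K$ in $\Cp$ with a two-step approximation procedure exploiting the ramification and trace machinery of \S\ref{sssec:Zpext}: starting from an almost-$G_F$-fixed algebraic element, I will first approximate it by an element of some layer $F_r$ of a $\Zp$-extension, and then descend from $F_r$ to $F$ via Tate's normalized traces.

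Take $x \in \Cp^{G_F}$, and for each large integer $n$ choose $y \in \bar K$ with $v_p(x - y) \geq n$; since $g(x) = x$, one has $v_p(g(y) - y) \geq n$ for every $g \in G_F$. Fix a ramified $\Zp$-extension $F_\infty/F$ and let $L/F$ be a finite Galois extension containing $y$. Given any $\varepsilon > 0$, Proposition~\ref{prop:trFinfperp} produces $r \in \N$ and $u \in \O_{L F_r}$ with $t := \Tr_{LF_r/F_r}(u) \neq 0$ satisfying $v_p(t) \leq \varepsilon$. Writing $g(y) = y + \delta_g$ with $v_p(\delta_g) \geq n$, a direct expansion gives
$$\Tr_{LF_r/F_r}(uy) \,=\, t\,y \,+\, \sum_{g \in \Gal(LF_r/F_r)} g(u)\,\delta_g,$$
so that $w := t^{-1}\,\Tr_{LF_r/F_r}(uy) \in F_r$ satisfies $v_p(y - w) \geq n - \varepsilon$. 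The element $w$ inherits the almost-invariance: $v_p(g(w) - w) \geq n - \varepsilon$ for every $g \in G_F$, in particular for any lift of $\gamma_0 \in \Gal(F_\infty/F)$.

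Now I apply Tate's normalized trace $R_0 \colon \hat F_\infty \to F$ (Remark~\ref{rem:normalizedtraces}) and set $z := R_0(w) \in F$. Since $z$ is fixed by $\gamma_0$ and $R_0(w - z) = 0$, Remark~\ref{rem:H90Rr} yields
$$v_p(w - z) \,\geq\, v_p\bigl((\gamma_0 - \id)(w - z)\bigr) - c_2 \,=\, v_p\bigl((\gamma_0 - \id)w\bigr) - c_2 \,\geq\, n - \varepsilon - c_2,$$
and combining with the previous estimates gives $v_p(x - z) \geq n - \varepsilon - c_2$. Letting $n \to \infty$ produces a sequence in $F$ converging to $x$, and completeness of $F$ forces $x \in F$. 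The main technical obstacle is the uniform control of the two loss constants: $\varepsilon$ can be chosen freely, and the constant $c_2$ of Proposition~\ref{prop:H90cont} depends only on $F$ and $F_\infty$; while the index $r$ may grow with $L$, this is harmless since $w \in F_r \subset \hat F_\infty$ in all cases. The trace estimates of \S\ref{sssec:Zpext} are tailored precisely to guarantee that the total loss remains bounded independently of the approximating data.
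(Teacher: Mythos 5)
Your proof is correct and follows essentially the same route as the paper's: you first descend from a finite Galois extension $L{\cdot}F_r$ down to $F_r$ using Proposition~\ref{prop:trFinfperp}, then from $F_r$ down to $F$ using the Hilbert~90 estimate of Proposition~\ref{prop:H90cont} (in its completed form, Remark~\ref{rem:H90Rr}), with the uniform loss $c_2$ depending only on $F$ and $F_\infty$. The only cosmetic difference is that the paper isolates the quantitative intermediate statement Theorem~\ref{theo:axsentate2} with $\varepsilon = 1$ and the finite normalized trace $p^{-r}\Tr_{F_r/F}$, whereas you keep $\varepsilon$ free and invoke the completed Tate trace $R_0$ directly; these are the same argument up to rearrangement.
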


We shall prove a ``finite'' version of Ax--Sen--Tate theorem 
which is more precise.

\begin{theo}
\label{theo:axsentate2}
There exists a constant $c_3$ (depending only on $F$) for which the 
following property holds:
for all real number $v$ and all $x \in \bar K$ such that $v_p(gx - x)
\geq v$ for all $g \in G_F$, there exists $z \in K$ such that $v_p(x-z)
\geq v - c_3$.
\end{theo}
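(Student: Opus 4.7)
The plan is to prove the theorem by induction on $n = [F(x):F]$, splitting according to whether $p$ divides $n$ or not. The base case $n = 1$ is immediate: take $z = x$ and the statement holds with $c_3 = 0$.

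For the inductive step, assume first that $\gcd(n,p) = 1$. Setting $z = \frac 1 n \Tr_{F(x)/F}(x) \in F$, we may write
\[
z - x = \frac 1 n \sum_\sigma (\sigma x - x),
\]
where the sum ranges over the $n$ embeddings $\sigma : F(x) \hookrightarrow \bar K$ fixing $F$. Since each term satisfies $v_p(\sigma x - x) \geq v$ by hypothesis and $v_p(1/n) = 0$, we conclude $v_p(x - z) \geq v$; no loss is incurred in this case.

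The harder case is $p \mid n$. Here I would pass to the Galois closure $L$ of $F(x)/F$ and locate inside $L$ a cyclic $p$-extension of a subfield, which by the results of \S\ref{sssec:Zpext} embeds into a $\Z_p$-extension of $F$. Picking $\gamma$ to be a topological generator of the corresponding $\Z_p$-quotient, the hypothesis gives $v_p(\gamma x - x) \geq v$. Applying Proposition~\ref{prop:H90cont} (or its extension in Remark~\ref{rem:H90Rr}) to solve the additive cocycle equation $\gamma y - y = \gamma x - x$ produces $y \in L$ with $v_p(y) \geq v - c_2$. Then $x - y$ is fixed by $\gamma$, hence lies in a proper subfield of $L$ still containing $F$, and satisfies $v_p(x - (x-y)) = v_p(y) \geq v - c_2$. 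Invoking the inductive hypothesis on $x - y$ (which continues to satisfy a Galois displacement bound, with $v$ replaced by $v - c_2$) produces the required $z \in F$.

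The main obstacle is keeping the constant $c_3$ uniform in $n$: a naive induction loses an additive $c_2$ at every wild step and so could accumulate a loss of order $c_2 \cdot v_p(n)$, which is not uniform. Overcoming this requires coordinating all the wild descents through a single fixed $\Z_p$-extension $F_\infty/F$, so that the constants $c_1, c_2$ governing Propositions~\ref{prop:trFinf}, \ref{prop:trFinfperp} and \ref{prop:H90cont} are absorbed once and for all rather than compounded. Concretely, one uses Proposition~\ref{prop:trFinfperp} to descend from the compositum $L\cdot F_r$ down to $F_r$ with a bounded loss, and then applies the quantitative Hilbert~90 of Proposition~\ref{prop:H90cont} a single time within the tower $F_\infty/F$ to descend all the way to $F$; since the iterated normalized traces of Remark~\ref{rem:normalizedtraces} are compatible across levels, the total loss is controlled by a constant $c_3$ depending only on $F$.
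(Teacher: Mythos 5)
Your diagnosis of the obstacle and your proposed fix are both correct, and the fix you sketch in the final paragraph is essentially the paper's proof. However, the intermediate inductive scheme contains a step that would not survive scrutiny, and the fix is left as an outline rather than carried out.

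The problematic step is the assertion that a cyclic $p$-extension inside the Galois closure of $F(x)/F$ ``embeds into a $\Zp$-extension of $F$.'' The paper does not do this and does not need it: instead, it fixes one $\Zp$-extension $F_\infty/F$ at the outset, and Proposition~\ref{prop:trFinfperp} (which encodes the fact that $F_\infty$ absorbs almost all of the ramification of $\bar K/F$) is what makes the descent from an \emph{arbitrary} Galois $L$ containing $x$ possible. Concretely, the paper chooses $r$ and $\lambda \in L\cdot F_r$ with $v_p(\Tr_{L\cdot F_r/F_r}(\lambda)) \leq 1$ and sets $y = \Tr_{L\cdot F_r/F_r}(\lambda x)/\Tr_{L\cdot F_r/F_r}(\lambda) \in F_r$; the hypothesis on the Galois displacement of $x$ then gives $v_p(x - y) \geq v - 1$, a loss of only $1$. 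It then sets $z = p^{-r}\Tr_{F_r/F}(y) \in F$ and observes that $y - z$ has trace zero over $F$ and solves $(\gamma_0 - \id)(y-z) = \gamma_0 y - y$; the uniqueness and valuation control of Proposition~\ref{prop:H90cont}, applied a single time, give $v_p(y-z) \geq v_p(\gamma_0 y - y) - c_2 \geq v - 1 - c_2$, whence $c_3 = c_2 + 1$. This is precisely the ``coordinate all descents through one fixed $\Zp$-extension'' idea you anticipate, but note that it is a non-inductive two-step construction (weighted trace from $L\cdot F_r$ down to $F_r$, then normalized trace from $F_r$ down to $F$), and it happens at a single finite level $F_r$; Remark~\ref{rem:normalizedtraces}, which you cite, is not actually invoked in the proof.
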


\begin{proof}
Throughout the proof, we fix a $\Zp$-extension $F_\infty$ of $F$.
We recall that such an extension always exists and can be built
from the cyclotomic extension of $F$ as discussed in 
\S\ref{sssec:cyclo}.

Let $L$ be a finite Galois extension of $\Qp$ in which $x$ lies.
Thanks to Proposition \ref{prop:trFinfperp}, one can choose an 
integer $r$ together with an element $\lambda \in L{\cdot}F_r$ with
the property that $v_p(\Tr_{L{\cdot}F_r/F_r}(\lambda)) \leq 1$.
We consider the elements:
$$y = \frac {\Tr_{L{\cdot}F_r/F_r}(\lambda x)}
{\Tr_{L{\cdot}F_r/F_r}(\lambda)} \in F_r
\quad \text{and} \quad
z = \frac 1{p^r} \cdot \Tr_{F_r/F}(y) \in F.$$
The fact that $v_p(g x - x) \geq v$ for all $g \in G_F$ implies that 
$v_p(y - x) \geq v - 1$.

Observe that $\Tr_{F_r/F}(y - z) = p^r z - p^r z = 0$ and $(\gamma_0 - 
\id)(y-z) = \gamma_0 y - y$. In other words, the element $y-z$ has trace 
$0$ and is an antecedent of $\gamma_0 y - y$ by the application 
$(\gamma_0 - \id)$. By Proposition~\ref{prop:H90cont}, it follows that 
$v_p(y-z) \geq v_p(\gamma_0 y - y) - c_2$. Now notice that the 
combinaison of $v_p(\gamma_0 x - x) \geq v$ and $v_p(y - x) \geq v - 1$ 
ensures that $v_p(\gamma_0 y - y) \geq v - 1$. Therefore, we obtain 
$v_p(y-z) \geq v-(c_2 + 1)$. Finally $v_p(x-z) \geq 
\min(v_p(x-y),v_p(y-z)) \geq v-(c_2 + 1)$ and we can take $c_3 = c_2+1$.
\end{proof}

\begin{proof}[Proof of Theorem~\ref{theo:axsentate}]
Let $x \in \Cp^{G_F}$. We consider a positive integer $n$.
Since $\Cp$ is the completion of $\Qpbar$, one can find an
element $x_n \in \Qpbar$ such that $v_p(x - x_n) \geq n$.
We then have $v_p(g x_n - x_n) \geq n $ for all $g \in G_F$.
By Theorem~\ref{theo:axsentate2}, one can find $z_n \in K$
such that $v_p(z_n - x_n) \geq n - c_3$. This implies that
$v_p(z_n - x) \geq n - c_3$ as well. 
The sequence $(z_n)_{n \geq 1}$ then converges to $x$.
Since $z_n \in K$ for all $K$, we obtain $x \in K$.
\end{proof}

\begin{rem}
\label{rem:classfield}
As presented above, it seems that the proof of 
Theorem~\ref{theo:axsentate} uses class field theory (\emph{via}
Proposition~\ref{prop:ramZp}). In fact, it is not the case because we
have the choice on the $\Zp$-extension $F_\infty$. If we decide to
take the $\Zp$-part of the cyclotomic extension, the computation 
of the ramification filtration of $\Gal(F_\infty/F)$ can be carried 
out explicitely, so that Proposition~\ref{prop:ramZp} can be proved in 
this case without making any reference to class field theory.
\end{rem}

The proof we have exposed above is essentialy due to 
Tate~\cite{tate}. A few years later, Ax~\cite{ax} reproves the 
theorem using a more direct and elementary argument. 
We presented Tate's proof because we believe that it serves as a very 
good introduction to the developments we will discuss afterwards, which 
are all modeled on the same strategy: in order to study the action of 
$G_F$ (on some space), we will always first descend to $F_\infty$ using 
Proposition~\ref{prop:trFinfperp} and then to $F$---or possibly only 
$F_r$ for some finite $r$---using Proposition~\ref{prop:trFinf} or 
Proposition~\ref{prop:H90cont}.

Ax's proof provides in addition an explicit value for the constant 
$c_3$, namely $\frac p{(p-1)^2}$. Ax asks for the optimality of this
constant. In \cite{leborgne}, Le Borgne answers this question and
shows that the optimal constant is \emph{not} $\frac p{(p-1)^2}$, but
$\frac 1{p-1}$. Le Borgne's proof follows Tate's strategy but uses a
non Galois extension in place of the cyclotomic extension.

\paragraph{An extension of Hilbert's theorem 90.}

Another input we shall need is a variant of Hilbert's theorem 90
(\emph{cf} Theorem~\ref{theo:H90}) valid for \emph{infinite} 
unramified extensions.
We recall that $K^\ur$ denotes the maximal unramified extension of
$K$ (inside $\bar K$). We define $\hat K^\ur$ as the completion of 
$K^\ur$; it is a field which naturally embeds into $\Cp$ and which
is equipped with a canonical action of $\Gal(K^\ur/K)$.

\begin{prop}
\label{prop:H90unram}
Any finite dimensional $\hat K^\ur$-semi-linear representation of 
$\Gal(K^\ur/K)$ is trivial.
\end{prop}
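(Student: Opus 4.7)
The plan is to reduce modulo a uniformizer of $\hat K^\ur$ and apply classical Hilbert~90 (Theorem~\ref{theo:H90}) over a finite Galois subextension of $\bar k/k$, then lift the resulting fixed basis $\pi$-adically using that $\bar k$ is algebraically closed. Let $W$ be a finite-dimensional $\hat K^\ur$-semi-linear representation of $G = \Gal(K^\ur/K)$. Since $G \simeq \Gal(\bar k/k)$ is topologically generated by the Frobenius $\varphi$ and its action on $W$ is continuous, a vector of $W$ is $G$-fixed if and only if it is $\varphi$-fixed; it therefore suffices to produce a $\hat K^\ur$-basis of $W$ fixed by $\varphi$.

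First, I would use compactness of $G$ and continuity of the action to choose a $G$-stable $\O$-lattice $M \subset W$, where $\O = \O_{\hat K^\ur}$ is the ring of integers of $\hat K^\ur$ (a complete DVR with residue field $\bar k$, in which any uniformizer $\pi$ of $K$ serves as a uniformizer). The reduction $\bar M = M/\pi M$ is a finite-dimensional $\bar k$-vector space carrying a continuous $\varphi$-semi-linear action of $\Gal(\bar k/k)$. Picking any $\bar k$-basis $f_1, \ldots, f_d$ of $\bar M$, the matrix of $\varphi$ in that basis has entries in some finite subextension $\F_{q^m} \subset \bar k$; then for any $n \geq 1$ the $\F_{q^{mn}}$-span of the $f_i$'s is a $\varphi$-stable $d$-dimensional $\F_{q^{mn}}$-semi-linear representation of $\Gal(\F_{q^{mn}}/k)$. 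Applying Theorem~\ref{theo:H90} to this finite Galois situation provides an $\F_{q^{mn}}$-basis of fixed vectors, which is simultaneously a $\varphi$-fixed $\bar k$-basis $\bar e_1, \ldots, \bar e_d$ of $\bar M$.

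The core step is then the $\pi$-adic lifting. I would construct inductively vectors $e_i^{(n)} \in M$ whose reductions modulo $\pi^n$ form a $\varphi$-fixed basis of $M/\pi^n M$, with $e_i^{(n+1)} \equiv e_i^{(n)} \pmod{\pi^n}$. Given the $e_i^{(n)}$, any lift $\tilde e_i$ satisfies $\varphi(\tilde e_i) - \tilde e_i = \pi^n h_i$ for some $h_i \in M$, and seeking a correction $e_i^{(n+1)} = \tilde e_i + \pi^n \delta_i$ with $\delta_i \in M$, the condition of $\varphi$-fixedness modulo $\pi^{n+1}$ becomes $\varphi(\delta_i) - \delta_i \equiv -h_i \pmod{\pi}$ (using $\varphi(\pi) = \pi$). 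Expanding both sides in the basis $(\bar e_j)_j$, this reduces to a finite system of additive Artin--Schreier equations $\varphi(x) - x = y$ in $\bar k$, each solvable since $\bar k$ is algebraically closed. Completeness of $\O$ then yields $\pi$-adic limits $e_i \in M$ that are $\varphi$-fixed; since $M$ is free over the complete DVR $\O$ and the $e_i$ reduce to a basis of $\bar M$, Nakayama's lemma guarantees that they form an $\O$-basis of $M$, hence a $\hat K^\ur$-basis of $W$. The main obstacle is precisely this lifting step, whose viability hinges on the algebraic closedness of $\bar k$---this is the essential reason to pass from $K^\ur$ to its completion $\hat K^\ur$, since the residue field of $\hat K^\ur$ is $\bar k$ rather than merely a separable closure; the remaining ingredients ($G$-stable lattice, Hilbert~90 over a finite residual extension, Nakayama, and completeness) are standard.
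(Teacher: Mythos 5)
Your proof is correct and follows essentially the same strategy as the paper's: reduce modulo the maximal ideal, invoke Hilbert's Theorem 90 at a finite residual level to trivialize the action on the residue module, and then lift $\pi$-adically by solving Artin--Schreier equations (i.e.\ using surjectivity of $\Frob_q - \id$ on $\bar k$), concluding by completeness and Nakayama. The only cosmetic difference is that you begin by explicitly producing a $G$-stable $\O$-lattice via compactness, whereas the paper takes the $\O$-span of an arbitrary basis and handles stability more implicitly.
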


\begin{rem}
Proposition \ref{prop:H90unram} implies in particular that 
unramified representation of $G_K$ are $\hat K^\ur$-admissible and
then \emph{a fortiori} $\Cp$-admissible. It then appears as a
first step towards the proof of Theorem~\ref{theo:Cpadm}.
\end{rem}

\begin{proof}[Proof of Proposition~\ref{prop:H90unram}]
In order to simplify notations, we denote by $\O$ the ring of integers 
of $\hat K^\ur$, and by $\m$ its maximal ideal. We recall that the 
quotient $\O/\m$ is isomorphic to an algebraic closure $\bar k$ of $k$.
We recall also that $\Gal(K^\ur/K)$ is a procyclic group generated by
the Frobenius $\Frob_q : x \mapsto x^q$ (where $q$ is the cardinality
of $k$).

Let $W$ be a finite dimensional $\hat K^\ur$-semi-linear representation.
We fix $(v_{1,0}, \ldots, v_{d,0})$ a basis of $W$ over $\hat K^\ur$.
Let $\O_W$ be a $\O$-span of $v_{1,0}, \ldots, v_{d,0}$.
We are going to construct a sequence of tuples $(v_{1,n}, \ldots, 
v_{d,n})$ such that $v_{i,n+1} \equiv v_{i,n} \pmod {\m^n}$ and
$\Frob_q (v_{i,n}) \equiv v_{i,n} \pmod{\m^n}$
for all $i \in \{1,\ldots, d\}$ and all $n \in \N$. 

We proceed by induction on $n$. The case $n = 1$ reduces to the fact 
that $\O_W/\m\O_W$ is trivial as a $\bar k$-semi-linear representation 
of $\Gal(K^\ur/K) \simeq \Gal(\bar k/k)$. In order to prove this, we
remark that, using continuity, $\O_W/\m\O_W$ descends at 
finite level: there exist a finite extension $\ell$ of $k$ and a 
$\ell$-semi-linear representation $W_\ell$ of $\Gal(\ell/k)$ such that 
$\bar k \otimes_\ell W_\ell = \O_W/\m\O_W$. The property we want to 
establish then follows from Hilbert's theorem 90 (\emph{cf} 
Theorem~\ref{theo:H90}).

We now assume that $(v_{1,n}, \ldots, v_{d,n})$ has been constructed.
We look for vectors $w_1, \ldots, w_n \in \O_W$ such that $\Frob_q 
(v_{i,n} + \pi^n w_i) \equiv v_{i,n} + \pi^n w_i \pmod{\m^{n+1}}$ 
for all $i$. Letting $\bar w_i$ be the image of $w_i$ in $\O_W/\m\O_W$, 
the system we have to solve can be rewritten $\Frob_q \bar w_i - \bar 
w_i = \bar c_i$ ($1 \leq i \leq d$) where $\bar c_i$ is defined as the 
image of $\frac{\Frob_q v_{i,n} - v_{i_n}}{\pi^n}$ in $\O_W/\m\O_W$. 
It is then enough to prove that $(\Frob_q - \id)$ is surjective on 
$\O_W/\m\O_W$. This follows directly from the triviality of 
$\O_W/\m\O_W$ and the fact that $(\Frob_q - \id)$ is surjective on 
$\bar k$.

We conclude the proof by remarking that, for any fixed $i$, the 
sequence $v_{i,n}$ is Cauchy and hence converges to a vector
$v_i \in \O_W$ on which $\Gal(K^\ur/K)$ acts trivially. Moreover
the family of $v_i$'s is an $\O$-basis of $\O_W$ (because its
reduction modulo $\m$ is a basis of $\O_W/\m\O_W$) and then it is
also a $\hat K^\ur$-basis of $W$.
\end{proof}

\subsubsection{Proof of Theorem~\ref{theo:Cpadm}}

We are now ready to prove Theorem~\ref{theo:Cpadm}.

Write $d = \dim_{\Qp} V$.
We first assume that the inertia subgroup acts on $V$ through
a finite quotient. In other words, there exists a finite extension 
$L$ of $K^\ur$ for which $\Gal(\bar K/L)$ acts trivially on $V$.
By Hilbert's theorem 90 (\emph{cf} Theorem~\ref{theo:H90}), the
$L$-semi-linear representation $L \otimes_{\Qp} V$ admits an 
$L$-basis $(v_1, \ldots, v_d)$ on which the action of $\Gal(L/K^\ur)$
is trivial. Consequently $\Gal(K^\ur/K)$ operates on the 
$\hat K^\ur$-span of $v_1, \ldots, v_d$. 
By Proposition~\ref{prop:H90unram}, this semi-linear representation is 
trivial. Therefore $V$ is $(L{\cdot}\hat K^\ur)$-admissible.
It is then also $\Cp$-admissible.

We now focus on the converse.
We assume that $V$ is $\Cp$-admissible. Then by definition, there exists 
a $\Cp$-basis $(w_1, \ldots, w_d)$ of $\Cp \otimes_{\Qp} V$ with the 
property that $g w_i = w_i$ for all $g \in G_K$ and all $i \in 
\{1,\ldots, d\}$. Let $(v_1, \ldots, v_d)$ be a basis of $V$ over $\Qp$ 
and let $P \in \GL_d(\Cp)$ be the matrix representating the change of 
basis between the $v_i$'s and the $w_i$'s. Up to rescaling the $v_i$'s, 
we may assume without loss of generality that $P \in M_d(\O_{\Cp})$.

From the fact that the $\Qp$-span of the $v_i$'s is stable under the 
action of $G_K$, we derive that the matrix $U_g = P^{-1} \cdot gP$ 
has coefficients in $\Qp$ for all $g \in G_K$. Let $c_3$ be the constant
of Theorem~\ref{theo:axsentate2} and let $v$ be a positive 
integer for which $p^v {\cdot} P^{-1}$ has coefficients in $\O_{\Cp}$. By 
continuity of the action of $G_K$, denoting by $I_d$ the identity
matrix of size $d$, there exists an open subgroup $H$ of 
$G_K$ such that $v_p(U_g - I_d) \geq v + c_3 + 1$ for all $g \in H$. 
Multiplying by $P$ on the left, we get $v_p(P - g P) \geq v + c_3 + 
1$ for all $g \in H$. Applying now Theorem \ref{theo:axsentate2}
(to each entry of $P$), 
we find a matrix $P_0 \in \GL_n(L)$ such that $P \equiv P_0 
\pmod{p^{v+1}}$. multiplying by $P^{-1}$ on the left, we get
$P^{-1} P_0 \equiv I_d \pmod p$. Define $M = P^{-1} P_0$. 
Writing $P = P_0 M^{-1}$, we find $M \cdot g M^{-1} = U_g$
for all $g \in H$. Since $M \equiv I_d \pmod p$, the matrix 
$N = \log M$ is well defined and satisfies the relation
$N - g N = \log U_g$ for all $g \in H$.

Let $F$ be the extension of $K$ cut out by $H$. We are going to
prove that $H \cap I_K$ operates trivially on $N$.
Let $\xi$ be an entry of $N$. The relation 
$N - g N = \log U_g$ ensures that 
$\xi - g\xi \in \Zp$ whenever $g$ is in $H$. Define the function
$\alpha : H \to \Zp$ by $\alpha(g) = g\xi - \xi$. The computation
$$\alpha(g_1 g_2) = g_1 g_2 \xi - \xi = g_1 (g_2 \xi - \xi) + 
(g_1 \xi - \xi) = (g_2 \xi - \xi) + (g_1 \xi - \xi) = 
\alpha(g_2) + \alpha(g_1)$$
shows that $\alpha$ is an additive character. Its kernel defines a Galois 
extension $F_\infty$ of $F$ whose Galois group embeds into $\Zp$. 
Moreover, by construction, $H \cap \ker \alpha$ acts trivially on $\xi$. 
We then need to prove that $F_\infty$ is unramified over~$F$.

We assume by contraction that the extension $F_\infty/F$ is ramified.
In particular, it is not trivial, and hence it is a $\Zp$-extension.
Proposition~\ref{prop:trFinf} then applies and ensures that there exists 
a constant $c_1$ such that:
\begin{equation}
\label{eq:trCpadm}
v_p(\Tr_{F_s/F_r}(z)) \geq v_p(z) + s - r - c_1
\end{equation}
whenever $s \geq r$ and $z \in F_s$.
Let $v$ be a positive real number and let $x$ be an element of
$\bar K$ such that $v_p(x-\xi) \geq v$. From the equality $g\xi - 
\xi = \alpha(g)$, we derive $v_p\big(gx - x - \alpha(g)\big) \geq v$
for all $g \in H$. In particular, if $g \in H \cap \ker\alpha$, we
obtain $v_p(gx - x) \geq v$. By continuity, this estimation is
also correct for $g \in \Gal(\bar K/F_s)$ for some integer $s$.
Repeating the first part of the proof of Theorem~\ref{theo:axsentate2} 
(with the $\Zp$-extension $F_\infty/F$) and possibly enlarging 
$s$, we find that there exists $y \in F_s$ with the property that 
$v_p(x-y) \geq v-1$. Thus $v_p(\xi - y) \geq v - 1$ as well.

Fix now $g \in H$ and set $z = gy - y - \alpha(g)$. By our assumption
on $\xi$, we know that $v_p(z) \geq v - 1$. Using \eqref{eq:trCpadm} 
with $r = 0$, we obtain $v_p(\Tr_{F_s/F_0}(z)) \geq v + s - c_1$. 
On the other hand, a direct computations yields 
$\Tr_{F_s/F_0}(z) = -p^s \alpha(g)$. Combining these two
inputs, we deduce
$v_p(\alpha(g)) \geq v - c_1$. Since this estimation holds for all
$g \in H$ and all $v \in \R^+$, we end up with $\alpha = 0$. 
This means that $F_\infty = F$ and then contradicts
our assumption that $F_\infty/F$ was ramified.

\begin{rem}
\label{rem:Cpadm}
It follows from the proof above (\emph{cf} in particular the first 
paragraph of the proof) that a representation is $\Cp$-admissible if and 
only if it is $(L {\cdot} \hat K^\ur)$-admissible for a
\emph{finite} extension $L$ of $K^\ur$. We will reuse this property in 
\S\ref{ssec:dR} when we will compare $\Cp$-representations with de Rham 
representations.
\end{rem}

\subsubsection{Application to Hodge--Tate representations}
\label{sssec:HT}

Beyond its obvious own interest, Theorem~\ref{theo:Cpadm} can be thought 
of as a first result towards the study of Hodge--Tate representations.
Recall that a finite dimensional $\Qp$-linear representation $V$ of
$G_K$ is Hodge--Tate if $\Cp \otimes_{\Qp} V$ decomposes as:
\begin{equation}
\label{eq:HTbis}
\Cp \otimes_{\Qp} V = 
\Cp(\chi_\cycl^{n_1}) \oplus
\Cp(\chi_\cycl^{n_2}) \oplus \cdots \oplus
\Cp(\chi_\cycl^{n_d})
\end{equation}
for some integers $n_i$'s. Theorem~\ref{theo:Cpadm} implies the
following unicity result.

\begin{prop}
\label{prop:HTweights}
Let $V$ be a finite dimensional Hodge--Tate representation of
$G_K$. Then the integers $n_i$ of Eq.~\eqref{eq:HTbis} are
uniquely determined up to permutation.
\end{prop}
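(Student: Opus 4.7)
My plan is to show that the multiplicity with which an integer $n$ appears among the $n_i$'s equals the $K$-dimension of the Galois-invariants of an explicit twist of $\Cp \otimes_{\Qp} V$; since this dimension depends only on $V$ (and not on the chosen decomposition), uniqueness follows.

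Concretely, fix $n \in \Z$ and tensor \eqref{eq:HTbis} over $\Cp$ with $\Cp(\chi_\cycl^{-n})$. Using the obvious isomorphism $\Cp(\chi_\cycl^a) \otimes_{\Cp} \Cp(\chi_\cycl^b) \simeq \Cp(\chi_\cycl^{a+b})$, I get
$$\Cp(\chi_\cycl^{-n}) \otimes_{\Qp} V \,\simeq\, \bigoplus_{i=1}^d \Cp(\chi_\cycl^{n_i - n}).$$
Taking $G_K$-invariants, the right-hand side becomes $\bigoplus_i \Cp(\chi_\cycl^{n_i-n})^{G_K}$, so the key input is the computation of $\Cp(\chi_\cycl^a)^{G_K}$ for $a \in \Z$.

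The main step is the following claim: $\Cp(\chi_\cycl^a)^{G_K} = K$ if $a = 0$ and $\Cp(\chi_\cycl^a)^{G_K} = 0$ otherwise. The case $a=0$ is Ax--Sen--Tate (Theorem~\ref{theo:axsentate}). For $a \neq 0$, I will argue by contradiction: a nonzero fixed vector $b \cdot e \in \Cp(\chi_\cycl^a)^{G_K}$ amounts to a nonzero $b \in \Cp$ satisfying $g(b) = \chi_\cycl(g)^{-a} b$ for all $g \in G_K$. The map $c \otimes v \mapsto cb$ then yields a $G_K$-equivariant isomorphism $\Cp \otimes_{\Qp} \Qp(\chi_\cycl^{-a}) \simeq \Cp$, so the character $\chi_\cycl^{-a}$ defines a $\Cp$-admissible representation. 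By Theorem~\ref{theo:Cpadm}, this forces $\chi_\cycl^{-a}$ to have finite image on $I_K$. But $\chi_\cycl(I_K)$ contains the image of $\Gal(K_{\pcycl}/K) \cap I_K$, which (by the discussion of~\S\ref{sssec:cyclo}) contains an open subgroup of $\Zp \subset \Zp^\times$, hence is infinite; so $\chi_\cycl^{-a}|_{I_K}$ has finite image only when $a=0$, a contradiction. This will be the main (and essentially only) obstacle.

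Granting the claim, $\dim_K (\Cp(\chi_\cycl^{-n}) \otimes_{\Qp} V)^{G_K}$ equals the number of indices $i$ with $n_i = n$. Since the left-hand side depends only on $V$ (and $n$), the multi-set $\{n_1, \ldots, n_d\}$ is an invariant of $V$, which is precisely the statement of the proposition.
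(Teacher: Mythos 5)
Your proposal is correct and follows essentially the same path as the paper's proof: both reduce the statement to computing $\Cp(\chi_\cycl^a)^{G_K}$ for $a\in\Z$, using Ax--Sen--Tate for $a=0$ and Theorem~\ref{theo:Cpadm} together with the infinite ramification of the $p$-cyclotomic extension for $a\neq 0$. The only cosmetic difference is that the paper phrases the computation in terms of $\Hom_{\Rep_{\Cp}(G_K)}\big(\Cp(\chi_\cycl^n),\Cp(\chi_\cycl^m)\big)$ whereas you twist by $\Cp(\chi_\cycl^{-n})$ and take invariants, which is the same calculation in a slightly different guise.
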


\begin{proof}
We have to show that, if
$\Cp(\chi_\cycl^{n_1}) \oplus \cdots \oplus
\Cp(\chi_\cycl^{n_d}) \, \simeq \,
\Cp(\chi_\cycl^{m_1}) \oplus \cdots \oplus
\Cp(\chi_\cycl^{m_{d'}})$,
then $d = d'$ and the $n_i$'s agree with the $m_i$'s up to permutation. 
For this, it is enough to check that, given two integers $n$ and $m$, 
\begin{equation}
\label{eq:homHT}
\Hom_{\Rep_{\Cp}(G_K)}\big(\Cp(\chi_\cycl^n), \Cp(\chi_\cycl^m)\big)
\end{equation}
is a one dimensional $K$-vector space if $n = m$, and is zero 
otherwise.

Let $W = \Hom_{\Cp}\big(\Cp(\chi_\cycl^n), \Cp(\chi_\cycl^m)\big)
\simeq \Cp(\chi_\cycl^{n-m})$ (equipped with its Galois action).
The space \eqref{eq:homHT} is 
equal to $W^{G_K}$. When $n = m$, it is then $\Cp^{G_K}$ which
is indeed equal to $K$ by Ax--Sen--Tate theorem (\emph{cf}
Theorem~\ref{theo:axsentate}). If $n \neq m$, we need to prove that $W$ 
is not trivial, which means that the representation 
$V = \Qp(\chi_\cycl^{n-m})$ is not $\Cp$-admissible.
By Theorem~\ref{theo:Cpadm}, we are reduced to justify that the inertia 
subgroup of $\Qp$ does not act on $V$ through a finite quotient. This is 
clear because the extension cut out by the kernel of 
$\chi_\cycl^{n-m}$ is the $p$-adic cyclotomic extension which is 
infinitely ramified.
\end{proof}

\begin{rem}
A byproduct of the proof above is that the $\Cp$-semi-linear
representation $\Cp(\chi_\cycl^n)$ has no nonzero invariant vector
when $n \neq 0$. We will reuse repeatedly this property in the sequel.
\end{rem}

\begin{ex}
\label{ex:HTchar}
Recall that, assuming $p > 2$, we have classified the characters of 
$G_{\Qp}$ in Proposition~\ref{prop:charGqp}: they are all of the form 
$\mu_\lambda \cdot \chi_\cycl^a \cdot \omega_\cycl^b$ with $a \in \Zp$ 
and $b \in \Z/(p{-}1)\Z$. Here $\mu_\lambda$ denotes the unramified 
character taking the Frobenius $\Frob_q$ to $\lambda$ and $\omega_\cycl 
= [\chi_\cycl \mod p]$.
Since the representations $\Cp(\mu_\lambda)$ and $\Cp(\omega_\cycl^b)$ 
are $\Cp$-admissible, we obtain:
$$\Cp(\mu_\lambda \cdot \chi_\cycl^a \cdot \omega_\cycl^b)
\simeq \Cp(\chi_\cycl^a).$$
Hence the character $\mu_\lambda \cdot \chi_\cycl^a \cdot 
\omega_\cycl^b$ is Hodge--Tate if and only if $a \in \Z$. In this
case, its Hodge--Tate weight is $a$.
\end{ex}

\begin{ex}
\label{ex:HTdim2}
Let $\alpha : G_K \to \Zp$ be an additive character, \emph{e.g.}
$\alpha = \log \chi_\cycl$.
Consider the two dimensional representation $V$ corresponding to 
the group homomorphism:
$$G_K \to \GL_2(\Qp), \quad g \mapsto 
\left(\begin{matrix} 1 & \alpha(g) \\ 0 & 1 \end{matrix}\right).$$
In order terms, $V = \Qp^2$ and $G_K$ acts to $V$ by $g \cdot (u,v)
= (u,\, v + \alpha(g) u)$. We have an obvious exact sequence
$0 \to \Qp \to V \to \Qp \to 0$ where the action of $G_K$ on the
two copies of $\Qp$ is the trivial action. Tensoring this
sequence by $\Cp$, we get $0 \to \Cp \to V \to \Cp \to 0$.
The representation $V$ is Hodge--Tate if and only if 
the above sequence splits, if and only if $V$ is $\Cp$-admissible.
By Theorem~\ref{theo:Cpadm}, this happens if and only if 
$\alpha(I_K)$ is finite (where $I_K$ is the inertia subgroup
of $G_K$). Since $\alpha(I_K)$ is a subgroup of $\Zp$, the
previous condition is equivalent to the fact that $\alpha(I_K)$
is reduced to $0$. As a conclusion, the representation $V$ is
Hodge--Tate if and only if $\alpha$ is unramified. In this case,
the Hodge--Tate weights of $V$ are $0$ with multiplicity $2$.
\end{ex}

\paragraph{Hodge--Tate representations and admissibility.}

It is important to notice that the class of Hodge--Tate 
representations fits very well in Fontaine's framework presented
in \S\ref{ssec:Fontainestrategy}. Precisely, let us
consider the rings $B_\HT = \Cp[t,t^{-1}]$ and $B'_\HT = \Cp(\!(t)\!)$.
We equip them with the Galois action obtained by letting $G_K$ act 
naturally on $\Cp$ and act on $t$ by $g t = \chi_\cycl(g) \: t$ for 
all $g \in G_K$. In addition, we define a filtration of $B'_\HT$ by 
$\Fil^m B'_\HT = t^m \Cp[[t]]$ for $m$ varying in $\Z$. The graded
ring of $B'_\HT$ is, by definition:
$$\gr\,B'_\HT = \bigoplus_{m \in \Z} \Fil^m B'_\HT / \Fil^m B'_\HT.$$
We observe that it is canonically isomorphic to $B_\HT$. Besides, we
have a natural $G_K$-equivariant inclusion $B_\HT \to B'_\HT$.
Ax--Sen--Tate theorem, together with the fact that $\Cp(\chi_\cycl^n)$
has no nonzero invariant vectors as soon as $n \neq 0$, implies that 
$(B_\HT)^{G_K} = (B'_\HT)^{G_K} = K$.

\begin{prop}
The rings $B_\HT$ and $B'_\HT$ satisfy Fontaine's assumptions (H1), (H2) 
and (H3) (introduced in \S\ref{sssec:criterium}).
\end{prop}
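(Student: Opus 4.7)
The plan is to verify the three hypotheses (H1), (H2), (H3) for each of the two rings, the only nontrivial input being the vanishing $\Cp(\chi_\cycl^n)^{G_K} = 0$ for $n \neq 0$ (proved via Theorem~\ref{theo:Cpadm} and recalled just above the proposition).

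Hypothesis (H1) is structural. The ring $B'_\HT = \Cp(\!(t)\!)$ is the field of formal Laurent series over $\Cp$, hence a domain; while $B_\HT = \Cp[t,t^{-1}]$ is a localization of the polynomial ring $\Cp[t]$, hence a domain as well.

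For (H2), the case of $B'_\HT$ is immediate because $B'_\HT$ is a field (so $\Frac B'_\HT = B'_\HT$) and the identity $(B'_\HT)^{G_K} = K$ has just been noted. For $B_\HT$, one has $\Frac B_\HT = \Cp(t)$, and the expansion of rational functions as Laurent series at $t = 0$ provides a $G_K$-equivariant inclusion $\Cp(t) \hookrightarrow \Cp(\!(t)\!) = B'_\HT$ (the equivariance follows from the fact that both rings are built out of $\Cp$ and $t$ with the same Galois action). Taking invariants gives $K \subset \Cp(t)^{G_K} \subset (B'_\HT)^{G_K} = K$, whence $(\Frac B_\HT)^{G_K} = K = B_\HT^{G_K}$.

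For (H3), the case of $B'_\HT$ is again trivial since it is a field. For $B_\HT$, the invertible elements are exactly the Laurent monomials $c\,t^i$ with $c \in \Cp^\times$ and $i \in \Z$, so it suffices to show that any nonzero $b = \sum_i a_i t^i \in B_\HT$ whose $\Qp$-span is $G_K$-stable is actually a monomial. Writing $g(b) = \eta(g)\,b$ for some character $\eta : G_K \to \Qp^\times$ and comparing coefficients in the identity $g(b) = \sum_i g(a_i)\,\chi_\cycl(g)^i\,t^i$ yields $g(a_i) = \eta(g)\,\chi_\cycl(g)^{-i}\,a_i$ for each index $i$ with $a_i \neq 0$. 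If two distinct indices $i \neq j$ both satisfied $a_i, a_j \neq 0$, then the ratio $a_i/a_j \in \Cp^\times$ would verify $g(a_i/a_j) = \chi_\cycl(g)^{j-i}(a_i/a_j)$, hence would constitute a nonzero $G_K$-invariant vector of $\Cp(\chi_\cycl^{i-j})$; this contradicts the vanishing recalled above, since $i - j \neq 0$. Thus $b$ is a monomial and so lies in $B_\HT^\times$. The only conceptual step is this last one; the rest are routine.
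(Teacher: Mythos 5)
Your proof is correct and follows essentially the same route as the paper: (H1) is structural, (H2) is obtained by sandwiching $\Frac B_\HT$ inside $B'_\HT$ and taking $G_K$-invariants, and (H3) reduces, via the ratio $a_i/a_j$, to the vanishing $\Cp(\chi_\cycl^n)^{G_K} = 0$ for $n \neq 0$. The only cosmetic difference is that you work directly with Laurent expansions rather than first multiplying by a power of $t$ to land in $\Cp[t]$.
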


\begin{proof}
This is obvious for $B'_\HT$ since it is a field.
As for $B_\HT$, it is clearly a domain. Moreover since $B'_\HT$ is 
a field, we have $B_\HT \subset \Frac B_\HT \subset B'_\HT$. 
Taking the $G_K$-invariants, we obtain $(\Frac B_\HT)^{G_K} 
= K$; hence $B_\HT$ satisfies (H2). 
Finally, we prove that $B_\HT$ satisfies (H3).
Let $x \in B_\HT$, $x \neq 0$ and assume that the line $\Qp x$ is
stable by $G_K$. We have to prove that $x$ is invertible in $B_\HT$.
Up to multiplying $x$ by some power of $t$, we may assume that $x
\in \Cp[t]$. Write $x = a_0 + a_1 t + \cdots + a_n t^n$ where the
$a_i$'s are in $\Cp$. Our assumption implies that there exists
$\lambda \in \Qp$ such that
$g a_i \cdot \chi(g)^i = \lambda a_i$
for all $g \in G_K$ and all $i \in \{0, 1, \ldots, n\}$.
Let $j$ be an index for which $a_j \neq 0$ and write $\mu_i = 
\frac{a_i}{a_j}$ for all $i$. We then have $g\mu_i = \chi(g)^{j-i} 
\mu_i$ for all $g$ and $i$. If $i \neq j$, this implies that $\mu_i = 0$ 
since $\Cp(\chi_\cycl^{j-i})^{G_K} = 0$. Therefore $x$ has to be equal
to $a_j t^j$, and so is invertible in $B_\HT$.
\end{proof}

\begin{prop}
\label{prop:HTadm}
Let $V$ be a finite dimensional $\Qp$-linear representation.
Then $V$ is Hodge--Tate if and only if it is $B_\HT$-admissible,
if and only if it is $B'_\HT$-admissible.
\end{prop}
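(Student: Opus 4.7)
The plan is to establish the chain of implications $V$ Hodge--Tate $\Rightarrow$ $V$ is $B_\HT$-admissible $\Rightarrow$ $V$ is $B'_\HT$-admissible $\Rightarrow$ $V$ is $B_\HT$-admissible $\Rightarrow$ $V$ Hodge--Tate. The first implication is immediate: if $\Cp \otimes_{\Qp} V \simeq \bigoplus_i \Cp(\chi_\cycl^{n_i})$, then for $B \in \{B_\HT, B'_\HT\}$ we have $B \otimes_{\Qp} V \simeq \bigoplus_i B \otimes_{\Cp} \Cp(\chi_\cycl^{n_i})$, and each summand is trivial, a $G_K$-fixed $B$-generator being $t^{-n_i} \otimes e_{n_i}$. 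The implication $B_\HT$-adm $\Rightarrow$ $B'_\HT$-adm is the general monotonicity of admissibility noted in \S\ref{ssec:Fontainestrategy}, since the $G_K$-equivariant inclusion $B_\HT \hookrightarrow B'_\HT$ makes $B'_\HT$ a $B_\HT$-algebra.

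For $B_\HT$-adm $\Rightarrow$ Hodge--Tate, I would appeal to the preceding proposition (that $B_\HT$ satisfies (H1)--(H3)) together with the criterion of \S\ref{sssec:criterium} to conclude that $\alpha_W \colon B_\HT \otimes_K (B_\HT \otimes_{\Qp} V)^{G_K} \to B_\HT \otimes_{\Qp} V$ is an isomorphism. Both sides carry a $\Z$-grading inherited from $B_\HT$, and $\alpha_W$ respects it. Setting $D^{(n)} := \{v \in \Cp \otimes_{\Qp} V : gv = \chi_\cycl(g)^{-n} v \text{ for all } g \in G_K\}$, the invariants decompose as $(B_\HT \otimes_{\Qp} V)^{G_K} = \bigoplus_n t^n \otimes D^{(n)}$, and extracting the degree-$0$ component of $\alpha_W$ yields a $G_K$-equivariant isomorphism
$$\bigoplus_n \Cp(\chi_\cycl^{-n}) \otimes_K D^{(n)} \stackrel{\sim}{\longrightarrow} \Cp \otimes_{\Qp} V.$$
Choosing a $K$-basis of each $D^{(n)}$ then exhibits the required Hodge--Tate decomposition.

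The delicate step is $B'_\HT$-adm $\Rightarrow$ $B_\HT$-adm, and the idea is to exploit the fact that $B_\HT$ is the associated graded of the filtered ring $B'_\HT$. Set $W = B'_\HT \otimes_{\Qp} V$ and $D = W^{G_K}$. The filtration $\Fil^n W = \Fil^n B'_\HT \otimes_{\Qp} V$ is $G_K$-stable with $\gr^n W \simeq \Cp(\chi_\cycl^n) \otimes_{\Qp} V$, so taking $G_K$-invariants on the graded pieces gives injections $\gr^n D \hookrightarrow (\gr^n W)^{G_K} = D^{(n)}$. Since any Laurent series has only finitely many terms of negative degree, the induced filtration on the finite-dimensional $K$-vector space $D$ is exhaustive, and it is evidently separated, so
$$\dim_K D = \sum_n \dim_K \gr^n D \leq \sum_n \dim_K D^{(n)} = \dim_K (B_\HT \otimes_{\Qp} V)^{G_K} \leq \dim_{\Qp} V,$$
the last inequality coming from property (H3) applied to $B_\HT$. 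Combined with $\dim_K D = \dim_{\Qp} V$ (by $B'_\HT$-admissibility), every inequality is an equality, forcing $V$ to be $B_\HT$-admissible and closing the loop.

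The main obstacle---and the only genuinely nontrivial step---is this last implication, where one must transfer admissibility information from the filtered ring $B'_\HT$ to its associated graded $B_\HT$; everything else is a routine unwinding of Fontaine's formalism combined with the structure of $\Cp$-semi-linear representations coming out of the Ax--Sen--Tate theorem.
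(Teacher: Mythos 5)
Your argument is correct and more thorough than the paper's, which only spells out the direction Hodge--Tate $\Rightarrow$ $B_\HT$-admissible (exactly as you do: decompose $B_\HT = \bigoplus_m \Cp(\chi_\cycl^m)$ and sum the dimensions of the invariant pieces) and then declares the converse and the $B'_\HT$ case ``proved in a similar fashion and left to the reader.'' Your filtration argument for $B'_\HT$-adm $\Rightarrow$ $B_\HT$-adm --- setting $\Fil^n D = D \cap \Fil^n W$, injecting $\gr^n D$ into $D^{(n)} = \big(\Cp(\chi_\cycl^n)\otimes_{\Qp}V\big)^{G_K}$, and counting dimensions --- is a clean, conceptual way to transfer admissibility from the filtered ring to its associated graded. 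A more literal ``similar fashion'' computation would instead observe that any $G_K$-fixed element $\sum_n t^n v_n$ of $B'_\HT\otimes_{\Qp}V$ must have $v_n \in D^{(n)}$, and that only finitely many $D^{(n)}$ can be nonzero (their $\Cp$-spans being independent in the $d$-dimensional space $\Cp\otimes_{\Qp}V$), so that $(B'_\HT\otimes_{\Qp}V)^{G_K} = \bigoplus_n t^n D^{(n)}$ already has the same dimension as $(B_\HT\otimes_{\Qp}V)^{G_K}$; both routes are valid and of comparable length. One small citation slip: the bound $\dim_K(B_\HT\otimes_{\Qp}V)^{G_K} \leq \dim_{\Qp}V$ comes from the injectivity of $\alpha_W$, which in Fontaine's formalism rests on (H1) and (H2) together with the fact (a consequence of (H3)) that $(B_\HT)^{G_K}$ is a field; attributing it to (H3) alone is slightly imprecise, though it does not affect the validity of the argument.
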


\begin{proof}
Write $d = \dim_{\Qp} V$.
Observe that $B_\HT = \bigoplus_{m \in \Z} \Cp(\chi_\cycl^m)$ 
as a $\Cp$-semi-linear representation. Therefore:
$$\big(V \otimes_{\Qp} B_\HT\big)^{G_K} \simeq 
\bigoplus_{m \in \Z}
\big(V \otimes \Cp(\chi_\cycl^m)\big)^{G_K}.$$
Suppose that $V$ is Hodge--Tate. Let $m_1, \ldots, m_s$ be its
Hodge--Tate weights and $e_1, \ldots, e_s$ be the corresponding
multiplicities. 
The space $\big(V \otimes \Cp(\chi_\cycl^{-m_i})\big)^{G_K}$ has
then dimension $e_i$. Summing up all these contributions, we find
that $(V \otimes_{\Qp} B_\HT)^{G_K}$ has dimension $d$, which means
that $V$ is $B_\HT$-admissible.

The converse and the case of $B'_\HT$ are proved in a similar fashion 
and left to the reader.
\end{proof}

\subsection{Complement: Sen's theory}
\label{ssec:Sen}

The aim of this subsection is to expose Sen's theory~\cite{sen} whose 
objective is to provide a systematic study of finite dimensional 
$\Cp$-semi-linear representations of $G_K$. In what follows, we choose 
and fix once for all a $\Zp$-extension $K_\infty$ of $K$. We let $\alpha 
: \Gal(K_\infty/K) \to \Zp$ be the attached group isomorphism. As in 
\S\ref{sssec:Zpext}, we put $\gamma_r = \alpha^{-1}(p^r)$ and let $K_r$ 
be the subextension of $K_\infty$ corresponding to the closed subgroup 
$\alpha^{-1}(p^r\Zp)$.

We recall that one possible choice is $\alpha = \log \chi_\cycl$, in 
which case $K_\infty$ is the $\Zp$-part of the cyclotomic extension of 
$K$ (\emph{cf} \S\ref{sssec:cyclo}). Actually, strictly speaking, Sen's 
theory only concerns this particular choice of $\alpha$. However the 
extension of general $\alpha$'s is straightforward. In what follows,
we do \emph{not} restrict ourselves to $\alpha = \log \chi_\cycl$.

\begin{rem}
Recently, Berger and Colmez~\cite{berger-colmez} generalized Sen's 
theory, allowing $\alpha$ to take its values in any $p$-adic Lie group 
(possibly noncommutative). Their theory relies on the notion of
locally analytic vectors, which is not needed in classical Sen's
theory (finite vectors are enough as we shall explain below). 
We will not expose their generalization in the article and do 
restrict ourselves to homomorphisms $\alpha$ taking their values
in $\Zp$.
\end{rem}

We recall that, given a topological group $G$ and a topological ring $B$ 
on which $G$ acts, we have introduced the notation $\Rep_B(G)$ for the 
category of $B$-semi-linear representations of $G$. 
Let $\Rep^\fin_B(G)$ denote the full subcategory of $\Rep_B(G)$ 
consisting of representations which are finitely generated as a 
$B$-module. When $B$ is the field, $\Rep^\fin_B(G)$ is then the category 
of \emph{finite dimensional} $B$-semi-linear representations of $G$.

\paragraph{Descend.}

The first result towards Sen's theory is 
Proposition~\ref{prop:descendKinfty} just below, which could understood
as an analogue of Hilbert's theorem 90 for the Galois group $\Gal(\bar 
K/K_\infty)$.

\begin{prop}
\label{prop:descendKinfty}
Let $W \in \Rep^\fin_{\Cp}(G_K)$. 
Then there exist an integer $r$ and a $\Cp$-basis $v_1, 
\ldots, v_d$ of $W$ such that $g v_i = v_i$ for all $g \in
\Gal(\bar K/K_\infty)$.
\end{prop}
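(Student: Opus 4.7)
The plan is to prove the equivalent (and stronger) statement that the natural map
$$\alpha_W : \Cp \otimes_{\widehat{K_\infty}} W^{H} \longrightarrow W, \qquad H = \Gal(\bar K/K_\infty),$$
is an isomorphism; then any $\widehat{K_\infty}$-basis of $W^{H}$ yields the desired $\Cp$-basis of $W$ consisting of $H$-fixed vectors. The ambient coefficient field for $W^{H}$ is $\Cp^{H} = \widehat{K_\infty}$ by Ax--Sen--Tate (Theorem~\ref{theo:axsentate}) applied over $K_\infty$.

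First, I would fix a $\Cp$-basis $(e_1,\ldots,e_d)$ of $W$ and consider the associated continuous $1$-cocycle $g \mapsto U_g \in \GL_d(\Cp)$. By continuity, for any prescribed integer $N$, there exists a finite Galois extension $L/K$ such that $U_g \in I_d + p^N \cdot M_d(\O_{\Cp})$ for all $g \in \Gal(\bar K/L)$. Enlarging $L$, I may assume $L$ is linearly disjoint from $K_\infty$ over $K$, so that $\Gal(L \cdot K_\infty/K_\infty) \simeq \Gal(L/K)$ is finite.

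The technical heart, and the step I expect to be the main obstacle, is constructing a matrix $P \in \GL_d(\Cp)$ with $P \equiv I_d \pmod{p}$ such that after the change of basis $w_i = \sum_j P_{ji} e_j$, the resulting cocycle becomes \emph{trivial} on the open subgroup $H' = \Gal(\bar K/L \cdot K_\infty) \subset H$; equivalently, $U_g = P \cdot g(P)^{-1}$ for all $g \in H'$. This is essentially a non-linear Hilbert 90 for the $\Zp$-extension $L\cdot K_\infty / L$. The plan is to mimic the final part of the proof of Theorem~\ref{theo:Cpadm}: when $N$ is large enough the logarithm $N_g = \log U_g$ is well-defined and defines an approximately additive cocycle with values in $M_d(\Cp)$ of large valuation; using Theorem~\ref{theo:axsentate2} (entrywise) to approximate by a matrix with entries in a fixed finite extension, then inverting the coboundary operator $\gamma - \id$ on the kernel of Tate's normalized traces via Proposition~\ref{prop:H90cont} (and Remark~\ref{rem:H90Rr}), I obtain a first-order correction; iterating this Newton-style process converges because the valuations improve at each step thanks to the trace estimate of Proposition~\ref{prop:trFinf}. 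One then exponentiates back to get the matrix $P$.

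Once Step~2 is achieved, $(w_1,\ldots,w_d)$ is a basis of $W^{H'}$, which by Ax--Sen--Tate is a module of rank $d$ over $\Cp^{H'} = \widehat{L \cdot K_\infty}$. The finite quotient $H/H' \simeq \Gal(L \cdot K_\infty/K_\infty) \simeq \Gal(L/K)$ acts semi-linearly on $W^{H'}$, and the extension $\widehat{L \cdot K_\infty}/\widehat{K_\infty}$ is finite Galois with this group (the fixed field is $\widehat{K_\infty}$ since completion commutes with invariants under a finite group). Applying the classical Hilbert Theorem~90 (Theorem~\ref{theo:H90}) to this situation produces a $\widehat{K_\infty}$-basis $(v_1,\ldots,v_d)$ of $(W^{H'})^{H/H'} = W^{H}$. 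These vectors are $\Gal(\bar K/K_\infty)$-fixed and form a $\Cp$-basis of $W$, as desired.
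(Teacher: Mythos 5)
Your high-level decomposition --- trivialize the cocycle on the open subgroup $H' = \Gal(\bar K/L\cdot K_\infty)$ of $H = \Gal(\bar K/K_\infty)$, then descend to $H$-invariants by applying Theorem~\ref{theo:H90} to the $H/H'$-action on $W^{H'}$ --- is a legitimate strategy, and the descent step is fine. The gap is in what you call the ``technical heart.''

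You identify trivializing the cocycle on $H'$ with ``a non-linear Hilbert~90 for the $\Zp$-extension $L\cdot K_\infty/L$,'' but this mistakes the subgroup for the quotient: the Galois group of that $\Zp$-extension is $\Gal(L\cdot K_\infty/L) = \Gal(\bar K/L)/H'$, whereas $H'$ itself is the full absolute Galois group of $L\cdot K_\infty$, a large profinite group containing no distinguished element $\gamma$ whose coboundary $\gamma-\id$ one could sensibly invert. Accordingly, the tools you invoke --- the inverse of $\gamma_r-\id$ on the kernel of the normalized trace $R_r$, Proposition~\ref{prop:H90cont}, Remark~\ref{rem:H90Rr} --- operate on $\hat K_\infty$ and belong to Sen's \emph{second} descent step (from $\hat K_\infty$-representations of $\Gamma=\Gal(K_\infty/K)$ down to finite level, exactly the proof of Proposition~\ref{prop:finitevector}), not to the first step you are attempting here, where the cocycle takes values in $M_d(\Cp)$ with a nontrivial $H'$-action on the coefficients.

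What is actually needed to trivialize on $H'$ (or directly on $H$, as the paper does) is the transverse trace estimate of Proposition~\ref{prop:trFinfperp}: for a finite Galois extension $M$ of $L$ inside $\bar K$ one can find $r$ and $\lambda \in \O_{M\cdot K_r}$ with $v_p\bigl(\Tr_{M\cdot K_r/K_r}(\lambda)\bigr)$ arbitrarily small, and one then averages the approximately invariant vectors over $\Gal(M\cdot K_r/K_r)$ without losing precision. The paper's proof runs exactly this iterated averaging, modelled on Proposition~\ref{prop:H90unram}, and produces $\Gal(\bar K/K_\infty)$-invariant vectors directly, with no intermediate $H'$ step. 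Your proposal never brings Proposition~\ref{prop:trFinfperp} into play at the point where it is essential, so the central step remains unjustified.
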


\begin{proof}
The proof is similar to that of Proposition~\ref{prop:H90unram}.

Let $\O_W$ be any $\O_{\Cp}$-lattice in $W$. 
As a first step, we are going to construct 
a $\Cp$-basis $w_1, \ldots, w_d$ of $W$ with $w_i \in \O_W$, $g w_i 
\equiv w_i \pmod {p^2 \O_W}$ for all $g \in \Gal(\bar K/K_\infty)$
and $p \O_W \subset \O_{\Cp} w_1 \oplus \cdots \oplus \O_{\Cp} w_d$.
By continuity of the Galois action, there exists a finite Galois
extension $L$ of $K$ such that $g w \equiv w \pmod{p^2 \O_W}$ for
all $g \in \Gal(\bar K/L)$ and all $w \in W$. 
For a positive integer $r$, set $L_r = L{\cdot}K_r$. By the proof of 
Proposition~\ref{prop:trFinf}, we know that there exists $r$ for which 
$v_p(\D_{L_r/K_r}) < e_{L/K}^{-1}$. We fix such an $r$.

Let $\lambda_1, \ldots, \lambda_m$ be a $\O_{K_r}$-basis of $\O_{L_r}$
and let $g_1, \ldots, g_m$ be the elements of $\Gal(L_r/K_r)$. 
For $i \in \{1,\ldots, m\}$, choose $\hat g_i \in G_K$ a
lift of $g_i$. We define the elements:
$$y_{i,j} = \sum_{i'=1}^m \hat g_{i'}(\lambda_j x_i) =
\sum_{i'=1}^m g_{i'}(\lambda_j) \cdot \hat g_{i'}(x_i)$$
for $i$ varying between $1$ and $d$ and $j$ varying between $1$ and
$m$.
It is easily seen that $g y_{i,j} \equiv y_{i,j} \pmod{p^2\O_W}$
for all $g \in \Gal(\bar K/K_r)$. Moreover, the 
determinant of the matrix $(g_i(\lambda_j))_{1\leq i,j\leq m}$ is, by
definition, the discriminant of $L_r/K_r$. Its $p$-adic valuation is
then less than $1$ thanks to our assumption on $v_p(\D_{L_r/K_r})$.
We deduce that there exist $\mu_1, \ldots, \mu_m \in \O_{L_r}$
with the property that $\sum_{j=1}^m \mu_j \Tr_{L_r/K_r}(\lambda_j) 
= p$. Hence $\sum_{j=1}^m \mu_j y_{i,j} \equiv p x_i \pmod{p^2\O_W}$ 
for all $i$.
The $\O_{\Cp}$-span of the $y_{i,j}$'s then contains $p \O_W$.
Among these vectors, one can select $d$ of them $w_1, \ldots, w_d$
whose span still contains $p \O_W$. The $w_i$'s satisfy all the 
announced properties.

\smallskip

The second step of the proof consists in lifting the $w_i$'s
by a process of successive approximations.
In order to simplify the notations, we redefine $\O_W$ as the
$\O_{\Cp}$-span of $w_1, \ldots, w_d$. With the new definition,
we have $g w_i \equiv w_i \pmod {p\O_W}$ for all $g \in \Gal
(\bar K/K_\infty)$. We will construct by
induction on $n$ a sequence of families $(v_{1,n}, \ldots, 
v_{d,n})$ satisfying the following congruences: 
$$v_{i,n+1} \equiv v_{i,n} \pmod{p^n \O_W}
\quad \text{and} \quad 
g v_{i,n} \equiv v_{i,n} \pmod{p^n \O_W}$$
for all $i \in \{1,\ldots,d\}$, $n \in \N$ and $g \in \Gal 
(\bar K/K_\infty)$. For $n = 1$, we set $v_{i,1} = w_i$. Now
we assume that the $v_{i,n}$'s have been constructed. 
By continuity there exists a finite Galois extension $L$ of $K$ such 
that $g v_{i,n} \equiv v_{i,n} \pmod{p^{n+2} \O_W}$ for all $g \in 
\Gal(\bar K/L)$. By Proposition~\ref{prop:trFinf}, there exist an
integer $r$ and $\lambda \in \O_{L_r}$ (with $L_r = L{\cdot}K_r$)
such that $v_p(\Tr_{L_r/K_r}(\lambda)) \leq 1$. As in the first
step, we let $g_1, \ldots, g_m$ be the elements of $\Gal(L_r/K_r)$
and we choose a lifting $\hat g_i \in G_K$ of $g_i$. We define:
$$v_{i,n+1} = \frac 1 {\Tr_{L_r/K_r}(\lambda)} \cdot 
\sum_{j=1}^m \hat g_j(\lambda v_{i,n}) =
 \frac 1 {\Tr_{L_r/K_r}(\lambda)} \cdot
\sum_{j=1}^m g_j(\lambda) \cdot \hat g_j(v_{i,n})$$
and check that the $v_{i,n+1}$'s satisfy the desired requirements.

We conclude the proof by taking the limit with respect to $n$.
\end{proof}

Proposition~\ref{prop:descendKinfty} tells us that the $W$ is trivial 
when viewed as a $\Cp$-linear representation of $\Gal(\bar K/K_\infty)$. 
Moreover by the proof of Ax--Sen--Tate theorem, the fixed field 
$\Cp^{\Gal(\bar K/K_\infty)}$ is the completion of $K_\infty$, that we 
shall call $\hat K_\infty$.
By general results of trivial semi-linear representations (\emph{cf}
\S\ref{sssec:trivialrep}), we then have an isomorphism 
$$\Cp \otimes_{\hat K_\infty} W^{\Gal(\bar K/K_\infty)} \simeq W$$
for all $\Cp$-semi-linear representation of $W$. We notice that
$W^{\Gal(\bar K/K_\infty)}$ inherits an action of $\Gal(K_\infty/K)$. 

\paragraph{Finite vectors.}

Set $\Gamma = \Gal(K_\infty/K)$. The second step is Sen's theory is the 
study of $\hat K_\infty$-semi-linear representations of $\Gamma$.
To this attempt, Sen defines the subspace of \emph{finite} vectors
as follows.

\begin{deftn}
\label{def:finitevector}
Let $W \in \Rep^\fin_{\hat K_\infty}(\Gamma)$.
A vector $v \in W$ is \emph{finite} if the $K_\infty$-subspace 
of $W$ generated by the $gv$ for $g$ varying in $\Gamma$ is
finite dimensional over $K_\infty$.
\end{deftn}

As an example, the subspace of finite vectors of the semi-linear
representation $\hat K_\infty$ itself is $K_\infty$. In general,
one easily checks that the subspace of finite vectors is a vector
space over $K_\infty$. 

\begin{prop}
\label{prop:finitevector}
Let $W \in \Rep^\fin_{\hat K_\infty}(\Gamma)$.
Then, there exist an integer $r$ and
a basis $(v_1, \ldots, v_d)$ of $W$ with the property that the
$K_r$-span of the $v_i$'s is stable under the $\Gamma$-action.
\end{prop}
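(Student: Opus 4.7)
The strategy is to produce an $\hat K_\infty$-basis $(v_1, \ldots, v_d)$ of $W$ whose elements are all fixed by $\gamma_r$ for some $r$; once this is done, the proposition will follow from commutativity of $\Gamma$ and Ax--Sen--Tate.

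Indeed, suppose such a basis has been constructed. Since $\Gamma$ is abelian, $\gamma_r \gamma_0 = \gamma_0 \gamma_r$, so $\gamma_0(v_j)$ is again $\gamma_r$-fixed. Writing $\gamma_0(v_j) = \sum_i c_{ij} v_i$ with $c_{ij} \in \hat K_\infty$ and applying $\gamma_r$ to this equality, the linear independence of the $v_i$ yields $\gamma_r(c_{ij}) = c_{ij}$. A Ax--Sen--Tate-type statement for the $\Zp$-extension $K_\infty/K_r$, obtained by running the proof of Theorem~\ref{theo:axsentate} with $F = K_r$ and $F_\infty = K_\infty$, gives $\hat K_\infty^{\gamma_r} = K_r$. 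Hence $c_{ij} \in K_r$, so the $K_r$-span of $(v_1, \ldots, v_d)$ is stable under $\gamma_0$, and therefore under $\Gamma$.

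The core of the argument is the construction of such a $\gamma_r$-fixed basis. I would start from an arbitrary $\hat K_\infty$-basis $(w_1, \ldots, w_d)$ of $W$ and form the matrix $U_{\gamma_r} \in \GL_d(\hat K_\infty)$ representing $\gamma_r$ in this basis. By continuity of the $\Gamma$-action, $U_{\gamma_r} \to I_d$ as $r \to \infty$, so setting $A := U_{\gamma_r}^{-1} - I_d$, the matrix $A$ can be made to have $v_p(A)$ as large as desired by choosing $r$ sufficiently large. A change of basis $v_j = \sum_i P_{ij} w_i$ satisfies $\gamma_r(v_j) = v_j$ for all $j$ exactly when $P \in \GL_d(\hat K_\infty)$ solves the twisted equation
$$(\gamma_r - \id)(P) = A P.$$
I would construct $P$ as the limit of a sequence $(P_n)$ close to $I_d$, modeled on the successive-approximation scheme of Proposition~\ref{prop:descendKinfty}. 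At each step, the residual $E_n := (\gamma_r - \id)(P_n) - A P_n$ is split entry-wise through Tate's normalized trace $R_r$ as $E_n = R_r(E_n) + (E_n - R_r(E_n))$; the $\ker R_r$-part is corrected using the controlled inverse of $\gamma_r - \id$ from Remark~\ref{rem:H90Rr} (losing only a factor $c_2$ in $v_p$), while the $K_r$-part is absorbed into a $K_r$-valued perturbation of $P$ that the equation tolerates because such a perturbation is itself $\gamma_r$-fixed. Choosing $r$ so that $v_p(A)$ exceeds $c_2$ by a sufficient margin, both parts of the residual shrink geometrically and $P_n$ converges to an invertible $P$ close to $I_d$.

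The main obstacle is organizing this iteration so that Remark~\ref{rem:H90Rr} applies at every stage and the two parts of the residual decrease at a uniform geometric rate; in particular, one must verify that the $K_r$-correction, which goes through the small matrix $A$, does not reintroduce a large $\ker R_r$-component. Once $P$ is built, invertibility, the basis property of $(v_1, \ldots, v_d)$, and the $\Gamma$-stability of their $K_r$-span are all formal, as explained in the first paragraph of this plan.
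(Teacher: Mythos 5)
Your plan rests on a false premise: it is not generally possible to find, for any $r$, a $\hat K_\infty$-basis of $W$ consisting of vectors \emph{fixed} by $\gamma_r$. The obstruction is precisely the Sen operator, which does not vanish in general, and it does not go away when $r$ grows.

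A concrete counterexample: take $W = \hat K_\infty(\chi_\cycl)$, with generator $e$ and $g e = \chi_\cycl(g) e$. Suppose $v = ae$ were $\gamma_r$-fixed. Then $\gamma_r(a)\,\chi_\cycl(\gamma_r) = a$, which rearranges to $(\gamma_r - \id)(a) = c\,a$ with $c = (1-\chi_\cycl(\gamma_r))/\chi_\cycl(\gamma_r) \in \Qp^\times$. Applying the normalized trace $R_r$ and using $R_r\circ\gamma_r = R_r$ gives $0 = c\,R_r(a)$, hence $R_r(a) = 0$. But on $\ker R_r$ the operator $\gamma_r - \id$ is bijective with inverse of norm at most $p^{c_2}$ (Remark~\ref{rem:H90Rr}), so once $r$ is large enough that $v_p(c) > c_2$, the operator $\gamma_r - \id - c$ is still bijective on $\ker R_r$, forcing $a = 0$. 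There is no nonzero $\gamma_r$-fixed vector for any $r$, even though the $K_r$-span of $e$ is trivially $\Gamma$-stable. The same obstruction is what makes your iteration diverge. You want $(\gamma_r - \id)(P) = AP$; applying $R_r$ shows that a necessary condition is $R_r(AP) = 0$, which for $P$ near $I_d$ is roughly $R_r(A) = 0$, and this fails in general. In the iteration, the $K_r$-component $R_r(E_n)$ of the residual is not removable by a $K_r$-valued perturbation $\delta$ of $P$: such $\delta$ leaves $(\gamma_r-\id)(P)$ unchanged and perturbs $AP$ only by the strictly smaller quantity $A\delta$, so $R_r(E_n)$ persists at essentially the same size and the scheme cannot converge to a solution.

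The correct argument aims for the weaker and correct target: a basis in which the matrix of $\gamma_r$ has entries in $K_r$ (not equal to $I_d$). One runs exactly the iteration you set up, but with a different accounting: at each step only the $\ker R_r$-component of the error is killed, via the controlled inverse of Remark~\ref{rem:H90Rr}, while the $K_r$-component is \emph{absorbed into the $K_r$-valued limit matrix} rather than eliminated. The limit basis $(v_1,\ldots,v_d)$ then satisfies $\gamma_r v_i = v_i + \sum_j \lambda_{ij} v_j$ with $\lambda_{ij}\in K_r$, so its $K_r$-span is $\gamma_r$-stable. Passing from $\gamma_r$-stability to $\Gamma$-stability then requires a separate argument: one cannot just cite $\hat K_\infty^{\gamma_r} = K_r$ as you do (that route used $\gamma_r$-fixedness, which we do not have). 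Instead, one exploits the commutation of $\gamma_0$ with $\gamma_r$ to compare the two action matrices and invokes once more the uniqueness statement of Remark~\ref{rem:H90Rr} to force the $\gamma_0$-matrix to lie in $M_d(K_r)$ as well.
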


\begin{rem}
Obviously, the $v_i$'s of Proposition~\ref{prop:finitevector}
are finite in the sense of Definition~\ref{def:finitevector}.
Therefore, we deduce that the subspace of finite vectors of $W$
generates $W$ as a $\hat K_\infty$-vector space. Finite vectors
are then numerous.
\end{rem}

\begin{proof}[Proof of Proposition~\ref{prop:finitevector}]
Let $c_2$ be the constant of Proposition~\ref{prop:H90cont}.
It is harmless to assume that $c_2$ is an integer.
To simplify notation, we write $L = \hat K_\infty$. Let $\O_L$
be the ring of integers of $L$. We choose a $\O_L$-lattice $\O_W$ 
inside $W$.
By continuity, there exists an integer $r$ such that $g w \equiv
w \pmod {p^{c_2+1} \O_W}$ for all $g \in \Gal(K_\infty/K_r)$ and
all $w \in W$.
We choose and fix such an $r$. The group $\Gal(K_\infty/K_r)$ 
acts on $\O_W$ and on all the quotients $\O_W/p^n\O_W$ for $n
\in \N$.

We are going to construct, by induction of $n$, a sequence of families 
$(v_{1,n}, \ldots, v_{d,n})_{n \geq 1}$ of elements of $\O_W$ with the 
following properties:

\vspace{-2mm}

\begin{enumerate}[(i)]
\renewcommand{\itemsep}{0pt}
\item for all $n$, the family $v_{1,n}, \ldots, v_{d,n}$ is an
$\O_L$-basis of $\O_W$,
\item for all $n \geq 1$ and all $i$, $v_{i,n+1} \equiv v_{i,n}
\pmod{p^n \O_W}$
\item the $\O_{K_r}$-submodule of $\O_W/p^{n+c_2}\O_W$ generated
by the classes of the $v_{i,n}$'s ($1 \leq i \leq d$) is stable
under the $\Gal(K_\infty/K_r)$-action.
\end{enumerate}

\noindent
For $n=1$, we pick an arbitrary $\O_L$-basis $v_{1,1}, \ldots, v_{d,1}$ 
of $\O_W$. Since $\Gal(K_\infty/K_r)$ acts trivially on $\O_W/p^{c_2+1}
\O_W$, all the requirements are fulfilled. We now assume that $v_{1,n},
\ldots, v_{d,n}$ have been constructed. By the induction hypothesis, 
for all $i$, we can write
$\gamma_r v_{i,n} = v_{i,n} + \sum_{j=1}^d (\lambda_{i,j} + \varepsilon_{i,j})
v_{j,n}$
where the $\lambda_{i,j}$'s lie in $K_r$ and the $\varepsilon_{i,j}$'s 
have $p$-adic valuation at least $n + c_2$. Moreover, since the action of
$\gamma_r$ is trivial modulo $p^{c_2+1}$, we deduce $v_p(\lambda_{i,j}) 
\geq c_2+1$.
Let $R_r : L \to K_r$ be the Tate's normalized trace defined in
Remark~\ref{rem:normalizedtraces}. By Proposition~\ref{prop:H90cont}
(\emph{cf} also Remark~\ref{rem:H90Rr}), for all $i$ and $j$, there exists
$\mu_{i,j} \in L$ with $v_p(\mu_{i,j}) \geq n$ and
$\varepsilon_{i,j} = R_r(\varepsilon_{i,j}) + \gamma_r \mu_{i,j} - \mu_{i,j}$.
For all $i$, define $v_{i,n+1} = v_{i,n} - \sum_{j=1}^d \mu_{i,j} v_{j,n}$.
Since the $\mu_{i,j}$'s have all valuation at least $n$, the items
(i) and (ii) are fulfilled.
Besides, a simple computation gives:
$$\gamma_r v_{i,n+1} = v_{i,n+1} + 
\sum_{j=1}^d \big(\lambda_{i,j} + R_r(\varepsilon_{i,j})\big) 
\cdot v_{j,n} +
\sum_{j=1}^d \gamma_r \mu_{i,j} \cdot 
\big(v_{j,n} - \gamma_r v_{j,n}\big).$$
Since $\gamma_r$ acts trivially modulo $p^{c_2+1}$, the last summand
lies in $p^{n+c_2+1}\O_W$. 
Noting in addition that $v_{j,n} \equiv v_{j,n+1} \pmod
{p^n \O_W}$ and that the $\lambda_{i,j}$'s are all divisible
by $p^{c_2+1}$, we obtain the congruence:
$$\gamma_r v_{i,n+1} \equiv v_{i,n+1} + 
\sum_{j=1}^d \big(\lambda_{i,j} + R_r(\varepsilon_{i,j})\big) 
\cdot v_{j,n+1}
\pmod {p^{n+c_2+1}\O_W}$$
from which the item (iii) follows.

Passing to the limit, we obtain an $L$-basis $v_1, \ldots, v_d$
of $W$ whose $K_r$-span is stable under the action of $\Gal(K_r/K)$.
It remains to prove that it is stable under the whole action of
$\Gamma$. Let $M_0$ and $M_r$ be the matrices that gives the action
of $\gamma_0$ and $\gamma_r$ on $L$ respectively, that are:
\begin{align*}
  (\begin{matrix} \gamma_0 v_1 & \cdots & \gamma_0 v_d \end{matrix})
& = (\begin{matrix} v_1 & \cdots &  v_d \end{matrix}) \cdot M_0 \\
  (\begin{matrix} \gamma_r v_1 & \cdots & \gamma_r v_d \end{matrix})
& = (\begin{matrix} v_1 & \cdots &  v_d \end{matrix}) \cdot M_r.
\end{align*}
We do know that $M_r$ has all its entries in $K_r$ and we want to
prove that the same holds for $M_0$. Actually, from our construction 
of the $v_i$'s, we know further that $M_r$ has integral entries and
that it is congruent to the identity matrix modulo $p^{c_2+1}$.
From the commutation of $\gamma_0$ and $\gamma_r$, we derive the
relation $M_0 \cdot \gamma_0 M_r = M_r \cdot 
\gamma_r M_0$. Define $C = R_r(M_r) - M_r$ where $R_r$ is
the Tate's normalized trace. We want to prove that $C$ vanishes.

Since $R_r$ commutes with $\gamma_r$, we have the relation $C \cdot 
\gamma_0 M_r = M_r \cdot \gamma_r C $, from which we derive
$\gamma_r C - C = M_r^{-1} \cdot C \cdot \gamma_0 M_r$. Set 
$N = M_r^{-1} \cdot C \cdot \gamma_0 M_r$ and let $v$ be the 
smallest valuation of an entry of $C$. We assume by contradiction
that $v$ is finite. The fact that $M_r \equiv I_d \pmod{p^{c_2+1}}$
implies that $N$ is divisible by $p^{v+c_2+1}$.
By unicity in Proposition~\ref{prop:H90cont} (and 
Remark~\ref{rem:H90Rr}), we deduce that $C$ must be divisible by 
$p^{v+1}$. This contradicts the definition of $v$.
\end{proof}

\paragraph{Sen's operator.}

We now put together the results we have established before.
Let $W \in \Rep^\fin_{\Cp}(G_K)$. 
We define $\hat W_\infty = W^{\Gal(\bar K/K_\infty)}$ and 
let $W_\infty$ be the subspace of finite vectors of $\hat W_\infty$.
Combining Propositions~\ref{prop:descendKinfty} 
and~\ref{prop:finitevector}, we find that $W$ admits a $\Cp$-basis
consisting of elements of $W_\infty$. In other words, the canonical
mapping $\Cp \otimes_{K_\infty} W_\infty \to W$ is an isomorphism.
The action of $G_K$ on $W$ is then entirely determined by the
action of $\Gamma$ of $W_\infty$. Using the particularly simple
structure of $\Gamma$, it is possible to describe its action even 
more concretely. 

More precisely, we consider an integer $r$ and a basis
$v_1, \ldots, v_d$ of $\hat W_\infty$ such that the $K_r$-vector
space $W_r = K_r v_1 \oplus \cdots \oplus K_r v_d$ is stable
under the action of $\Gamma$ (or equivalently, $G_K$). 
For $g \in G_K$, we shall denote by $\rho_W(g)$ the endomorphism of 
$W_r$ given by the action of $g$. Note that $\rho_W(g)$ is $K_r$-linear 
as soon as $g \in \Gal(\bar K/K_r)$. In particular $\rho_W(\gamma_s)$ is 
linear whenever $s \geq r$.
Since $\gamma_s$ converges to the identity in $\Gamma$, the 
logarithm of $\rho_W(\gamma_s)$ is well defined for $s$ sufficiently
large. Moreover, we have the relation
$p \cdot \log \rho_W(\gamma_{s+1}) = \log \rho_W(\gamma_s)$ as soon as
the logarithm $\rho_W(\gamma_s)$ is defined. 
The sequence $p^{-s} \log \rho_W(\gamma_s)$ is then ultimately 
constant. Sen's operator $\Phi_W$ is defined as the limit of 
this sequence:
$$\Phi_W = \lim_{s \to \infty} \frac{\log \rho_W(\gamma_s)}{p^s}.$$
We extend $\Phi_W$ to $W_\infty$ by $K_\infty$-linearity.
This extension is canonical in the sense that it does not depend on
the choice of~$r$.
Besides, the exponential map can be used to reconstruct the
representation $W$ we started with, at least on a finite index
subgroup of $G$. Precisely, there exists an integer $s$ such that:
\begin{equation}
\label{eq:expSen}
\rho(g) = \exp\big(\alpha(g) \Phi_W\big)
\quad \text{for all } g \in \Gal(\bar K/K_s)
\end{equation}
where we recall that $\alpha : G_K \to \Zp$ was the character
defining the isomorphism between $\Gal(K_\infty/K)$ and $\Zp$.
From \eqref{eq:expSen}, it follows that the action of $\Gal(\bar K/K_s)$ 
on $W$ can be entirely reconstructed by extending the $\rho(g)$'s 
to $W$ using semi-linearity.

\begin{ex}
\label{ex:Sendim2}
Consider the representation $V$ given by:
$$G_K \to \GL_2(\Qp), \quad g \mapsto 
\left(\begin{matrix} 1 & \alpha(g) \\ 0 & 1 \end{matrix}\right)$$
already discussed in Example~\ref{ex:HTdim2}. Set $W = \Cp \otimes_{\Qp}
V$. It is easily checked that $W^{\Gal(\bar K/K_\infty)} = \hat 
K_\infty^2$ and its subspace of finite vectors is $K_\infty^2$.
Sen's operator $\Phi_V$ is the nilpotent linear morphism 
$(x,y) \mapsto (y,0)$.
\end{ex}

Sen's operator exhibits very interesting properties. Below,
we state the most important ones.

\begin{prop}
\label{prop:SenoverK}
We keep the above notations.
Sen's operator $\Phi_W$ is defined over $K$, in the sense 
that $W_\infty$ admits a basis in which the matrix of $\Phi_W$
has coefficients in $K$.
\end{prop}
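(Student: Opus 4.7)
The plan is to exploit the commutation of $\Phi_W$ with the $\Gamma$-action on $W_\infty$ and then descend $\Phi_W$ from $K_r$ to $K$ via an elementary argument based on the rational canonical form. The commutation follows directly from the construction: for all sufficiently large $s$ one has $\Phi_W = p^{-s} \log \rho_W(\gamma_s)$ as $K_r$-linear endomorphisms of $W_r$, and since $\Gamma \cong \Zp$ is abelian, the operators $\rho_W(g)$ (for $g \in \Gamma$) all commute with $\rho_W(\gamma_s)$, hence with $\Phi_W$.

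I would then fix the basis $(v_1,\ldots,v_d)$ of $W_r$ provided by Proposition~\ref{prop:finitevector}, and denote by $M \in M_d(K_r)$ the matrix of $\Phi_W$ and by $U(g) \in \GL_d(K_r)$ the matrix of $\rho_W(g)$ (defined by $g(v_j) = \sum_i U_{ij}(g) v_i$) in this basis. A short computation that takes into account the semi-linearity of $\rho_W(g)$ translates the commutation $\Phi_W \circ \rho_W(g) = \rho_W(g) \circ \Phi_W$ into the matrix identity $g(M) = U(g)^{-1} M U(g)$, valid for every $g \in \Gamma$. In particular $M$ and $g(M)$ are $\GL_d(K_r)$-conjugate, so they share the same invariant factors in $K_r[X]$; but applying $g$ to the coefficients sends the invariant factors of $M$ to those of $g(M)$, so these invariant factors are $\Gamma$-invariant. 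Since $\Gamma$ acts on $K_r$ through $\Gal(K_r/K)$ with fixed subfield $K$, they in fact lie in $K[X]$.

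The proof will then conclude by the uniqueness of the rational canonical form: there exist a matrix $R \in M_d(K)$ (the rational canonical form of $M$) and a change-of-basis matrix $P \in \GL_d(K_r)$ with $P^{-1} M P = R$. The vectors $v'_j = \sum_i P_{ij} v_i$ will then form a basis of $W_r$ (and thus a $K_\infty$-basis of $W_\infty$) in which $\Phi_W$ is represented by $R$. The main point requiring care is the careful derivation of the identity $g(M) = U(g)^{-1} M U(g)$ with proper attention to the semi-linearity of $\rho_W(g)$; everything else is classical linear algebra.
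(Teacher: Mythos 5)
Your proof is correct and develops exactly the hint the paper gives, whose entire proof consists of the single remark that $\Phi_W$ commutes with the $\Gamma$-action. You have correctly supplied the missing descent from $K_r$ to $K$: translating the commutation into $g(M) = U(g)^{-1} M U(g)$, observing that the invariant factors of $M$ are therefore $\Gamma$-invariant and hence lie in $K[X]$, and concluding with the rational canonical form.
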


\begin{proof}
This follows from the fact that $\Phi_W$ commutes with the action
of $\Gamma$.
\end{proof}

Let $\Sen(K,K_\infty)$ denote the category of finite dimensional
$K_\infty$-vector spaces equipped with an endomorphism defined 
over $K$ (in the sense of Proposition~\ref{prop:SenoverK}). 
The construction $W \mapsto (W_\infty, \Phi_W)$ defines a functor 
$\mathcal S : \Rep^\fin_{\Cp}(G_K) \to \Sen(K, K_\infty)$. Indeed
a morphism $f : W \to W'$ in the category $\Rep^\fin_{\Cp}(G_K)$
necessarily maps $W_\infty$ to $W'_\infty$ and commutes with 
Sen's operators on both sides because it commutes with the
Galois action.
The functor $\mathcal S$ commutes with direct sums, while its behavior 
under tensor products is governed by the Leibniz rule:
\begin{align*}
(W \otimes W')_\infty & = W_\infty \otimes W'_\infty \\
\Phi_{W \otimes W'} & = \Phi_W \otimes \id_{W'} + \id_W \otimes \Phi_{W'}.
\end{align*}
Moreover, the functor $\mathcal S$ is faithful. Indeed, assume that
we are given $W, W' \in \Rep^\fin_{\Cp}(G_K)$, together with a 
morphism $f : W \to W'$ such that $\mathcal S(f) = 0$. Then, by
assumption, $f$ vanishes on the subspace $W_\infty$. Since the latter 
generates $W$ as a $\Cp$-vector spaces, one must have $f = 0$.
In general, $\mathcal S$ is not full. However, it detects 
isomorphisms as shown by the next proposition.

\begin{prop}
\label{prop:conservativeSen}
Let $W, W' \in \Rep^\fin_{\Cp}(G_K)$. We assume that
$\mathcal S(W)$ and $\mathcal S(W')$ are isomorphic in
$\Sen(K,K_\infty)$.
Then $W$ and $W'$ are isomorphic in $\Rep^\fin_{\Cp}(G_K)$.
\end{prop}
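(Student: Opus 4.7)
The plan is to bootstrap the given $\Sen$-isomorphism $f_\infty \colon (W_\infty, \Phi_W) \to (W'_\infty, \Phi_{W'})$ first to a $\Cp$-linear isomorphism $f \colon W \to W'$ that is equivariant under some \emph{open} subgroup $\Gal(\bar K/K_s)$ of $G_K$, and then to modify $f$ into a genuinely $G_K$-equivariant isomorphism via a Hilbert~90 descent for the residual finite quotient $\Gal(K_s/K)$, combined with a Zariski-density argument exploiting that $K$ is infinite.

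For the first part, I use the canonical isomorphism $W \simeq \Cp \otimes_{K_\infty} W_\infty$ (and similarly for $W'$) to extend $f_\infty$ to a $\Cp$-linear bijection $f \colon W \to W'$. To force equivariance under an open subgroup, I enlarge $r$ so that one can choose $\Gamma$-stable $K_r$-forms $W_r \subset W_\infty$ and $W'_r \subset W'_\infty$ on which $\Phi_W$ and $\Phi_{W'}$ both have matrices in $M_d(K)$ (Proposition~\ref{prop:SenoverK}) and such that $f_\infty(W_r) \subset W'_r$; the last condition can be arranged because the matrix of $f_\infty$ has finitely many entries in $K_\infty = \bigcup_r K_r$. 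Then I pick $s \geq r$ large enough for the exponential formula~\eqref{eq:expSen} to apply simultaneously to both $W_r$ and $W'_r$. The intertwining $f_\infty \circ \Phi_W = \Phi_{W'} \circ f_\infty$ passes to power series, giving $f_\infty \circ \exp(\alpha(g)\Phi_W) = \exp(\alpha(g)\Phi_{W'}) \circ f_\infty$ on $W_r$ for every $g \in \Gal(\bar K/K_s)$. By $\Cp$-semilinearity of the $G_K$-action, this identity propagates to $f \circ g = g \circ f$ on all of $W$; equivalently, setting $H := \Hom_{\Cp}(W, W')$ with its natural $G_K$-structure, $f$ is a fixed vector in $H^{\Gal(\bar K/K_s)}$.

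For the descent, observe that $K_s/K$ is finite Galois, so $\Gal(\bar K/K_s)$ is normal in $G_K$ and the finite quotient $\Gal(K_s/K)$ acts on $H^{\Gal(\bar K/K_s)}$; by Ax--Sen--Tate (Theorem~\ref{theo:axsentate}) $\Cp^{\Gal(\bar K/K_s)} = K_s$, and this action is $K_s$-semi-linear. Let $V$ be the $K_s$-linear span of the (necessarily finite) $\Gal(K_s/K)$-orbit of $f$; it is a finite-dimensional $K_s$-subspace of $H^{\Gal(\bar K/K_s)}$, stable under $\Gal(K_s/K)$. Hilbert's Theorem~90 (Theorem~\ref{theo:H90}) applied to $V$ yields a canonical isomorphism $V \simeq K_s \otimes_K V^{\Gal(K_s/K)}$. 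Choose a $K$-basis $(e_1, \ldots, e_n)$ of $V^{\Gal(K_s/K)}$; it is then also a $K_s$-basis of $V$. After fixing $\Cp$-bases of $W$ and $W'$, the determinant restricts to $V$ as a polynomial $P(a_1, \ldots, a_n) = \det\bigl(\sum_i a_i e_i\bigr)$ with coefficients in $\Cp$. Since $f \in V$ is invertible, $P$ is not the zero polynomial, and since $K$ is infinite a standard one-variable-at-a-time argument produces $a_1, \ldots, a_n \in K$ with $P(a_1, \ldots, a_n) \neq 0$. The element $\tilde f := \sum_i a_i e_i$ then lies in $V^{\Gal(K_s/K)} \subset H^{G_K}$ and is a $G_K$-equivariant $\Cp$-linear isomorphism $W \to W'$.

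The principal obstacle is in the middle step: the hypothesis provides only an intertwining of the Sen operators, with no a priori $\Gamma$-equivariance, whereas the conclusion demands full Galois equivariance. The key observation that unlocks the proof is that formula~\eqref{eq:expSen} expresses the action of an open subgroup of $G_K$ purely in terms of the Sen operator, so commuting with $\Phi$ automatically forces commutation with this open subgroup, after which Hilbert~90 and the infiniteness of $K$ dispose of the finite residual obstruction.
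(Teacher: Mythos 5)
Your proof is correct and follows essentially the same route as the paper: extend the Sen-module isomorphism to a $\Cp$-linear map, use formula~\eqref{eq:expSen} to obtain $\Gal(\bar K/K_s)$-equivariance, then combine Hilbert's theorem~90 for the finite quotient $\Gal(K_s/K)$ with the determinant-polynomial argument over the infinite field $K$. The only cosmetic difference is that you run Hilbert~90 on the $K_s$-span of the $\Gal(K_s/K)$-orbit of $f$ rather than on the full fixed space $\Hom_{\Cp}(W,W')^{\Gal(\bar K/K_s)}$, which sidesteps having to invoke the general Fontaine formalism to see that the latter is finite dimensional over $K_s$ — a mild simplification but not a different proof.
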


\begin{proof}
Let $f : W_\infty \to W'_\infty$ be an isomorphism commuting
with Sen's operators. By $\Cp$-linearity, $f$ extends to an
isomorphism of $\Cp$-vector spaces $f : W \to W'$. Moreover, 
thanks to formula \eqref{eq:expSen}, there exists an integer $s$ 
such that $f$ is $\Gal(\bar K/K_s)$-equivariant.

Let $V$ be the space of $\Gal(\bar K/K_s)$-equivariant $\Cp$-linear
morphisms from $W$ to $W'$. It is endowed with a canonical action 
of $\Gal(K_s/K)$ and thus appears as an object in the category
$\Rep^\fin_{K_s}(\Gal(K_s/K))$. By Hilbert's theorem 90 (\emph{cf}
Theorem \ref{theo:H90}), $V$ admits a basis $(f_1, \ldots, f_m)$
of fixed vectors. In other words the $f_i$'s are $G_K$-equivariant
morphisms $W \to W'$.
It remains to prove that a suitable $K$-linear combination of the
$f_i$'s is invertible. 
For this, we consider the $m$-variate polynomial defined by:
$$P(t_1, t_2, \ldots, t_m) = 
\det(t_1 f_1 + t_2 f_2 + \cdots + t_m f_m).$$
We know that $P$ is not the zero polynomial because the $K_s$-span
of the $f_i$'s contains an isomorphism (namely $f$). Since $K$ is
an infinite field, $P$ cannot vanish everywhere on $K^m$. Hence
there exist $t_1, \ldots, t_m \in K$ such that $t_1 f_1 +
\cdots + t_m f_m$ is an isomorphism.
\end{proof}

\begin{cor}
A representation $W \in \Rep^\fin_{\Cp}(G_K)$ is trivial if and
only if Sen's operation $\Phi_W$ vanishes.
\end{cor}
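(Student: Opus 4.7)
The plan is to apply Proposition~\ref{prop:conservativeSen} to reduce the corollary to a trivial comparison inside $\Sen(K, K_\infty)$.

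First, I would handle the easy direction. Suppose $W$ is trivial, so $W \simeq \Cp^d$ with the natural coordinate-wise action of $G_K$. Then $\hat W_\infty = \hat K_\infty^d$, the subspace of finite vectors is $W_\infty = K_\infty^d$, and $\rho_W(\gamma_s)$ is the identity for every $s$. Hence $\log \rho_W(\gamma_s) = 0$ for $s$ large enough, and passing to the limit gives $\Phi_W = 0$.

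For the converse, let $d = \dim_{\Cp} W$ and consider the trivial representation $T = \Cp^d$. By the easy direction, $\mathcal S(T) = (K_\infty^d, 0)$. Now assume $\Phi_W = 0$. Then $\mathcal S(W) = (W_\infty, 0)$, and $W_\infty$ is a $K_\infty$-vector space of dimension $d$ (since $\Cp \otimes_{K_\infty} W_\infty \simeq W$, as noted after Proposition~\ref{prop:finitevector}). Any $K_\infty$-linear isomorphism $W_\infty \simeq K_\infty^d$ automatically commutes with the zero operator on each side, hence is an isomorphism in $\Sen(K, K_\infty)$. Thus $\mathcal S(W) \simeq \mathcal S(T)$, and Proposition~\ref{prop:conservativeSen} gives $W \simeq T$ in $\Rep^\fin_{\Cp}(G_K)$, i.e.\ $W$ is trivial.

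There is no real obstacle here; the entire content has already been packaged into Proposition~\ref{prop:conservativeSen}, and the corollary is just the observation that the object $(K_\infty^d, 0)$ of $\Sen(K, K_\infty)$ classifies, via $\mathcal S$, exactly the trivial $\Cp$-semi-linear representations.
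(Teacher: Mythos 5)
Your proposal is correct and is essentially the paper's proof spelled out: the paper's argument is literally the one-liner ``apply Proposition~\ref{prop:conservativeSen} with $W' = \Cp^{\dim W}$,'' and you have simply made explicit the two facts this relies on, namely that $\mathcal S(\Cp^d)=(K_\infty^d,0)$ and that any $K_\infty$-linear isomorphism $W_\infty \simeq K_\infty^d$ is automatically a morphism in $\Sen(K,K_\infty)$ when both operators are zero. Your inclusion of the easy direction (trivial $\Rightarrow \Phi_W=0$) is a welcome bit of completeness that the paper leaves implicit.
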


\begin{proof}
It suffices to apply Proposition~\ref{prop:conservativeSen} with 
$W' = \Cp^{\dim W}$.
\end{proof}

We conclude our exposition of Sen's theory by noticing that 
Sen's operator is closely related to the notion of Hodge--Tate
representations. 
Precisely, a representation $V \in \Rep^\fin_{\Qp}(G_K)$ is
Hodge--Tate if and only if the operator $\Phi_{\Cp \otimes_{\Qp}
V}$ is semi-stable with eigenvalues in $\Z$, these eigenvalues 
being the Hodge--Tate weights of $V$. (Combine Examples~\ref{ex:HTdim2} 
and~\ref{ex:Sendim2} for an illustration of this property.)
Given a general $W \in \Rep^\fin_{\Cp}(G_K)$, the eigenvalues of 
$\Phi_W$ are sometimes called the \emph{generalized Hodge--Tate weights} 
of $W$.

%%%%%%%%%%%%%%%%%%%%%%%%%%%%%%%%%%%%%%%%%%%%%%%%%%%%%%%%%%%%%%%%%%%%%%

\section{Two refined period rings: $\Bcrys$ and $\BdR$}
\label{sec:dRcrys}

Previously, we have studied the period rings $\Cp$ and $B_\HT$ and 
discussed the attached notion of Hodge--Tate representations.
In the present section, we introduce two new period rings, called
$\Bcrys$ and $\BdR$. As we shall see in \S\ref{sec:crysdRrep},
these rings have a deeper arithmetical and geometrical content that 
$\Cp$ and $B_\HT$.

The definition of $\Bcrys$ and $\BdR$ is a bit elaborated and 
occupies all this section. In order to ease the task of the reader, we 
devote two short paragraphs below to collect the most important 
properties of $\Bcrys$ and $\BdR$ and sketch the main steps of
their construction.

Before this, we need to recall and introduce some notations. 
Throughout this section, $K$ will continue to refer to a finite 
extension of $\Qp$. Its ring of integers (resp. its residuel field) is 
denoted by $\O_K$ (resp. $k$). We define $K_0 = W(k)[\frac 1 p]$; it is 
the maximal unramified extension of $\Qp$ included in $K$. We fix an 
algebraic closure $\bar K$ of $K$ and set $G_K = \Gal(\bar K/K)$. 
Observe that $\bar K$ is also an algebraic closure of $\Qp$ and hence 
does not depend on $K$. We let $K_0^\ur$ (resp. $K^\ur$) be the maximal 
unramified extension of $K_0$ (resp. of $K$) inside $\bar K$. Since 
$K_0$ is unramified over $\Qp$, $K_0^\ur$ is also the maximal extension 
of $\Qp$ inside $\bar K$ and thus is also independent of $K$. We let 
also $\Cp$ denote the $p$-adic completion of $\bar K$. Finally, we 
choose and fix once for all a uniformizer $\pi$ of $K$.

\paragraph{Main properties of $\Bcrys$ and $\BdR$.}

As discussed in \S\ref{ssec:motivations},
the original idea behind the definition of $\Bcrys$ is the wish
to design a variant of Barsotti-type spaces (the $\mathcal B$ of
\S\ref{ssec:motivations}) which includes the tannakian formalism.
On the geometric side, a nice tannakian framework in which 
$p$-divisible groups naturally arise is crystalline cohomology.
Indeed, in many contexts, crystalline cohomology provides powerful
invariants that can be used to classify $p$-divisible groups (and 
more generally finite flat group schemes)~\cite{BBM}.
We then expect the ring $\Bcrys$ to have some ``crystalline 
nature'' and to be eventually related to crystalline cohomology.
Apart from this, recall that another motivation of $p$-adic Hodge 
theory is to compare étale cohomology with de Rham cohomology. The
period ring making the comparison possible---namely $\BdR$---then 
needs to be deeply related to de Rham cohomology.
The algebraic structure of $\Bcrys$ and $\BdR$ is guided by the above 
general expectations: the ring $\Bcrys$ (resp. $\BdR$) will carry, as 
much as possible, the same structures and exhibit similar properties as 
the crystalline (resp. de Rham) cohomology.

Below, we list the main features of $\Bcrys$ and $\BdR$ and, when it 
is possible, we make the parallel with the corresponding properties 
of the cohomology. We start with $\BdR$:

\vspace{-2mm}

\begin{itemize}
\renewcommand{\itemsep}{0pt}
\item $\BdR$ is a discrete valuation field with residue field
$\Cp$;
\item $\BdR$ is an algebra over $\bar K{\cdot}\hat K^\ur$,
but \emph{not} over $\Cp$ (with a defining morphism preserving
the Galois action);
\item $\BdR$ is equipped with a filtration $\Fil^m \BdR$ (which
is nothing but the canonical filtration given by the valuation);
this filtration corresponds to the de Rham filtration on the
cohomology;
\item $\BdR$ has a distinguished element $t$ on which Galois
acts by multiplication by the cyclotomic character; moreover $t$
is a uniformizer of $\BdR$, so that $\Fil^m \BdR = t^m \BdR$;
\item the graded ring of $\BdR$ is $B_\HT = \Cp[t,t^{-1}]$;
\item $(\BdR)^{G_K} = K$; this property corresponds to the fact
that the de Rham cohomology is a vector space over $K$.
\end{itemize}

\noindent
And now for $\Bcrys$:

\vspace{-2mm}

\begin{itemize}
\renewcommand{\itemsep}{0pt}
\item $\Bcrys$ is an algebra over $\hat K_0^\ur$;
\item $\Bcrys$ is equipped with a Frobenius, which is a ring
homomorphism $\varphi : \Bcrys \to \Bcrys$; this structure
corresponds to the action of the Frobenius on the crystalline
cohomology;
\item there is a canonical embedding $K \otimes_{K_0} \Bcrys 
\hookrightarrow \BdR$; this property corresponds to the fact
that the crystalline cohomology defines a canonical $K_0$-structure inside
the de Rham cohomology (this is Hyodo--Kato isomorphism);
\item the distinguished element $t$ of $\BdR$ is in $\Bcrys$;
\item $(\Bcrys)^{G_K} = K_0$; this property corresponds to the 
fact that the de Rham cohomology is a vector space over $K_0$;
\item $\big(\Bcrys \cap \Fil^0 \BdR\big)^{\varphi=1} = \Qp$ (the notation
``$\varphi{=}1$'' means that we are taking the fixed points
under the Frobenius).
\end{itemize}

\paragraph{Sketch of the construction.}

The starting point of the construction of $\Bcrys$ and $\BdR$ is the 
introduction of the rings $\Ainf$ and $\Bpinf = \Ainf[1/p]$. One may 
think of $\Ainf$ as the universal thickening of $\O_{\bar K} / 
p\O_{\bar K}$; it is obtained \emph{via} a general process (detailed 
in \S\ref{ssec:Ainf}) involving a 
perfectization mecanism as a first step and Witt vectors as a second 
step. Beyond this purely algebraic construction, it is 
important to notice that the ring $\Bpinf$ has a strong geometrical 
interpretation. Indeed as observed first by Colmez and then by 
Fargues--Fontaine~\cite{fargues-fontaine} and Scholze~\cite{scholze-weinstein}, $\Bpinf$ 
appears at a mixed characteristic analogue\footnote{This analogy has 
been placed in the framework of Huber geometry by Scholze in~\cite{scholze-weinstein} and 
then takes a very substantial meaning. However, for this article, it 
will be sufficient to keep in mind that elements of $\Bpinf$ behave 
like analytic functions over a nonarchimedian base.} of the ring of 
bounded analytic functions on the open unit disc.
Moreover $\Bpinf$ is equipped with a Frobenius (coming from the
general theory of Witt vectors) and a distinguished geometric point, 
which is materalized by a surjective ring homomorphism $\theta : 
\Bpinf \to \Cp$.

Following the crystalline formalism, we then define the ring $\Bpcrys$ 
as the completion of the divided powers envelope of $\Bpinf$ with 
respect to the ideal $\ker\theta$. 
Unfortunately, $\Bpcrys$ does not have a nice geometrical
interpretation, in the sense that it is not the ring of analytic 
functions on a smaller domain.
In order to tackle this difficulty, we introduce (following Colmez)
some variants 
of $\Bpcrys$. Precisely, given a real parameter $\mu \geq 1$, one
considers the rings $\Bpmu$'s of analytic functions defined over 
some annulus $D_\mu$ included in the open unit disc, and containing
the distinguished point~$\theta$.
The $\Bpmu$'s are closely related to $\Bpcrys$ (we have inclusions
in both directions), so that it is often safe to replace the latter
by the formers.

Another important feature of $\Bpcrys$ and the $\Bpmu$'s is
that they contain a period of the cyclotomic character, that is an
element $t$ on which $G_K$ acts by multiplication by the cyclotomic
character. Geometrically, the divisor of $t$ is the orbit of the point
$\theta$ under the action of the Frobenius, that is the union of
all point $\theta \circ \varphi^n$ for $n$ varying in $\Z$.
The presence of $t$ in $\Bpmu$ will eventually ensure the 
admissibility of the representation $\Qp(\chi_\cycl^{-1})$.
In order to make $\Qp(\chi_\cycl)$ admissible as well (which is of 
course something we really want to have), we need $t$ to be a unit. So 
we finally define $\Bmu = \Bpmu [\frac 1 t]$ and the construction of 
$\Bcrys$ is now complete.

As for the field $\BdR$, it is defined as the fraction field of the 
completion of the local field of $\Bpinf$ (or equivalently, $\Bpmu$) 
at the special point~$\theta$. The filtration on $\BdR$ is nothing but 
the canonical filtration given by the order of the zero (or the pole) 
at~$\theta$.

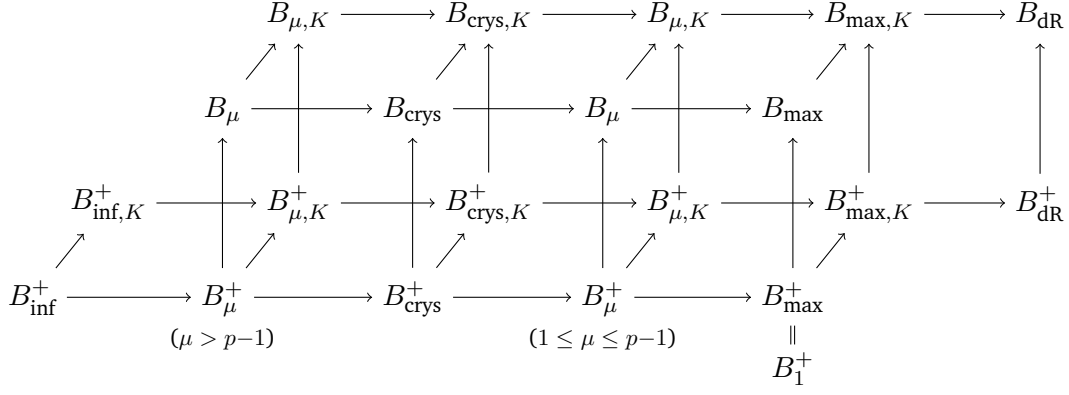
\begin{figure}
\hfill
\begin{tikzpicture}[xscale=2.5,yscale=2.5]

\node[scale=0.8] at (1,0.78) { \ph ($\mu > p{-}1$) };
\node[scale=0.8] at (3,0.78) { \ph ($1 \leq \mu \leq p{-}1$) };
\node at (4,0.62) { \ph $B^+_1$ };
\draw (3.99,0.85)--(3.99,0.75);
\draw (4.01,0.85)--(4.01,0.75);

\node (Bpinf) at (0,1) { \ph $\Bpinf$ };
\node (Bpmu) at (1,1) { \ph $\Bpmu$ };
\node (Bpcrys) at (2,1) { \ph $\Bpcrys$ };
\node (Bpmu2) at (3,1) { \ph $\Bpmu$ };
\node (Bpmax) at (4,1) { \ph $\Bpmax$ };

\begin{scope}[xshift=4mm,yshift=5mm]
\node (BpinfK) at (0,1) { \ph $\BpinfK$ };
\node (BpmuK) at (1,1) { \ph $\BpmuK$ };
\node (BpcrysK) at (2,1) { \ph $\BpcrysK$ };
\node (Bpmu2K) at (3,1) { \ph $\BpmuK$ };
\node (BpmaxK) at (4,1) { \ph $\BpmaxK$ };
\node (BpdR) at (4.9,1) { \ph $B^+_{\dR}$ };
\end{scope}

\node (Bmu) at (1,2) { \ph $\Bmu$ };
\node (Bcrys) at (2,2) { \ph $\Bcrys$ };
\node (Bmu2) at (3,2) { \ph $\Bmu$ };
\node (Bmax) at (4,2) { \ph $\Bmax$ };

\begin{scope}[xshift=4mm,yshift=5mm]
\node (BmuK) at (1,2) { \ph $\BmuK$ };
\node (BcrysK) at (2,2) { \ph $\BcrysK$ };
\node (Bmu2K) at (3,2) { \ph $\BmuK$ };
\node (BmaxK) at (4,2) { \ph $\BmaxK$ };
\node (BdR) at (4.9,2) { \ph $\BdR$ };
\end{scope}

\draw[->] (Bpinf)--(Bpmu);
\draw[->] (Bpmu)--(Bpcrys);
\draw[->] (Bpcrys)--(Bpmu2);
\draw[->] (Bpmu2)--(Bpmax);
\draw[->] (Bmu)--(Bcrys);
\draw[->] (Bcrys)--(Bmu2);
\draw[->] (Bmu2)--(Bmax);

\draw[->] (BpinfK)--(BpmuK);
\draw[->] (BpmuK)--(BpcrysK);
\draw[->] (BpcrysK)--(Bpmu2K);
\draw[->] (Bpmu2K)--(BpmaxK);
\draw[->] (BpmaxK)--(BpdR);
\draw[->] (BmuK)--(BcrysK);
\draw[->] (BcrysK)--(Bmu2K);
\draw[->] (Bmu2K)--(BmaxK);
\draw[->] (BmaxK)--(BdR);

\draw[->] (Bpinf)--(BpinfK);
\draw[->] (Bpmu)--(BpmuK);
\draw[->] (Bpcrys)--(BpcrysK);
\draw[->] (Bpmu2)--(Bpmu2K);
\draw[->] (Bpmax)--(BpmaxK);
\draw[->] (Bmu)--(BmuK);
\draw[->] (Bcrys)--(BcrysK);
\draw[->] (Bmu2)--(Bmu2K);
\draw[->] (Bmax)--(BmaxK);

\draw[->] (Bpmu)--(Bmu);
\draw[->] (Bpcrys)--(Bcrys);
\draw[->] (Bpmu2)--(Bmu2);
\draw[->] (Bpmax)--(Bmax);
\draw[->] (BpmuK)--(BmuK);
\draw[->] (BpcrysK)--(BcrysK);
\draw[->] (Bpmu2K)--(Bmu2K);
\draw[->] (BpmaxK)--(BmaxK);
\draw[->] (BpdR)--(BdR);
\end{tikzpicture}
\hfill\null

\caption{Diagram of period rings; all arrows are injective}
\label{fig:periodrings}
\end{figure}

The diagram presented on Figure~\ref{fig:periodrings} summarizes the 
period rings we will define in this section and the relations between 
them. We see on this diagram that the $\Bpmu$'s and the $\Bmu$'s all 
have a variant denoted by an extra index $K$. They are defined by 
extending scalars from $K_0$ to $K$. These variants are interesting 
because, when $V$ is a $\Bmu$-admissible representation, the de Rham 
filtration becomes visible after extending scalars to $\BmuK$, which 
is much smaller and sometimes more tractable that $\BdR$.

\subsection{Preliminaries: the ring $\Bpinf$}
\label{ssec:Ainf}

In this subsection, we introduce the ring $\Bpinf$ which serves
as a common base upon which all the forthcoming constructions will 
be built.

\subsubsection{Perfectization}

Let $\varphi$ denote the Frobenius morphism $x \mapsto x^p$ acting on 
the quotient $\O_{\Cp}/p\O_{\Cp} \simeq \O_{\bar K}/p \O_{\bar K}$ and 
observe that $\varphi$ is a ring homomorphism since $\O_{\Cp}/p\O_{\Cp}$ 
is annihilated by $p$.

Let $\OCpflat$ be the limit of the projective system\footnote{This 
definition is s special case of a general construction (the \emph{tilt}) 
in the theory of perfectoid spaces~\cite{Sch12}. We refer to Andreatta 
and~al. lecture~\cite{andreatta} in this volume for an introduction to 
perfectoid spaces. The notations $p^\flat$, $\pi^\flat$ and $v_\flat$
that we will introduce later comes from the language of perfectoid
spaces.}:
$$
\O_{\Cp}/p\O_{\Cp} \stackrel \varphi{\longrightarrow}
\O_{\Cp}/p\O_{\Cp} \stackrel \varphi{\longrightarrow}
\O_{\Cp}/p\O_{\Cp} \stackrel \varphi{\longrightarrow}
\cdots \stackrel \varphi{\longrightarrow}
\O_{\Cp}/p\O_{\Cp} \stackrel \varphi{\longrightarrow} \cdots$$
Concretely, an element of $\OCpflat$ is a sequence $(\xi_n)_{n 
\geq 0}$ of elements of $\O_{\Cp}/p\O_{\Cp}$ satisfying the following 
compatibility property: $\xi_{n+1}^p = \xi_n$ for all $n \geq 0$.
Clearly, $\OCpflat$ is a ring of characteristic~$p$. 

In a slight abuse of notation, we continue to write $\varphi$ for the 
Frobenius acting on $\OCpflat$. Over this ring, it is an 
isomorphism, its inverse being given by the shift map $(\xi_0, \xi_1, 
\xi_2, 
\ldots) \mapsto (\xi_1, \xi_2, \xi_3, \ldots)$. We sometimes say that 
$\OCpflat$ is the \emph{perfectization} of $\O_{\Cp}/p\O_{\Cp}$.
Moreover $\OCpflat$ is endowed with an action of $G_K$ coming
from its natural action on $\O_{\Cp}$.

\paragraph{Some distinguished elements.}

Choose a primivite $p$-root of unity in $\O_{\bar K}$ and denote it by 
$\varepsilon_1$. Similarly, choose a $p$-th root of $\varepsilon_1$ and 
denote it by $\varepsilon_2$; obviously, $\varepsilon_2$ is a primitive 
$p^2$-th root of unity. Repeating inductively this process, we construct 
elements $\varepsilon_3, \varepsilon_4, \ldots \in \O_{\bar K}$ such 
that $\varepsilon_{n+1}^p = \varepsilon_n$ for all $n$. 
Let $\bar\varepsilon_n \in \O_{\bar K}/p\O_{\bar K}$ be the image of 
$\varepsilon_n$. The compabitility property ensures that the sequence 
$(1, \bar\varepsilon_1, \bar\varepsilon_2, \ldots)$ defines an element 
in $\OCpflat$; we shall denote it by $\Ueps$.
We emphasize that $\Ueps$ does depend on the choice of the 
$\varepsilon_n$'s. However, the dependency is easy to write down
explicitely: if $(\varepsilon'_n)_{n \geq 0}$ is another compatible 
sequence of primitive $p^n$-th roots of unity, one can always find an 
element $g \in G_K$ such that $\varepsilon'_n = g \varepsilon_n = 
\varepsilon_n^{\chi_\cycl(g)}$. Hence the element of $\OCpflat$ 
defined the $\varepsilon'_n$'s is $\Ueps^{\chi_\cycl(g)}$. In what 
follows, we fix once for all an element $\Ueps$ as above.

In a similar fashion, we choose a compatible system $(p_n)_{n \geq 1}$
of $p^n$-root of $p$, \emph{i.e.} $p_1^p = p$ and $p_{n+1}^p = p_n$
for all $n \geq 1$. If $\bar p_n \in \O_{\bar K}/p\O_{\bar K}$ is 
the reduction of $p_n$ modulo $p$, the sequence $(0, \bar p_1, 
\bar p_2, \ldots)$ defines an element of $\OCpflat$ that we
will denote by $\Up$. Again, $\Up$ depends on the choice of the 
$p_n$'s but we can check that another choice would finally lead to 
an element of the form $\Up{\cdot}\Ueps^a$ for some $a \in \Zp$.
The same construction works more generally if we start for any 
element $x \in \O_{\Cp}$ in place of $p$; it leads to an element
$x^\flat \in \OCpflat$, which is well defined up to 
multiplication by $\Ueps^a$ with $a \in \Zp$.
Besides $\Up$, we will fix a choice of $\Upi$ (where we recall that 
$\pi$ is a fixed uniformizer of $K$) for future use.

\paragraph{Valuation.}

The ring $\OCpflat$ is equipped with a derivation $v_\flat$
that we are going to define now. We start with the following
observation: if $x$ is a nonzero element in $\O_{\Cp}/p\O_{\Cp}$, 
the $p$-adic valuation of $\hat x$ does not depend on the lifting 
$\hat x$ of $x$. The valuation $v_p$ then induces a well defined
function $v_p : \O_{\Cp}/p\O_{\Cp} \to \Q \cup \{+\infty\}$ where
we agree that $v_p(0) = +\infty$ as usual.
For $\xi = (\xi_n)_{n \geq 0}$ in $\OCpflat$, we define:
$$v_\flat(\xi) = \lim_{n \to \infty} p^n v_p(\xi_n).$$
The compatibility condition $\xi_{n+1}^p = \xi_n$ implies that
the sequence $\big(p^n v_p(\xi_n)\big)_{n \geq 0}$ is ultimately
constant; so the limit is well defined. The function $v_\flat$
satisfies the following properties for $\xi, \xi' \in 
\OCpflat$:

\vspace{-2mm}

\begin{enumerate}[(1)]
\renewcommand{\itemsep}{0pt}
\item $v_\flat(\OCpflat) = \Q \cup \{+\infty\}$,
\item $v_\flat(\xi) = \infty$ if and only if $\xi = 0$,
\item $v_\flat(\xi) = 0$ if and only if $\xi$ is invertible,
\item $v_\flat(\xi+\xi') \geq \min(v_\flat(\xi), v_\flat(\xi'))$ and
equality holds if $v_\flat(\xi) \neq v_\flat(\xi')$,
\item $v_\flat(\xi\xi') = v_\flat(\xi) + v_\flat(\xi')$.
\end{enumerate}

\vspace{-2mm}

\noindent
Combining (2) and (5), we find that $\OCpflat$ is a domain.
Indeed if $\xi$ and $\xi'$ are nonzero elements of $\OCpflat$,
then $v_\flat(\xi)$ and $v_\flat(\xi')$ are finite, and so 
$v_\flat(\xi\xi') = v_\flat(\xi) + v_\flat(\xi')$ is also finite. The
existence of $v_\flat$ implies that $\OCpflat$ is a local
ring with maximal ideal $\m_{\OCpflat}$ consisting of elements
of positive valuation. The residue field $\OCpflat/
\m_{\OCpflat}$ is canonically isomorphic to $\bar k$.
We observe in addition that the projection $\OCpflat \to
\bar k$ has a canonical splitting defined by:
$$a \mapsto \big([a] \mod p, [a^{1/p}] \mod p,
[a^{1/p^2}] \mod p, \ldots\big)$$
where the notation $[\cdot]$ stands for the Teichmuller 
representative.
Besides, the valuation $v_\flat$ equips $\OCpflat$ with a 
distance, and hence a topology. The Galois action on $\OCpflat$ 
preserves~$v_\flat$; in particular, it is continuous.

An easy consequence of the existence of a valuation is the following
result.

\begin{lem}
\label{lem:ocpflat}
The projection onto the first coordinate 
$\OCpflat \to \O_{\Cp}/p\O_{\Cp}$ induces an isomorphism
$\OCpflat/\Up\OCpflat \simeq \O_{\Cp}/p\O_{\Cp}$.
\end{lem}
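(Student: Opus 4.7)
The plan is to verify that the first-coordinate projection $\theta_0 \colon \OCpflat \to \O_{\Cp}/p\O_{\Cp}$, $(\xi_n)_{n\geq 0} \mapsto \xi_0$, is a surjective ring homomorphism with kernel exactly $\Up\OCpflat$. The ring-homomorphism property is clear from the componentwise definitions, so the real work is surjectivity and the identification of the kernel.

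For surjectivity, given $x \in \O_{\Cp}/p\O_{\Cp}$, I would lift $x$ to some $\hat x \in \O_{\Cp}$ and, using density of $\O_{\bar K}$ in $\O_{\Cp}$, replace it by $\tilde x \in \O_{\bar K}$ congruent to $\hat x$ modulo $p$. Since $\bar K$ is algebraically closed, one can then pick a compatible sequence $(\tilde x_n)_{n\geq 0}$ in $\O_{\bar K}$ with $\tilde x_0 = \tilde x$ and $\tilde x_{n+1}^p = \tilde x_n$; reducing these modulo $p$ yields an element of $\OCpflat$ mapping to $x$.

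For the kernel, one inclusion is immediate: since the first coordinate of $\Up = (0, \bar p_1, \bar p_2, \ldots)$ is zero, every element of $\Up\OCpflat$ lies in $\ker \theta_0$. The converse is the substantive step. Given $\xi = (\xi_n) \in \ker\theta_0$, my plan is to lift each $\xi_n$ canonically to $\hat\xi_n \in \O_{\Cp}$ satisfying $\hat\xi_{n+1}^p = \hat\xi_n$ \emph{exactly}, and then perform the division by $p_n$ upstairs in $\O_{\Cp}$. The canonical lift is $\hat\xi_n := \lim_{m\to\infty} \widetilde{\xi_{n+m}}^{\,p^m}$ for any choice of crude lifts $\widetilde{\xi_k} \in \O_{\Cp}$; the elementary congruence $(a+pb)^{p^m} \equiv a^{p^m} \pmod{p^{m+1}}$ in $\O_{\Cp}$ guarantees simultaneously the convergence of this limit, its independence from the crude lifts chosen, and the identity $\hat\xi_{n+1}^p = \hat\xi_n$. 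With these compatible lifts in hand, the assumption $\xi_0 = 0$ gives $v_p(\hat\xi_0) \geq 1$, and iterating $\hat\xi_n^{p^n} = \hat\xi_0$ forces $v_p(\hat\xi_n) \geq 1/p^n = v_p(p_n)$. Thus $\hat\xi_n/p_n \in \O_{\Cp}$; setting $\eta_n := \overline{\hat\xi_n/p_n}$ in $\O_{\Cp}/p\O_{\Cp}$, the equality $(\hat\xi_{n+1}/p_{n+1})^p = \hat\xi_n/p_n$ yields $\eta_{n+1}^p = \eta_n$, so $\eta := (\eta_n) \in \OCpflat$ and $\Up \cdot \eta = \xi$ by construction.

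The main obstacle is precisely this lifting step: one cannot simply ``divide $\xi_n$ by $\bar p_n$'' inside $\O_{\Cp}/p\O_{\Cp}$, because $\bar p_n$ is a zero divisor there and the naive candidate $\hat\xi_n/p_n \bmod p$ depends on the choice of lift $\hat\xi_n$. Passing to genuinely Frobenius-compatible lifts via the limit formula above, and only then dividing, is what resolves the ambiguity. In substance this is the alternative description $\OCpflat \simeq \varprojlim_n \O_{\Cp}$ along $x \mapsto x^p$, which seems a natural prelude to the construction of $\Ainf$ that follows.
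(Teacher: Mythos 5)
Your proof is correct, but it follows a genuinely different route from the paper's. The paper's argument is valuation-theoretic: having already set up the valuation $v_\flat$ on $\OCpflat$, it observes that $\ker f = \{\xi : v_\flat(\xi) \geq 1\}$ and then concludes, from $v_\flat(\Up)=1$, that this set is the principal ideal $\Up\OCpflat$. This last step quietly uses that $\OCpflat$ is the valuation ring of $v_\flat$ (so that any element of valuation $\geq 1$ is divisible by any fixed element of valuation exactly $1$), a fact the paper does not spell out. Your proof sidesteps this: instead of invoking divisibility abstractly, you exhibit the quotient $\eta$ with $\Up\eta = \xi$ explicitly, by lifting $\xi$ to the Frobenius-compatible sequence $(\hat\xi_n)$ in $\O_{\Cp}$ via $\hat\xi_n = \lim_m \widetilde{\xi_{n+m}}^{\,p^m}$ and dividing upstairs by $p_n$, which is legitimate there because $v_p(\hat\xi_n) \geq p^{-n}$. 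This is essentially the ``sharp'' map of \S\ref{sssec:sharp}, which you are anticipating, and it amounts to the multiplicative identification $\OCpflat \simeq \varprojlim_{x \mapsto x^p} \O_{\Cp}$. What your approach buys is self-containment and an explicit construction that does not presuppose $\OCpflat$ is a valuation ring; what the paper's buys is brevity, given that $v_\flat$ is already on the table and used heavily throughout. Both arguments use algebraic closedness of $\Cp$ for surjectivity (your detour through $\O_{\bar K}$ is harmless but unnecessary, since $\Cp$ itself is algebraically closed).
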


\begin{proof}
Let $f : \OCpflat \to \O_{\Cp}/p\O_{\Cp}$, $(\xi_0, \xi_1,
\ldots) \mapsto \xi_0$. The surjectivity of $f$ is a consequence
of the fact that $\Cp$ is algebraically closed. On the other hand,
it is clear that the kernel of $f$ consists of elements $\xi$ such
that $v_\flat(\xi) \geq 1$. Since $v_\flat(\Up) = 1$, we deduce
that $\ker f$ is the principal ideal generated by $\Up$. This
proves the lemma.
\end{proof}

\subsubsection{Witt vectors}

We set $\Ainf = W(\OCpflat)$ (where $W(-)$ stands for the Witt 
vectors functor) and $\Bpinf = \Ainf[\frac 1 p]$.
For $x \in \OCpflat$, we let $[x]$ denote its representative 
Teichmüller in $\Ainf$. Since $\OCpflat$ is perfect, an element 
of $\Ainf$ can be written uniquely as a convergent series $\sum_{i 
\geq 0} [\xi_i]\:p^i$ with $\xi_i \in \OCpflat$ for all $i$. 
A similar decomposition holds for elements in $\Bpinf$: each such
element $x$ has a unique expansion of the form $\sum_{i \geq i_0}
[\xi_i]\:p^i$ (with $\xi_i \in \OCpflat$) where $i_0$ is a
(possibly negative) integer, which depends on $x$.

The inclusion $\bar k \to \OCpflat$ provides by functoriality a 
ring morphism $W(\bar k) \to \Ainf$.
% In particular, $W(k)$ embeds into $\Ainf$. 
Thus $\hat K_0^\ur$ embeds into $\Bpinf$.
The ring $\Ainf$ is a local ring whose maximal ideal is the kernel of 
the composition $\Ainf \to W(\bar k) \to \bar k$ where the first map is 
induced by the projection $\OCpflat \to \bar k$ and the second
map is the reduction modulo $p$. Concretely, it consists of series
$\sum_{i \geq 0} [\xi_i]\:p^i$ for which $\xi_0 \in \m_{\OCpflat}$.

\medskip

We set $\AinfK = \O_K \otimes_{W(k)} \Ainf$ and $\BpinfK
= K \otimes_{K_0} \Bpinf$. These tensors products make sense
because we saw that $\Ainf$ is an algebra over $W(k)$.
The elements of $\BpinfK$ have a canonical 
expansion of the form $\sum_{i \geq i_0} [\xi_i]\:\pi^i$ with $i_0
\in \Z$ and $\xi_i \in \OCpflat$ for all~$i \geq i_0$.
Moreover $\AinfK$ is a local ring and its maximal
ideal consists of series as above such that $\xi_0 \in \m_{\OCpflat}$;
its residue field is $\bar k$.

\paragraph{Additional structures.}

By definition of the Witt vectors, $\Bpinf$ carries an action of
a Frobenius, that we shall continue to call $\varphi$. On the above
representation, it is given by the simple formula:
\begin{equation}
\label{eq:FrobAinf}
\varphi\Bigg(\sum_{i=i_0}^\infty [\xi_i]\:p^i\Bigg) =
\sum_{i=i_0}^\infty [\xi_i^p]\:p^i
\qquad (i_0 \in \Z, \, \xi_i \in \OCpflat).
\end{equation}
We emphasize that $\varphi$ does not admit a \emph{canonical} 
extension to $\BpinfK$ as there is no canonical Frobenius on $K$.

The ring $\Bpinf$ is also equipped with an action of $G_K$ by
functorialiy of Witt vectors. Again, this action has a simple expression,
namely:
\begin{equation}
\label{eq:GaloisAinf}
g\Bigg(\sum_{i=i_0}^\infty [\xi_i]\:p^i\Bigg) =
\sum_{i=i_0}^\infty [g \xi_i]\:p^i
\qquad (i_0 \in \Z, \, \xi_i \in \OCpflat)
\end{equation}
for all $g \in G_K$. The $G_K$-action extends to $\BpinfK$
by letting $G_K$ act trivially on $\O_K$.

Finally, we equip $\Ainf$ and $\AinfK$ with the \emph{weak 
topology}, which is the topology defined by the ideal~$(p, [\Up])$ (or 
equivalently, by the ideal~$(p, [x])$ for any element $x \in 
\m_{\OCpflat}$). Concretely, if
$$x_n = \sum_{i=0}^\infty [\xi_{i,n}]\:p^i \in \Ainf
\quad \text{and} \quad
x = \sum_{i=0}^\infty [\xi_i]\:p^i \in \Ainf$$
the sequence $(x_n)_{n \geq 0}$ converges to $x$ if $\xi_{i,n} \to
\xi_i$ for each fixed index~$i \in \N$, and a similar property holds
for $\AinfK$.
The topology on $\Ainf$ induces a topology on the subset $p^{-v}
\Ainf$ of $\Bpinf$ for all~$v$. Gluing them, we obtain a topology
on $\Bpinf = \bigcup_{v \geq 0} p^{-v} \Ainf$. In concrete terms,
a sequence $(x_n)_{n \geq 0}$ of elements on $\Bpinf$ converges
to $x \in \Bpinf$ if and only if there exists an integer $v$ such
that $p^v x_n \in \Ainf$ for all~$n$ and $p^v x_n$ tends to $p^v
x$ in $\Ainf$ as $n$ goes to infinity.
The topology on $B^+_{K,\inf}$ is defined similarly.

From the above descriptions, it follows that the Frobenius 
acts continuously on $\Ainf$ and $G_K$ acts continuously on
$\Ainf$ and $\AinfK$.

\paragraph{Newton polygons.}

In~\cite{fargues-fontaine}, Fargues and Fontaine argue that elements of 
$\Ainf$ (resp. $\AinfK$) should be thought of as analytic functions 
of the variable $p$ (resp. $\pi$); indeed, they share many properties 
with bounded analytic functions on the open unit disc. Similarly, 
elements of $\Bpinf$ and $\BpinfK$ resemble to bounded analytic 
functions on the punctured open unit disc.

In particuler, there is a well-defined notion of Newton polygons for 
series in $\Bpinf$ and $\BpinfK$. Precisely, if $x = \sum_{i \geq 
i_0} [\xi_i]\:p^i \in \Ainf$, its \emph{Newton polygon} is defined as 
the convex hull in $\R^2$ of the points $(i, v_\flat(\xi_i))$ together 
with two points at infinity in the direction of the positive $x$-axis 
and the direction of the positive $y$-axis respectively.
Similarly, the Newton polygon of $x = \sum_{i \geq i_0} [\xi_i]\:\pi^i 
\in \AinfK$ is the convex hull of the points $(\frac i e, 
v_\flat(\xi_i))$ and the same points at infinity. 
Using that $\pi^e = up$ for some invertible element $u \in \O_K$, one 
easily proves that the above definition coincides with that of Newton 
polygons on $\Bpinf$ when $x$ is in $\Bpinf$. Let $\NP_\inf(x)$ denote the 
Newton polygon of $x \in \BpinfK$.

Fargues and Fontaine prove that Newton polygons satisfy many
excepted properties. For example, they are multiplicative in the
sense that $\NP_\inf(xy) = \NP_\inf(x) + \NP_\inf(y)$ where the plus 
sign on the right hand side denotes the Minkowski sum.
Moreover Fargues and Fontaine prove
an analogue of the Weierstrass preparation and factorization 
theorems in this context, showing that Newton polygons serve as a
guide for factorization in the rings $\Bpinf$ and $\BpinfK$
as they do for usual
analytic functions. We do not reproduce their proofs here because
we will only use Newton polygons for visualizing our forthcoming 
constructions, and not for proving results. In any case, we refer
to \cite[\S1--3]{fargues-fontaine} for many developments in this
direction.

We conclude this discussion by examining the action of the additional 
structures at the level of Newton polygons.
Since $G_K$ acts on $\OCpflat$ by isometries, it follows from the 
formula~\eqref{eq:GaloisAinf} that $\NP_\inf(gx) = \NP_\inf(x)$ whenever $g$ is
in $G_K$ and $x$ is in $\BpinfK$. 
As for Frobenius, formula~\eqref{eq:FrobAinf} shows that, for any $x \in 
\Bpinf$, we have $\NP_\inf(\varphi(x)) = \varphi_{\R^2}(\NP_\inf(x))$ where 
$\varphi_{\R^2} : \R^2 \to \R^2$ takes $(i,v)$ to~$(i,pv)$.

\subsubsection{The ``sharp'' construction}
\label{sssec:sharp}

In \S\ref{ssec:Ainf}, starting with $x \in \O_{\Cp}$, we 
have constructed an element $x^\flat \in \OCpflat$ (which was only 
well-defined up to multiplication by an element of the form $\Ueps^a$ 
with $a \in \Zp$). Let us recall more precisely that the element 
$x^\flat = (x_0 \mod p, x_1 \mod p, x_2 \mod p, \ldots)$ where
$x_0 = x$ and $x_{n+1}$ is a $p$-th root of $x_n$ for $n \geq 0$.

It turns out that the datum of $x^\flat$ entirely determines 
$x$. Precisely, if we write $x^\flat = (\xi_0, \xi_1, \xi_2, \ldots)$ 
and if we choose a lifting $\hat \xi_n \in \O_{\Cp}$ of $x_n$ for all $n$, 
we have $x = \lim_{n \to\infty} \hat \xi_n^{p^n}$
independently of the choices of the liftings. Indeed, following the
definitions, we find that $x_n \equiv \hat \xi_n \pmod p$ and then, 
raising to the $p^n$-th power, $x \equiv \hat \xi_n^{p^n} \pmod
{p^{n+1}}$. This motivates the following definition.

\begin{deftn}
For $\xi = (\xi_0, \xi_1, \xi_2, \ldots) \in \OCpflat$,
we put
$$\xi^\sharp = \lim_{n \to \infty} \hat \xi_n^{p^n}$$
where $\hat \xi_n$ is a lifting of $\xi_n$.
\end{deftn}

One checks immediately that the function $\OCpflat \to \O_{\Cp}$, 
$\xi \mapsto \xi^\sharp$ is surjective and multiplicative. Its
``kernel'' is the closed subgroup of $\OCpflat^\times$ 
generated by $\Ueps$; it is isomorphic to $\Zp$. Besides, we
observe that $v_p(\xi^\sharp) = v_\flat(\xi)$ for all $\xi
\in \OCpflat$ and that $\xi^\sharp$ is the Teichmuller
representative of $\xi$ if $\xi$ is in $\bar k$.
By the general properties of Witt vectors, the ``sharp'' function 
extends to a surjective homorphism of $\hat K_0^\ur$-algebras
$\theta: \Bpinf \to \Cp$ which commutes with the $G_K$-action.
Concretely, it is given by:
$$\theta : \quad \sum_{i=i_0}^\infty [\xi_i]\: p^i \,\, 
\mapsto \,\, \sum_{i=i_0}^\infty \xi_i^\sharp\: p^i \qquad 
(i_0 \in \Z, \, \xi_i \in \OCpflat).$$
Note that the latter series converges in $\Cp$ since its 
$i$-th summand is a multiple of~$p^i$.
The morphism $\theta$ extends by $K$-linearity to a surjective 
$G_K$-equivariant homomorphism of 
$\hat K^\ur$-algebras $\theta_K : \BpinfK \to \Cp$.

\begin{prop}
\label{prop:kertheta}
\begin{enumerate}[(i)]
\renewcommand{\itemsep}{0pt}
\item Let $z \in \Ainf$ be an element such that $\theta(z) = 0$ and 
$v_\flat(z \mod p) = 1$. Then $z$ generates $\Ainf \cap \ker 
\theta$, viewed as an ideal of $\Ainf$.
\item Let $z \in \AinfK$ be an element such that $\theta_K(z) = 0$ 
and $v_\flat(z \mod \pi) = \frac 1 e$. Then $z$ generates
$\AinfK \cap \ker \theta_K$, viewed as an ideal of $\AinfK$.
\end{enumerate}
\end{prop}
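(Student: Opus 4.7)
The plan is to prove (i) by showing that any $y \in \Ainf$ with $\theta(y)=0$ is a $z$-multiple, via successive approximation modulo powers of $p$, and then to observe that (ii) is proved by the same argument with $p$ replaced by $\pi$.

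The key preliminary step is a mod-$p$ divisibility statement: for any $r \in \Ainf$ with $\theta(r) = 0$, the class $r \bmod p \in \OCpflat$ (using the canonical identification $\Ainf/p\Ainf \simeq \OCpflat$) lies in the principal ideal generated by $\xi_0 := z \bmod p$. Indeed, $\theta$ induces a map $\OCpflat \simeq \Ainf/p\Ainf \to \O_{\Cp}/p\O_{\Cp}$, and unwinding the definition of $\xi^\sharp$ shows this induced map is precisely the first-coordinate projection $(\xi_0,\xi_1,\ldots) \mapsto \xi_0$ of Lemma~\ref{lem:ocpflat}. Its kernel is therefore $\Up\OCpflat$. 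Since $v_\flat(\xi_0) = 1 = v_\flat(\Up)$, and $\OCpflat$ is a valuation ring for $v_\flat$, the element $\xi_0/\Up$ has valuation $0$ and so is a unit; hence $\xi_0\OCpflat = \Up\OCpflat$, and $r \bmod p$ is divisible by $\xi_0$, equivalently by $z \bmod p$.

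With this in hand, I build inductively a sequence $(w_n)_{n\geq 0}$ in $\Ainf$ with $w_0 = 0$, $w_{n+1} \equiv w_n \pmod{p^n\Ainf}$, and $y - zw_n \in p^n\Ainf$. Writing $y - zw_n = p^n r_n$, the identity $p^n\theta(r_n) = \theta(y - zw_n) = -\theta(zw_n) = 0$ combined with the torsion-freeness of $\O_{\Cp}$ gives $\theta(r_n) = 0$. By the preliminary step, $r_n \equiv z\alpha_n \pmod p$ for some $\alpha_n \in \Ainf$, and setting $w_{n+1} = w_n + p^n\alpha_n$ produces the next term. Since $\Ainf = W(\OCpflat)$ is $p$-adically complete (being the Witt vectors of a perfect $\F_p$-algebra), the Cauchy sequence $(w_n)$ converges to some $w \in \Ainf$ satisfying $y = zw$, proving (i).

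For (ii) the argument transposes verbatim with $p$ replaced by $\pi$: the ring $\AinfK$ is $\pi$-adically complete, the isomorphism $\AinfK/\pi\AinfK \simeq \OCpflat$ is still realized by the first-coordinate projection (now in the $\pi$-adic expansion), and the induced map to $\O_{\Cp}/\pi\O_{\Cp}$ has kernel $\Upi\OCpflat$; the hypothesis $v_\flat(z \bmod \pi) = 1/e = v_\flat(\Upi)$ then ensures, as above, that $(z \bmod \pi)\OCpflat = \Upi\OCpflat$, and the successive approximation proceeds identically. The only real obstacle is the mod-$p$ (respectively mod-$\pi$) step, which is essentially a valuation-ring argument once one has correctly identified the induced quotient map of $\theta$; everything else is the standard Hensel-style lifting enabled by the completeness of $\Ainf$ and $\AinfK$.
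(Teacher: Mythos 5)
Your proof is correct and follows essentially the same route as the paper's: establish that $\theta$ reduces mod $p$ to the first-coordinate projection $\OCpflat \to \O_{\Cp}/p\O_{\Cp}$ so that $z \bmod p$ generates the kernel as soon as $v_\flat(z \bmod p) = 1$, then bootstrap by successive approximation using $p$-adic completeness of $\Ainf$. The paper is a bit terser — it computes $v_\flat(\xi_0) \geq 1$ directly from $\theta(x) = 0$ rather than naming the induced map and invoking Lemma~\ref{lem:ocpflat}, and it leaves (ii) to the reader — but the two arguments are the same.
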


\begin{rem}
In particular, an element $z$ satisfying the condition of the
first item (resp. the second item) of Proposition~\ref{prop:kertheta}
is a generator of the ideal $\ker\theta$ (resp. $\ker\theta_K$).
\end{rem}

\begin{proof}[Proof of Proposition~\ref{prop:kertheta}]
Let $z \in \Ainf$ such that $\theta(z) = 0$ and $v_\flat(\zeta) = 1$ 
with $\zeta = z \mod p$.
Let $x \in \ker\theta \cap \Ainf$. Write
$x = \sum_{i\geq 0} [\xi_i] \: p^i$ with $\xi_i \in \OCpflat$. 
From $\theta(x) = 0$, we derive that $v_p(\xi_0^\sharp) \geq 1$ and
then $v_\flat(\xi_0) \geq 1$. From the assumption $v_\flat(\zeta) 
= 1$, we find that $\zeta$ divides $\xi_0$ in $\OCpflat$.
Thus, we can write $x = z y_0 + p x_1$ with $y_0, x_1 \in 
\Ainf$. From this above equality, we derive $\theta(x_1) = 0$
and we can then repeat the argument with $x_1$, ending up with
a writing of the form
$x = z{\cdot}(y_0 + p y_1) + p^2 x_2$ with $y_1, x_2 \in \Ainf$.
Repeating this process again and again, we construct a sequence
$(y_n)_{n \geq 0}$ of elements of $\Ainf$ such that:
$$x \equiv z\cdot(y_0 + p y_1 + \cdots + p^n y_n) \pmod{p^n \Ainf}$$
for all $n$. Passing to the limit we find that $x \in z \Ainf$,
which proves~(i).

The statement~(ii) is proved similarly.
\end{proof}

We remark that there do exist elements in $\Ainf$ satisfying the 
condition of Proposition~\ref{prop:kertheta}. The simplest one is 
$[\Up]{-}p$, which is then a generator of $\ker\theta$. 
Similarly $[\Upi]{-}\pi \in \AinfK$ satisfies the
condition of Proposition~\ref{prop:kertheta} and so is a generator
of $\ker \theta_K$.
Another generator of $\ker\theta$ is $E([\Upi])$ where $E$ is minimal 
polynomial of $\pi$ over $K_0$. Indeed, on the one hand, we have 
$\theta(E([\Upi])) = E(\theta([\Upi])) = E(\pi) = 0$ and, on the other 
hand, $E([\Upi])$ reduces modulo $p$ to the constant coefficient of $E$, 
which has valuation $1$.

The next proposition gives another quite interesting generator of
$\ker\theta$.

\begin{prop}
\label{prop:omega}
The element 
$$\omega = \frac{[\Ueps]-1}{[\Ueps^{1/p}]-1}
= [\Ueps^{1/p}] + [\Ueps^{1/p}]^2 + \cdots + [\Ueps^{1/p}]^{p-1}$$
satisfies the condition of Proposition~\ref{prop:kertheta}.(i).
\end{prop}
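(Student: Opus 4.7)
The plan is to verify in turn the three conditions of Proposition \ref{prop:kertheta}.(i): that $\omega$ lies in $\Ainf$, that $\theta(\omega) = 0$, and that the reduction $\omega \bmod p$ has $\flat$-valuation equal to $1$. The displayed expansion itself follows from the algebraic identity $X^p - 1 = (X-1)(1 + X + \cdots + X^{p-1})$ applied to $X = [\Ueps^{1/p}]$, using $[\Ueps^{1/p}]^p = [(\Ueps^{1/p})^p] = [\Ueps]$ by multiplicativity of Teichmüller lifts; this exhibits $\omega$ as a polynomial in $[\Ueps^{1/p}]$ with $\Z$-coefficients, so $\omega \in \Ainf$.

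For the vanishing under $\theta$, I would compute $\theta([\Ueps^{1/p}]) = (\Ueps^{1/p})^\sharp$. Since $\Ueps^{1/p} = \varphi^{-1}(\Ueps)$ corresponds to the shifted sequence $(\bar\varepsilon_1, \bar\varepsilon_2, \ldots)$, taking any lift $\hat\xi_n = \varepsilon_{n+1}$, the defining limit gives $(\Ueps^{1/p})^\sharp = \lim_n \varepsilon_{n+1}^{p^n} = \varepsilon_1$, the chosen primitive $p$-th root of unity. Consequently $\theta(\omega) = \sum_{k=0}^{p-1} \varepsilon_1^k = 0$, since $\varepsilon_1$ is a nontrivial $p$-th root of unity.

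For the third condition, I would use the identification $\Ainf/p\Ainf \simeq \OCpflat$, under which $[\xi] \bmod p$ is $\xi$. Thus
$$\omega \bmod p \,=\, \sum_{k=0}^{p-1} (\Ueps^{1/p})^k \,=\, \frac{\Ueps - 1}{\Ueps^{1/p} - 1} \quad \text{in } \OCpflat,$$
and the task reduces to computing two valuations. From the definition $v_\flat(\xi) = \lim_n p^n v_p(\xi_n)$ together with the standard identity $v_p(\varepsilon_n - 1) = \frac{1}{p^{n-1}(p-1)}$ for $n \geq 1$, one finds $v_\flat(\Ueps - 1) = \frac{p}{p-1}$ and $v_\flat(\Ueps^{1/p} - 1) = \frac{1}{p-1}$; subtracting gives $v_\flat(\omega \bmod p) = 1$, as required.

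There is no real obstacle: the argument is essentially bookkeeping with the definitions. The one place where care is needed is distinguishing $\Ueps$ (whose first coordinate is $1$) from $\Ueps^{1/p}$ (whose first coordinate is $\bar\varepsilon_1$), since a momentary confusion between them would give $\theta([\Ueps]) = 1$ instead of the correct value $\theta([\Ueps^{1/p}]) = \varepsilon_1$ that makes the sum vanish.
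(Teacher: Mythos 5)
Your proof is correct and follows essentially the same path as the paper's: you reduce mod $p$, identify $\omega \bmod p = \frac{\Ueps - 1}{\Ueps^{1/p}-1}$ in $\OCpflat$, and compute $v_\flat$ from the standard valuation $v_p(\varepsilon_n - 1) = \frac{1}{p^{n-1}(p-1)}$; for the vanishing of $\theta(\omega)$ you evaluate the polynomial sum rather than arguing via the fraction (numerator vanishes, denominator does not), which is only a cosmetic difference. Note that the displayed identity in the statement of the proposition in fact omits the constant term $1$; your version $\sum_{k=0}^{p-1}[\Ueps^{1/p}]^k$ is the correct one.
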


\begin{proof}
We want to check that $\theta(\omega) = 0$ and $v_\flat(\omega\mod p) = 1$. 
The first equality follows from the fact that $\theta([\Ueps]) = 1$ and 
the fact that $\theta([\Ueps^{1/p}])$ is a primitive $p$-th root of 
unity. Let us now prove that $v_\flat(\omega\mod p) = 1$. Reducing
modulo $p$, we find that $\omega \mod p = \frac{\Ueps-1}{\Ueps^{1/p}-1}$. 
Write $\Ueps = (\varepsilon_0, \varepsilon_1, \varepsilon_2, \ldots)$ 
where $\varepsilon_n$ is the reduction modulo $p$ of a primitive 
$p^n$-th root of unity. Coming back to the definition of~$v_\flat$, we 
find:
\begin{equation}
\label{eq:vflateta}
v_\flat(\omega \mod p) = 
\lim_{n \to \infty} p^{n+1} \cdot 
v_p\left(\frac{\varepsilon_n - 1} {\varepsilon_{n+1} - 1}\right).
\end{equation}
By the standard properties of the cyclotomic extension (\emph{cf}
\cite[Chap. IV, \S 4]{serre}), we know that the $p$-adic valuation of 
$\varepsilon_n{-}1$ is $\frac 1{p^n(p-1)}$. Injecting this in
\eqref{eq:vflateta}, we obtain $v_\flat(\omega \mod p) = 
\frac p{p-1} - \frac 1{p-1} = 1$.
\end{proof}

\begin{rem}
\label{rem:NPeps}
Since two generators of $\ker\theta$ differ by multiplication by a unit, 
they have to share the same Newton polygon up to translation by a 
horizontal vector. If in addition, they satisfy the conditions of 
Proposition~\ref{prop:kertheta}, the Newton polygons must coincide since 
they both admit $(0,1)$ as an extremal point.
Clearly, the Newton polygon of $[\Up] - p$ is the convex polygon whose 
vertices are $(0,+\infty)$, $(0,1)$, $(1,0)$ and $(+\infty,0)$. The 
Newton polygon of $\omega$ is then the same. Writing
\begin{equation}
\label{eq:prodeps}
[\Ueps]-1 = \prod_{n=0}^\infty \varphi^{-n}(\omega)
\end{equation}
and using the multiplicative properties of the Newton polygons, we
find that $\NP_\inf([\Ueps]-1)$ starts at $(0,\frac 1{p-1})$ and then
has a segment of length $1$ of slope $p^{-n}$ for each nonnegative
integer $n$ (\emph{cf} Figure~\ref{fig:NewtonUeps}).
\begin{figure}
\hfill
\begin{tikzpicture}[rotate=90,xscale=5,yscale=-2]
\draw[->] (-0.1,0)--(1.2,0);
\draw[->] (0,-0.5)--(0,5.1);
\draw[thick] (1.2,0)--(1,0)--(0.5,1)--(0.25,2)--(0.125,3)--(0.0625,4);
\draw[thick,dashed] (0.0625,4)--(0.03125,5);
\begin{scope}[dotted]
\draw (0.5,0)--(0.5,1)--(0,1);
\draw (0.25,0)--(0.25,2)--(0,2);
\draw (0.125,0)--(0.125,3)--(0,3);
\end{scope}
\node[below left,scale=0.8] at (0,0) { $0$ };
\node[scale=0.5] at (1,0) { $\bullet$} ;
\node[left,scale=0.8] at (1,0) { $\phantom {\frac p{p-1} ={}}\nu = \frac p{p-1}$ };
\node[left,scale=0.8] at (0.5,0) { $\nu/p$ };
\node[left,scale=0.8] at (0.25,0) { $\nu/p^2$ };
\node[left,scale=0.8] at (0.125,0) { $\nu/p^3$ };
\node[below,scale=0.8] at (0,1) { $1$ };
\node[below,scale=0.8] at (0,2) { $2$ };
\node[below,scale=0.8] at (0,3) { $3$ };
\node[scale=0.7,rotate=-51] at (0.75,0.61) { slope: $-1$ };
\node[scale=0.7,rotate=-33] at (0.4,1.56) { slope: $-1\hspace{-0.1ex}/\hspace{-0.1ex}p$ };
\end{tikzpicture}
\hfill\null

\caption{The Newton polygon of $[\Ueps] - 1$}
\label{fig:NewtonUeps}
\end{figure}
\end{rem}

\begin{prop}
\label{prop:kerthetaphi}
The element $[\Ueps]-1$ is a generator of the ideal
$\bigcap_{n \geq 0} \ker(\theta \circ \varphi^n)$.
\end{prop}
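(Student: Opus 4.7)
The inclusion $([\Ueps] - 1)\Bpinf \subseteq \bigcap_{n \geq 0} \ker(\theta \circ \varphi^n)$ is immediate: using that $\Ueps^\sharp = 1$, we compute
$$\theta(\varphi^n([\Ueps] - 1)) = \theta([\Ueps^{p^n}] - 1) = (\Ueps^\sharp)^{p^n} - 1 = 0.$$
For the reverse inclusion, I first observe that $\varphi$ is an automorphism of $\Bpinf$ (induced by the shift on the perfect ring $\OCpflat$), so for every $n$,
$$\ker(\theta \circ \varphi^n) = \varphi^{-n}(\ker \theta) = \varphi^{-n}(\omega) \, \Bpinf$$
is the principal ideal generated by $\varphi^{-n}(\omega)$, thanks to Propositions~\ref{prop:kertheta} and~\ref{prop:omega}.

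Fix $x \in \bigcap_n \varphi^{-n}(\omega)\Bpinf$. I will establish by induction on $N \geq 0$ a factorization $x = P_N \, y_N$ with $y_N \in \Bpinf$, where $P_N = \prod_{n=0}^N \varphi^{-n}(\omega)$. The key input for the inductive step is the computation $\theta(\varphi^k(\omega)) = p$ for every $k \geq 1$: indeed, $\varphi^k(\omega) = 1 + [\Ueps^{p^{k-1}}] + \cdots + [\Ueps^{p^{k-1}}]^{p-1}$, and each Teichmüller representative satisfies $\theta([\Ueps^{p^{k-1}}]) = 1$. Applying $\theta \circ \varphi^{N+1}$ to the equation $x = P_N y_N$ then gives $0 = p^{N+1} \cdot \theta(\varphi^{N+1}(y_N))$, so $y_N \in \ker(\theta \circ \varphi^{N+1}) = \varphi^{-(N+1)}(\omega)\Bpinf$, and we may write $y_N = \varphi^{-(N+1)}(\omega) \, y_{N+1}$.

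Since $\omega = ([\Ueps] - 1)/([\Ueps^{1/p}] - 1)$, the product $P_N$ telescopes to $([\Ueps] - 1)/([\Ueps^{1/p^{N+1}}] - 1)$, so the divisibility just obtained reads
$$([\Ueps^{1/p^{N+1}}] - 1) \cdot x = ([\Ueps] - 1) \cdot y_N \quad \text{for every } N \geq 0.$$
To conclude that $[\Ueps] - 1$ itself divides $x$, I appeal to the interpretation of $\Bpinf$ as a ring of analytic functions developed by Fargues and Fontaine and referenced in the discussion of Newton polygons just before the statement: the divisibilities above force $\NP_\inf(x)$ to dominate $\NP_\inf([\Ueps] - 1)$, and a Weierstrass-type factorization then yields $y \in \Bpinf$ with $x = ([\Ueps] - 1)\, y$. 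The \emph{main obstacle} is precisely this final step: although the partial products $P_N$ divide $x$ for every $N$, they do \emph{not} converge to $[\Ueps] - 1$ in the weak topology of $\Ainf$ (the denominators $[\Ueps^{1/p^{N+1}}] - 1$ do not tend to $1$), so the conclusion cannot be extracted by a naive sequential limit and requires either the Fargues--Fontaine Weierstrass theorem or a careful direct estimate controlling the $y_N$.
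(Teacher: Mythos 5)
Your proof correctly establishes the easy inclusion, the identification $\ker(\theta\circ\varphi^n) = \varphi^{-n}(\omega)\,\Bpinf$, and the inductive factorization $x = P_N\, y_N$ with $P_N = \prod_{n=0}^N\varphi^{-n}(\omega)$ — all of which the paper's proof also uses (with the same computation $\theta(\varphi^k(\omega)) = p$ for $k\geq 1$, stated there in the weaker form $\theta\circ\varphi(\omega)\neq 0$). And your diagnosis that $P_N$ does not converge to $[\Ueps]-1$ in the weak topology, so a naive sequential limit fails, is correct. However, the final step is a genuine gap: you invoke ``a Weierstrass-type factorization'' or ``a careful direct estimate controlling the $y_N$'' without carrying either out, whereas the paper does not need the Fargues--Fontaine machinery at all.

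The trick you are missing is to change the target of the limiting process. Rather than trying to extract a limit from the factorizations $x = P_N y_N$ (which is what fails), one should instead read off from them a single valuation estimate \emph{modulo $p$} and then appeal to the successive-approximation argument already used in Proposition~\ref{prop:kertheta}. Concretely: working in $\Ainf$ (one reduces to this case by clearing denominators of $p$), the divisibility of $x$ by $P_N$ forces
$$v_\flat(x \bmod p) \,\geq\, \sum_{n=0}^{N} v_\flat\bigl(\varphi^{-n}(\omega) \bmod p\bigr) = 1 + \tfrac1p + \cdots + \tfrac1{p^N},$$
and letting $N\to\infty$ gives $v_\flat(x\bmod p) \geq \tfrac{p}{p-1} = v_\flat\bigl([\Ueps]-1 \bmod p\bigr)$. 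This is exactly the hypothesis needed to rerun, word for word, the second half of the proof of Proposition~\ref{prop:kertheta}: one writes $x = ([\Ueps]-1)\,y_0 + p\,x_1$, checks that $x_1$ again lies in $\bigcap_n\ker(\theta\circ\varphi^n)$ (applying $\theta\circ\varphi^n$ to the identity kills the first term and produces a factor $p$), applies the same valuation bound to $x_1$, divides again, and sums the resulting geometric series $p$-adically to obtain $x\in ([\Ueps]-1)\Ainf$. The key conceptual point is that although the $P_N$ do not converge, the valuations $v_\flat(P_N \bmod p)$ do converge, and the $p$-adic completeness of $\Ainf$ (rather than any analytic factorization theorem) takes care of lifting the mod-$p$ divisibility to a genuine one.
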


\begin{rem}
Proposition~\ref{prop:kerthetaphi} is not surprising after 
formula~\eqref{eq:prodeps}. Indeed if $x$ is such that $\theta
\circ \varphi^n(x) = 0$ for all $n \geq 0$, then $x$ must to divisible
by $\varphi^{-n}(\omega)$ for all $n \geq 0$. It is then reasonable
to expect to $[\Ueps]-1 = \prod_{n=0}^\infty \varphi^{-n}(x)$ 
divides $x$ since the Newton polygon of the factors do not share
any common slope (and thus the factors look pairwise coprime). It is 
possible to turn this vague idea into a rigourous proof. However, we 
prefer giving below a more direct argument, which is easier to write 
down.
\end{rem}

\begin{proof}
Clearly $[\Ueps]-1 \in \bigcap_{n \geq 0} \ker(\theta \circ \varphi^n)$.
Repeating the second part of the proof of 
Proposition~\ref{prop:kertheta}, we are reduced to show that any element 
$x \in \Ainf$ such that $\theta \circ \varphi^n(x) = 0$ verifies 
$v_\flat(x \mod p) \geq \frac p{p-1}$. From $\theta(x) = 0$, we
deduce that $x$ can be written $\omega x_1$ with $x_1 \in \Ainf$.
Since $\theta\circ\varphi(\omega) \neq 0$, we deduce that
$\theta\circ\varphi(x_1)$ must vanish. Therefore there exists $x_2
\in \Ainf$ such that $x_1 = \varphi^{-1} (\omega) x_2$, \emph{i.e.}
$x = \omega \varphi^{-1}(\omega) x_2$. By induction, we find that
$x$ has to be divisible by $x = \omega \varphi^{-1}(\omega) \cdots 
\varphi^{-n}(\omega)$ in $\Ainf$ for all $n$. Reducing modulo $p$,
this implies
$v_\flat(x \mod p) \geq 1 + \frac 1 p + \cdots + \frac 1{p^n}$
for all $n$. Passing to the limit, we find $v_\flat(x \mod p)
\geq \frac p{p-1}$ as expected.
\end{proof}

\subsection{The ring $\Bcrys$ and some variants}

In this subsection, we introduce the ring $\Bcrys$ and its variants 
$\Bmu$'s. The former is interesting because it fits very well in the 
crystalline framework and therefore is well suited for studying 
cohomology. 
Nevertheless, as we shall see, $\Bcrys$ does not behave
very well from the purely algebraic point of view. The $\Bmu$'s are
substitutes to $\Bcrys$ which share its most important features and, 
in addition, exhibit better algebraic (and analytic) properties, and 
hence are easier to work with.

\subsubsection{Divided powers}

Given $x \in \Ainf$, we denote by $\Ainf\left<x\right>$ the 
sub-$\Ainf$-algebra of $\Bpinf$ generated by the elements 
$\frac{x^n}{n!}$ for $n$ varying in $\N$. Obviously if $y$ divides $x$ 
in $\Ainf$, we have $\Ainf\left<x\right> \subset 
\Ainf\left<y\right>$. In particular, $\Ainf\left<x\right>$ only
depends on the principal ideal $x \Ainf$.
By the proof of Proposition~\ref{prop:kertheta}, we know that
$\Ainf \cap \ker\theta$ is a principal ideal of $\Ainf$. The
following definition then makes sense.

\begin{deftn}
We define $\Acrys$ as the $p$-adic completion of 
$\Ainf\left< z\right>$ where $z$ is some generator of the
ideal $\Ainf \cap \ker\theta$.
We set $\Bpcrys = \Acrys[\frac 1 p]$.
\end{deftn}

Rephrasing the definition, we can write:
$$\Acrys = \Ainf\left<\right.\![\Up]-p\!\left.\right>^\wedge
= \Ainf\left<\omega\right>^\wedge$$
where the exponent ``$\wedge$'' means the $p$-adic completion
and the element $\omega$ is the one of Proposition~\ref{prop:omega}.

\begin{lem}
For $x, y \in \Ainf$ with $x \equiv y \pmod {p \Ainf}$,
we have $\Ainf\left<x\right> = \Ainf\left<y\right>$.
\end{lem}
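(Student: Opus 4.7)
The plan is to write $y = x + pz$ with $z \in \Ainf$ and then expand the divided powers $\frac{y^n}{n!}$ using the binomial formula, checking that each resulting term lives in $\Ainf\langle x\rangle$. By symmetry (the roles of $x$ and $y$ being interchangeable), this will give both inclusions.

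Concretely, I would compute
\[
\frac{y^n}{n!} \;=\; \frac{(x+pz)^n}{n!} \;=\; \sum_{k=0}^{n} \binom{n}{k}\,\frac{x^k\,(pz)^{n-k}}{n!} \;=\; \sum_{k=0}^{n} \frac{x^k}{k!} \cdot \frac{p^{n-k}}{(n-k)!}\, z^{n-k}.
\]
The factor $\frac{x^k}{k!}$ belongs to $\Ainf\langle x\rangle$ by definition, and $z^{n-k} \in \Ainf$. The key point is therefore that $\frac{p^m}{m!} \in \Zp$ for every $m \geq 0$: using the standard estimate $v_p(m!) \leq \frac{m}{p-1}$, we get $v_p\bigl(\frac{p^m}{m!}\bigr) \geq m - \frac{m}{p-1} = \frac{m(p-2)}{p-1} \geq 0$ (which is fine even for $p=2$). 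Hence each summand lies in $\Ainf\langle x\rangle$, and so $\frac{y^n}{n!} \in \Ainf\langle x\rangle$. This proves $\Ainf\langle y\rangle \subset \Ainf\langle x\rangle$, and exchanging $x$ and $y$ gives the reverse inclusion.

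There is no real obstacle here; the only point that requires a moment of care is the arithmetic check that $\frac{p^m}{m!}$ is a $p$-adic integer, which is a standard consequence of Legendre's formula. Everything else is a direct binomial expansion.
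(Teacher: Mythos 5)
Your proof is correct and is essentially the same as the paper's: both write the difference as $p$ times an element of $\Ainf$, expand the divided power by the binomial formula, and reduce to the $p$-integrality of $p^m/m!$ (the paper bounds $v_p(m!)\leq m$ directly from $v_p(m!)=\frac{m-s_p(m)}{p-1}$, you use the equivalent Legendre estimate $v_p(m!)\leq\frac{m}{p-1}$). No meaningful difference in approach.
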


\begin{proof}
By symmetry, it is enough to prove that 
$\Ainf\left<x\right> \subset \Ainf\left<y\right>$, \emph{i.e.}
that $\frac{x^n}{n!} \in \Ainf\left<y\right>$ for all positive
integer~$n$. Writing $x = y + pz$ with $z \in \Ainf$, we have:
\begin{equation}
\label{eq:xnnfact}
\frac{x^n}{n!} = \frac{(y+pz)^n}{n!}
= \sum_{i=0}^n \frac{p^i z^i}{i!} \cdot \frac{y^{n-i}}{(n{-}i)!}.
\end{equation}
We recall that $v_p(i!) = \frac{i - s_p(i)}{p-1}$ where $s_p(i)$
denotes the sum of the digits of $i$ in radix $p$. In particular,
we observe that $v_p(i!) \leq i$, so that the fraction $\frac{p^i}
{i!}$ is in $\Zp$. The formula~\eqref{eq:xnnfact} then presents
$\frac{x^n}{n!}$ as an $\Ainf$-linear combination of elements of
the form $\frac{y^j}{j!}$ for $j$ between $0$ and $n$. Therefore,
$\frac{x^n}{n!} \in \Ainf\left<y\right>$ and we are done.
\end{proof}

The above lemma shows that $\Acrys = 
\Ainf\left<\right.\![\Up]\!\left.\right>^\wedge$. Since 
$\Ainf\left<x\right>$ depends only on the ideal generated by $x$, we 
also have $\Acrys = \Ainf\left<\right.\![\xi]\!\left.\right>^\wedge$ 
for any element $\xi \in \OCpflat$ with $v_\flat(\xi) = 1$.

\paragraph{Topology and additional structures.}

Since $\Acrys$ is defined as a $p$-adic completion, it is quite natural 
to endow $\Acrys$ (and $\Bpcrys$) with the $p$-adic topology. 
Noticing that we can obviously write $[\Up]^n = n! \cdot 
\frac{[\Up]^n}{n!}$, it follows from the defintion of $\Acrys$ that 
$[\Up]^n$ tends to zero when $n$ goes to infinity (since $n!$ goes to 
zero for the $p$-adic topology). 
In particular the inclusion $\Ainf \to \Acrys$ is continuous. 
Inverting~$p$, we find that the inclusion $\Bpinf \to \Bpcrys$ is 
continuous as well.

Besides, we observe that the Frobenius extends canonically to a ring 
homomorphism $\varphi : \Acrys \to \Acrys$. This can be checked by 
noticing that $\Ainf\left<\right.\![\Up]\!\left.\right>$ is stable 
under the Frobenius since $\varphi\big(\frac{[\Up]^n}{n!}\big) = 
[\Up]^{np-n} \cdot \varphi\big(\frac{[\Up]^n}{n!}\big)$.
Inverting~$p$, we obtain an extension of the Frobenius to $\Bpcrys$.
We shall continue to denote it by $\varphi$ in the sequel.
Similarly, the action of $G_K$ extends to $\Bpcrys$.

The embedding $W(\bar k) \to \Ainf \to \Acrys$ endows $\Acrys$ with
a structure of $W(\bar k)$-algebra. Similarly $\Bpcrys$ is an
algebra over $\hat K_0^\ur$. It then makes sense to define
$\AcrysK = \O_K \otimes_{W(k)} \Acrys$ and 
$\BpcrysK = K \otimes_{K_0} \Bpcrys = \AcrysK[\frac 1 p]$.

\subsubsection{Some analytic analogues of $\Acrys$}
\label{sssec:Amu}

%\paragraph{Analytic expansion of elements of $\Acrys$.}

In \S\ref{ssec:Ainf}, we saw that elements of $\Ainf$ admitted a nice
series expansion, allowing for an analytic interpretation of the
ring $\Ainf$.
To some extent, this point of view is also meaningful for $\Acrys$.
Indeed, it follows from 
$\Acrys =
\Ainf\left<\right.\![\Up]\!\left.\right>^\wedge$, that any
element $x \in \Acrys$ has a unique expansion of the form:
\begin{equation}
\label{eq:convAcrys}
\begin{array}{rl}
& \displaystyle
x = \sum_{i \in \Z} [\xi_i] \: p^i \qquad (\xi_i \in \OCpflat) 
\smallskip \\
\text{with} & 
v_\flat(\xi_i) - \nu(i) \geq 0
\quad \text{and} \quad
\displaystyle
\lim_{i \to -\infty} v_\flat(\xi_i) - \nu(i) = +\infty
\end{array}
\end{equation}
where, for $i \geq 0$, $\nu(i) = 0$ and, for $i < 0$, $\nu(i)$ denotes 
the smallest integer $n$ such that $v_p(n!) + i \geq 0$. From the
formula $v_p(n!) = \frac{n - s_p(n)}{p-1} = \frac n{p-1}$,
we derive that, for $i \ll 0$, we have the estimation:
\begin{equation}
\label{eq:estimnui}
- i\cdot(p-1) \leq \nu(i) \leq - i\cdot(p-1) + O\big(\log |i|\big).
\end{equation}
We insist on the fact that the term $O(\log |i|)$ is not bounded (it may 
have order of magnitude $(p{-}1){\cdot} \frac{\log|i|}{\log p}$); hence, 
we cannot replace $\nu(i)$ by $-i{\cdot}(p{-}1)$ 
in~\eqref{eq:convAcrys}. We will circumvent this difficulty later on.
Figure~\ref{fig:convAcrys} illustrates the convergence conditions
discussed above: the grey part is the region on which $v_\flat(\xi_i) 
- \nu(i) \geq 0$.

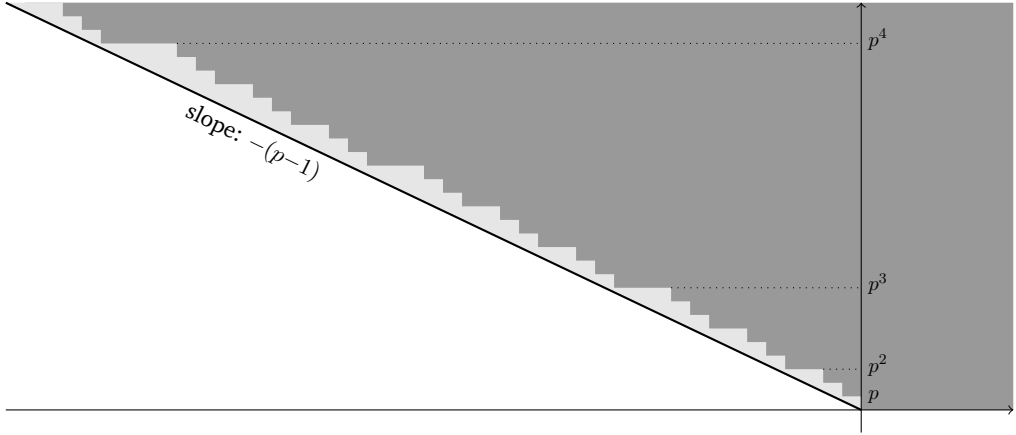
\begin{figure}
\hfill
\begin{tikzpicture}[rotate=90,xscale=0.06,yscale=-0.25]
\fill[black!10] (0,0)--(90,-45)--(90,0);
\fill[black!40]
      (0,8)--(0,0)
    --(3,0)--(3,-1)
    --(6,-1)--(6,-2)
    --(9,-2)--(9,-4)
    --(12,-4)--(12,-5)
    --(15,-5)--(15,-6)
    --(18,-6)--(18,-8)
    --(21,-8)--(21,-9)
    --(24,-9)--(24,-10)
    --(27,-10)--(27,-13)
    --(30,-13)--(30,-14)
    --(33,-14)--(33,-15)
    --(36,-15)--(36,-17)
    --(39,-17)--(39,-18)
    --(42,-18)--(42,-19)
    --(45,-19)--(45,-21)
    --(48,-21)--(48,-22)
    --(51,-22)--(51,-23)
    --(54,-23)--(54,-26)
    --(57,-26)--(57,-27)
    --(60,-27)--(60,-28)
    --(63,-28)--(63,-30)
    --(66,-30)--(66,-31)
    --(69,-31)--(69,-32)
    --(72,-32)--(72,-34)
    --(75,-34)--(75,-35)
    --(78,-35)--(78,-36)
    --(81,-36)--(81,-40)
    --(84,-40)--(84,-41)
    --(87,-41)--(87,-42)
    --(90,-42)--(90,-44)
    --(90,8);
\draw[->] (-5,0)--(90,0);
\draw[->] (0,-45)--(0,8);
\draw[thick] (0,0)--(90,-45);
\begin{scope}[dotted]
%\draw (6,-1)--(6,0);
\draw (9,-2)--(9,0);
%\draw (18,-6)--(18,0);
\draw (27,-10)--(27,0);
%\draw (54,-23)--(54,0);
\draw (81,-36)--(81,0);
\end{scope}
\node[scale=0.7,right] at (3,0) { $p$ };
\node[scale=0.7,right] at (9,0) { $p^{\smash{2}}$ };
\node[scale=0.7,right] at (27,0) { $p^{\smash{3}}$ };
\node[scale=0.7,right] at (81,0) { $p^{\smash{4}}$ };
\node[scale=0.8,rotate=-26] at (59,-32) { slope: $-(p{-}1)$ };
\end{tikzpicture}
\hfill\null

\caption{Convergence conditions for elements in $\Acrys$}
\label{fig:convAcrys}
\end{figure}

\paragraph{Analytic functions on annuli.}

The function $\nu$ that appeared in the formula~\eqref{eq:convAcrys} 
has a very erratic behavior. This is unfortunate for two reasons: the 
ring $\Acrys$ we defined do not have pleasant algebraic properties (for 
instance, it is not noetherian), nor a nice analytic interpretation
(its elements are not anayltic functions defined on a nice domain).
In order to get around these difficulties, we introduce a variant
of $\Acrys$ which does not have these defaults.
More precisely, given a positive real number $\mu$, we introduce
the ring $\Amu$ consisting of series of the form:
$$\begin{array}{rl}
& \displaystyle
x = \sum_{i \in \Z} [\xi_i] \: p^i \qquad (\xi_i \in \OCpflat) 
\smallskip \\
\text{with} & 
v_\flat(\xi_i) + \mu i \geq 0
\quad \text{and} \quad
\displaystyle
\lim_{i \to -\infty} v_\flat(\xi_i) + \mu i = +\infty.
\end{array}$$
When $\mu$ is rational\footnote{Otherwise, an element $\xi$ with the 
required properties does not exist.}, $\Amu$ is the $p$-adic completion 
of $\Ainf\big[\frac{[\xi]}p\big]$ for any $\xi \in \Ainf$ with 
$v_\flat(\xi) = \mu$.
We let $\Bpmu = \Amu[\frac 1 p]$. The elements of $\Bpmu$ are 
series of the form:
$$\begin{array}{rl}
& \displaystyle
x = \sum_{i \in \Z} [\xi_i] \: p^i \qquad (\xi_i \in \OCpflat) 
\smallskip \\
\text{with} & 
\displaystyle
\lim_{i \to -\infty} v_\flat(\xi_i) + \mu i = +\infty
\end{array}$$
\emph{i.e.} the same conditions as for $\Amu$ except that the condition 
of positivity has been dropped.
From the analytic point of view, elements of $\Bpmu$ should be 
considered as bounded analytic functions (of the variable $p$) on the 
annulus $\{ 0 \leq v_\flat(\cdot) < \mu\}$.

It is clear that $\Amu \subset A_{\mu'}$ (resp. $\Bpmu \subset 
B^+_{\mu'}$) as soon as $\mu \geq \mu'$. However, the reader should be 
careful that the functions $\mu \mapsto \Amu$ and $\mu \mapsto \Bpmu$ 
are not continuous in the sense that $\Amu$ is strictly included in 
$\bigcap_{\mu' < \mu} A_{\mu'}$, and similarly for the $\Bpmu$'s. 
In the analytic language, a function in $\bigcap_{\mu' < \mu} 
B^+_{\mu'}$ is analytic on the annulus $\{ 0 \leq v_\flat(\cdot) < 
\mu\}$ but not necessarily bounded. Similarly $\bigcap_{\mu > 0} 
\Bpmu$ is strictly greater than the ring $\Bpinf$ we have introduced 
in \S\ref{ssec:Ainf}; actually, we shall construct soon a quite important
element $t$ lying in the former ring but not in the latter.
The relation between $\Bpcrys$ and the $\Bpmu$'s is also simple
to understand. Indeed, the estimation~\eqref{eq:estimnui} shows 
that $\Bpmu \subset \Bpcrys \subset B^+_{p-1}$ for all $\mu >
p{-}1$  (\emph{cf} also Figure~\ref{fig:convAcrys}). 
At the integral level, we have $A_p \subset \Acrys \subset A_{p-1}$.

For $\mu > 0$, we also define $\AmuK = \O_K \otimes_{W(k)}
\Amu$ and $\BpmuK = K \otimes_{K_0} \Bpmu$. Elements in
$\BpmuK$ are series of the form $\sum_{i \in \Z} [\xi_i] \: 
\pi^i$ with $\lim_{i \to -\infty} v_\flat(\xi_i) + \frac {\mu i} e 
= +\infty$. The subring $\AmuK$ is characterized by the
positivity condition $v_\flat(\xi_i) + \mu
\big\lfloor\frac i e\big\rfloor \geq 0$ for all $i \in \Z$.

The notion of Newton polygons, which was defined for elements of 
$\Bpinf$ (resp. $\BpinfK$) in \S\ref{ssec:Ainf}, admits a
straightforward extension to $\Bpmu$ (resp. $\BpmuK$). 
Precisely, if $x = \sum_{i \in \Z} [\xi_i] \: p^i \in \Bpmu$ (resp. $x 
= \sum_{i \in \Z} [\xi_i] \: \pi^i \in \BpmuK$), we define $\NP_\mu(x)$ 
as the convex hull of the points $(i, v_\flat(i))$ (resp. $(\frac i e, 
v_\flat(i))$) together with the two points at infinity $(0, +\infty)$ 
and $+\infty{\cdot}(-1,\mu)$. When $x \in \bigcap_{\mu > 0} \Bpmu$
(resp. $x \in \bigcap_{\mu > 0} \BpmuK$), we define 
$\NP_\inf(x) = \bigcap_{\mu > 0} \NP_\mu(x)$. One checks easily 
that this definition agrees with the definition of $\NP_\inf$ on
$\Bpinf$ (resp. on $\BpinfK$) we gave earlier.

Finally, we observe that the Galois action and the Frobenius are 
well-defined on the $\Amu$'s and $\Bpmu$'s. Even better, for all 
$\mu > 0$, the Frobenius induces isomorphisms of rings $A_{\mu} \to 
A_{p\mu}$, $\Bpmu \to B^+_{p\mu}$ and $\BpmuK \to B^+_{p\mu,K}$.
As for Newton polygons, they are preserved under the action of $G_K$
and we have the following transformation formula under Frobenius:
$$\NP_{\mu p}(\varphi(x)) = \varphi_{\R^2}(\NP_\mu(x))
\quad \text{where} \quad
\varphi_{\R^2} : \R^2 \to \R^2, \, (i,v) \mapsto (i,pv)$$
for $x \in \BpmuK$. Passing to the limit on $\mu$, we find that 
$\NP_\inf(\varphi(x)) = \varphi_{\R^2}(\NP_\inf(x))$ for all $x \in
\bigcap_{\mu > 0} \BpmuK$.

\subsubsection{The element $t$}

An essential property of $\Acrys$ is that it contains a period for the 
cyclotomic character, that is a special element on which Galois acts by 
multiplication by $\chi_\cycl$. This distinguished element is:
$$t = \log\:[\Ueps] = \sum_{i=1}^\infty (-1)^{i-1} {\cdot}
\frac{([\Ueps]{-}1)^i} i.$$
Observe that the latter sum converges in $\Acrys$ since its
$i$-th summand is equal to:
$$(-1)^{i-1} \cdot (i{-}1)! \cdot \big([\Ueps^{1/p}] - 1\big)^i \cdot
\frac{\omega^i}{i!}$$
and therefore goes to $0$ in $\Acrys$, thanks to the factor $(i{-}1)!$
which converges to $0$ for the $p$-adic topology. A similar computation
shows that $t$ actually lies in $\Bpmu$ for all $\mu > 0$ and in
$\Amu$ for $\mu \geq 1 - \frac 1 p$.

Recall that the Frobenius and the group $G_K$ act on $[\Ueps]$ by 
$\varphi([\Ueps]) = [\Ueps]^p$ and $g [\Ueps] = [\Ueps]^{\chi_\cycl(g)}$ 
for $g \in G_K$. Taking logarithms, we find $\varphi(t) = pt$ and
$gt = \chi_\cycl(g)\: t$ for all $g \in G_K$. The latter relation is
what we expected: the element $t$ is a period for the cyclotomic
character.

\smallskip

The Newton polygon of $t$ can also be computed\footnote{The computation 
can be carried out as follows. By Remark~\ref{rem:NPeps}, we know that 
$\NP_\inf([\Ueps]-1))$ is the set $\mathcal P^+$ defined as the convex 
hull of the points $A_n = \big(n,\frac 1{(p-1)p^n}\big)$ for $n$ varying 
in $\N$. By the multiplicativity property of Newton polygons, we find 
that $\NP\big( \frac{([\Ueps]-1)^i} i\big) = \tau_{v_p(i)} (i \mathcal 
P^+)$ where $\tau_u$ is the translation of vector $(0,-u)$. We now 
observe that each $A_n$ ($n \in \Z$) belongs to exactly one 
$\tau_{v_p(i)} (i \mathcal P^+)$: when $n \geq 0$, we have $i = 0$ and 
when $n < 0$, we have $i = p^{-n}$. Therefore the Newton polygon of $t$ 
is the convex hull of the $A_n$'s for $n$ varying in $\Z$.}.
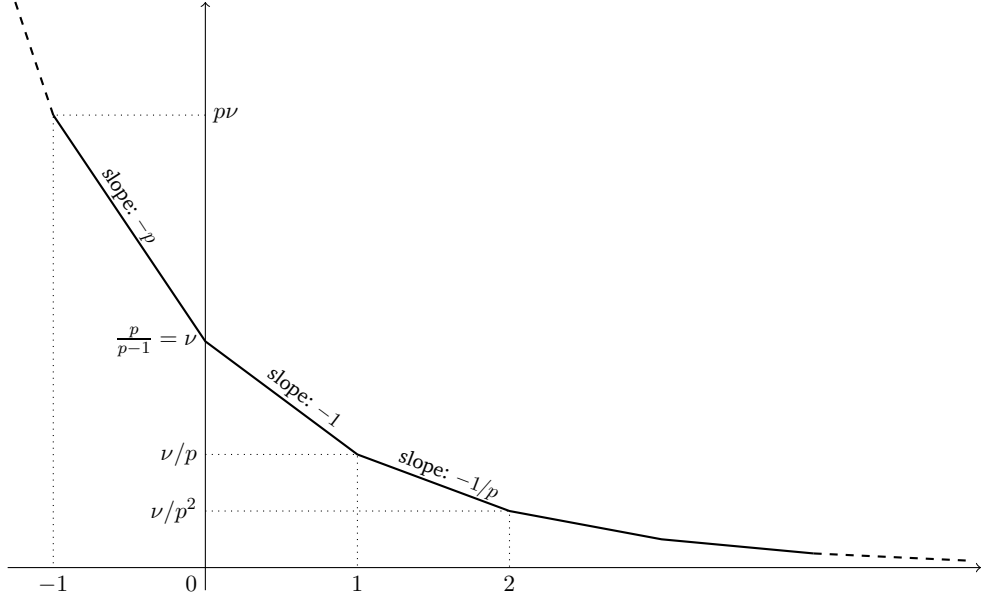
\begin{figure}
\hfill
\begin{tikzpicture}[rotate=90,xscale=3,yscale=-2]
\draw[->] (-0.1,0)--(2.5,0);
\draw[->] (0,-1.3)--(0,5.1);
\draw[thick,dashed] (2.5,-1.25)--(2,-1);
\draw[thick] (2,-1)--(1,0)--(0.5,1)--(0.25,2)--(0.125,3)--(0.0625,4);
\draw[thick,dashed] (0.0625,4)--(0.03125,5);
\begin{scope}[dotted]
\draw (2,0)--(2,-1)--(0,-1);
\draw (0.5,0)--(0.5,1)--(0,1);
\draw (0.25,0)--(0.25,2)--(0,2);
\end{scope}
\node[below left,scale=0.8] at (0,0) { $0$ };
\node[right,scale=0.8] at (2,0) { $p\nu$ };
\node[left,scale=0.8] at (1,0) { $\frac p{p-1} = \nu$ };
\node[left,scale=0.8] at (0.5,0) { $\nu/p$ };
\node[left,scale=0.8] at (0.25,0) { $\nu/p^2$ };
\node[below,scale=0.8] at (0,-1) { $-1$ };
\node[below,scale=0.8] at (0,1) { $1$ };
\node[below,scale=0.8] at (0,2) { $2$ };
\node[scale=0.7,rotate=-57] at (1.6,-0.5) { slope: $-p$ };
\node[scale=0.7,rotate=-37] at (0.75,0.65) { slope: $-1$ };
\node[scale=0.7,rotate=-21] at (0.41,1.6) { slope: $-1\hspace{-0.1ex}/\hspace{-0.1ex}p$ };
\end{tikzpicture}
\hfill\null

\caption{The Newton polygon of $t$}
\label{fig:Newtont}
\end{figure}
The result we find is displayed on Figure~\ref{fig:Newtont}; we notice
in particular that its slopes are unbounded, reflecting the fact that
$t$ is in $\Bpmu$ for all $\mu > 0$. It also remains unchanged under 
the transformation $(i,v) \mapsto (i{-}1, pv)$, reflecting the fact
that $\varphi(t) = pt$. 
In fact, the special shape of $\NP_\inf(t)$ is explained by the
existence of a decomposition of $t$ as an infinite convergent product 
(in all $\Bmu$'s), precisely:
\begin{equation}
\label{eq:prodt}
t = \prod_{n=0}^\infty \varphi^{-n}(\omega) \cdot
\prod_{n=1}^\infty \frac{\varphi^n(\omega)} p
\end{equation}
where $\omega = \frac{[\Ueps]-1}{[\Ueps]^{1/p}-1}$ is the element of
Proposition~\ref{prop:omega}. The factorization~\eqref{eq:prodt} should 
be paralleled with~\eqref{eq:prodeps}.

\begin{deftn}
For $\mu > 0$ or $\mu = \crys$, we set $\Bmu = \Bpmu[\frac 1 t]$.
\end{deftn}

\noindent
The Frobenius and the Galois action extend to $\Bmu$ without 
difficulty: for $g \in G_K$, $x \in \Bpmu$ and $m \in \N$, we put 
$\varphi(\frac x{t^m}) = \frac{\varphi(x)}{p^m t^m}$ and $g(\frac 
x{t^m}) = \frac{g x}{\chi_\cycl(g)^m \cdot t^m}$.

\subsection{The de Rham filtration and the field $\BdR$}

We recall that, in \S\ref{sssec:sharp}, we have constructed a ring 
homomorphism $\theta : B^+_{\inf} \to \C_p$, which was given by 
the explicit formula:
$$\theta : \quad \sum_{i=-\infty}^\infty [\xi_i]\: p^i \,\, 
\mapsto \,\, \sum_{i=-\infty}^\infty \xi_i^\sharp\: p^i \qquad 
(\xi_i \in \OCpflat),\, \xi_i = 0 \text{ for } i \ll 0).$$
The filtration by the power of the ideal $\ker \theta$ will play
an important role because it will eventually correspond to the de
Rham filtration on the cohomology. 
We devote this subsection to the study of its main properties.
This will lead us to the definition of the period ring $\BdR$.

\subsubsection{Definition and main properties of the de Rham filtration}

First of all, we will need to extend the morphism $\theta$ to the rings 
$\Bmu$'s we have introduced earlier.
Actually, just noticing that $v_p(\xi_i^\sharp p^i) = v_\flat(\xi_i) + 
i$, we deduce that $\theta$ extends readily to a ring homomorphism 
$\theta_\mu : \Bpmu \to \C_p$ whenever $\mu \geq 1$. Extending scalars 
to $K$, we obtain a ring homomorphism $\theta_{\mu,K} : \BpmuK \to 
\C_p$ for all $\mu \geq 1$. We observe that $\theta_\mu$ (resp. 
$\theta_{\mu,K}$) maps the subring $\Amu$ (resp. $\AmuK$) to 
$\O_{\Cp}$.

The condition $\mu\geq 1$ for the existence of $\theta_\mu$ suggests 
that the rings $A_1$, $A_{1,K}$, $B^+_1$ and $B^+_{1,K}$ will play a 
particular role. In the literature, they are often denoted by $\Amax$, 
$\AmaxK$, $\Bpmax$ and $\BpmaxK$ respectively; we will also 
use this notation in the sequel and will set $\theta_\max = \theta_1$,
$\theta_{\max,K} = \theta_{1,K}$ accordingly. Similarly, we will use
the notation $\theta_\inf$ and $\theta_{\inf,K}$ for $\theta$ and
$\theta_K$ respectively.

We recall that $\Amax$ is the $p$-adic completion of 
$\Ainf\big[\frac{[\Up]}p\big]$. 
In particular, we have canonical isomorphisms:
\begin{equation}
\label{eq:Amaxmodp}
\textstyle
\Amax / p \Amax
\, \simeq \,
\big(\OCpflat / \Up \OCpflat\big)[X] 
\, \simeq \,
\big(\O_{\Cp} / p \O_{\Cp}\big)[X], \quad
[\Up]/p \,\mapsfrom\, X
\end{equation}
the second isomorphism coming from Lemma~\ref{lem:ocpflat}.
Similarly, we have:
\begin{equation}
\label{eq:AmaxKmodpi}
\textstyle
\AmaxK / \pi \AmaxK
\, \simeq \,
\big(\OCpflat / \Upi \OCpflat\big)[X] 
\, \simeq \,
\big(\O_{\Cp} / \pi \O_{\Cp}\big)[X], \quad
[\Upi]/\pi \,\mapsfrom\, X.
\end{equation}
We can also identify the kernels of $\theta_\max$ and 
$\theta_{\max,K}$ (as we did for $\theta$ and $\theta_K$ in 
Proposition~\ref{prop:kertheta}).

\begin{prop}
\label{prop:kerthetamax}
The ideal $\ker\theta_\max$ (resp. $\ker\theta_{\max,K}$) is the principal
ideal generated by the element $[\Up] - p$ (resp.
$[\Upi] - \pi$).
\end{prop}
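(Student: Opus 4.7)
The plan is to first establish the integral statement $\ker\theta_\max \cap \Amax = ([\Up]/p - 1)\Amax$, from which the proposition follows by inverting $p$ (since $[\Up] - p = p\cdot([\Up]/p - 1)$ and $p$ is a unit in $\Bpmax$). The argument runs in parallel with the proof of Proposition~\ref{prop:kertheta}: a successive-approximation scheme driven by the explicit description of $\Amax/p\Amax$ from \eqref{eq:Amaxmodp}. The statement for $\theta_{\max,K}$ will then be obtained by the same recipe with \eqref{eq:AmaxKmodpi} replacing \eqref{eq:Amaxmodp}.

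One inclusion is trivial: $\theta_\max([\Up]) = \Up^\sharp = p$, so $[\Up] - p \in \ker\theta_\max$. For the other, use \eqref{eq:Amaxmodp} to identify $\Amax/p\Amax$ with $(\O_{\Cp}/p\O_{\Cp})[X]$ via $[\Up]/p \leftrightarrow X$. Since $\theta_\max([\Up]/p) = 1$, the induced map $\bar\theta_\max \colon \Amax/p\Amax \to \O_{\Cp}/p\O_{\Cp}$ is nothing but evaluation at $X = 1$, whose kernel is $(X-1)(\O_{\Cp}/p\O_{\Cp})[X]$ by polynomial division (valid since $X - 1$ is monic, so the argument works over any coefficient ring). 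Given $x \in \Amax \cap \ker\theta_\max$, lifting the resulting factorization provides $y_0 \in \Amax$ with
$$x = ([\Up]/p - 1)\,y_0 + p\,x_1$$
for some $x_1 \in \Amax$; applying $\theta_\max$ to both sides forces $\theta_\max(x_1) = 0$. Iterating produces sequences $(y_n), (x_n) \subset \Amax$ with $x_n = ([\Up]/p - 1)\,y_n + p\,x_{n+1}$ and $x_{n+1} \in \ker\theta_\max$. By $p$-adic completeness of $\Amax$, the series $y := \sum_{n \geq 0} p^n y_n$ converges, and one checks $x = ([\Up]/p - 1)\,y$. This establishes the integral claim; passing to $\Bpmax = \Amax[\tfrac 1 p]$ gives the proposition.

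The $K$-variant is handled identically after substituting $(\Upi, \pi)$ for $(\Up, p)$ and invoking \eqref{eq:AmaxKmodpi}; the necessary $\pi$-adic completeness of $\AmaxK$ is automatic because $p$ and $\pi^e$ are associates in $\O_K$, so the $\pi$-adic and $p$-adic topologies on $\AmaxK$ coincide. There is no genuine obstacle here; the only point requiring attention is bookkeeping the successive approximations to confirm that each $p^n y_n$ lies in $p^n \Amax$ so that the telescoping sum defining $y$ converges $p$-adically, but this is immediate from the construction.
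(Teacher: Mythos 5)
Your proof is correct and follows essentially the same route as the paper's: reduce to the integral statement $\Amax\cap\ker\theta_\max = ([\Up]/p - 1)\Amax$, identify $\ker\bar\theta_\max$ in $\Amax/p\Amax \simeq (\O_{\Cp}/p\O_{\Cp})[X]$ as the ideal $(X-1)$ using~\eqref{eq:Amaxmodp}, and then lift by $p$-adic successive approximation. The only difference is one of exposition: the paper compresses the approximation step into the phrase ``repeating the argument of Proposition~\ref{prop:kertheta}'' while you spell it out, which is a harmless (and arguably helpful) expansion.
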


\begin{proof}
We only give the proof for $\theta_\max$, the case of $\theta_{\max,K}$
being absolutely similar.
We will prove that $1 - [\Up]/p$ is a generator of the ideal
$\Amax \cap \ker \theta_\max$ of $\Amax$.
Let $\bar\theta_\max : \Amax/p\Amax \to \O_{\Cp}/p\O_{\Cp}$ be the 
morphism induced by $\theta_\max$.
Repeating the argument of Proposition~\ref{prop:kertheta}, it is enough 
to show that $\ker \bar\theta_\max$ is the principal ideal generated by 
$1 - [\Up]/p $.
Under the isomorphism~\eqref{eq:Amaxmodp}, $\bar\theta_\max$ acts by the 
identity on $\O_{\Cp}/p\O_{\Cp}$ and takes $X$ to $1$. Hence, its kernel 
is the principal ideal generated by $X{-}1$.
\end{proof}

\begin{deftn}
For $\mu \geq 1$, $\mu = \crys$ or $\mu = \inf$ and for
$m \in \N$, we define:
$$\Fil^m \Bpmu = (\ker \theta_\mu)^m
\quad \text{and} \quad
\Fil^m \BpmuK = (\ker \theta_{\mu,K})^m.$$
\end{deftn}

\noindent
We will use the notation $\gr$ to refer to the graded ring of a 
filtered ring: if $m$ is an integer and $\mathfrak A$ is a filtered ring, 
we put $\gr^m \mathfrak A = \Fil^m \mathfrak A / \Fil^{m+1} \mathfrak A$ 
and $\gr\,\mathfrak A = \bigoplus_{m \geq 0} \gr^m \mathfrak A$. 
We recall that $\gr^0 \mathfrak A$ is a ring and that $\gr^m \mathfrak
A$ is a module over $\gr^0 \mathfrak A$ for all $m \geq 0$. As for
$\gr\:\mathfrak A$, is it a graded algebra over $\gr^0\mathfrak A$.
In our case, we have $\gr^0 \Bpmu = \gr^0 \BpmuK = \Cp$ (since
$\theta_\mu$ and $\theta_{\mu,K}$ are surjective). Hence
$\gr^m \Bpmu$ and $\gr^m \BpmuK$ have a natural structure of
$\Cp$-vector space. They moreover inherit a Galois action, so that 
they are actually $\Cp$-semi-linear representations of $G_K$, 
\emph{i.e.} objects of the category $\Rep_{\Cp}(G_K)$.
From Proposition~\ref{prop:kertheta} and 
Proposition~\ref{prop:kerthetamax}, we deduce that $\gr^m \Bpinf$, 
$\gr^m \BpinfK$, $\gr^m \Bpmax$ and $\gr^m \BpmaxK$ are all 
one dimensional over $\Cp$. As we shall see below (\emph{cf} 
Proposition~\ref{prop:compkertheta}), this property also holds for 
$\gr^m \Bpmu$ and $\gr^m \BpmuK$ for any $\mu$.

\smallskip

The next proposition shows that the de Rham filtration is separated.

\begin{prop}
\label{prop:kerthetasep}
For $\mu \geq 1$, $\mu = \crys$ or $\mu=\inf$, we have
$\bigcap_m \Fil^m \Bpmu = \bigcap_m \Fil^m \BpmuK = 0$.
\end{prop}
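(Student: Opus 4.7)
My plan is to reduce all cases to $\mu = \max$ (both over $K_0$ and over $K$) and then to argue via a mod-$p$ reduction inside $\Amax$.

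For the reduction, I observe that $\Bpinf$, $\Bpcrys$, and $\Bpmu$ (for $\mu \geq 1$) all embed into $\Bpmax$: the case $\mu \geq 1$ is the stated inclusion $\Bpmu \subset B^+_{\mu'}$ for $\mu \geq \mu'$; the case $\mu = \crys$ follows from $\Bpcrys \subset B^+_{p-1} \subset \Bpmax$; and for $\mu = \inf$, elements of $\Bpinf = \Ainf[\tfrac 1 p]$ have truncated expansions lying in $\Bpmax$. Since $\theta_\mu$ is the restriction of $\theta_\max$ to $\Bpmu$, one has $\Fil^m \Bpmu = (\ker \theta_\mu)^m \subset (\ker \theta_\max)^m = \Fil^m \Bpmax$, so it suffices to show $\bigcap_m \Fil^m \Bpmax = 0$; the same reduction in the $K$-setting reduces the $K$-statement to $\bigcap_m \Fil^m \BpmaxK = 0$.

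For the main argument, I work with the generator $u = [\Up]/p - 1 \in \Amax$ of $\ker \theta_\max$ identified in the proof of Proposition~\ref{prop:kerthetamax}. Since $[\Up] - p = pu$ and $p$ is invertible in $\Bpmax$, we have $\Fil^m \Bpmax = u^m \Bpmax$. The crucial input is the mod-$p$ description~\eqref{eq:Amaxmodp}: $\Amax/p\Amax \simeq R[X]$ with $R = \O_{\Cp}/p\O_{\Cp}$, under which $u$ corresponds to $X - 1$. Being monic of degree $m$, $(X-1)^m$ is not a zero-divisor in $R[X]$, and any nonzero multiple of it has degree at least $m$, so $\bigcap_m (X - 1)^m R[X] = 0$.

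The conclusion then comes in two steps. First, I show $\bigcap_m u^m \Amax = 0$: for $x$ in this intersection, reduction mod~$p$ gives $\bar x = 0$, so $x = p x_1$ with $x_1 \in \Amax$; writing $x = u^m y_m$, the equality $p x_1 = u^m y_m$ reduces mod $p$ to $(X-1)^m \bar y_m = 0$, forcing $y_m \in p\Amax$ by the non-zero-divisor property, and the $p$-torsion-freeness of $\Amax$ then yields $x_1 \in u^m \Amax$. Iterating shows $x \in \bigcap_n p^n \Amax$, which vanishes by the $p$-adic separation of $\Amax$. Second, the same non-zero-divisor argument proves the saturation $u^m \Bpmax \cap \Amax = u^m \Amax$: if $p^L x = u^m z$ with $x, z \in \Amax$, then descending induction on $L$ via mod~$p$ extracts $x \in u^m \Amax$. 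Given $x \in \bigcap_m u^m \Bpmax$, clearing denominators by a suitable $p^N$ lands $p^N x$ in $\bigcap_m u^m \Amax = 0$, forcing $x = 0$. The $K$-analogue runs identically using~\eqref{eq:AmaxKmodpi} with the generator $u_K = [\Upi]/\pi - 1$ and reducing mod $\pi$. The only delicate point is the saturation, which depends on the monicity of $X - 1$ in $R[X]$; this is also the reason $u$ (rather than $[\Up] - p$) is the right generator at the integral level.
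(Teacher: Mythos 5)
Your proof is correct and follows essentially the same route as the paper: reduce all cases to $\mu=\max$ via the inclusions among the period rings, then pass to $\Amax/p\Amax\simeq(\O_{\Cp}/p\O_{\Cp})[X]$ (resp. the mod-$\pi$ analogue over $K$), where divisibility by the monic polynomial $(X-1)^m$ for all $m$ forces a polynomial to vanish. The one thing you do differently is to make explicit the saturation $u^m\Bpmax\cap\Amax=u^m\Amax$, needed to turn divisibility in $\Bpmax$ into divisibility modulo $p$ at the integral level; the paper's proof passes over this point without comment, so your version is a welcome tightening rather than a different argument.
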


\begin{proof}
Since $\Bpmu$ and $\BpmuK$ contain $\BpmaxK$, it is enough 
to prove the proposition for $\theta_{\max,K}$.
After Proposition~\ref{prop:kerthetamax}, we are reduced to check that if 
$x \in \BpmaxK$ is divisible by $\big(1 - \frac{[\Upi]}{\pi}\big)^m$ 
for all $m$, then $x = 0$. Multiplying $x$ by the adequate power of 
$\pi$, we may assume that $x \in \AmaxK$ and in addition, if $x 
\neq 0$, that $x \not\in \pi \Amax$. Using 
isomorphism~\eqref{eq:Amaxmodp}, we find that $x$ vanishes in 
$\AmaxK / \pi \AmaxK$, \emph{i.e.} $x \in \pi \AmaxK$. By 
our assumption, this implies that $x = 0$.
\end{proof}

\begin{prop}
\label{prop:compkertheta}
For $\mu \geq 1$ or $\mu = \crys$ and for $m \in \N$,
the inclusion $\Bpinf \to \Bpmu$ (resp.
$\BpinfK \to \BpmuK$)
induces a $G_K$-equivariant isomorphism 
$$\Bpinf / \Fil^m \Bpinf \, \simeq \,
\Bpmu / \Fil^m \Bpmu
\qquad \text{(resp. }
\BpinfK / \Fil^m \BpinfK \, \simeq \,
\BpmuK / \Fil^m \BpmuK
\text{).}$$
\end{prop}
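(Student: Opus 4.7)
The plan is to prove both isomorphisms by induction on $m$, the inductive step being an application of the five lemma. The $K$-indexed case is handled identically to the unramified one, replacing the generator $[\Up]{-}p$ of $\ker\theta$ by $[\Upi]{-}\pi$; I focus below on the unramified version.

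The natural map is well defined and $G_K$-equivariant because $\theta_\mu$ extends $\theta_\inf$: both are given on the canonical series expansion by the same explicit formula $\sum [\xi_i]\,p^i \mapsto \sum \xi_i^\sharp\,p^i$, so the inclusion $\Bpinf \hookrightarrow \Bpmu$ carries $\ker\theta_\inf$ into $\ker\theta_\mu$. The case $m=0$ is vacuous, and for $m=1$ both quotients are canonically isomorphic to $\Cp$ via $\theta_\inf$ and $\theta_\mu$ respectively, with the induced map being the identity on $\Cp$.

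For the inductive step, I set $\xi = [\Up]{-}p$. By Proposition~\ref{prop:kertheta}, $\xi$ generates $\ker\theta \cap \Ainf$, hence $\ker\theta_\inf = \xi\Bpinf$ after inverting $p$. The main technical point---and the principal obstacle---is that $\xi$ also generates $\ker\theta_\mu$ as an ideal of $\Bpmu$. I would establish this by the same argument as in Proposition~\ref{prop:kertheta}: given $x \in \Amu$ with $\theta_\mu(x)=0$, write $x$ in its canonical series expansion, observe that the vanishing of $\theta_\mu(x)$ forces the leading layer to be divisible by $\xi$, pull out a factor of $\xi$, and iterate, taking the $p$-adic limit in $\Amu$. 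Granting this claim, $\Fil^m \Bpinf = \xi^m \Bpinf$ and $\Fil^m \Bpmu = \xi^m \Bpmu$ for every $m$.

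Now consider the commutative diagram with exact rows:
\[
\begin{array}{ccccccccc}
0 & \to & \xi^m \Bpinf/\xi^{m+1}\Bpinf & \to & \Bpinf/\xi^{m+1} \Bpinf & \to & \Bpinf / \xi^m \Bpinf & \to & 0 \\
 & & \downarrow & & \downarrow & & \downarrow & & \\
0 & \to & \xi^m \Bpmu/\xi^{m+1}\Bpmu & \to & \Bpmu/\xi^{m+1} \Bpmu & \to & \Bpmu / \xi^m \Bpmu & \to & 0.
\end{array}
\]
The right vertical arrow is an isomorphism by the inductive hypothesis. For the left vertical arrow, multiplication by $\xi^m$ induces bijections $\Bpinf/\xi\Bpinf \stackrel{\sim}{\to} \xi^m\Bpinf/\xi^{m+1}\Bpinf$ and $\Bpmu/\xi\Bpmu \stackrel{\sim}{\to} \xi^m\Bpmu/\xi^{m+1}\Bpmu$, because $\xi$ is a non-zero-divisor in each ring (both $\Bpinf$ and $\Bpmu$ are integral domains: for $\Bpinf$ this is immediate from $\Ainf = W(\OCpflat)$ being the Witt ring of a valuation domain, while for $\Bpmu$ it follows from the multiplicativity of the Newton polygon $\NP_\mu$ recalled in~\S\ref{sssec:Amu}). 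These bijections intertwine the left vertical arrow with the natural map $\Bpinf/\xi\Bpinf \to \Bpmu/\xi\Bpmu$, which is the case $m=1$ and hence an isomorphism. The five lemma then closes the induction.
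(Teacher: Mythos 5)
Your skeleton (base case, five lemma, domain check via Newton polygons) is fine in structure, but it hangs entirely on the claim that $\xi = [\Up]-p$ generates $\ker\theta_\mu$ as an ideal of $\Bpmu$, and the justification you propose for that claim does not work. You say you would ``establish this by the same argument as in Proposition~\ref{prop:kertheta}'': extract a factor of $\xi$ from the leading layer of the canonical expansion and iterate, taking the $p$-adic limit in $\Amu$. If that iteration succeeded it would prove the stronger \emph{integral} statement $\ker\theta_\mu \cap \Amu = \xi\,\Amu$, and that statement is false as soon as $\mu > 1$. For instance, for $1 < \mu \leq 2$ the element
$$x \;=\; \frac{[\Up]^2}{p} - [\Up] \;=\; \frac{[\Up]\,([\Up]-p)}{p}$$
lies in $\Amu$ (its $p^{-1}$-coefficient is $\Up^{2}$ with $v_\flat = 2 \geq \mu$) and satisfies $\theta_\mu(x) = p^2/p - p = 0$, yet $x/\xi = [\Up]/p$ has $v_\flat(\Up) = 1 < \mu$ and hence does not belong to $\Amu$. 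The structural reason the iteration breaks is that for $\mu>1$ the canonical expansion of $x\in\Amu$ has terms in negative degree in $p$; the vanishing of $\theta_\mu(x)$ no longer forces $v_\flat(\xi_0)\geq 1$, because the negative-degree contributions $\xi_i^\sharp p^i$ ($i<0$) have $p$-adic valuation only $\geq -i(\mu-1)$, which is strictly between $0$ and $1$ when $\mu<2$. Equivalently, the ``reduce mod $p$'' step is useless here: $\Amu/p\Amu$ is a polynomial ring over $\OCpflat/\eta\OCpflat$ (for $\eta$ with $v_\flat(\eta)=\mu$), and in that quotient $[\Up]$, and hence $\xi$ itself, vanish once $\mu>1$.

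The paper circumvents this difficulty by a genuinely different route. It proves principality only for $\mu = 1 = \max$ (Proposition~\ref{prop:kerthetamax}), where the correct generator inside $\Amax$ is $1-[\Up]/p$ and the iteration goes through; it then deduces the proposition in the $\max$ case, establishes by induction the continuity bound $p^{v_m}\cdot\Amax/\Fil^m\Amax \subset \Ainf/\Fil^m\Ainf$, and uses it to show that the inverse map, defined on the dense subring $A'_\mu/(\ker\theta_\mu)^m$, extends by $p$-adic continuity to all of $\Bpmu/(\ker\theta_\mu)^m$ for every $\mu\geq 1$ and for $\mu=\crys$. No claim is ever made that $\ker\theta_\mu$ is principal for general $\mu$. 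That principality is in fact true, but proving it requires Weierstrass-division-type results of Fargues--Fontaine, which the paper deliberately avoids; your proof would have to import that machinery before the five-lemma induction could even start.
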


\begin{proof}
In the case $\mu = 1$, the proposition follows by combining 
Propositions~\ref{prop:kertheta} and~\ref{prop:kerthetamax}. Before 
moving to the case of a general $\mu$, we will prove an additional
continuity property of the isomorphism $\Bpinf / \Fil^m \Bpinf 
\simeq \Bpmax / \Fil^m \Bpmax$, which will be useful later. 
Precisely, we claim that, for all $m \geq 0$, there exists a
nonnegative integer $v_m$ such that: 
\begin{equation}
\label{eq:continuity}
p^{v_m} \cdot \Amax / \Fil^m \Amax \subset \Ainf / \Fil^m \Ainf
\end{equation}
We prove the claim by induction on $m$. For $m = 0$, there is nothing 
to prove (we can take $v_0 = 0$). 
We now assume that \eqref{eq:continuity} is proved for $m$. We 
consider the following commutative diagram with exact rows:
$$\xymatrix @C=2em @R=1.5em {
0 \ar[r] &
\gr^m \Ainf \ar[r] \ar[d]^\sim &
\Ainf / \Fil^{m+1} \Ainf \ar[r] \ar[d]^\sim &
\Ainf / \Fil^m \Ainf \ar[r] \ar[d]^\sim & 0 \\
0 \ar[r] &
\gr^m \Amax \ar[r] &
\Amax / \Fil^{m+1} \Amax \ar[r] &
\Amax / \Fil^m \Amax \ar[r] & 0. }$$
From the fact that $\gr^m \Bpinf \to \gr^m \Bpmax$ is a $\Cp$-linear 
mapping between two one-dimensional $\Cp$-vector spaces, we deduce that 
there exists an integer $v$ such that $p^v{\cdot}\gr^m \Amax \subset 
\gr^m \Ainf$. A diagram chase then shows that \eqref{eq:continuity} 
holds with $v_{m+1} = v_m + v$.

We now go back to the proof of the proposition. We pick $\mu \in
(1,+\infty) \sqcup \{\crys\}$ and $m \in \N$.
Let $f : \Bpinf / (\ker \theta)^m \to \Bpmu / (\ker \theta_\mu)^m$
be the morphism of the proposition.
Let $A'_\mu$ be the sub-$\Ainf$-algebra of $\Bpinf$ generated by all 
the elements of the form $\frac{[\xi]}{p^i}$ ($\xi \in \OCpflat$, 
$i \in \N$), which belong to $\Amu$. Then $A'_\mu \subset \Bpinf$
and $\Amu$ appears as the $p$-adic completion of $A'_\mu$. 
The former property implies that we have a morphism
$g' : A'_\mu / (\ker \theta_\mu)^m \to \Bpinf / (\ker \theta)^m$.
We claim that $g'$ is continuous. Indeed, by~\eqref{eq:continuity},
we have $\Amax \subset p^{-v_m} \Ainf + (\ker \theta_\max)^m$. Since 
$A'_\mu
\subset \Amax$, we deduce that $g'$ maps $A'_\mu / (\ker \theta_\mu)^m$
to $p^{-v_m} \Ainf / (\ker \theta)^m$, which implies its continuity.
Now, passing to the $p$-adic completion and inverting~$p$, we find that 
$g'$ induces a ring morphism $g : \Bpmu / (\ker \theta_\mu)^m \to 
\Bpinf / (\ker \theta)^m$, which is an inverse of $f$. Therefore $f$ 
is an isomorphism.

The identification $\BpinfK / \Fil^m \BpinfK
\simeq \BpmuK / \Fil^m \BpmuK$ is obtained similarly.
\end{proof}

\noindent
Proposition~\ref{prop:compkertheta} implies that for all $m$, all
the maps of the commutative square below are isomorphisms of 
$\Cp$-semi-linear representations:
\begin{equation}
\label{eq:diaggrm}
\raisebox{0.5\depth}{
\xymatrix @C=3em {
\gr^m \Bpinf \ar[r]^-{\sim} \ar[d]_-{\sim} &
\gr^m \Bpmu \ar[d]^-{\sim} \\
\gr^m \BpinfK \ar[r]^{\sim} &
\gr^m \BpmuK}}
\end{equation}
We can moreover entirely elucidate the Galois action.
Indeed we have the following proposition.

\begin{prop}
\label{prop:tmgen}
For $\mu \geq 1$ or $\mu = \crys$ and for $m \in \N$, the
spaces $\gr^m \Bpmu$ and $\gr^m \BpmuK$ are generated
by the class of $t^m$.
\end{prop}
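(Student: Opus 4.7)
The plan is to reduce the statement, via the isomorphisms of diagram~\eqref{eq:diaggrm}, to a single verification in $\gr^m \Bpinf$: namely, that the class of $t^m$ there is nonzero. Since $\gr^m \Bpinf$ is one-dimensional over $\Cp$ (a consequence of Proposition~\ref{prop:kertheta} together with the fact that $\Bpinf$ is a domain, see below), nonvanishing is equivalent to generation.

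First, I will check that $t^m \in \Fil^m \Bpmax$. The series $t = \sum_{i \geq 1} (-1)^{i-1} \frac{([\Ueps]-1)^i}{i}$ converges in $\Bpmax$ and $\theta_\max$ is continuous, so it is enough to observe that $\theta_\max([\Ueps]) = \Ueps^\sharp = \lim_n \varepsilon_n^{p^n} = 1$; consequently each term of the series lies in $\ker\theta_\max = \Fil^1 \Bpmax$, giving $\theta_\max(t) = 0$. Transporting along the isomorphisms of Proposition~\ref{prop:compkertheta}, this also places $t^m \in \Fil^m \Bpmu$ for all $\mu \in [1,+\infty) \cup \{\crys\}$, and similarly over $K$.

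Next, I will show that the image of $t^m$ in $\gr^m \Bpinf$ is a generator. Write $t = ([\Ueps]-1) + r$ with $r \in \Fil^2$; then every term of the binomial expansion of $t^m$ that involves $r$ lies in $\Fil^{m+1}$, so
\[
t^m \equiv ([\Ueps] - 1)^m \pmod{\Fil^{m+1}}.
\]
I now factor $[\Ueps] - 1 = \omega \cdot ([\Ueps^{1/p}] - 1)$ with $\omega$ the generator of $\ker\theta$ provided by Propositions~\ref{prop:omega} and~\ref{prop:kertheta}. Since $\OCpflat$ is a perfect integral domain (it carries the valuation $v_\flat$), the Witt ring $\Ainf = W(\OCpflat)$ is a domain, and so is $\Bpinf = \Ainf[\tfrac 1 p]$. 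In particular $\omega$ is not a zero divisor, so $\Fil^m \Bpinf = \omega^m \Bpinf$ and multiplication by $\omega^m$ induces an isomorphism $\Cp = \Bpinf/\omega\Bpinf \xrightarrow{\sim} \gr^m \Bpinf$, showing that $\omega^m$ is a $\Cp$-basis of $\gr^m \Bpinf$. On the other hand, $\theta([\Ueps^{1/p}] - 1) = \zeta_p - 1$ where $\zeta_p := (\Ueps^{1/p})^\sharp$ is a primitive $p$-th root of unity, hence $(\zeta_p-1)^m \in \Cp^\times$. Thus the image of $([\Ueps]-1)^m$ in $\gr^m \Bpinf$ equals $(\zeta_p - 1)^m \cdot \overline{\omega^m} \neq 0$.

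Transporting this conclusion through~\eqref{eq:diaggrm} yields the statement for the four rings $\Bpinf$, $\BpinfK$, $\Bpmu$ and $\BpmuK$. The main (mild) obstacle is the book-keeping around the identification of $\Fil^m \Bpinf$ with the principal ideal $\omega^m \Bpinf$: everything rests on $\omega$ being a non-zero-divisor, which in turn relies on the integrality of $W(\OCpflat)$; once this is in hand the computation is essentially the two-line identity $t^m \equiv \omega^m (\zeta_p-1)^m \pmod{\Fil^{m+1}}$.
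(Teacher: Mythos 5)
Your proof is correct and follows essentially the same route as the paper: reduce via the diagram~\eqref{eq:diaggrm} to the nonvanishing of $t^m$ in $\gr^m \Bpinf$, replace $t$ by $[\Ueps]-1$ modulo $\Fil^2$, and then use the factorization $[\Ueps]-1 = \omega \cdot ([\Ueps^{1/p}]-1)$ together with the fact that $\theta$ sends $[\Ueps^{1/p}]-1$ to $\varepsilon_1-1 \neq 0$. The only difference is that you fill in a few justifications the paper leaves implicit (continuity of $\theta_\max$ applied to the logarithm series, and the domain property of $\Ainf$ making $\omega$ a non-zero-divisor).
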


\begin{proof}
Thanks to the diagram~\eqref{eq:diaggrm}, it is enough to prove the 
proposition for $\gr^m \Bpmu$.
We already know that $\gr^m \Bpmu$ is one dimensional over $\Cp$. 
We observe that $t$ lies in $\ker \theta_\mu$ since $\theta_\mu(t) = 
\log \theta_\mu([\Ueps]) = \log 1 = 0$. Thus $t^m \in \Fil^m \Bpmu$ 
and we are reduced to prove that $t^m$ is not zero in $\gr^{m+1} 
\Bpmu$.
Noting that $t \equiv [\Ueps]-1 \pmod{\Fil^2 \Bpmu}$, we can replace 
$t$ by $[\Ueps]-1$. Using again the diagram~\eqref{eq:diaggrm}, it is
enough to show that $([\Ueps]{-}1])^m$ does not vanish in $\gr^{m+1} 
\Bpinf$.
Let $\omega$ be the element of Proposition~\ref{prop:omega}, so that we 
can write $([\Ueps] - 1)^m = \omega^m \cdot ([\Ueps]^{1/p}-1)^m$.
We know that the class of $\omega^m$ is a generator of $\gr^m \Bpinf$. 
It is enough to check $\theta$ does not vanish on $([\Ueps]^{1/p}-1)^m$. 
But a direct computation gives $\theta(([\Ueps]^{1/p}-1)^m) = 
(\varepsilon_1-1)^m$ where $\varepsilon_1 \in \Cp$ is a 
primitive $p$-th root of unity. We conclude by noticing that
$\varepsilon_1 \neq 1$.
\end{proof}

\begin{rem}
\label{rem:genkertheta}
We strongly insist on the fact that $t^m$ is \emph{not} a generator of 
$\Fil^m \Bpmu$ (resp. $\Fil^m \BpmuK$) since this is often the
source of confusion.
Let us clarify this point by examining a bit the case where $\mu = 1$. 
Then, by Proposition~\ref{prop:kerthetamax}, we know that $\Fil^1 
\Bpmax$ is generated by the element $\gamma = [\Up] - p$. 
Thus, we can write $t = \gamma \gamma'$ for some $\gamma' \in \Bpmax$. 
It turns out that $\gamma'$ is not invertible in $\Bpmax$ but is a
unit in 
$\Bpmax / \Fil^1 \Bpmax$ (which is isomorphic to $\Cp$), reflecting
the fact that $t^m$ does not generate $\Fil^m \Bpmax$ but 
generates $\gr^m \Bpmax$.
The situation is quite similar to the following one which is very 
familiar to the number theorists: pick an odd prime number $p$, equip 
$\Z$ with the filtration $\Fil^m \Z = p^m \Z$ and consider the element 
$t = 2p$. Then $t^m$ is not a generator of $\Fil^m \Z$ but it does
generate $\gr^m \Z$ because $2$ is invertible modulo $p$.
\end{rem}

It follows from Proposition~\ref{prop:tmgen} that $\gr^m \Bpmu$ 
and $\gr^m \BpmuK$ are both isomorphic to $\Cp(\chi_\cycl^m)$
in the category $\Rep_{\Cp}(G_K)$. Passing to the graduation, we 
obtain $G_K$-equivariant isomorphisms of rings:
\begin{equation}
\label{eq:gradedplus}
\gr\, \Bpmu \simeq \gr\, \BpmuK
\simeq \Cp[t] \qquad (\mu \in [1,+\infty) \sqcup \{\inf,\crys, \max\})
\end{equation}
where the letter $t$ on the right hand side is a new variable
(corresponding to the special element $t \in \Bpmu$) on which
Galois acts by multiplication by the cyclotomic character.

\subsubsection{Completion with respect to the de Rham filtration}

After what we have done previously, it is natural to introduce the 
completion of the $\Bpmu$'s (resp. the $\BpmuK$'s) with respect 
to the de Rham filtration. This actually leads to the definition of the 
period ring $\BpdR$.

\begin{deftn}
We define $\BpdR$ as the completion of $\Bpinf$ for
the $(\ker\theta)$-adic topology:
$$\BpdR = \varprojlim_m \, \Bpinf / (\ker\theta)^m 
= \varprojlim_m \, \Bpinf / \Fil^m \Bpinf.$$
\end{deftn}

Since each quotient $\Bpinf / \Fil^m \Bpinf$ has a Galois
action, $\BpdR$ inherits an action of $G_K$.
Besides, the algebraic structure of $\BpdR$ is very pleasant. Indeed, 
from the fact that $\ker\theta$ is a principal ideal of $\Bpinf$, we 
deduce that $\BpdR$ is a discrete valuation ring. Its maximal ideal is 
the ideal generated by $\ker\theta$ and its residue field is canonically 
isomorphic to $\Bpinf/\Fil^1 \Bpinf \simeq \Cp$. Therefore, as a 
ring, $\BpdR$ is isomorphic $\Cp(\!(t)\!)$, that is to the ring
$B'_\HT$ we introduced in \S\ref{sssec:HT}.
However, we strongly insist on the fact that there is no such 
isomorphism preserving the Galois action.
The sole connection between $\BpdR$ and $\Cp(\!(t)\!)$ is that they 
share the same graded ring, namely $B_\HT$.

Observe that, by Proposition~\ref{prop:compkertheta}, we could have 
defined alternatively $\BpdR$ as the completion of $\Bpmu$ or 
$\BpmuK$, \emph{i.e.} we have the following canonical 
identifications:
$$\BpdR = \varprojlim_m \, \Bpmu / \Fil^m \Bpmu
= \varprojlim_m \, \BpmuK / \Fil^m \BpmuK.$$
for any $\mu \in [1,+\infty) \sqcup \{\inf,\crys, \max\}$.
Combining this with the fact that the de Rham filtration is separated 
(\emph{cf} Proposition~\ref{prop:kerthetasep}), we deduce that the 
canonical maps $\Bpmu \to \BpmuK \to \BpdR$ are injective for 
all $\mu$ as before.
In particular $t \in \BpdR$ and $\BpdR$ contains a copy of $K$. 
Since the definition of $\BpdR$ does not actually depend on $K$,
it follows that $\BpdR$ contains (in a coherent way) a copy of any 
finite extension of $\Qp$, that is a copy of $\bar K$.
Denote by $\iota: \bar K \to \BpdR$ the resulting embedding. It turns 
out that $\iota$ can be understood in more down-to-earth terms. Indeed 
observe first that $\bar K$ naturally embeds into the residue field of 
$\BpdR$ since the latter is canonically isomorphic to $\Cp$. 
By Hensel lemma, this embedding admits a unique lifting $\iota: \bar K 
\to \BpdR$ which is a homomorphism of $K_0$-algebras: concretely, for 
$x \in \bar K$ whose minimal polynomial over $K_0$ is denoted by $P$, 
$\iota(x)$ is the unique root of $P$ that lifts the image of $x$ in 
$\Cp$. In particular, the composite $\bar K \to \BpdR \to \Cp$
is the natural inclusion.

\smallskip

The map $\theta$ extends to $\BpdR$ easily: we define $\theta_\dR$ as 
the composite $\BpdR \to \Bpinf / (\ker \theta) \to \Cp$ where the 
first map is the projection onto the first component and the second map is 
induced by~$\theta$. 
We set $\Fil^m \BpdR = (\ker\theta_\dR)^m$ for $m \in \N$. 
Observe that the kernel of $\theta_\dR$ is nothing but the maximal ideal 
of $\BpdR$. As a consequence, the de Rham filtration of $\BpdR$
coincides with the canonical filtration on the discrete valuation
ring $\BpdR$, given by the valuation. Its graded ring is isomorphic to
$\Cp[t]$ (compare with \eqref{eq:gradedplus}).
Moreover, any generator of $\Bpmu$ or $\BpmuK$ (for $\mu \in 
\{\inf, \max\}$) is a generator of $\BpdR$, \emph{i.e.} a uniformizer 
of $\BpdR$. Even better, by completeness, an element of $\BpdR$
is a uniformizer if and only if it does not belong to $\Fil^1 \BpdR$
or, equivalently, it does not vanish if $\gr^1 \BpdR$.
In particular, the special element~$t$ is a uniformizer of $\BpdR$
by Proposition~\ref{prop:tmgen}.

\begin{rem}
\label{rem:genkertheta2}
Continuing Remark~\ref{rem:genkertheta} (and importing notations from 
there), we observe that the element $\gamma' \in \Bpmax \subset 
\BpdR$ is invertible in $\BpdR$
since it is nonzero in the residue field; thus $t = \gamma\gamma'$ is 
a generator of $\ker\theta_\dR$ as $\gamma$ is.
This contrasts with the fact that $t$ did not generate $\ker\theta_\max$ 
because $\gamma'$ was not invertible in $\Bpmax$.
\end{rem}

\paragraph{Topology on $\BpdR$.}

As $\BpdR$ is defined as a completion, the first natural topology on 
$\BpdR$ is the $(\ker \theta)$-adic topology: a sequence $(x_n)_{n 
\geq 0}$ of elements of $\BpdR$ converges to $x \in \BpdR$ if and 
only if, for all $m$, the sequence $x_n \mod \Fil^m \BpdR$ is 
eventually constant. This topology is actually not nice because it
does not see the $p$-adic topology: it induces the discrete topology
both on the subfield $\bar K{\cdot}\hat K^\ur$ and on the residue
field $\Cp$.

A coarser topology can be defined as follows.
Observe that the quotients $\Bpinf / \Fil^m \Bpinf$ have finite 
length and hence are equipped with a canonical topology. 
This topology can be described by remarking that the lattice $\Ainf / 
\Fil^m \Ainf$ defines a valuation $v_{m,\inf}$ on $\BpdR / \Fil^m 
\BpdR$: given $x \in \BpdR / \Fil^m \BpdR \simeq \Bpinf / \Fil^m 
\Bpinf$, we define $v_{m,\inf}(x)$ as the largest (possible negative) 
integer $n$ for which $x \in p^n \Ainf / \Fil^m \Ainf$. The valuation 
$v_{m,\inf}$ defines a norm on $\Bpinf / \Fil^m \Bpinf$, and hence a 
topology.

\begin{rem}
Alternatively, instead of $\Bpinf$, one could have worked with 
$\Bpmu$ for a different $\mu$. We would have ended up this way with a
valuation $v_{m,\mu}$ on $\BpdR / \Fil^m \BpdR$ 
for which there exists a constant $v_{m,\mu}$ with the property that:
\begin{equation}
\label{eq:vm}
v_{m,\inf}(x) - v_{m,\mu} \leq v_{m,\mu}(x) \leq v_{m,\inf}(x)
\end{equation}
for all $x \in \BpdR / \Fil^m \BpdR$ (see the first part of the proof 
of Proposition~\ref{prop:compkertheta}). Therefore the topology induced 
by $v_{m,\mu}$ agrees with that defined by $v_{m,\inf}$ for all $m$.
\end{rem}

We extend $v_{m,\inf}$ to $\BpdR$ by precomposing by the natural 
projection $\BpdR \to \BpdR / \Fil^m \BpdR$.
When $m$ varies, the $v_{m,\inf}$'s define a family of semi-norms
on $\BpdR$, giving it the structure of a Frechet space.
The attached topology will be called (in this article) the 
\emph{standard topology} on $\BpdR$. Concretely, a sequence
$(x_n)$ of elements of $\BpdR$ converges to $x \in \BpdR$ 
for the standard topology if and only if, for all integer $m$, 
the image of $x_n$ is $\BpdR / \Fil^m \BpdR \simeq \Bpinf / 
\Fil^m \Bpinf$ converges to the image of $x$.
Clearly, the standard topology induces the usual $p$-adic topology on 
the residue field $\BpdR / \Fil^1 \BpdR \simeq \Cp$. Colmez proved 
in~\cite{colmez} that $\bar K$ is dense in $\BpdR$ for the standard 
topology.

We point out that there is no good notion of $p$-adic topology 
on $\BpdR$. Indeed, if there were, the inclusion $\iota : \bar K \to 
\BpdR$ would extend to an inclusion $\Cp \to \BpdR$ which would 
imply that $\BpdR$ would be isomorphic to $\Cp(\!(t)\!) = B'_\HT$ and 
we have already seen that this does not happen. Yet, $\BpdR$ admits 
kinds of lattices, \emph{e.g.}
$$A_{\mu,\dR} = \varprojlim_m \, \Amu / (\Amu \cap \Fil^m \Bpmu)
\quad \text{or} \quad
A_{\mu,K,\dR} = \varprojlim_m \, \AmuK / (\AmuK \cap 
\Fil^m \BpmuK)$$
though we have to be careful that $A_{\mu,\dR}[\frac 1 p]
\subsetneq B_\dR$ and similarly for $A_{\mu,K,\dR}$.
These ``lattices'' do define topologies on $\BpdR$ (which might be 
considered as sort of $p$-adic topologies). However, these topologies 
are all different (and different from the standard topology) and they 
all have bad properties; for instance, the inclusion $\iota : \bar K \to 
\BpdR$ is not continuous for any of them. The point behind this is 
that the constant $v_{m,\mu}$ of Eq.~\eqref{eq:vm} is not bounded uniformly
when $m$ grows.

\begin{rem}
The situation is quite similar to that $\Qp[[t]]$.
The analogue of the standard topology on $\Qp[[t]]$ is the
standard Fréchet topology on this ring: a sequence 
$(f_n)_{n \geq 0}$ converges to $f$ if and only if $f_n
\mod t^m$ converges to $f \mod t^m$ in $\Qp[t]/t^m$ for 
all~$m \in \N$.
This is further equivalent to the fact that, for all fixed $m
\in \N$, the $m$-th coefficient of $f_n$ converges to the the 
$m$-th coefficient of $f$.
Another topology on $\Qp[[t]]$ is that defined by the ``lattice''
$\Zp[[t]]$, for which the sequence $(f_n)_{n \geq 0}$ converges to 
$f$  when, for each $A \geq 0$, there exists an index $n_0$ with
the property that $f_n \equiv f \pmod{p^A \Zp[[x]]}$ for all
$n \geq n_0$. This notion of convergence is stronger than the
previous one because we impose here that the coefficients of 
$f_n$ converge \emph{uniformly} to that of $f$ (the index $n_0$
has to the same for all~$m$).
\end{rem}

\paragraph{Inverting $t$.}

Recall that we have defined $\Bmu$ and $\BmuK$ as $\Bpmu[\frac 1 
t]$ and $\BpmuK[\frac 1 t]$ respectively for $\mu \geq 1$ or $\mu
= \crys$ (recall that this definition does not make sense for $\mu =
\inf$ because $t \not\in \Bpinf$).
Similarly we set $\BdR = 
\BpdR[\frac 1 t]$. Since $\BpdR$ is a discrete valuation ring with 
uniformizer $t$, $\BdR$ is also the fraction field of $\BpdR$; in 
particular, it is a field. Moreover since localization is exact,
the rings $\Bmu$ and $\BmuK$ appear as subrings of $\BdR$.

The de Rham filtration extends readily to~$\BdR$ by letting $\Fil^m 
\BdR = t^m \BpdR$ for $m \in \Z$. The graded ring of $\BdR$ is then
canonically isomorphic to $\Cp[t,t^{-1}] = B_\HT$.
If $B$ is any subring of $\BdR$, we define:
\begin{equation}
\label{eq:filmB}
\Fil^m B = B \cap \Fil^m \BdR \qquad (m \in \Z).
\end{equation}
Observe that $\Fil^0 B$ is the intersection of two rings and thus
is a ring as well.
It is easily checked that, when $B = \Bpmu$ or $\BpmuK$ (for $\mu 
\geq 1$ or $\mu = \crys$), the above definition leads to 
the de Rham filtration $\Fil^m B$ we have defined earlier by different 
means. Yet, the definition~\eqref{eq:filmB} is new and interesting for 
$B = \Bmu$ and $B = \BmuK$.
The filtrations obtained this way sit in the following diagram
(and a similar diagram for $\BmuK$):

\begin{center}
\begin{tikzpicture}[xscale=2.2,yscale=1.2]
\node at (-2.8,0) { \ph $\cdots$ };
\node at (-2.5,0) { \ph $\subset$ };
\node at (-2,0) { \ph $\Fil^2 \Bmu$ };
\node at (-1.5,0) { \ph $\subset$ };
\node at (-1,0) { \ph $\Fil^1 \Bmu$ };
\node at (-0.5,0) { \ph $\subset$ };
\node at (0,0) { \ph $\Fil^0 \Bmu$ };
\node at (0.5,0) { \ph $\subset$ };
\node at (1,0) { \ph $\Fil^{-1} \Bmu$ };
\node at (1.5,0) { \ph $\subset$ };
\node at (2,0) { \ph $\Fil^{-2} \Bmu$ };
\node at (2.5,0) { \ph $\subset$ };
\node at (2.8,0) { \ph $\cdots$ };
\node at (-2.8,-1) { \ph $\cdots$ };
\node at (-2.5,-1) { \ph $\subset$ };
\node at (-2,-1) { \ph $\Fil^2 \Bpmu$ };
\node at (-1.5,-1) { \ph $\subset$ };
\node at (-1,-1) { \ph $\Fil^1 \Bpmu$ };
\node at (-0.5,-1) { \ph $\subset$ };
\node at (0,-1) { \ph $\phantom{\Bpmu = {}}\Fil^0 \Bpmu = \Bpmu$ };
\node[rotate=90] at (-2,-0.45) { \ph $\subset$ };
\node[rotate=90] at (-1,-0.45) { \ph $\subset$ };
\node[rotate=90] at (0,-0.45) { \ph $\subset$ };
\end{tikzpicture}
\end{center}

\noindent
The reader should be very careful that the inclusion $\Bpmu \subset 
\Fil^0 \Bmu$ is strict.
Let us first focus on the case where $\mu = \max$.
We recall that, in Remark~\ref{rem:genkertheta}, we have set $\gamma = 
[p^\flat] - p \in \Bpmax$ and noticed that $t = \gamma \gamma'$ 
for some $\gamma' \in \Bpmax$. The element $\gamma'$ is not invertible 
in $\Bpmax$ but we have seen in Remark~\ref{rem:genkertheta2} that it 
is invertible in $\BpdR$. Besides, since $\gamma'$ is a divisor of 
$t$, it is invertible in $\Bmax$. Now consider $\frac 1{\gamma'} \in 
\Bmax$. It does not lie in $\Bpmax$. However, its image in $\BdR$ 
falls in $\BpdR$, so that $\frac 1{\gamma'} \in \Fil^0 \Bmax$.
Actually, one can (easily) prove that $\Fil^0 \Bmax = \Bpmax[\frac 1 
{\gamma'}]$.
A similar description is also possible for a general $\mu$. Precisely
let $S$ be the multiplicative part consisting of all divisors in
$\Bpmu$ of some power of $t$. Then $\Fil^0 \Bmu = \Bpmu[S^{-1}]$.

\begin{rem}
As discussed in Remark~\ref{rem:genkertheta}, what happens
here is very similar to the following very classical situation: assume 
that $\Z$ is endowed with the filtration $\Fil^m \Z = p^m \Z$, which 
induces the usual valuation filtration on $\Zp$ and $\Qp$ after 
completion. Now consider the localization $\Z[\frac 1{2p}]$; it is a 
subring of $\Qp$ and then inherits the valuation filtration. For this
filtration, we have $\Fil^0\:\Z[\frac 1{2p}] = \Z[\frac 1 2]$.
\end{rem}

\begin{rem}
The reader may wonder why we defined $\Bmu$ as $\Bpmu[\frac 1 t]$
and not $\Bpmu[\frac 1 \gamma]$ in order to avoid the small
unpleasantness discussed above. One reason is that the Frobenius
does not extend on $\Bpmu[\frac 1 \gamma]$ because the ideal
$\ker \theta_\mu$ is not stable under Frobenius. 
Formula~\eqref{eq:prodt} shows that inverting $t$ is very natural 
if our objective is to keep an action of the Frobenius.
\end{rem}

\begin{prop}
For $\mu \geq 1$ or $\mu = \crys$, the inclusions
$\Bmu \subset \BmuK \subset \BdR$ induce 
$G_K$-equivariants isomorphisms of rings
$\gr\,\Bmu \simeq
\gr\,\BmuK \simeq
\gr\,\BdR \simeq B_\HT$.
\end{prop}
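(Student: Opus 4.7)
The plan is to exploit the fact that, by definition, $t$ is invertible in $\Bmu$ and $\BmuK$, and that all three filtrations are induced from the single filtration on $\BdR$ via $\Fil^m B = B \cap \Fil^m \BdR$. In particular, the three inclusions are automatically filtered, so they induce $G_K$-equivariant maps of graded rings $\gr\,\Bmu \to \gr\,\BmuK \to \gr\,\BdR$, and everything will follow once one checks that each of these is an isomorphism and identifies the target with $B_\HT$.

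I would first dispose of the target. Since $\BpdR$ is a discrete valuation ring with uniformizer~$t$ and residue field~$\Cp$, and $\Fil^m \BdR = t^m \BpdR$ by construction, the natural map $\Cp[t,t^{-1}] \to \gr\,\BdR$ sending~$t$ to the class of~$t \in \gr^1 \BdR$ is a $G_K$-equivariant isomorphism of graded rings, yielding $\gr\,\BdR \simeq B_\HT$. This is essentially the same argument already used to produce the isomorphism \eqref{eq:gradedplus} at the $\BpdR$-level, extended to negative degrees by inverting~$t$.

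For the comparison $\gr\,\Bmu \simeq \gr\,\BdR$ (the case of $\BmuK$ being identical), I would argue degree by degree. Multiplication by $t$, which is a unit in $\Bmu$, gives a $\Cp$-linear bijection $\Fil^m \Bmu \to \Fil^{m+1} \Bmu$ for every $m \in \Z$, and hence on graded pieces a $\Cp$-linear bijection $\gr^m \Bmu \simeq \gr^{m+1} \Bmu$ sending the class of~$t^m$ to the class of~$t^{m+1}$. It thus suffices to show that $\gr^0 \Bmu \to \gr^0 \BdR = \Cp$ is an isomorphism. Restricting $\theta_\dR$ to $\Fil^0 \Bmu$, the kernel is $\Fil^0 \Bmu \cap \ker\theta_\dR = \Fil^0 \Bmu \cap \Fil^1 \BdR = \Fil^1 \Bmu$, which gives injectivity. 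Surjectivity comes from the chain of inclusions $\Bpmu \subset \Bpmu \cap \BpdR \subset \Fil^0 \Bmu$ together with the fact that $\theta_\dR$ restricts to $\theta_\mu$ on $\Bpmu$ (both being induced from the common map $\theta$ on $\Bpinf$), and $\theta_\mu : \Bpmu \to \Cp$ is surjective.

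The same argument, using $\theta_{\mu,K}$ in place of $\theta_\mu$ and the inclusion $\BpmuK \subset \Fil^0 \BmuK$, shows that $\gr\,\BmuK \to \gr\,\BdR$ is an isomorphism. All maps being $G_K$-equivariant ring homomorphisms induced by inclusions of $G_K$-stable subrings, the resulting comparisons are isomorphisms of $G_K$-equivariant graded rings, and composing with the identification $\gr\,\BdR \simeq B_\HT$ finishes the proof. I do not anticipate any serious obstacle: the only point requiring any genuine care is the surjectivity of $\theta_\dR|_{\Fil^0 \Bmu}$, which would be tempting to deduce from a mythical inclusion $\Cp \subset \Bmu$ that does not hold, but which in fact follows cleanly from $\Bpmu \subset \Fil^0 \Bmu$.
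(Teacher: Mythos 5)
Your argument is correct, and it takes essentially the same route as the paper's: both proofs exploit the invertibility of~$t$ in~$\Bmu$ together with the tight link between $\Bpmu$ and~$\BpdR$. The paper's own proof is terser: it considers the composite $\gr\,\Bpmu \to \gr\,\Bmu \to \gr\,\BmuK \to \gr\,\BdR$ and observes that the outer arrow $\gr\,\Bpmu \to \gr\,\BpdR$ is an isomorphism because $\BpdR$ is the completion of $\Bpmu$ for the de~Rham filtration. This sandwich argument settles the nonnegative degrees directly but silently suppresses the extension to negative degrees. Your degree-by-degree reduction to $\gr^0$ via multiplication by~$t$, followed by the explicit computation that $\theta_\dR|_{\Fil^0 \Bmu}$ has kernel $\Fil^1 \Bmu$ and is surjective thanks to $\Bpmu \subset \Fil^0\Bmu$, fills in precisely what the paper glosses over, and you correctly flag the trap of trying to get surjectivity from a nonexistent copy of $\Cp$ inside $\Bmu$. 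The only small imprecision is the phrase ``$\Cp$-linear bijection $\Fil^m \Bmu \to \Fil^{m+1}\Bmu$'': the $\Fil^m$ are $\Fil^0\Bmu$-modules, not $\Cp$-vector spaces, so at that level multiplication by~$t$ is just a bijection of abelian groups; the $\Cp$-structure on $\gr^m$ only becomes available once you have identified $\gr^0 \Bmu$ with~$\Cp$, which is exactly what you establish afterwards. Reordering slightly so that the degree-zero identification precedes the $\Cp$-linearity claim would make this cleaner, but the mathematics is sound.
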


\begin{proof}
The fact that $\gr\,\BdR$ is isomorphic to $B_\HT$ has been
already noticed.
Now, consider the composite
$f : \gr\,\Bpmu \to \gr\,\Bmu \to \gr\,\BmuK \to
\gr\,\BdR \simeq \gr\,\BpdR$
in which all maps are injective. Since $\BpdR$ is the completing
of $\Bpmu$ with respect to the de Rham filtration, the map $f$
has to be an isomorphism. The proposition follows.
\end{proof}

\subsection{$\Bcrys$ and $\BdR$ as period rings}

In order to apply Fontaine's general strategy (discussed in 
\S\ref{ssec:Fontainestrategy}) with the $\Bmu$'s (for $\mu \geq 1$ or 
$\mu = \crys$ or $\mu = \dR$)---and then ``promote'' these rings at the 
level of genuine period rings---a final couple of verifications still 
need to be done; precisely
we need to check that the $\Bmu$'s satisfy Fontaine's 
hypotheses (H1), (H2) and (H3) introduced in~\S\ref{sssec:criterium},
and we need to compute the invariants under the $G_K$-action.

We start with $\BdR$ which is easier. First, since it is a field,
Fontaine's hypotheses are obviously fulfilled. Concerning the
computation of the fixed points, we have the following theorem.

\begin{theo}
\label{theo:bdrgk}
We have $(\BdR)^{G_K} = K$.
\end{theo}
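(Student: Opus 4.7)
The plan is to exploit the discrete valuation structure of $\BdR$ together with the identification $\gr^m \BdR \simeq \Cp(\chi_\cycl^m)$ that we have already established, and then to combine it with two facts already available from \S\ref{sec:Cp}: the Ax--Sen--Tate theorem ($\Cp^{G_K} = K$, Theorem~\ref{theo:axsentate}), and the vanishing $\Cp(\chi_\cycl^m)^{G_K} = 0$ for $m \neq 0$ (the remark following Proposition~\ref{prop:HTweights}). Note that the filtration $\Fil^\bullet \BdR$ is $G_K$-stable: even though $t$ is not fixed by $G_K$, the ideal $\Fil^1 \BdR = t\BpdR$ is preserved because $gt = \chi_\cycl(g) t$ is still a uniformizer.

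The argument will proceed in two clean steps. First, I would show that $\BdR^{G_K} \subset \BpdR$. Indeed, given a nonzero $x \in \BdR^{G_K}$, let $m = v_{\BdR}(x) \in \Z$. Because the filtration is $G_K$-stable, the principal symbol of $x$ in $\gr^m \BdR$ is a nonzero $G_K$-invariant element; under the identification with $\Cp(\chi_\cycl^m)$, this forces $m = 0$, so $x \in \BpdR$.

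Second, I would show that $\BpdR^{G_K} \subset \iota(K)$, where $\iota : \bar K \to \BpdR$ is the canonical Hensel-lift embedding discussed in the excerpt. For $x \in \BpdR^{G_K}$, the equivariance of $\theta_\dR$ and Ax--Sen--Tate give $a := \theta_\dR(x) \in \Cp^{G_K} = K$. Since $\iota$ is characterized by the root-of-minimal-polynomial property, a quick uniqueness argument shows $g \iota(y) = \iota(gy)$ for all $y \in \bar K$ and $g \in G_K$; in particular $\iota(a) \in \BpdR^{G_K}$. Then $x - \iota(a)$ lies in $\BpdR^{G_K} \cap \Fil^1 \BpdR$. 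If it were nonzero, its $\BdR$-valuation would be a positive integer and the first step would yield a contradiction. Hence $x = \iota(a) \in K$, proving $\BdR^{G_K} \subset K$. The reverse inclusion is immediate since $G_K$ acts trivially on $K \subset \bar K$.

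I do not anticipate any serious obstacle: the technical content (computation of the graded pieces and the absence of invariants in the twists) has already been done, and the Galois-equivariance of $\iota$ is a direct consequence of Hensel's uniqueness. The only point deserving slight care is to make explicit that the filtration on $\BdR$ is $G_K$-stable, which is what allows us to pass to principal symbols in the first step.
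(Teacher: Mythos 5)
Your proof is correct and follows essentially the same route as the paper: the paper's one-line argument ``the reverse inclusion follows from $(\gr\,\BdR)^{G_K}=(B_\HT)^{G_K}=K$'' is exactly the reduction to graded pieces that you spell out via the discrete valuation, the vanishing of $\Cp(\chi_\cycl^m)^{G_K}$ for $m\ne 0$, and Ax--Sen--Tate. You have simply made explicit the two steps (forcing valuation zero, then applying $\theta_\dR$ and the Galois-equivariance of $\iota$) that the paper leaves to the reader.
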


\begin{proof}
We have already seen that $K$ embeds into $\BdR$, so that
$K \subset (\BdR)^{G_K}$. The reverse inclusion follows from
the fact that $(\gr\, \BdR)^{G_K} = (B_\HT)^{G_K} = K$.
\end{proof}

We now move to the crystalline setting, that is the ring $\Bcrys$ and 
its variant $\Bmu$ with $\mu \geq 1$.

\begin{theo}
\label{theo:bmugk}
For $\mu \geq 1$ or $\mu = \crys$, we have
$(\Bmu)^{G_K} = K_0$ and
$(\BmuK)^{G_K} = K$.
\end{theo}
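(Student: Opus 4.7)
The plan is to reduce both equalities to the already established fact that $(\BdR)^{G_K} = K$ (Theorem \ref{theo:bdrgk}), by exploiting that both $\Bmu$ and $\BmuK$ sit injectively inside $\BdR$ (Figure \ref{fig:periodrings}).

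The identity $(\BmuK)^{G_K} = K$ will be essentially immediate. On the one hand, $K \subset \BmuK$ is contained in the fixed part because the $G_K$-action on $\BpmuK = K \otimes_{K_0} \Bpmu$ was defined so that $G_K$ acts trivially on the $K$-factor. On the other hand, the injection $\BmuK \hookrightarrow \BdR$ is $G_K$-equivariant, so $(\BmuK)^{G_K} \subset (\BdR)^{G_K} = K$.

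For $(\Bmu)^{G_K} = K_0$, the inclusion $K_0 \subset (\Bmu)^{G_K}$ comes from $K_0 \subset \hat K_0^\ur \subset \Bpmu \subset \Bmu$ together with the fact that $G_K$ acts on $\hat K_0^\ur$ through its quotient $\Gal(\bar k/k)$, which fixes $K_0$. For the converse, first apply Theorem \ref{theo:bdrgk} to get $(\Bmu)^{G_K} \subset (\BdR)^{G_K} = K$, which reduces us to showing that $\Bmu \cap K = K_0$ inside $\BdR$. Here I will use that $\BmuK = K \otimes_{K_0} \Bmu$ is a free $\Bmu$-module with basis any $K_0$-basis $1 = \pi_1, \ldots, \pi_e$ of $K$. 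Given $x \in \Bmu \cap K$, the two elements $x \otimes 1$ and $1 \otimes x$ of $\BmuK$ have the same image in $\BdR$; by injectivity of $\BmuK \hookrightarrow \BdR$, they must already coincide in $\BmuK$. Writing $x = \sum_i a_i \pi_i$ with $a_i \in K_0$ and comparing the two expressions in the basis $(\pi_i \otimes 1)_i$ then forces $a_i = 0$ for $i \geq 2$ and $a_1 = x$, so that $x \in K_0$.

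The only subtle point to check along the way is that the embedding $K \hookrightarrow \BmuK \hookrightarrow \BdR$ agrees with the canonical embedding $\iota : K \to \BpdR$ coming from Hensel's lemma (which is what underlies the statement $(\BdR)^{G_K} = K$). Both are $K_0$-algebra homomorphisms lifting the inclusion $K \hookrightarrow \Cp$ at the residue field, and uniqueness in Hensel's lemma forces them to coincide; this is where a routine but necessary verification lives. Modulo this compatibility, no further computation is required.
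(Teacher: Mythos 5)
Your proof is correct and follows essentially the same route as the paper: both derive $(\BmuK)^{G_K} = K$ from the $G_K$-equivariant inclusion $\BmuK \hookrightarrow \BdR$ together with Theorem~\ref{theo:bdrgk}, and both descend to $(\Bmu)^{G_K} = K_0$ via the identification $\BmuK = K \otimes_{K_0} \Bmu$. The only cosmetic difference lies in the final step, where the paper takes $G_K$-invariants of $K \otimes_{K_0} \Bmu$ and counts $K_0$-dimensions, whereas you reach the same conclusion by comparing $x \otimes 1$ and $1 \otimes x$ in a $K_0$-basis of $K$ --- these are two ways of unpacking the same flatness/injectivity fact.
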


\begin{proof}
We have already seen that $K_0 \subset (\Bmu)^{G_K}$ and
$K \subset (\BmuK)^{G_K}$.
From $\BmuK \subset \BdR$, we deduce that $(\BmuK)^{G_K} 
\subset (\BdR)^{G_K} = K$, the latter equality resulting from 
Theorem~\ref{theo:bdrgk}. Hence we have proved that $(\BmuK)^{G_K} = 
K$.
Now remember that, by definition, $\BmuK = K \otimes_{K_0}
\Bmu$. Taking the $G_K$-invariants, we obtain $K = K \otimes_{K_0}
(\Bmu)^{G_K}$, from which we deduce $(\Bmu)^{G_K} = K_0$.
\end{proof}

\begin{prop}
For $\mu \geq 1$ or $\mu = \crys$, 
the rings $\Bmu$ and $\BmuK$ satisfy Fontaine's hypotheses.
\end{prop}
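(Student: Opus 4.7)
The plan is to verify (H1), (H2), and (H3) for both $\Bmu$ and $\BmuK$; (H1) and (H2) are essentially formal consequences of what has already been established, while (H3) is the substantive step.

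For (H1), Proposition~\ref{prop:kerthetasep} ensures that the canonical maps $\Bpmu \to \BpdR$ and $\BpmuK \to \BpdR$ are injective; inverting $t$ extends these to injections $\Bmu \hookrightarrow \BdR$ and $\BmuK \hookrightarrow \BdR$, and since $\BdR$ is a field, both rings are integral domains.

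For (H2), the $\BmuK$ case is immediate from the chain $K = \BmuK^{G_K} \subset (\Frac \BmuK)^{G_K} \subset \BdR^{G_K} = K$, using Theorems~\ref{theo:bdrgk} and~\ref{theo:bmugk}. For $\Bmu$ I exploit the identification $\BmuK = K \otimes_{K_0} \Bmu$: since $\BmuK \hookrightarrow \BdR$ is injective, $K$ and $\Bmu$ are linearly disjoint over $K_0$ inside $\BdR$, and a standard clearing-of-denominators argument extends this to linear disjointness of $K$ and $\Frac \Bmu$. Hence $\Frac \BmuK = K \otimes_{K_0} \Frac \Bmu$ inside $\BdR$, and taking $G_K$-invariants (which commute with the finite scalar extension $K/K_0$, exactly as in the proof of Theorem~\ref{theo:bmugk}) yields $K = K \otimes_{K_0} (\Frac \Bmu)^{G_K}$; a dimension count over $K$ forces $(\Frac \Bmu)^{G_K} = K_0$.

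For (H3), let $b \in \Bmu$ be nonzero with $g(b) = \chi(g) b$ for some character $\chi \colon G_K \to \Qp^\times$. Since $\BdR$ is a field whose filtration is $G_K$-equivariant, $v := v_\dR(b) \in \Z$ is well defined and $\chi$ must take values in $\Zp^\times$. Because $t$ is invertible in $\Bmu$, setting $c := t^{-v} b$ reduces the problem to showing that $c$, which lies in $\Fil^0 \Bmu$ and is a unit in $\BpdR$, is actually a unit in $\Bmu$; the twisted character on $\Qp c$ is $\chi_0 := \chi \chi_\cycl^{-v}$. The main obstacle is this last step, since the inverse of $c$ exists in $\BpdR$ but need not a priori lie in $\Bmu$. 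My strategy is to exploit the Frobenius: (H2) applied to $\varphi(c)/c \in (\Frac \Bmu)^{G_K}$ gives $\varphi(c) = \lambda c$ for some $\lambda \in K_0^\times$, and combining this Frobenius-eigenvalue relation with the presentation $\Fil^0 \Bmu = \Bpmu[S^{-1}]$ (where $S$ is the multiplicative set of divisors of powers of $t$ in $\Bpmu$) and a Newton polygon analysis of a lift $c = x/s$ should force $x$ itself to divide a power of $t$, so that $c$ is a unit in $\Bmu$. The $\BmuK$ case follows by the same analysis, or equivalently by scalar extension from $K_0$ to $K$.
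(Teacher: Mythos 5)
Your treatment of (H1) and (H2) follows the paper's argument (the paper says only ``repeat the proof of Theorem~\ref{theo:bmugk}'' for (H2); your extra care with linear disjointness for $\Bmu$ is a reasonable fleshing-out of the same idea). The problem is with (H3), where your strategy diverges from the paper's and is not actually carried through.

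Your preparatory steps for (H3) are correct: replacing $b$ by $c = t^{-v}b$ so that $c\in\Fil^0\Bmu$ and $\theta_\dR(c)\neq 0$, and then deducing from (H2) that $\varphi(c) = \lambda c$ for some $\lambda\in K_0^\times$. But the crucial step --- extracting invertibility of $c$ from this Frobenius eigen-relation via a Newton-polygon analysis of a lift $c = x/s$ with $s\in S$ --- is only announced (``should force $x$ itself to divide a power of $t$''). This is precisely where the substance lies, and the envisaged analysis is not obviously sound. For instance, the paper's own proof shows that such a $c$ ends up in $\hat K_0^\ur$; an element $u\in\hat K_0^\ur$ is a unit of $\Bpmu$ but it is not in any meaningful sense forced by the $\varphi$-equation alone to ``divide a power of $t$.'' So the Newton-polygon approach would need to allow for a unit factor, and you have supplied no mechanism to separate such a unit from, say, an element genuinely vanishing somewhere on the annulus. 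As written, there is a gap.

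The paper's route for (H3) is quite different and avoids Newton polygons altogether. After normalizing so that $x\in\BpdR$ with $\theta_\dR(x)\neq 0$, it observes that $\theta_\dR$ embeds the line $\Qp x$ $G_K$-equivariantly into $\Cp$, so the corresponding character is $\Cp$-admissible. By Theorem~\ref{theo:Cpadm}, $I_K$ acts through a finite quotient; hence some power $y=x^n$ is fixed by $I_K$. One then applies Proposition~\ref{prop:H90unram} to the $\hat K_0^\ur$-line spanned by $y$ to find $\lambda\in\hat K_0^\ur$ with $\lambda y$ fixed by $G_K$; by Theorem~\ref{theo:bmugk}, $\lambda y\in K_0$, so $y\in\hat K_0^\ur\subset\Bmu$ is a unit, and so is $x$. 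You should replace your Newton-polygon sketch with an argument of this kind, or else actually carry out the factorization analysis you allude to (which would require invoking Fargues--Fontaine's Weierstrass-type results that the paper explicitly declines to develop).
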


\begin{proof}
It is clear that $\Bmu$ and $\BmuK$ are domains since they
both embed into $\BdR$ which is a field. Repeating the proof of
Theorem~\ref{theo:bmugk}, we find that $(\Frac \Bmu)^{G_K} = K_0$
and $(\Frac \BmuK)^{G_K} = K$. Hence $\Bmu$ and $\BmuK$
satisfy hypothesis (H2). 

Let us now prove that $\Bmu$ satisfies Fontaine's hypothesis (H3). 
Let $x \in \Bmu$, $x \neq 0$ and assume that the line $\Qp x$ is stable 
under the action of~$G_K$. We have to prove that $x$ is invertible in 
$\Bmu$. In what follows, we will consider $x$ as an element on $\BdR$.
Replacing possibly $x$ by $t^n x$ for some integer $n$ (which is safe
since $t$ is invertible in $\Bmu$), we may 
assume that $x \in \BpdR$ and $x \not\in \Fil^1 \BpdR$. The morphism 
$\theta_\dR$ then induces a $G_K$-equivariant embedding $\Qp x 
\hookrightarrow \Cp$. Thus the representation $\Qp x$ is 
$\Cp$-admissible. By Theorem~\ref{theo:Cpadm}, the inertia subgroup 
$I_K$ of $G_K$ acts on $x$ through a finite quotient. Therefore there 
exists a positive integer $n$ such that $I_K$ acts trivially on $y = 
x^n$. The line $\Qp y$ then inherits an action of $G_K/I_K = 
\Gal(K^\ur/K) \simeq \Gal(K_0^\ur/K_0)$.
Applying Proposition~\ref{prop:H90unram} to the 
$\Gal(K_0^\ur/K_0)$-representation $\hat K_0^\ur y$ (recall that 
$\hat K_0^\ur \subset \Bpinf \subset \BdR$), we find that there 
exists $\lambda \in \hat K_0^\ur$ such that $\lambda y$ is fixed
by $G_K$. By Theorem~\ref{theo:bmugk}, we obtain $\lambda y \in K_0$
and then $y \in \hat K_0^\ur$.
We deduce that $y$ is invertible in $\Bmu$, and so also is $x$.

The fact that $\BmuK$ satisfies (H3) is proved in a similar fashion.
\end{proof}

We conclude this section by stating another important property
of the rings $\Bmu$. 

\begin{prop}
\label{prop:bmuphi1}
Let $\mu \geq 1$ and $\mu = \crys$.
Let $x \in \Fil^0 \Bmu$ such that $\varphi(x) = x$, then 
$x \in \Qp$.
\end{prop}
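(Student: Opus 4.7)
The plan is to reduce to the case $\mu = 1$, that is, to prove $(\Fil^0 \Bmax)^{\varphi = 1} = \Qp$. This reduction uses the inclusion $\Bpmu \subset \Bpmax$ valid for every $\mu \geq 1$ and for $\mu = \crys$ (cf.\ Figure~\ref{fig:periodrings}), which gives $\Fil^0 \Bmu \subset \Fil^0 \Bmax$; the $\varphi$-action is preserved under this inclusion, so any $\varphi$-fixed element of the left-hand side sits inside the right-hand side with the same Frobenius. Given $x \in \Fil^0 \Bmax$ with $\varphi(x) = x$, I would write $x = y/t^n$ with $y \in \Bpmax$ and $n \geq 0$ (possible since $\Bmax = \Bpmax[\frac 1 t]$). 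Combining $\varphi(x) = x$ with $\varphi(t) = pt$ gives $\varphi(y) = p^n y$, while $x \in \BpdR$ forces $y \in \Fil^n \BpdR$. By Proposition~\ref{prop:compkertheta}, $\Fil^n \BpdR \cap \Bpmax = \Fil^n \Bpmax = \gamma^n \Bpmax$, where $\gamma = [\Up] - p$ generates $\ker \theta_{\max}$. Thus the proposition reduces to the following claim: for each $n \geq 0$, the only elements $y \in \Fil^n \Bpmax$ with $\varphi(y) = p^n y$ are the $\Qp$-multiples of $t^n$.

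For $n = 0$ this amounts to $(\Bpmax)^{\varphi = 1} = \Qp$, which I would prove by a direct coefficient analysis. Writing $y = \sum_{i \in \Z} [\xi_i] p^i$ with $\xi_i \in \OCpflat$, the equation $\varphi(y) = y$ translates term by term into $\xi_i^p = \xi_i$, so each $\xi_i$ is either $0$ or a $(p{-}1)$-th root of unity in $\OCpflat$ coming from the natural copy of $\F_p$; consequently $[\xi_i] \in W(\F_p) = \Zp$. The convergence condition $v_\flat(\xi_i) + i \to +\infty$ as $i \to -\infty$ defining $\Bpmax$ then forces $\xi_i = 0$ for all sufficiently negative $i$ (since $v_\flat(\xi_i) \in \{0, +\infty\}$ here), so $y$ is a $p$-adically convergent series in $\Zp \cdot p^i$ with only finitely many negative indices, and hence $y \in \Qp$.

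For $n \geq 1$, the plan is a Newton polygon analysis: the equation $\varphi(y) = p^n y$ forces the self-similarity $\varphi_{\R^2}(\NP_1(y)) = \NP_1(y) + (n, 0)$, where $\varphi_{\R^2}(i, v) = (i, pv)$, and concretely gives the coefficient recursion $\xi_{i-n} = \xi_i^p$. Combined with the $\Bpmax$ convergence condition and the divisibility $y \in \gamma^n \Bpmax$ coming from the filtration, the free parameters collapse to a single $\Qp$-line, generated by $t^n$ (a direct check using $\varphi(\gamma) = [\Up]^p - p = pu$ with $u \equiv -1 \pmod p$ a unit in $\Amax$, and $t = \gamma \gamma'$, shows $\varphi(t^n) = p^n t^n$ and $t^n \in \Fil^n \Bpmax$). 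The main obstacle is to execute this rigidity cleanly, the subtle point being that the space $(\Bpmax)^{\varphi = p^n}$ is large (parametrized by choices of $\xi_{i_0} \in \m_{\OCpflat}$ for $i_0$ in each residue class modulo $n$), and one must show that the cut by $\Fil^n \Bpmax$ reduces it to a one-parameter family. Conceptually, the content is the left-exactness of Fontaine's fundamental exact sequence
\[
0 \longrightarrow \Qp \longrightarrow (\Bpcrys)^{\varphi = 1} \longrightarrow \BdR/\BpdR \longrightarrow 0,
\]
of which our proposition is precisely the kernel statement, after identifying $\Fil^0 \Bmax \cap (\Bmax)^{\varphi=1}$ with the preimage in $(\Bpcrys)^{\varphi = 1}$ of $0 \in \BdR/\BpdR$.
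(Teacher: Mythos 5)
Your proposal makes a reasonable start but has a genuine gap precisely where the real content of the proposition lives.

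The reduction to the single case $\mu = 1$ is correct (since $\Bmu \subset \Bmax$ for every $\mu \geq 1$ and for $\mu = \crys$, the Frobenius is compatible, and the filtrations match by the definition in~\eqref{eq:filmB}). The treatment of the $n=0$ case via the coefficient equation $\xi_i^p = \xi_i$ is essentially fine, granting the uniqueness of the Teichm\"uller series expansion in $\Bpmax$ that the paper asserts in \S\ref{sssec:Amu}. But the $n \geq 1$ case is only sketched, and you acknowledge as much (``the main obstacle is to execute this rigidity cleanly''). This is not a small finishing step: the space $(\Bpmax)^{\varphi = p^n}$ is genuinely infinite-dimensional, and showing that intersecting with $\gamma^n \Bpmax$ collapses it to $\Qp t^n$ is the heart of the proposition; the Newton-polygon heuristic you describe does not by itself produce the argument. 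Moreover, the closing appeal to the left-exactness of the fundamental exact sequence is circular: the exactness at $\Qp$ of that sequence \emph{is} Proposition~\ref{prop:bmuphi1}, so you cannot invoke it.

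The paper circumvents the difficulty by a different and slicker reduction. Rather than tackling $\Bmax$ directly, it first observes that any $x \in (\Fil^0 \Bmax)^{\varphi=1}$ satisfies $x = \varphi(x) \in \Fil^0 B_p \subset \Fil^0 B_{\mu_0}$ with $\mu_0 = \frac{p}{p-1}$, so one only needs the result for $B_{\mu_0}$. The choice $\mu_0 = \frac{p}{p-1}$ is not arbitrary: $A_{\mu_0}$ is the $p$-adic completion of $\Ainf[\frac{t}{p}]$, which yields the structural inclusion $B^+_{\mu_0} \subset \Bpinf + t\, B^+_{\mu_0}$. Writing $x = t^{-m}y$ with $m$ minimal and $y = a + tb$ ($a \in \Bpinf$, $b \in B^+_{\mu_0}$), the relation $\varphi^n(y) = p^{nm}y$ forces $\theta\circ\varphi^n(a) = 0$ for all $n$, so Proposition~\ref{prop:kerthetaphi} gives that $[\Ueps]-1$ (hence $t$, up to a unit in $B^+_{\mu_0}$) divides $a$; this contradicts minimality of $m$ unless $m = 0$. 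The $m = 0$ case is then finished by a convergence argument pushing $x$ into $\Bpinf$, where $\varphi$-invariance gives $x \in W(\F_p)[\frac 1 p] = \Qp$. The key input you are missing is Proposition~\ref{prop:kerthetaphi} and the $B^+_{\mu_0} \subset \Bpinf + t B^+_{\mu_0}$ decomposition; without some substitute for these, the $n \geq 1$ rigidity has no leverage.
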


\begin{proof}
We first prove the proposition when $\mu = \mu_0 = \frac p{p-1}$.
In this case, it is easily checked that $A_{\mu_0}$ is the $p$-adic 
completion of $\Ainf[\frac t p]$. This implies that
$A_{\mu_0} \subset \Ainf + \frac t p A_{\mu_0}$ and thus, inverting $p$, we 
find $B^+_{\mu_0} \subset \Bpinf + t B^+_{\mu_0}$.

Let $x \in \Fil^0 B_{\mu_0}$ such that $\varphi(x) = x$. By definition
of $B_{\mu_0}$, we can write $x = t^{-m} y$ with $m \in \N$ and $y \in 
B^+_{\mu_0}$. We choose $m$ minimal with this property. We assume by
contradiction that $m > 0$. By the first paragraph of the proof, we 
can write
$y = a + tb$ with $a \in \Bpinf$ and $b \in B^+_{\mu_0}$. Besides,
for any nonnegative integer $n$, we have $\varphi^n(y) = p^{nm} y$
and then:
$$\theta \circ \varphi^n(a) 
= \theta \circ \varphi^n \big(y - t b\big)
= \theta \big(p^{mn} y - p^n t \varphi^n(b)\big) 
= p^{mn} \cdot \theta_{\mu_0}(y) = 0$$
the last equality coming from the fact that $y = t^m x \in \Fil^m
B_{\mu_0} \subset \Fil^1 B_{\mu_0}$.
By Proposition~\ref{prop:kerthetaphi}, we find that $[\Ueps]-1$
divides $a$ in $\Bpinf$. On the other hand, from the definition 
of $t$, we have:
$$pt = \varphi(t) = 
\big([\Ueps]^p-1\big) \cdot \sum_{i=1}^\infty
(-1)^{i-1} \frac{([\Ueps]^p{-}1)^{i-1}} i,$$
from what we derive that $t$ and $[\Ueps]^p-1$ differ by a unit 
in $B^+_{\mu_0}$. From the divisibility observed above, we deduce 
that $[\Ueps]^p-1$ divides $\varphi(a) = p^m a + p^m t b - pt
\varphi(b)$ in $B^+_{\mu_0}$. Therefore $t$ must divide $a$ in 
$B^+_{\mu_0}$, which contradicts the minimality of $m$. As a
conclusion, we find $m = 0$, \emph{i.e.} $x \in B^+_{\mu_0}$.

Write $x = a + tb$ with $a \in \Bpinf$ and $b \in B^+_{\mu_0}$.
The equality $x = \varphi(x)$ gives $x = \varphi^n(a) + p^n t 
\varphi^n(b)$ for all~$n$. Therefore $\varphi^n(a)$ converges to $x$
when $n$ goes to infinity. Since $\Bpinf$ is closed in $B^+_{\mu_0}$,
we deduce that $x \in \Bpinf$. Finally, remembering that
$\Bpinf = W(\OCpflat)[\frac 1 p]$, we obtain $x \in
W(\Fp)[\frac 1 p]$, that is $x \in \Qp$.

We now move to the general case. 
Let $x \in (\Fil^0 \Bmu)^{\varphi = 1}$. In particular $x \in
\Fil^0 \Bmax$ and therefore $x = \varphi(x) \in \Fil^0 B_p
\subset \Fil^0 B_{\mu_0}$. 
The conclusion now follows by the first part of the proof.
\end{proof}

\begin{rem}
Proposition~\ref{prop:bmuphi1} can be written in the shorter form:
$$\big(\Fil^0 \Bmu\big)^{\varphi = 1} = \Qp$$
where the exponent ``$\varphi{=}1$'' means that we are taking the 
subspace of fixed points under $\varphi$.
The reader should be aware that restricting to $\Fil^0$ is
essential: $\Bmu^{\varphi = 1}$ is much bigger than $\Qp$. 
Precisely, we have the so-called fundamental exact sequence:
$$0 \to \Qp \to \Bmu^{\varphi = 1} \to \BdR/\BpdR \to 0$$
where the map $\Bmu^{\varphi = 1} \to \BdR/\BpdR$ is induced
by the natural inclusion $\Bmu \hookrightarrow \BdR$.
\end{rem}

\section{Crystalline and de Rham representations}
\label{sec:crysdRrep}

We keep the general notations of the previous section: the letter $K$ 
denotes a finite extension of $\Qp$, $G_K$ is its Galois group, 
\emph{etc.} So far, we have defined the periods rings $\Bcrys$ and 
$\BdR$ (together with some variants). By Fontaine's formalism 
(cf~\S\ref{ssec:Fontainestrategy}), these rings cut out full 
subcategories of $\Rep_{\Qp}(G_K)$. The objective of this section is 
to study these categories and to demonstrate that they are relevant 
for geometric purpose.
We begin with a definition.

\begin{deftn}
Let $V$ be a finite dimension $\Qp$-linear representation
of $G_K$.

\vspace{-2mm}

\begin{enumerate}[(i)]
\renewcommand{\itemsep}{0pt}
\item We say that $V$ is \emph{crystalline} if it is
$\Bcrys$-admissible.
\item We say that $V$ is \emph{de Rham} if it is
$\BdR$-admissible.
\end{enumerate}
\end{deftn}

\noindent
Rephrasing the definition of $B$-admissibilty and using
Theorems~\ref{theo:bdrgk} and~\ref{theo:bmugk}, we have:
$$\begin{array}{lcl}
V \text{ is crystalline}
& \Longleftrightarrow & 
\dim_{K_0} \big(\Bcrys \otimes_{\Qp} V)^{G_K} = \dim_{\Qp} V,
\smallskip \\
V \text{ is de Rham}
& \Longleftrightarrow & 
\dim_K \big(\BdR \otimes_{\Qp} V)^{G_K} = \dim_{\Qp} V.
\end{array}$$
Moreover, since $\Bcrys$ is a subring of $\BdR$, any crystalline 
representation is de Rham.

\subsection{Comparison theorems: statements}
\label{ssec:comparison}

We start by discussing the geometric relevance of the notion of 
crystalline and de Rham representations. Our ambition is only to state 
the relevant theorems in this direction and definitely not to prove 
them. The most important ingredients of the proofs will be presented and 
discussed in Yamashita's lecture~\cite{yamashita} and Andreatta and 
al.'s lecture~\cite{andreatta} in this volume.
From now on, we fix a proper smooth variety $X$ defined over $\Spec K$. 
(At least) two different cohomology theories taking coefficients in 
$\Qp$ can be naturally attached to $X$, namely:

\vspace{-2mm}

\begin{itemize}
\renewcommand{\itemsep}{0pt}
\item the (algebraic) de Rham cohomology $H_\dR^\bullet(X)$ of $X$: each 
component $H_\dR^r(X)$ is a $K$-vector space endowed with a descreasing 
filtration, denoted by $\Fil^m H^r_\dR(X)$, with $\Fil^0 H^r_\dR(X) =
H^r_\dR(X)$ and $\Fil^{r+1} H^r_\dR(X) = 0$

\item the $p$-adic \'etale cohomology $H_\et^\bullet(X_{\bar K}, \Qp)$ 
where $X_{\bar K} = \Spec\:\bar K \times_{\Spec K} X$: each component 
$H_\et^r(X_{\bar K}, \Qp)$ is a $\Qp$-vector space endowed with a 
continuous action of $\Gal(\bar K/K)$.
\end{itemize}

\noindent
In the early 1970's, Grothendieck~\cite{Gro71} wondered whether one can 
compare these cohomology groups. More precisely, he raised the so-called 
\emph{problem of the mysterious functor}, asking for the existence of a 
purely algebraic recipe to recover $H^r_\dR(X)$ from 
$H^r_\et(X_{\bar K}, \Qp)$.
When $X_\C$ is a complex variety, the problem of the ``mysterious'' 
functor has been solved for a long time; indeed, the de Rham comparison 
theorem ensures that $H^r_\dR(X_\C)$ is isomorphic to the singular 
cohomology of $X_\C(\C)$ with coefficients in $\C$ (which plays the role 
of the étale cohomology). As we shall see, the $p$-adic case is more 
subtle.

Using standard arguments, one proves that $H_\dR^r(X)$ and 
$H_\et^r(X_{\bar K}, \Qp)$ have the same dimension for all $r$. Thus 
$K \otimes_{\Qp} H_\et^r(X_{\bar K}, \Qp)$ has to be isomorphic to 
$H_\dR^r(X)$ as abstract $K$-vector spaces. However there does not 
exist any \emph{functorial} isomorphism between
them. Therefore the coincidence of dimensions cannot be considered
as a satisyfing answer to Grothendieck's question.

Hodge-like decomposition theorems discussed in \S\ref{ssec:motivations} 
(see in particular Eq.~\eqref{eq:hodgeabelian}) constitute a significant 
process towards Grothendieck's problem. Indeed they show, for some
particular $X$'s, that $H^r_\et(X_{\bar K}, \Qp)$ is isomorphic to
the \emph{graded} module of $H_\dR^r(X)$ after extending scalars to 
$\Cp$.
However the de Rham filtration on $H_\dR^r(X)$ is not canonically
split in the $p$-adic setting; therefore some information is lost
when passing to the graduation.
The point, which was first formulated by Fontaine and Jannsen,
is that we can recover this missing information by extending scalars
to the larger field $\BdR$.
This is the content of the $C_\dR$ theorem\footnote{This result is 
sometimes referred to as the $C_\dR$-conjecture (even if it is now 
proved) since it has been a conjecture for a long time. The letter ``C'' 
in $C_\dR$ stands for ``comparison'' or ``conjecture''.}:

\begin{theo}[$C_\dR$]
\label{theo:cdr}
Let $X$ be a proper smooth variety over $\Spec K$.
For all $r$, there exists a canonical isomorphism:
\begin{equation}
\label{eq:cdr}
\gamma_\dR(X) : \BdR \otimes_K H_\dR^r(X) \simeq 
\BdR \otimes_{\Qp} H_\et^r(X_{\bar K}, \Qp)
\end{equation}
which respects filtrations and Galois action on both sides. Moreover
$\gamma_\dR(X)$ is functorial in $X$.
\end{theo}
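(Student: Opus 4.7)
The plan is to prove $C_\dR$ by first constructing a canonical comparison map $\gamma_\dR(X)$ and then proving it is an isomorphism, via a reduction to the good-reduction (crystalline) case together with a local computation of a period sheaf. Throughout, the essential new tool beyond what has been developed in the article is a sheafification of $\BdR$ on an appropriate site over $X_{\bar K}$.

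First I would construct the map $\gamma_\dR(X)$. The natural approach is to work with the pro-\'etale site $X_{\bar K, \textrm{pro\'et}}$ in the sense of Scholze and to introduce period sheaves $\mathbb{B}_\dR^+$ and $\mathbb{B}_\dR$ whose stalks at a geometric point recover $\BpdR$ and $\BdR$. Alongside, one defines a ``big'' variant $\mathcal{O}\mathbb{B}_\dR$, which couples $\mathbb{B}_\dR$ with the structure sheaf of $X$ and carries a natural integrable connection $\nabla$. A geometric Poincar\'e lemma asserts that the de Rham complex $(\mathcal{O}\mathbb{B}_\dR \otimes_{\mathcal{O}_X} \Omega^\bullet_{X/K}, \nabla)$ is a resolution of $\mathbb{B}_\dR$. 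Taking hypercohomology on $X_{\bar K}$ and using that pro-\'etale cohomology with $\Qp$-coefficients agrees with $H^r_\et$ on a proper smooth variety, this resolution yields a canonical filtered and Galois-equivariant map
\[\gamma_\dR(X) : \BdR \otimes_K H_\dR^r(X) \longrightarrow \BdR \otimes_{\Qp} H_\et^r(X_{\bar K}, \Qp).\]

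Second, I would show that $\gamma_\dR(X)$ is an isomorphism. Since $\BdR$ satisfies Fontaine's hypotheses (H1), (H2), (H3), injectivity of the associated $\alpha_W$ (as in \S\ref{sssec:criterium}) reduces the problem to a dimension count over $\BdR$. The strategy to establish equality of dimensions is to reduce to the good-reduction case: by de Jong's alterations, after a finite extension of $K$, any proper smooth $X$ admits a proper hypercovering by varieties with semi-stable reduction; a spectral sequence argument then reduces $C_\dR$ for $X$ to the semi-stable comparison $C_\st$ for each term, which in turn reduces to $C_\crys$ via the Hyodo--Kato isomorphism. For a variety $\mathcal{X}/\O_K$ with good reduction, crystalline cohomology of the special fibre provides a natural $K_0$-lattice inside $H^r_\dR(X)$ equipped with a Frobenius, and the admissibility of $H^r_\et(X_{\bar K}, \Qp)$ as a $\Bcrys$-representation is established by matching it with this crystalline cohomology.

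The hard part will be the local input underlying $C_\crys$, namely the computation of the pro-\'etale cohomology of $\mathbb{B}_\dR^+$ (or $\Bcrys$ with its Frobenius) on a small affine. Concretely, on an affine piece $\Spec R \subset X$ admitting a toric chart, one must identify the continuous Galois cohomology of a perfectoid tower over $R$ acting on an appropriate period module with the de Rham complex of $R$. This is precisely the domain of Faltings' almost purity theorem---streamlined in modern language via Scholze's theory of perfectoid spaces---which provides the almost vanishing of higher Galois cohomology of perfectoid algebras needed to conclude. Once this local statement is available, the remaining work (gluing the local comparisons, verifying compatibility with Frobenius, filtrations, and Galois action, and controlling the pro-\'etale-to-\'etale comparison) is technical but essentially formal; the genuine depth of $C_\dR$ is concentrated in this almost-\'etale descent.
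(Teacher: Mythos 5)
The paper does not prove $C_\dR$. It states the theorem, explicitly disclaims any intention of proving it (``Our ambition is only to state the relevant theorems in this direction and definitely not to prove them''), and follows the statement with a historical paragraph summarizing the various proofs (Fontaine--Messing, Tsuji, Faltings, Scholze) and deferring the technical ingredients to the companion lectures of Yamashita and of Andreatta et al.\ in the same volume. So there is no proof in the source to compare against; what follows is an assessment of your sketch on its own terms.

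Your outline is a reasonable high-level picture, but it splices together two \emph{independent} complete proofs in a way that is redundant and, at one step, incorrect. The pro-\'etale construction of $\gamma_\dR$ via the period sheaves $\mathbb{B}_\dR$ and $\mathcal{O}\mathbb{B}_\dR$, the de Rham Poincar\'e lemma, and the local almost-purity computation is Scholze's proof \emph{in its entirety}: once the local almost-\'etale descent and the Poincar\'e lemma are in hand, $\gamma_\dR$ is shown to be an isomorphism directly by a spectral sequence and a finiteness argument, without any reduction to good or semi-stable reduction. Conversely, the reduction via de Jong's alterations to the semi-stable case belongs to Fontaine's strategy as carried out by Tsuji, where the comparison map is built via \emph{syntomic} cohomology, not pro-\'etale period sheaves. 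You should choose one of the two paths rather than overlapping them. Furthermore, the assertion that $C_\st$ ``reduces to $C_\crys$ via the Hyodo--Kato isomorphism'' misstates the logic: the Hyodo--Kato isomorphism identifies $K \otimes_{W(k)} H^r_{\log\text{-}\crys}$ with $H^r_\dR$, and it is an ingredient \emph{inside} the proof of $C_\st$ (allowing one to rephrase the comparison in terms of log-crystalline cohomology), but $C_\st$ is a genuinely stronger statement requiring log-geometric or log-almost-\'etale methods; it is not a formal consequence of $C_\crys$. Finally, a smaller caution: invoking Fontaine's criterion (H1)--(H3) to ``reduce to a dimension count'' risks circularity, because $\gamma_\dR$ is identified with $\alpha_W$ only once one already knows $H^r_\dR(X) \simeq D_\dR(H^r_\et)$, which is part of what is to be proved; the correct way to use the formalism is to first produce a map $H^r_\dR(X) \to D_\dR(H^r_\et(X_{\bar K},\Qp))$, show it is injective, and combine this with the inequality $\dim_K D_\dR \leq \dim_{\Qp} H^r_\et$ and the equality of Betti numbers.
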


\noindent
In the above theorem, the filtration on the source of $\gamma_\dR(X)$ is
the ``convolution'' filtration:
$$\Fil^m \big(\BdR \otimes H_\dR^r(X) \big) = \sum_{a+b=m}
\Fil^a \BdR \otimes_K \Fil^b H_\dR^r(X)$$
whereas, on the target, the filtration comes only from that on $\BdR$.
In the same way, the Galois action on the source (resp. on the target)
of \eqref{eq:cdr} is the diagonal action (resp. the action coming from
that on $\BdR$).

We observe that Theorem~\ref{theo:cdr} implies readily that the
$\Qp$-linear representation $H_\et^r(X_{\bar K}, \Qp)$ is de Rham.
Moreover, taking $G_K$-invariants on both side of~\eqref{eq:cdr}, we 
find a natural isomorphism:
\begin{equation}
\label{eq:ettodr}
H_\dR^r(X) \simeq \big(\BdR \otimes_{\Qp}
H_\et^r(X_{\bar K},\Qp)\big)^{G_K}
\end{equation}
which gives a satisfactory answer to Grothendieck's mysterious functor
problem. Similarly, passing to the graduation in~\eqref{eq:cdr}, we 
obtain the following Hodge-like decomposition:
\begin{equation}
\Cp \otimes_{\Qp} H^r_\et(X_{\bar K}, \Qp) 
\, \simeq \, \bigoplus_{a+b=r} \Cp(\chi_\cycl^{-a})
 \otimes_K H^b(X, \O_X)
\end{equation}
extending Tate's theorem on abelian varieties (\emph{cf} 
\S\ref{ssec:motivations}). Observe in addition that the above 
isomorphism gives the Hodge--Tate decomposition of 
$H_\et^r(X_{\bar K}, \Qp)$. In particular, we see that all the
Hodge--Tate weights of $H_\et^r(X_{\bar K}, \Qp)$ are in the 
range $[-r,0]$.

\paragraph{The Fontaine--Mazur conjecture}

A classical application of Theorem \ref{theo:cdr} is to prove that a 
representation $V$ does \emph{not} come from geometry: if we can prove 
that $V$ is not de Rham (or not Hodge--Tate), it can't arise as the 
\'etale cohomology of a proper smooth variety. One may ask for the 
converse: does any de Rham representation arise as a subquotient of a 
Tate twist of the \'etale cohomology of some variety? In the local 
situation considered up to now, the answer is negative. Nevertheless, a 
``global'' variant of this question is conjectured to admit a positive 
answer. It is the so-called Fontaine--Mazur conjecture, which first 
appeared in~\cite{FM95}.

Let $F$ be a number field, that is a finite extension of $\Q$. 
For any prime ideal $\mathfrak p$ in $\O_F$ 
(the ring of integers of $F$), one can consider the field $F_{\mathfrak p}$ 
defined as the completion of $F$ with respect to the $\mathfrak p$-adic 
topology. If $p$ is the prime number defined by $p \Z = \Z \cap 
\mathfrak p$, the field $F_{\mathfrak p}$ is a finite extension of $\Qp$.
Moreover its absolute Galois group $\Gal(\Qpbar/F_{\mathfrak p})$ embeds 
into $\Gal(\bar\Q/F)$. This embedding is not unique but it is up to conjugacy 
by an element of $\Gal(\bar\Q/F)$.
Therefore, if $V$ is a $\Qp$-representation of $\Gal(\bar\Q/F)$, its 
restrction to $\Gal(\Qpbar/F_{\mathfrak p})$ is well defined and it makes sense 
to wonder whether it is de Rham or not. 
In the same way, a representation of $\Gal(\bar\Q/F)$ (with coefficients 
in any ring) is said to be \emph{unramified} at $\mathfrak p$ if its restriction to
$\Gal(\Qpbar/F_{\mathfrak p})$ is unramified (\emph{i.e.} if the inertia subgroup
of $\Gal(\Qpbar/F_{\mathfrak p})$ acts trivially on it).

\begin{conj}[Fontaine--Mazur]
We fix a number field $F$ and a prime number $p$.
Let $V$ be a finite dimensional $\Qp$-representation of 
$\Gal(\bar\Q/F)$. We assume that:

\vspace{-2mm}

\begin{enumerate}[(i)]
\renewcommand{\itemsep}{0pt}
\item for almost\footnote{``almost all'' means ``all expect possibly
a finite number of them''} all prime ideals $\mathfrak p \in \O_F$, the
representation $V$ is unramified at $\mathfrak p$,
\item for all primes $\mathfrak p$ above $p$ (\emph{i.e.} such that
$\Z \cap \mathfrak p = p \Z$), the representation $V_{|\Gal(\Qpbar / 
F_{\mathfrak p})}$ is de Rham.
\end{enumerate}

\vspace{-2mm}

\noindent
Then $V$ appears as a subquotient of some $H_\et^r(X_{\bar\Q}, \Qp)
(\chi_\cycl^m)$ where
$r$ is a nonnegative integer, $X$ is a proper smooth variety defined
over $\Spec F$ and $m$ is an integer.
\end{conj}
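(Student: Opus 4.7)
The statement as given is the Fontaine--Mazur conjecture, which is open in general, so what follows can only be a strategy rather than a proof. The plan I would follow is the automorphy route, initiated by Wiles and Taylor, and pushed much further by Kisin, Emerton, and others: rather than trying to construct the variety $X$ directly, one tries to show that $V$ is \emph{automorphic}, i.e.\ attached to a cohomological automorphic representation $\pi$ of $\mathrm{GL}_n(\mathbb{A}_F)$, and then realise $\pi$ geometrically inside the étale cohomology of a suitable Shimura variety (or, failing that, a congruence quotient of a locally symmetric space whose cohomology is still of motivic origin).

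The main steps, in order, would be: \textbf{(1)} Reduce to the case where $V$ is absolutely irreducible, using the stability of the class of ``geometric'' representations under extensions and Tate twists. \textbf{(2)} Choose a stable lattice in $V$ and consider the residual representation $\bar V$ with coefficients in $\overline{\mathbb{F}}_p$, which by hypothesis (i) is unramified outside a finite set $S$ and at each place $\mathfrak p | p$ has local restriction arising from a de Rham representation. \textbf{(3)} Invoke a Serre-type conjecture to establish that $\bar V$ is residually automorphic: there should exist a mod $p$ automorphic form on a suitable reductive group over $F$ whose associated Galois representation is $\bar V$. \textbf{(4)} Apply a modularity lifting theorem of Taylor--Wiles--Kisin type to deduce that $V$ itself is automorphic; the de Rham hypothesis at places above $p$ is exactly what feeds the local deformation rings on the Galois side and matches with the weight and inertial type of the lift. \textbf{(5)} Attach to the corresponding automorphic representation $\pi$ a motive: embed $\pi$ in the middle-degree cohomology of a Shimura variety $X/F$, so that $V$ (up to a Tate twist determined by the Hodge--Tate weights supplied by the de Rham condition) appears as a subquotient of $H^r_\et(X_{\bar{\mathbb Q}},\mathbb{Q}_p)(\chi_\cycl^m)$.

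The hard part, and where the proposal ceases to be a proof, is step (3)--(5) in the generality claimed. The residual automorphy input (Serre's conjecture) is known over $\mathbb{Q}$ in dimension $2$ by Khare--Wintenberger, and in scattered higher-dimensional situations, but is completely open in general over a number field $F$. The modularity lifting step requires the Taylor--Wiles method, which demands the existence of auxiliary primes and a ``big image'' hypothesis on $\bar V$, together with the construction of well-behaved local deformation rings at primes above $p$; here the description of crystalline and de Rham local conditions via Fontaine's theory (precisely the rings $\BdR$ and $\Bcrys$ constructed above) is the essential technical tool, through the Breuil--Kisin classification of lattices in semi-stable representations. Finally, the realisation step (5) rests on the existence of Galois representations attached to automorphic forms of the relevant type, which over a general $F$ goes through the cohomology of arithmetic locally symmetric spaces and is only fully motivic when one has a Shimura variety available.

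A more modest, and realistic, plan would therefore be to verify the conjecture in settings where all three inputs are available: $F = \mathbb{Q}$, $n = 2$, $V$ odd, absolutely irreducible, with distinct Hodge--Tate weights. In that setting, steps (3), (4), (5) are respectively Khare--Wintenberger, Kisin's modularity lifting theorem, and the construction of the Deligne motive of a weight-$k$ newform inside the cohomology of a Kuga--Sato variety over a modular curve. I would first write out this case carefully as a proof of concept, explicitly tracking how the de Rham hypothesis at $p$ produces, via $\BdR$, the weight data that feeds into modularity lifting; and only then discuss what is missing in higher dimension or over a general base field.
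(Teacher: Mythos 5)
You correctly recognize that this statement is a conjecture, not a theorem, and that no proof can be expected: the paper offers none. It simply states the Fontaine--Mazur conjecture, remarks that it is still open, and records that the two-dimensional case over $\Q$ was proved by Emerton and Kisin using modularity lifting and the $p$-adic Langlands program. Your sketch of the automorphy route (residual automorphy via a Serre-type conjecture, modularity lifting à la Taylor--Wiles--Kisin, realisation in the cohomology of a Shimura or Kuga--Sato variety) is precisely the circle of ideas the paper alludes to through those citations, and your identification of where the strategy breaks down in general --- residual automorphy over a general number field, Taylor--Wiles auxiliary primes, and the lack of a Shimura variety for general $F$ and $n$ --- is accurate. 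Your more modest plan (odd, absolutely irreducible, two-dimensional, distinct Hodge--Tate weights over $\Q$, via Khare--Wintenberger and Kisin) matches exactly the scope in which the paper says the conjecture is known. In short, there is nothing to compare against: you have correctly diagnosed the situation and described the state of the art; just be sure not to present the outline as a proof of the general conjecture, which remains open as the paper says.
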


When a representation $V$ satisfies the conclusion of the above 
conjecture, we usually say that $V$ \emph{comes from geometry}. From 
the $C_\dR$-theorem, we derive that every representation of the shape 
$H_\et^r(X_{\bar\Q}, \Qp) (\chi_\cycl^m)$ comes from geometry. The 
Fontaine--Mazur conjecture then appears as a purely algebraic criterium 
to recognize representations coming from geometry among all 
representations.

We would like to emphasize that the Fontaine--Mazur conjecture might 
look surprising at first glance. Indeed it has been known for a long 
time that the Galois action on the \'etale cohomology satisfies many 
additional properties: for instance, the eigenvalues of the Frobenius 
acting on the \'etale cohomology have to take very particular values, 
known as Weyl numbers. However, these properties are not required in 
Fontaine--Mazur conjecture. It means that, assuming the conjecture to be 
true, they are implied by the unramified and the de Rham conditions, 
which is \emph{a priori} rather unexpected.

Nowadays, the Fontaine--Mazur conjecture is still open. It was recently 
proved by Emerton \cite{Eme} and Kisin \cite{Kis09} for two-dimensional 
representations of $\Gal(\bar\Q/\Q)$ (satisfying some additional mild 
conditions) using the most recent developments in algebraic number 
theory (\emph{e.g.} modularity lifting theorems, $p$-adic Langlands 
program). As far as we know, beyond the dimension $2$, nothing is known.

\paragraph{The $C_\crys$-theorem}

We now go back to the local setting and examine the case of the variety 
$X$ has good reduction.
We recall that this means that there exists a proper smooth 
variety $\calX$ over $\Spec \O_K$ whose generic fiber is $X$. We 
emphasize that the model $\calX$ is required to be smooth; it is the 
crucial assumption.

When $X$ has good reduction, the de Rham cohomology of $X$ carries more 
structures. Indeed, assuming that $X$ has good reduction, one can fix a 
model $\calX$ as above and consider its special fiber $\bar \calX$. It 
is a proper smooth scheme defined over $\Spec k$.
To $\bar \calX$, one can attach a third 
cohomology group: its \emph{crystalline} cohomology $H^r_\crys 
(\bar\calX)$, defined by Berthelot~\cite{Ber74}.
We refer to~\cite{Ber74,BO78} for a complete exposition of the
crystalline theory.
For this article, let us just recall \emph{very} briefly that, for all 
positive integer $r$, the crystalline cohomology $H^r_\crys(\bar\calX)$ 
is a module over $W(k)$ endowed with an endomorphism $\varphi :
H^r_\crys(\bar\calX) \to H^r_\crys(\bar\calX)$ which is semi-linear with 
respect to the Frobenius on $W(k)$.
In addition, the crystalline cohomology of $\bar\calX$ is closely 
related to the de Rham cohomology of $X$ through the Hyodo--Kato 
isomorphism $K \otimes_{W(k)} H^r_\crys(\bar\calX) \simeq H^r_\dR(X)$. 
Putting $K_0 = W(k)[\frac 1 p]$ as before, we see that Hyodo--Kato 
isomorphism defines a $K_0$-structure in $H^r_\dR(X)$, namely
$K_0 \otimes_{W(k)} H^r_\crys(\bar\calX)$. One can prove that this 
structure is canonical in the sense that it does not depend on the 
choice of a proper smooth model $\calX$ of $X$.

\begin{theo}[$C_\crys$]
\label{theo:ccris}
Let $X$ be a proper smooth variery over $\Spec\:K$ with good 
reduction. Let $\calX$ denote a proper smooth
model of $X$ over $\Spec\:\O_K$.
For all $r$, there exists a canonical and functorial isomorphism:
\begin{equation}
\label{eq:ccris}
\gamma_\crys(X) : \Bcrys \otimes_W H_\crys^r(\bar\calX) \simeq 
\Bcrys \otimes_{\Qp} H_\et^r(X_{\bar K}, \Qp)
\end{equation}
which respects Galois action and Frobenius action on both sides and such 
that $\BdR \otimes \gamma_\crys(X)$ respects filtrations.
\end{theo}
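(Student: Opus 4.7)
The plan is to construct $\gamma_\crys(X)$ by globalizing $\Bcrys$ into a sheaf on an appropriate Grothendieck site attached to $\calX$, and by identifying both sides of~\eqref{eq:ccris} with the cohomology of this sheaf. This is the strategy pioneered by Fontaine and Messing and brought to completion by Faltings and Tsuji; a more recent reformulation, due to Bhatt, Morrow and Scholze, uses the pro-étale site and a sheafified version of $\Ainf$. I will describe the Faltings--Tsuji version, which is closest in spirit to the tilt-and-Witt construction of \S\ref{ssec:Ainf}.

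First I would define period sheaves. Over a small affine open $\Spec R \subset \calX$ admitting étale coordinates, one adjoins all $p$-power roots of $p$ and of the coordinates to form a ``perfectoid cover'' $R_\infty$ of $R$, and then performs the tilt-and-Witt construction of \S\ref{ssec:Ainf} relative to $R_\infty$ in place of $\O_{\Cp}$. This yields rings $A_\inf(R_\infty)$ and $B_\crys(R_\infty)$ which glue into sheaves $\mathcal{A}_\inf$ and $\mathcal{B}_\crys$ on Faltings' site associated to $\bar\calX$, equipped with Frobenius, continuous Galois action, and a filtration; their stalks at a geometric base point recover $\Ainf$ and $\Bcrys$, while their global sections will be related both to étale and to crystalline cohomology.

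Next I would prove two independent comparisons. A \emph{primitive comparison} identifies $\Bcrys \otimes_{\Qp} H^r_\et(X_{\bar K}, \Qp)$ with the $r$-th cohomology of Faltings' site with coefficients in $\mathcal{B}_\crys$, compatibly with Galois action and Frobenius; this relies on the fact that constant sheaves compute étale cohomology on Faltings' site, together with the stalkwise identification of $\mathcal{B}_\crys$ with $\Bcrys$. A \emph{crystalline Poincaré lemma} states that the de Rham complex $\mathcal{B}_\crys \otimes \Omega^\bullet_{\calX/\O_K}$ resolves the constant sheaf $\Bcrys$; combined with the globalized Fontaine--Messing period map, this identifies $\Bcrys \otimes_{K_0} H^r_\crys(\bar\calX)$ with the same Faltings cohomology. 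Composing the two isomorphisms yields $\gamma_\crys(X)$. The filtration compatibility of $\BdR \otimes \gamma_\crys(X)$ is then obtained by running the entire argument with $\mathcal{B}_\dR$ in place of $\mathcal{B}_\crys$ and observing that the Fontaine--Messing map carries the Hodge filtration on the de Rham complex to the filtration coming from $\BdR$.

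The main obstacle is the crystalline Poincaré lemma: one must show that after extracting all $p$-power roots of coordinates, the de Rham cohomology of $\mathcal{B}_\crys$ becomes trivial in positive degrees, up to an error controlled by powers of the period $t$. This in turn rests on Faltings' \emph{almost purity theorem}, which asserts that the tower $R_\infty \to \bar R$ of integral closures is almost étale in a precise sense, combined with a careful analysis of the divided power structures near $\ker \theta$. Canonicity and functoriality in $X$, as well as independence of the choice of proper smooth model $\calX$, then reduce to the corresponding properties of Faltings' site, of crystalline cohomology, and of the Fontaine--Messing period map.
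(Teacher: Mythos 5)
The paper does not prove Theorem~\ref{theo:ccris}: the introduction to \S\ref{ssec:comparison} states that its ambition is ``only to state the relevant theorems in this direction and definitely not to prove them,'' and defers the ingredients of the proofs to Yamashita's lecture and to Andreatta et al.'s lecture in this volume. So there is no proof in the paper against which to compare your proposal; the only material in the paper on the subject is the historical discussion under ``A brief history of the $C_\dR$-theorem.''

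With that caveat, your sketch is a reasonable high-level description of Faltings' approach and is consistent with the paper's historical account. Two points deserve care, however. First, you introduce your sketch as ``the Faltings--Tsuji version,'' but the paper (correctly) distinguishes these as genuinely different strategies: Fontaine--Messing and Tsuji pass through syntomic cohomology and a symbol map into the period sheaf, while Faltings' independent proof relies on almost mathematics and his topos built from towers of almost étale covers of small affines --- which is what you actually describe; the label is therefore a conflation. Second, and more substantively, what you have written is a roadmap rather than a proof. The almost purity theorem, the primitive comparison isomorphism, and the crystalline Poincaré lemma are each the substance of long and technically demanding papers, and the one or two sentences you devote to each of them is where the entire mathematical content of the theorem actually lives. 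Invoking them as black boxes is perfectly acceptable at the expository level of this article, but you should be explicit that the difficulty has been delegated to those results rather than sketched, and give precise references to where they are established.
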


\noindent
Here, the Frobenius action is defined on the 
source (resp. the target) of $\gamma_\crys(X)$ as the diagonal action 
(resp. the action given by $\varphi \otimes \text{id}$).
Theorem~\ref{theo:ccris} shows that the representation 
$H_\et^r(X_{\bar K}, \Qp)$ is crystalline as soon as $X$ has good
reduction. (Remember that we already knew that this representation
was de Rham thanks to the $C_\dR$-theorem.)
More precisely, taking $G_K$-invariants 
on both sides of \eqref{eq:ccris}, we obtain:
\begin{equation}
\label{eq:ettocrys}
H_\crys^r(\calX) \simeq \big(\Bcrys \otimes_{\Qp}
H_\et^r(X_{\bar K},\Qp)\big)^{G_K}
\end{equation}
which shows that, when $X$ has good reduction, the étale cohomology of 
$X$ not only determines its de Rham cohomology but also its canonical
$K_0$-structure coming from the crystalline cohomology.
After the results of \S\ref{ssec:crys} (to come up), it turns out that 
the converse also holds true: the crystalline cohomology, equipped with 
its Frobenius and the de Rham filtration after scalar extension to $K$, 
determines the étale cohomology.

\paragraph{A brief history of the $C_\dR$-theorem}

Theorems~\ref{theo:cdr} and~\ref{theo:ccris} were first stated as
conjecture by Fontaine and 
Jannsen just after Fontaine introduced
the corresponding periods rings $\BdR$ and $\Bcrys$ respectively.
Fontaine also designed a strategy to prove these conjectures. Very 
roughly, it can be summarized as follows:

\vspace{-2mm}

\begin{enumerate}
\renewcommand{\itemsep}{0pt}
\item prove the $C_\crys$-conjecture;
\item extend the $C_\crys$-conjecture to the semi-stable 
case\footnote{We say that a variety $X$ over $K$ has 
\emph{semi-stable} reduction if it has a proper model $\calX$ over
$\Spec\:\O_K$ whose generic fibre is a divisor with normal
crossings.};
\item derive the $C_\dR$-conjecture by reduction to the semi-stable
case.
\end{enumerate}

\vspace{-2mm}

\noindent
The case of $C_\crys$ looks easier than that of $C_\dR$ because
the isomorphism~\eqref{eq:ccris} can be understood as a kind of
Kunneth formula. Indeed, the period ring $\Bcrys$ has a nice
cohomological interpretation, that is $\Bcrys = H^0_\crys\big(\O_{\bar 
K}/p\O_{\bar K}\big)$. It 
then becomes plausible that $\Bcrys \otimes_{K_0} 
H^r_\crys(\bar\calX)$ could have something to do with the cohomology of 
$X_{\bar K}$.
Beyond this remark, it remained to find the way to go back and forth 
between the crystalline and the étale cohomologies. To this end, 
Fontaine and Messing 
proposed to use a third cohomology, the syntomic cohomology, and
to compare it to both sides of the isomorphism~\eqref{eq:cdr}. 
Using these ideas, they managed to prove the $C_\crys$-theorem 
under the additional assumption that $X$ has dimension at most 
$\frac{p-1}2$~\cite{FM87}.

Regarding the second step, Fontaine and Illusie introduced and proposed 
to develop log geometry. The main feature of log geometry is that it 
sees a normal crossing divisor as a log-smooth scheme. It then should be 
the right framework to perform local computations in the semi-stable 
case and then, hopefully, to extend the proof by Fontaine and Messing to 
all varieties admitting semi-stable reduction. The development of log 
geometry was achieved by the Japanese school \cite{K89,HK94}, who 
defined an analogue of the crystalline cohomology in this setting --- 
the so-called log-crystalline cohomology --- and related it to the de 
Rham cohomology \emph{via} a log-analogue of the Hyodo--Kato 
isomorphism.

The initial idea for the third step was to prove that every proper 
smooth variety over $\Spec K$ admits semi-stable reduction after a 
finite extension. Unfortunately, this problem turns out to be quite 
difficult and is still open nowadays. Nevertheless, de Jong~\cite{dJ96, 
dJ97, Ber97} proved a weaker result which was enough to complete the 
last step of Fontaine's strategy. In the very long paper~\cite{tsuji}, 
Tsuji gathered all these inputs and finally came up with a complete 
proof of the $C_\dR$-theorem. The main ingredients of the proof will
be presented in Yamashita's lecture~\cite[\S 2]{yamashita} in this 
volume.

In the meanwhile, Faltings published another proof of the $C_\crys$
and $C_\dR$-theorem~\cite{Fal} (but did not state a semi-stable version).
Faltings' strategy is quite different from Fontaine's one and relies on 
\emph{almost mathematics}, a theory specifically developped by Faltings 
for this application, which can be thought of as a wild generalization
of Tate--Sen's methods presented in \S\ref{sec:Cp}.
The common idea which unifies these two proofs is, roughly speaking, to 
develop advanced methods to control extensions obtained by extracting 
$p$-th roots: in Fontaine's approach, it is achieved by the syntomic 
topology\footnote{A morphism of schemes obtained by extraction of a 
$p$-th 
root of some function turns out to be a covering for the syntomic 
cohomology.} whereas Faltings' initial idea is to work over infinite 
extensions obtained by extracting successive $p$-th roots and to use 
almost mathematics as the main tool to study the cohomology of varieties 
defined over such extensions.

More recently, Scholze designed a very powerful framework to do geometry 
over many ``very ramified'' bases including those obtained from usual 
$\Zp$-schemes by adjoining iterated $p$-th roots: it is the theory of 
\emph{perfectoid spaces}~\cite{Sch12}.
Based on this, he obtained in~\cite{Sch13} a new proof of the 
$C_\dR$-theorem which extends readily to \emph{analytic} varieties 
(without any hypothesis of type K\"ahler)!
This proof will be sketched in the article of Andreatta and al.'s in 
this volume~\cite{andreatta}.

\subsection{More on de Rham representations}
\label{ssec:dR}

Now we have seen the relevance of crystalline and de Rham
representations, it looks important to study systematically their
properties. We start with the de Rham case.
Let $\Rep_{\Qp}^\dR(G_K)$ denote the category of $\Qp$-linear de Rham 
representations of $G_K$. By Fontaine's general formalism, we know 
that $\Rep_{\Qp}^\dR(G_K)$ is a full abelian subcategory of $\Rep_{\Qp}
(G_K)$. It is moreover stable under direct sums, duals, tensor products, 
subobjects and quotients.

\begin{theo}
\label{theo:CpdR}
Any finite dimensional $\Cp$-admissible representation of
$G_K$ is de Rham.
\end{theo}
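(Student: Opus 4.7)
The idea is to combine Theorem~\ref{theo:Cpadm} with the fact that $\BdR$ already contains enough unramified periods, and then to descend from a suitable finite extension via Hilbert's theorem~90.

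First, by Theorem~\ref{theo:Cpadm} (or rather Remark~\ref{rem:Cpadm}) the $\Cp$-admissibility of $V$ implies that the inertia subgroup $I_K$ acts through a finite quotient. Hence there exists a finite extension $L$ of $K$, which we are free to enlarge so as to be Galois over $K$, such that $V_{|G_L}$ is an unramified representation of $G_L$. My goal is to promote unramifiedness to $\BdR$-admissibility over $G_L$, and then to descend to $G_K$.

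Second, I would check that $V_{|G_L}$ is $\BdR$-admissible as a $G_L$-representation. Applying Proposition~\ref{prop:H90unram} with $L$ in place of $K$, the $\hat L^\ur$-semi-linear $G_L$-representation $\hat L^\ur \otimes_{\Qp} V$ is trivial. It then suffices to exhibit a $G_K$-equivariant embedding $\hat L^\ur \hookrightarrow \BdR$: the inclusion $\hat K_0^\ur \hookrightarrow \Bpinf \subset \BdR$ from \S\ref{ssec:Ainf} together with the Hensel lifting $\iota : \bar K \hookrightarrow \BpdR$ of \S\ref{sec:dRcrys} provide a compatible embedding of $L \cdot \hat K_0^\ur = \hat L^\ur$ into $\BdR$. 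Extending scalars to $\BdR$, the representation $\BdR \otimes_{\Qp} V$ becomes trivial as a $\BdR$-semi-linear $G_L$-representation, so $\dim_L (\BdR \otimes_{\Qp} V)^{G_L} = \dim_{\Qp} V$.

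Third, I would descend from $G_L$ to $G_K$. The finite-dimensional $L$-vector space $W = (\BdR \otimes_{\Qp} V)^{G_L}$ carries a semi-linear action of $\Gal(L/K)$, and by Theorem~\ref{theo:bdrgk} (applied to $L$) we have $\BdR^{G_L} = L$, so that $L/L^{\Gal(L/K)} = L/K$ is the relevant Galois situation. Since $\Gal(L/K)$ is finite, Theorem~\ref{theo:H90} applies and yields the canonical isomorphism
\[
L \otimes_K W^{\Gal(L/K)} \;\xrightarrow{\sim}\; W.
\]
Using that $W^{\Gal(L/K)} = (\BdR \otimes_{\Qp} V)^{G_K}$, I conclude
\[
\dim_K (\BdR \otimes_{\Qp} V)^{G_K} = \dim_L W = \dim_{\Qp} V,
\]
which is exactly the $\BdR$-admissibility of $V$.

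The only potential subtlety is the verification that the embedding $\hat L^\ur \hookrightarrow \BdR$ produced by combining $\iota$ and the Witt-vector embedding is well defined, Galois equivariant and compatible with the completion; this is routine once one remembers that $L/K$ is finite (so the topology is unambiguous) and that $\hat K_0^\ur \subset \Bpinf$ already sits inside $\BdR$ in a $G_K$-equivariant way.
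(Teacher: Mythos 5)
Your proof is correct and rests on the same core idea as the paper's, namely that any ring of the form $L\cdot\hat K^\ur$ with $L/K$ finite embeds $G_K$-equivariantly into $\BdR$. The difference is that you take a detour: you restrict to $G_L$ (where $V$ becomes unramified), establish $\BdR$-admissibility there via Proposition~\ref{prop:H90unram}, and then descend to $G_K$ by Hilbert's theorem 90 --- in effect re-proving, inside this argument, the special case of Theorem~\ref{theo:dRrest} you need. The paper's proof is shorter because it exploits the full content of Remark~\ref{rem:Cpadm}: $\Cp$-admissibility already gives $(L\cdot\hat K^\ur)$-admissibility \emph{as a $G_K$-representation} for some finite $L$, after which the increasing property of $B\mapsto\Rep_E^{B\aadm}(G)$ (noted in \S\ref{ssec:Fontainestrategy}) immediately yields $\BdR$-admissibility, no descent needed. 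Both routes are valid; yours has the minor pedagogical advantage of not relying on the refinement recorded in Remark~\ref{rem:Cpadm}, but it is a bit longer and makes you redo a descent argument that the paper treats once and for all in Theorem~\ref{theo:dRrest}.
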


\begin{proof}
Let $V$ be a finite dimensional $\Cp$-admissible representation
of~$G_K$.
By Remark~\ref{rem:Cpadm}, there exists a finite extension $L$
of $K^\ur$ such that $V$ is $(L {\cdot}\hat K^\ur)$-admissible.
Since $L{\cdot}\hat K^\ur \subset \BdR$, we conclude that $V$ is
de Rham.
\end{proof}

Another interesting result is that de Rham representations can be 
detected by looking at the restriction to open subgroups. Precisely,
we have the following theorem.

\begin{theo}
\label{theo:dRrest}
Let $L$ be a finite extension of $K$ and 
let $V$ be a finite dimensional $\Qp$-linear representation of $G_K$.
Then $V$ is de Rham if and only if $V_{|G_L}$ is de Rham.
\end{theo}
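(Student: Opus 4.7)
The direction ($\Rightarrow$) is essentially formal. Set $W = (\BdR \otimes_{\Qp} V)^{G_L}$; since $\BdR^{G_L} = L$ (apply Theorem~\ref{theo:bdrgk} with $K$ replaced by $L$), $W$ is an $L$-vector space, and the general injectivity of $\alpha_W$ established in \S\ref{sssec:criterium} gives $\dim_L W \leq \dim_{\Qp} V$. On the other hand, if $V$ is de Rham over $G_K$, then scalar extension yields $L \otimes_K (\BdR \otimes_{\Qp} V)^{G_K} \hookrightarrow W$, so $\dim_L W \geq \dim_K (\BdR \otimes V)^{G_K} = \dim_{\Qp} V$, whence equality and the $\BdR$-admissibility of $V_{|G_L}$.

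For the nontrivial direction ($\Leftarrow$), I first reduce to the case where $L/K$ is Galois. If $\tilde L$ denotes the Galois closure of $L$ inside $\bar K$, then $\tilde L/L$ is finite, hence the easy direction applied to $G_L \supset G_{\tilde L}$ shows that $V_{|G_{\tilde L}}$ is also de Rham. Replacing $L$ by $\tilde L$, I may and shall assume $L/K$ finite Galois, with $\Gamma = \Gal(L/K)$.

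The key step is then to exploit Hilbert's theorem 90 for the finite Galois extension $L/K$. Set again $W = (\BdR \otimes_{\Qp} V)^{G_L}$, which by hypothesis is an $L$-vector space of dimension $d = \dim_{\Qp} V$. The residual action of $G_K/G_L = \Gamma$ makes $W$ into an object of $\Rep_L(\Gamma)$: indeed, the action is continuous (in fact $\Gamma$ is finite), and for $g \in \Gamma$, $\lambda \in L \subset \BdR^{G_L}$ and $w \in W$ one has $g(\lambda w) = g\lambda \cdot gw$, since the action of $g$ on $\BdR \otimes V$ is already $L$-semi-linear. Because $L^{\Gamma} = K$, Theorem~\ref{theo:H90} applies and yields the canonical isomorphism
\[
L \otimes_K W^{\Gamma} \xrightarrow{\ \sim\ } W.
\]

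To conclude, I identify $W^{\Gamma}$ with $(\BdR \otimes_{\Qp} V)^{G_K}$: an element of $\BdR \otimes V$ is fixed by all of $G_K$ if and only if it is fixed by $G_L$ (giving membership in $W$) and by $\Gamma = G_K/G_L$ (giving $\Gamma$-invariance). Taking $L$-dimensions in the displayed isomorphism then gives
\[
\dim_K (\BdR \otimes_{\Qp} V)^{G_K} \;=\; \dim_K W^{\Gamma} \;=\; \dim_L W \;=\; \dim_{\Qp} V,
\]
which is precisely the condition for $V$ to be de Rham. The only subtle point is the verification that $W$ genuinely fits into the hypotheses of Hilbert 90 (finite dimensional $L$-semi-linear representation of the finite Galois group $\Gamma$ with $L^{\Gamma} = K$), which is why the reduction to the Galois case in the previous paragraph is essential; everything else is bookkeeping with invariants.
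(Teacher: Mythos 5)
Your proof is correct and follows essentially the same route as the paper's: reduce to the Galois case via the easy direction, then apply Hilbert's theorem 90 (Theorem~\ref{theo:H90}) to the $L$-semi-linear $\Gal(L/K)$-representation $W = (\BdR \otimes_{\Qp} V)^{G_L}$ and identify $W^{\Gal(L/K)}$ with $(\BdR \otimes_{\Qp} V)^{G_K}$. The only stylistic difference is in the easy direction, where the paper simply observes that a $G_K$-fixed $\BdR$-basis of $\BdR \otimes V$ is a fortiori $G_L$-fixed, avoiding the dimension count with $\alpha_W$ and in particular the injectivity of $L \otimes_K (\BdR \otimes V)^{G_K} \hookrightarrow W$ that you assert without spelling out (it does follow from the injectivity of the $\alpha$-map over $G_K$, but a word of justification would be welcome).
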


\begin{rem}
In other terms, Theorem~\ref{theo:dRrest} says that the
following diagram is cartesian.
$$\xymatrix @C=3em {
\Rep_{\Qp}^\dR(G_K) \ar@{^(->}[r] \ar[d]_-{\text{restriction}} &
\Rep_{\Qp}(G_K) \ar[d]^-{\text{restriction}} \\
\Rep_{\Qp}^\dR(G_L) \ar@{^(->}[r] &
\Rep_{\Qp}(G_L)}$$
\end{rem}

\begin{proof}[Proof of Theorem~\ref{theo:dRrest}]
By definition, if $V$ is de Rham, the $\BdR$-semi-linear 
representation $\BdR \otimes_{\Qp} V$ is trivial as a 
$G_K$-representation. It is then a fortiori trivial as a
$G_L$-representation, which means that $V_{|G_L}$ is de Rham.

Conversely, let us assume that $V_{|G_L}$ is de Rham. Without
loss of generality, we may assume that the extension $L/K$ is
Galois (if not, replace $L$ by its Galois closure).
Define $D = (\BdR \otimes_{\Qp} V)^{G_L}$, so that we have $\dim_L 
D = \dim_{\Qp} V$. Moreover $D$ inherits a semi-linear action of 
$\Gal(L/K)$. By Hilbert's theorem 90 (\emph{cf} Theorem~\ref{theo:H90}), 
$D$ is spanned by a basis of fixed vectors. In other words, 
$\dim_K D^{\Gal(L/K)} = \dim_L D = \dim_{\Qp} V$. Since
$D^{\Gal(L/K)} = (\BdR \otimes_{\Qp} V)^{G_K}$, we have proved
that $V$ is de Rham.
\end{proof}

\paragraph{The function $D_\dR$.}

If $V$ is a de Rham representation of $G_K$, we define:
\begin{equation}
\label{eq:ddr}
D_\dR(V) = \big(\BdR \otimes_{\Qp} V\big)^{G_K}
= \Hom_{\Qp[G_K]} \big(V^\star, \BdR\big)
\end{equation}
where $\Hom_{\Qp[G_K]}$ refers to the set of $\Qp$-linear
$G_K$-equivariant morphisms and $V^\star$ is the dual representation
of $V$. Fontaine's formalism shows that we have a canonical 
isomorphism:
\begin{equation}
\label{eq:dRrepcomp}
\BdR \otimes_{\Qp} V \,\simeq\, \BdR \otimes_K D_\dR(V).
\end{equation}

\begin{rem}
When $V$ is the étale cohomology of a proper smooth variety $X$
over $\Spec K$, the isomorphism~\eqref{eq:dRrepcomp} is the 
isomorphism \eqref{eq:cdr} of the $C_\dR$-theorem. Notably, we have
$H^r_\dR(X) = D_\dR(H^r_\et(X_{\bar K}, \Qp))$ for all integer~$r$.
\end{rem}

Formula~\eqref{eq:ddr} defines a functor $D_\dR : \Rep_{\Qp}^\dR(G_K) 
\to \Vect_K$ where $\Vect_K$ is the category of finite dimensional 
vector spaces over $K$.
One can actually be more precise and endow $D_\dR(V)$ with a 
filtration coming from the filtration on $\BdR$. Precisely, for 
an integer $m \in \Z$, we define:
$$\Fil^m D_\dR(V) = \big(\Fil^m \BdR \otimes_{\Qp} V\big)^{G_K}
= \Hom_{\Qp[G_K]} \big(V^\star, \Fil^m \BdR\big).$$
Clearly $\Fil^m D_\dR(V)$ is sub-$K$-vector space of $D_\dR(V)$
and $\Fil^{m+1} D_\dR(V) \subset \Fil^m D_\dR(V)$ for all $m$.
Moreover observe that:
\begin{align*}
\bigcap_{m \in \Z} \Fil^m D_\dR(V)
& = {\textstyle 
\Hom_{\Qp[G_K]} \Big(V^\star,\, \bigcap_{m \in Z} \Fil^m \BdR\Big)}
= 0 \\
\text{and} \quad
\bigcup_{m \in \Z} \Fil^m D_\dR(V)
& = {\textstyle 
\Hom_{\Qp[G_K]} \Big(V^\star,\, \bigcup_{m \in Z} \Fil^m \BdR\Big)}
= D_\dR(V),
\end{align*}
the second equality coming from the fact that $D_\dR(V)$ has finite
dimension over $K$. Since again $D_\dR(V)$ is finite dimensional, we
deduce that $\Fil^m D_\dR(V) = 0$ for $m \gg 0$ and $\Fil^m D_\dR(V)
= D_\dR(V)$ for $m \ll 0$; we say that the filtration of $D_\dR(V)$
is \emph{separated} and \emph{exhaustive}.

With the above construction, we have promoted $D_\dR$
to a functor $D_\dR : \Rep^\dR_{\Qp}(G_K) \to \MF_K$ where
$\MF_K$ denotes the category of finite dimension $K$-vector
spaces equipped with a nonincreasing separated and exhaustive
filtration by sub-$K$-vector spaces.
This functor has an extra remarkable property given by the next
proposition.

\begin{prop}
\label{prop:dRrepcompfil}
For any $V \in \Rep^\dR_{\Qp}(G_K)$ and any integer $m$,
the isomorphism~\eqref{eq:dRrepcomp} identifies
$\Fil^m (\BdR \otimes_{\Qp} V)$ with $\Fil^m (\BdR \otimes_K
 D_\dR(V))$ where, by definition:
\begin{align*}
\Fil^m \big(\BdR \otimes_{\Qp} V\big)
 & = \Fil^m \BdR \otimes_{\Qp} V \\
\text{and} \quad
\Fil^m \big(\BdR \otimes_K D_\dR(V)\big)
 & = \sum_{a+b = m} \Fil^a \BdR \otimes_K \Fil^b D_\dR(V).
 %    \, \subset \, \BdR \otimes_K D_\dR(V).
\end{align*}
\end{prop}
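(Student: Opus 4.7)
The inclusion $\sum_{a+b=m} \Fil^a \BdR \otimes_K \Fil^b D_\dR(V) \subseteq \Fil^m \BdR \otimes_{\Qp} V$ (using the comparison isomorphism to identify both sides as submodules of $W := \BdR \otimes_{\Qp} V$) is immediate: for $d \in \Fil^b D_\dR(V) \subset \Fil^b \BdR \otimes_{\Qp} V$ and $\omega \in \Fil^a \BdR$, the product $\omega \cdot d$ lies in $\Fil^{a+b} \BdR \otimes_{\Qp} V$.

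For the reverse inclusion, I would fix a $K$-basis $(e_i)_{i=1}^d$ of $D_\dR(V)$ adapted to the filtration, with $e_i$ of weight $w_i$ (meaning $e_i \in \Fil^{w_i} D_\dR(V) \setminus \Fil^{w_i+1} D_\dR(V)$ and $\Fil^b D_\dR(V) = \bigoplus_{w_i \geq b} K e_i$). A direct computation identifies the sum with
$$\bigoplus_{i=1}^d \Fil^{m-w_i} \BdR \cdot e_i,$$
a free $\BpdR$-lattice of rank $d$ in $W$. The other side $\Fil^m \BdR \otimes_{\Qp} V$ is likewise a $\BpdR$-lattice of rank $d$. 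Since both filtrations satisfy $\Fil^{m+1} = t\cdot\Fil^m$, it suffices to establish equality at one degree $m$.

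The key step is to show that the inclusion induces an isomorphism on graded pieces (both of $\Cp$-dimension $d$). Expanding $e_i = \sum_j \beta_{i,j} v_j$ in a $\Qp$-basis $(v_j)$ of $V$ with $\beta_{i,j} = t^{w_i}\gamma_{i,j}$ and $\gamma_{i,j} \in \BpdR$, the induced graded map is represented by $\bar\Gamma = (\bar\gamma_{i,j}) \in M_d(\Cp)$, and its bijectivity between $\Cp$-vector spaces of equal dimension amounts to $\det\Gamma \in \BpdR^\times$. Setting $B = (\beta_{i,j}) \in M_d(\BdR)$ (invertible since the comparison isomorphism is), we have $\det B = t^{\sum w_i}\det\Gamma$, and a direct computation gives $g(\det B) = \chi_{\det V}^{-1}(g)\det B$. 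Writing $\det B = t^n u$ with $u \in \BpdR^\times$, the reduction $\bar u \in \Cp^\times$ transforms under $G_K$ by $\chi_\cycl^{-n}\chi_{\det V}^{-1}$, so the $\Cp$-semi-linear representation $\Cp(\chi_\cycl^{-n}\chi_{\det V}^{-1})$ has a nonzero fixed vector (a suitable multiple of $\bar u^{-1}$), and by Theorem~\ref{theo:Cpadm} this character is $\Cp$-admissible. Since $V$ is de Rham and hence Hodge--Tate (as $\gr\BdR \simeq B_\HT$), the Hodge--Tate weight of $\chi_{\det V}$ equals $-\sum w_i$, so $\Cp$-admissibility of $\chi_\cycl^{-n}\chi_{\det V}^{-1}$ forces $n = \sum w_i$, whence $\det\Gamma \in \BpdR^\times$.

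Given the isomorphism on graded pieces, the reverse filtration inclusion follows by successive approximation: for $x \in \Fil^m \BdR \otimes_{\Qp} V$, one iteratively finds $y_k \in \sum_{a+b=m+k} \Fil^a \BdR \otimes_K \Fil^b D_\dR(V)$ lifting the class of $x - \sum_{j<k} y_j$ modulo $\Fil^{m+k+1}$; the series $\sum_k y_k$ converges to $x$ in the sum (closed in $W$ for the filtration topology) by the completeness of $\BdR$. The main technical obstacle is the invertibility of $\bar\Gamma$, which genuinely uses the de Rham hypothesis on $V$.
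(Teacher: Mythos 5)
Your setup is sound: fixing an adapted basis $(e_i)$ of weights $(w_i)$, reducing to $\det\Gamma\in\BpdR^\times$, computing that $g(\det B)=\chi_{\det V}^{-1}(g)\det B$ for all $g\in G_K$, and concluding that $\chi_\cycl^{-n}\chi_{\det V}^{-1}$ is $\Cp$-admissible (where $n$ is the $t$-adic valuation of $\det B$) are all correct. But the decisive step has a gap. You write ``Since $V$ is de Rham and hence Hodge--Tate (as $\gr\BdR\simeq B_\HT$), the Hodge--Tate weight of $\chi_{\det V}$ equals $-\sum w_i$.'' That is precisely Corollary~\ref{cor:dRHT}, which in the paper is a \emph{consequence} of the proposition under proof, so you cannot invoke it here without circularity; and the parenthetical is not a justification, since taking $G_K$-invariants does not commute with $\gr$, so $\gr\BdR\simeq B_\HT$ does not by itself compare $\dim_K D_\dR(V)$ with $\dim_K(B_\HT\otimes_{\Qp}V)^{G_K}$. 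In fact the $\Cp$-admissibility you establish only rederives the tautology that $\chi_{\det V}$ has Hodge--Tate weight $-n$; the real content to prove is the equality $n=\sum w_i$.

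The missing ingredient is supplied by the paper's dimension count: tensoring the exact sequence $0\to\Fil^{m+1}\BdR\to\Fil^m\BdR\to\Cp(\chi_\cycl^m)\to 0$ with $V$ and taking $G_K$-invariants gives an injection $h\colon\gr\,D_\dR(V)\hookrightarrow(B_\HT\otimes_{\Qp}V)^{G_K}$, whose source has $K$-dimension $\dim_{\Qp}V$ (de Rham hypothesis) and whose target has $K$-dimension at most $\dim_{\Qp}V$ (Fontaine's formalism), so $h$ is an isomorphism. This proves at once that $V$ is Hodge--Tate with weights $-w_i$ (filling your gap) and that the graded comparison map is an isomorphism, from which the statement follows either directly or by your successive-approximation step. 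Once this count is made, however, the computation with $\det B$ becomes redundant; your matrix route is therefore not an independent shortcut---it needs the very lemma the paper proves, and the paper concludes from it more quickly.
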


\begin{proof}
The inclusion $\Fil^m (\BdR \otimes_K D_\dR(V)) \subset 
\Fil^m (\BdR \otimes_{\Qp} V)$ is easily checked. It is then
enough to show that mapping:
$$f : \gr \big(\BdR \otimes_{\Qp} D_\dR(V)\big) \longrightarrow
\gr \big(\BdR \otimes_{\Qp} V\big) \simeq B_\HT \otimes_{\Qp} V$$
induced by the inverse of~\eqref{eq:dRrepcomp} is an isomorphism.
For this, we consider the exact sequence
$0 \to \Fil^{m+1} \BdR \to \Fil^m \BdR \to \Cp(\chi_\cycl^m)
\to 0$.
Tensoring it by $V$ and taking the $G_K$-invariants, we obtain an
injective morphism $h_m : \gr^m D_\dR(V) \hookrightarrow
(\Cp(\chi_\cycl^m) \otimes_{\Qp} V)^{G_K}$. Taking the direct
sum of the $h_m$'s, we end up with an injective $K$-linear
mapping
$h : \gr\,D_\dR(V) \hookrightarrow (B_\HT \otimes_{\Qp} V)^{G_K}$.
Now observe that $\dim_K \gr\,D_\dR(V) = \dim_K D_\dR(V) = 
\dim_{\Qp} V$ since $V$ is de Rham. On the other hand, 
$\dim_K (B_\HT \otimes_{\Qp} V)^{G_K} \leq \dim_{\Qp} V$ by the
general Fontaine's formalism.
As a consequence, $h$ must be an isomorphism.
We conclude the proof by remarking that $B_\HT \otimes h = f \circ g$ 
where $g$ is the canonical mapping:
$$g : B_\HT \otimes_{\Qp} \gr\,D_\dR(V) \longrightarrow
\gr \big(\BdR \otimes_{\Qp} D_\dR(V)\big).$$
By definition of the filtration on $\BdR \otimes_{\Qp} D_\dR(V)$, $g$ 
is surjective. Since $h$ is a bijection, we deduce, first, that $g$ is 
an isomorphism and, then, that $f$ is an isomorphism as well.
\end{proof}

As a byproduct of the above proof, we obtain the following
quite interesting corollary.

\begin{cor}
\label{cor:dRHT}
Let $V$ de a de Rham representation of $G_K$.
Then $V$ is Hodge--Tate and its Hodge--Tate weights are
the integers $m$ for which $\gr^{-m} D_\dR(V) \neq 0$, the
multiplicity of $m$ being equal to $\dim_K \gr^{-m} D_\dR(V)$.
\end{cor}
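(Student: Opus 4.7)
The plan is to leverage Proposition~\ref{prop:dRrepcompfil}, which already supplies the key identification $\BdR \otimes_{\Qp} V \simeq \BdR \otimes_K D_\dR(V)$ respecting filtrations and Galois actions. Passing to the associated graded functor on both sides, the right hand side of \eqref{eq:dRrepcomp} becomes $B_\HT \otimes_{\Qp} V$ (since $\gr \BdR = B_\HT$ and the filtration on the source is just $\Fil^m \BdR \otimes_{\Qp} V$), while the left hand side becomes $\gr\big(\BdR \otimes_K D_\dR(V)\big)$ for the convolution filtration. So the whole game reduces to identifying this last graded ring in terms of the $\gr^m D_\dR(V)$.

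First I would choose a basis of $D_\dR(V)$ adapted to its filtration (this is possible because $D_\dR(V)$ is a finite dimensional filtered $K$-vector space, so the filtration always splits). Such a splitting $D_\dR(V) \simeq \bigoplus_m \gr^m D_\dR(V)$ promotes to an identification of filtered $\BdR$-modules $\BdR \otimes_K D_\dR(V) \simeq \bigoplus_m \BdR \otimes_K \gr^m D_\dR(V)$ where the summand attached to $\gr^m D_\dR(V)$ is placed in filtration degree shifted by $m$. Taking the graded therefore yields a $G_K$-equivariant isomorphism
$$\gr\big(\BdR \otimes_K D_\dR(V)\big) \,\simeq\, B_\HT \otimes_K \gr\,D_\dR(V)$$
as graded $B_\HT$-modules, where $\gr^m D_\dR(V)$ contributes in degree $m$.

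The final step consists in extracting the degree zero piece on both sides and recognizing it as $\Cp \otimes_{\Qp} V$. Indeed, since $B_\HT = \bigoplus_{n \in \Z} \Cp(\chi_\cycl^n) \cdot t^n$, the degree zero part of $B_\HT \otimes_{\Qp} V$ is exactly $\Cp \otimes_{\Qp} V$, while on the other side one computes
$$\gr^0 \big(\BdR \otimes_K D_\dR(V)\big) \,=\, \bigoplus_{m \in \Z} \gr^m \BdR \otimes_K \gr^{-m} D_\dR(V) \,\simeq\, \bigoplus_{m \in \Z} \Cp(\chi_\cycl^m)^{\dim_K \gr^{-m} D_\dR(V)}$$
using that $G_K$ acts trivially on $\gr^{-m} D_\dR(V)$ (as $D_\dR(V)$ sits inside the $G_K$-invariants of $\BdR \otimes_{\Qp} V$) and that $\gr^m \BdR \simeq \Cp(\chi_\cycl^m)$ by Proposition~\ref{prop:tmgen}. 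Comparing both descriptions exhibits the Hodge--Tate decomposition of $V$ and reads off the weights and their multiplicities as claimed.

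There is no real obstacle here: everything is essentially packaged in Proposition~\ref{prop:dRrepcompfil}; the only mild technical point is the splitting of the filtration on $D_\dR(V)$, which is harmless in finite dimension over a field. Note that the sign flip in the statement (weights $m$ correspond to $\gr^{-m} D_\dR(V)$) comes exactly from the fact that in the convolution filtration the index $m$ on $\gr^m \BdR$ pairs with the index $-m$ on $\gr^{-m} D_\dR(V)$ to land in degree zero.
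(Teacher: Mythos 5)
Your proof is correct and takes essentially the same route as the paper's: the paper's own proof simply invokes the isomorphism $B_\HT \otimes_{\Qp} V \simeq B_\HT \otimes_K \gr\,D_\dR(V)$, which it had already established (via the maps $h$, $g$ and a dimension count) inside the proof of Proposition~\ref{prop:dRrepcompfil}, whereas you rederive that same isomorphism from the \emph{statement} of Proposition~\ref{prop:dRrepcompfil} by splitting the filtration on $D_\dR(V)$ and passing to the associated graded. The degree-zero extraction and sign bookkeeping are handled correctly.
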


\begin{proof}
The corollary follows from the isomorphism
$B_\HT \otimes_{\Qp} V \simeq B_\HT \otimes_K \gr\,D_\dR(V)$,
which was established in the
proof of Proposition~\ref{prop:dRrepcompfil}.
\end{proof}

For one dimensional representations, the converse of 
Corollary~\ref{cor:dRHT} holds. Indeed, if $\chi : G_K \to \Qp^\times$ 
is a Hodge--Tate character, then there exists some integer $m$ for which 
$\chi \cdot \chi_\cycl^m$ is $\Cp$-admissible. 
By Theorem~\ref{theo:CpdR}, we deduce that $\chi \cdot \chi_\cycl^m$ 
is de Rham. Hence $\chi$ is de Rham as well.
However for higher dimensional representation, there do exist
Hodge--Tate representations which are not de Rham. 

\paragraph{$\BdR$-representations.}

After what we have achieved so far, it is quite tempting to study 
$\BdR$-semi-linear representations on their own in the spirit of Sen's 
theory (presented in \S\ref{ssec:Sen}).
This work was achieved by Fontaine in~\cite{fontaine-bdr}. Let us give 
rapidly a few details on Fontaine's results. Let $K_\infty$ denote the 
$p$-adic cyclotomic extension of $K$.
Generalizing Sen's arguments, Fontaine first shows that any 
$\BdR$-semi-linear representation of $G_K$ descends to 
$K_\infty(\!(t)\!)$. We are then reduced to study the 
$K_\infty(\!(t)\!)$-semi-linear representations of $\Gamma = 
\Gal(K_\infty/K)$. Fontaine then defines an analogue of the Sen's 
operator which is no longer a linear map, but instead a derivation.
More precisely, given a $K_\infty(\!(t)\!)$-semi-linear representation 
$W$ of $\Gamma$, Fontaine shows that, for $\gamma \in \Gamma$ 
sufficiently closed to the identity, the formula $\frac{\log \gamma} 
{\log \chi_\cycl(\gamma)}$ defines a $K_\infty$-linear mapping 
$\nabla_W : W \to W$ which satisfies the Leibniz rule, \emph{i.e.}
$$\nabla_W(f w) = \frac{df}{dt} \cdot w + f \cdot \nabla_W(w)
\qquad (f \in K_\infty(\!(t)\!), \, w \in W).$$
Moreover, as in Sen's theory, this construction is functorial 
and the datum of $\nabla_W$ caracterizes the representation~$W$.
For much more details, we refer to Fontaine's original 
paper~\cite{fontaine-bdr}.

\subsection{More on crystalline representations}
\label{ssec:crys}

Let $\Rep^\crys_{\Qp}(G_K)$ be the category of $\Qp$-linear
crystalline representations of $G_K$. It is a abelian 
subcategory of $\Rep^\dR_{\Qp}(G_K)$, which is stable by direct
sums, duals,
tensor products, subobjects and quotients.
Unlike the de Rham case, the fact that a representation is 
crystalline cannot be detected on the restriction to an open subgroup 
in general. Nevertheless, we have a weaker result in this direction.

\begin{prop}
\label{prop:crysrest}
Let $V$ be a finite dimensional $\Qp$-linear representation of 
$G_K$. Then:

\vspace{-2mm}

\begin{enumerate}[(i)]
\renewcommand{\itemsep}{0pt}
\item if $V$ is unramified (\emph{i.e.} the inertia subgroup
acts trivially on $V$), then $V$ is crystalline,
\item if there exists a finite \emph{unramified} extension $L$
of $K$ such that $V_{|G_L}$ is crystalline, then $V$ is
crystalline.
\end{enumerate}
\end{prop}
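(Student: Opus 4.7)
The plan is to reduce both statements to tools already established in the paper: for (i), the unramified variant of Hilbert 90 (Proposition~\ref{prop:H90unram}) together with the inclusion $\hat K_0^\ur \subset \Bcrys$; for (ii), Galois descent via the ordinary Hilbert 90 (Theorem~\ref{theo:H90}) applied to the finite unramified extension of residue fields.

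For (i), I would argue as follows. If $V$ is unramified, the action of $G_K$ factors through the quotient $G_K/I_K \simeq \Gal(\bar k/k) = \Gal(K_0^\ur/K_0)$. Consider the $\hat K_0^\ur$-semi-linear representation $W = \hat K_0^\ur \otimes_{\Qp} V$; the action of $G_K$ on $W$ still factors through $\Gal(K_0^\ur/K_0)$. By Proposition~\ref{prop:H90unram} applied with base field $K_0$, $W$ is trivial, i.e.\ it admits a $\hat K_0^\ur$-basis $(v_1, \ldots, v_d)$ of vectors fixed by $G_K$. Since $\hat K_0^\ur \hookrightarrow \Bcrys$ as a $G_K$-equivariant subring, the $v_i$'s also form a $\Bcrys$-basis of fixed vectors of $\Bcrys \otimes_{\Qp} V$. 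Therefore $V$ is $\Bcrys$-admissible, i.e.\ crystalline.

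For (ii), I would first observe that we may assume $L/K$ is Galois: since $L$ is an unramified extension of $K$ it lies in $K^\ur$, and every finite subextension of $K^\ur/K$ is automatically Galois (residues fields are finite, hence all finite extensions are Galois and $\Gal(L/K)$ coincides with $\Gal(\ell/k)$ where $\ell$ is the residue field of $L$). Set $D = (\Bcrys \otimes_{\Qp} V)^{G_L}$. Since $V_{|G_L}$ is crystalline and $(\Bcrys)^{G_L} = L_0$ (by Theorem~\ref{theo:bmugk} applied to $L$, noting that the maximal unramified subfield of $L$ is itself since $L/\Qp$ is unramified over $K_0$), the space $D$ is an $L_0$-vector space of dimension $\dim_{\Qp} V$. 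Because $G_L$ is normal in $G_K$, the quotient $\Gal(L/K)$ acts on $D$, and this action is semi-linear with respect to the action of $\Gal(L/K)$ on $L_0 = (\Bcrys)^{G_L}$; moreover $L_0^{\Gal(L/K)} = K_0$ (the extension $L_0/K_0$ is Galois with Galois group canonically identified with $\Gal(L/K) = \Gal(\ell/k)$).

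To conclude I would apply Hilbert's theorem 90 (Theorem~\ref{theo:H90}) to the finite Galois extension $L_0/K_0$ and the $L_0$-semi-linear representation $D$ of $\Gal(L_0/K_0)$: it yields $\dim_{K_0} D^{\Gal(L/K)} = \dim_{L_0} D = \dim_{\Qp} V$. Since $D^{\Gal(L/K)} = (\Bcrys \otimes_{\Qp} V)^{G_K}$, this equality is exactly the $\Bcrys$-admissibility criterion from \S\ref{sssec:criterium}, so $V$ is crystalline. The only delicate point is the identification $(\Bcrys)^{G_L} = L_0$ and the compatibility of the $\Gal(L/K)$-action on $L_0$ with that coming from $\Bcrys$, but both follow transparently from the unramifiedness of $L/K$ and from the fact that the embedding $\hat K_0^\ur \hookrightarrow \Bcrys$ is $G_K$-equivariant.
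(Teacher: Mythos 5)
Your proof is correct and follows essentially the same route as the paper: Proposition~\ref{prop:H90unram} for (i), and Hilbert's theorem~90 applied to $\Gal(L/K)\simeq\Gal(L_0/K_0)$ acting on $D = (\Bcrys\otimes_{\Qp}V)^{G_L}$ for (ii). One small remark in your favour on (i): the paper's printed proof invokes the inclusion $\hat K^\ur\subset\Bcrys$, but when $K\neq K_0$ this cannot hold $G_K$-equivariantly (it would force $K\subset(\Bcrys)^{G_K}=K_0$); your choice to descend to $\hat K_0^\ur$, which \emph{is} a $G_K$-stable subring of $\Bcrys$, is the precise version of the argument.
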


\begin{proof}
By Proposition~\ref{prop:H90unram}, if $V$ is unramified then
it is $\hat K^\ur$-admissible. Since $\hat K^\ur \subset \Bcrys$,
it is then \emph{a fortioti} crystalline. This proves (i).

We now assume that $V_{|G_L}$ is crystalline for some finite
unramified extension $L$ of $K$. Without loss of generality, we
may assume that $L/K$ is Galois. We let $L_0$ be the maximal
unramified extension of $\Qp$ inside $L$. Then $\Gal(L/K)
\simeq \Gal(L_0/K_0)$. Set $D = (\Bcrys \otimes_{\Qp} V)^{G_L}$;
it is a $L_0$-vector space endowed with a semi-linear action of
$\Gal(L/K) \simeq \Gal(L_0/K_0)$. By Hilbert's theorem 90, we
have $\dim_{K_0} D^{\Gal(L/K)} = \dim_{L_0} D$. Moreover since 
$V_{|G_L}$ is crystalline, we know that $\dim_{L_0} D = \dim_{\Qp} V$. 
Consequently $\dim_{K_0} D^{\Gal(L/K)} = \dim_{\Qp} V$, which proves 
that $V$ is crystalline because $D^{\Gal(L/K)} = (\Bcrys \otimes_{\Qp} 
V)^{G_K}$.
\end{proof}

We insist again on the fact that the assumption that $L/K$ is unramified 
is crucial in Proposition~\ref{prop:crysrest}.(ii). For example, one can 
prove (using Proposition~\ref{prop:crysunram} below for example) that a 
character is crystalline if and only if it is the product of an 
unramified character by a power of the cyclotomic character. In 
particular the finite order character $\omega_\cycl = [\chi_\cycl \mod 
p]$ of $G_{\Qp}$ is not crystalline.

A finite dimensional $\Qp$-linear representation that becomes 
crystalline over a finite extension (non necessarily ramified) is called 
\emph{potentially crystalline}. Combining Theorems~\ref{theo:Cpadm}, 
\ref{theo:CpdR}, \ref{theo:dRrest} and Proposition~\ref{prop:crysrest}, 
we obtain the following diagram of implications:
\begin{center}
\begin{tikzpicture}[xscale=3.2,yscale=1.3]
\node at (0,0) { \ph unramified };
\node at (0.5,0) { \ph $\Longrightarrow$ };
\node[rotate=90] at (0,0.5) { \ph $\Longrightarrow$ };
\node at (1,0) { \ph crystalline };
\node[rotate=90] at (1,0.5) { \ph $\Longrightarrow$ };
\node at (0,1) { \ph $\Cp$-admissible };
\node at (0.55,1) { \ph $\Longrightarrow$ };
\node at (1,1) { \ph pot. crys. };
\node at (1.5,1) { \ph $\Longrightarrow$ };
\node at (2,1) { \ph de Rham };
\node at (2.5,1) { \ph $\Longrightarrow$ };
\node at (3,1) { \ph Hodge--Tate };
\end{tikzpicture}
\end{center}

\begin{prop}
\label{prop:crysunram}
A representation which is at the same time crystalline
and $\Cp$-admissible is unramified.
\end{prop}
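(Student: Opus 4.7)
The plan is to combine $\Cp$-admissibility of $V$ (which makes all Hodge--Tate weights of $V$ vanish, and hence trivializes the de Rham filtration) with the crystalline comparison in order to exhibit $V$ as a $\Qp$-subspace of $\hat K_0^{\ur} \otimes_{K_0} D_\crys(V)$. Since the inertia subgroup $I_K$ acts trivially both on $\hat K_0^{\ur}$ (the completion of the maximal unramified extension of $K_0 \subset K$) and on $D_\crys(V)$ (by construction of $D_\crys$), this will force $I_K$ to act trivially on $V$ and conclude that $V$ is unramified.

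First, $\Cp$-admissibility gives that $\Cp \otimes_{\Qp} V$ is trivial as a $\Cp$-semi-linear representation, so $V$ is Hodge--Tate with all Hodge--Tate weights equal to~$0$. Since $V$ is crystalline hence de Rham, Corollary~\ref{cor:dRHT} yields $\Fil^0 D_\dR(V) = D_\dR(V)$ and $\Fil^1 D_\dR(V) = 0$: the de Rham filtration is trivial. Setting $D = D_\crys(V)$---a $K_0$-vector space of dimension $d := \dim_{\Qp} V$ with semi-linear Frobenius $\varphi_D$---and using the crystalline comparison $\Bcrys \otimes_{K_0} D \simeq \Bcrys \otimes_{\Qp} V$ together with Proposition~\ref{prop:bmuphi1} and the triviality of the filtration, a routine manipulation à la Fontaine identifies
\[
V \;\simeq\; (\Fil^0 \Bcrys \otimes_{K_0} D)^{\varphi = 1},
\]
with $G_K$ acting only through its action on $\Fil^0 \Bcrys$ (and trivially on $D$).

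The next step is to show that this $\varphi$-invariant subspace already lies in the much smaller subring $\hat K_0^{\ur} \otimes_{K_0} D \subset \Fil^0 \Bcrys \otimes_{K_0} D$, or equivalently that $V_0 := (\hat K_0^{\ur} \otimes_{K_0} D)^{\varphi = 1}$ has $\Qp$-dimension exactly~$d$. For this I would appeal to the refined form of $\Cp$-admissibility recorded in Remark~\ref{rem:Cpadm}, which produces a finite Galois extension $L_0/K$ such that $V|_{G_{L_0}}$ is unramified. Crystallinity descends to $G_{L_0}$ and yields the base-change identity $D_\crys(V|_{G_{L_0}}) = (L_0)_0 \otimes_{K_0} D$, where $(L_0)_0$ is the maximal unramified subextension of $L_0/\Qp$. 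Applying Proposition~\ref{prop:H90unram} to the unramified representation $V|_{G_{L_0}}$ produces a $\widehat{(L_0)_0^{\ur}}$-basis of $\widehat{(L_0)_0^{\ur}} \otimes V|_{G_{L_0}}$ fixed by $G_{L_0}$, and feeding this basis into the same recovery formula as above (now over $L_0$) exhibits $d$ linearly independent $\varphi$-fixed vectors inside $\widehat{(L_0)_0^{\ur}} \otimes_{(L_0)_0} D_\crys(V|_{G_{L_0}}) = \hat K_0^{\ur} \otimes_{K_0} D$, using the crucial equality $(L_0)_0^{\ur} = K_0^{\ur}$ valid because $(L_0)_0/K_0$ is unramified. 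This forces $\dim_{\Qp} V_0 = d$, hence $V = V_0$.

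Once $V = V_0 \subset \hat K_0^{\ur} \otimes_{K_0} D$ is established, any vector of $V$ decomposes as a $\hat K_0^{\ur}$-linear combination of elements of $D$, both of which are pointwise $I_K$-fixed; hence $I_K$ acts trivially on $V$, and $V$ is unramified. The principal obstacle is the descent step of the third paragraph: it is essentially a structural result about the isocrystal $(D, \varphi_D)$---namely, that all its Frobenius slopes vanish---packaged through the interplay between the crystalline data over $K$ and the unramifying extension $L_0$ produced by $\Cp$-admissibility, and one needs to be careful with the bookkeeping of the base-change identities for $D_\crys$ under the ramified extension $L_0/K$.
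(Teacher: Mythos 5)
Your approach is genuinely different from the paper's and, modulo one imprecision, it works. You aim at showing the isocrystal $D = D_\crys(V)$ is unit-root, i.e.\ that $V_0 := (\hat K_0^\ur \otimes_{K_0} D)^{\varphi=1}$ already has $\Qp$-dimension $d$, and then observe that $V = V_0$ is $I_K$-fixed since both $\hat K_0^\ur$ and $D$ are. The paper's proof bypasses $D_\crys$ entirely: it looks at $\Hom_{\Qp[G_K]}(V^\star, \BdR)$, shows that any such $f$ has image both in $\Bcrys$ (crystallinity) and in $L\cdot\hat K_0^\ur$ (the refined form of $\Cp$-admissibility from Remark~\ref{rem:Cpadm}), and concludes $f(V^\star) \subset \Bcrys \cap (L\cdot\hat K_0^\ur) = \hat K_0^\ur$ from the injectivity of $L\otimes_{L_0}\Bcrys \hookrightarrow \BdR$. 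That gives directly that $V$ is $\hat K_0^\ur$-admissible, hence unramified, with no slope analysis at all. Your route is still viable, but the phrase ``exhibits $d$ linearly independent $\varphi$-fixed vectors'' is not what the $G_{L_0}$-fixed basis of Proposition~\ref{prop:H90unram} produces: those vectors are $G_{L_0}$-fixed, not $\varphi$-fixed. What they actually show is that $D_\crys(V|_{G_{L_0}}) = (L_0)_0\otimes_{K_0} D$ spans $\hat K_0^\ur\otimes_{\Qp}V$ over $\hat K_0^\ur$ inside $\Bcrys\otimes_{\Qp}V$; hence the crystalline comparison restricts to a $\varphi$-equivariant isomorphism $\hat K_0^\ur\otimes_{K_0}D \simeq \hat K_0^\ur\otimes_{\Qp}V$, and taking $\varphi$-invariants (using $(\hat K_0^\ur)^{\varphi=1}=\Qp$) gives $V_0 = V$. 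Once this is stated correctly the argument closes --- and, notably, the discussion of the trivial filtration in your second paragraph turns out not to be needed at all.
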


\begin{rem}
Recall that, for a Hodge--Tate representation, $\Cp$-admissibility
means that all Hodge--Tate weights are $0$.
Proposition~\ref{prop:crysunram} then says that any crystalline 
representation with Hodge--Tate weights $0$ is unramified.
\end{rem}

\begin{proof}[Proof of Proposition~\ref{prop:crysunram}]
Let $V$ be a crystalline $\Cp$-admissible representation.
From Remark~\ref{rem:Cpadm}, we derive that there exists a finite 
extension $L$ of $K$ such that $V$ is $(L {\cdot} \hat 
K_0^\ur)$-admissible.

Let $f : V^\star \to \BdR$ be a $G_K$-equivariant $\Qp$-linear 
morphism. Since $V$ is crystalline, we know that $f(V^\star) \subset \Bcrys$. 
Similarly, using that $V$ is $(L {\cdot} \hat K_0^\ur)$-admissible, we 
find $f(V^\star) \subset (L {\cdot} \hat K_0^\ur)$. On the other hand, we know 
that $L \otimes_{L_0} \Bcrys$ embebs into $\BdR$. The 
canonical morphism $(L {\cdot} \hat K_0^\ur) \otimes_{\hat K_0^{\ur}} 
\Bcrys \to \BdR$ is then injective. As a consequence $(L {\cdot} \hat 
K_0^\ur) \cap \Bcrys = \hat K_0^\ur$ and we deduce that $f$ takes its 
values in $\hat K_0^\ur$. As a conclusion, $\Hom_{\Qp[G_K]}(V^\star, 
\BdR) = \Hom_{\Qp[G_K]}(V^\star, \hat K_0^\ur)$.

Since $V$ is de Rham, we deduce from the above equality that $V$ is 
$\hat K_0^\ur$-admissible. In particular $V$ embeds into a direct sum of
copies of $\hat K_0^\ur$. Since the inertia subgroup acts trivially
on $\hat K_0^\ur$, it acts trivially on $V$ as well.
\end{proof}

\begin{ex}
We give an example of a two dimensional representation which is
de Rham but not crystalline. 
For any positive integer $n$, let $\varepsilon_n \in \bar K$ be a 
primitive $p^n$-th root of unity. Similarly, let $\varpi_n \in \bar K$ 
be a $p^n$-root of $p$. For any $g \in G_{\Qp}$, there exists a unique 
element $c(g) \in \Zp$ such that $g \varpi_n = \varepsilon_n^{c(g)} 
\varpi_n$ for all $n$. In the language of \S\ref{sec:dRcrys}, the
previous equation reads:
\begin{equation}
\label{eq:gpflat}
g p^\flat = \Ueps^{c(g)} \cdot p^\flat
\qquad (g \in G_{\Qp})
\end{equation}
where $p^\flat = (p, \bar p_1, \bar p_2, \ldots)$ and 
$\Ueps = (1, \bar \varepsilon_1, \bar \varepsilon_2, \ldots)$ are 
the elements of $\OCpflat$ defined in \S\ref{ssec:Ainf}.
A direct computation shows that $c(gh) = c(g) + \chi_\cycl(g) \cdot 
c(h)$ (we say that $c$ is a cocycle). From this observation, we
deduce that the function:
$$G_{\Qp} \to \GL_2(\Qp), \qquad 
g \mapsto \left(\begin{matrix} \chi(g) & c(g)\\ 0 & 1 \end{matrix} \right)$$
is a group homomorphism and then defines a two dimensional 
$\Qp$-linear representation $V$ of~$G_{\Qp}$.
We are going to compute the space $D = \Hom_{\Qp[G_{\Qp}]}(V, \BdR)$.
By the general theory, we know that:

\vspace{-2mm}

\begin{enumerate}[(i)]
\renewcommand{\itemsep}{0pt}
\item $D$ is a $K$-vector space of dimension at most~$2$,
\item $V^\star$ is de Rham if and only if $\dim_K D = 2$,
\item $V^\star$ is crystalline if and only if it is de Rham and
any morphism in $D$ falls in $\Bcrys$.
\end{enumerate}

\noindent
On the other hand, $D$ is canonically in bijection with the set of pairs 
$(x,y) \in \BdR^2$ such that:
\begin{equation}
\label{eq:Bdrst}
g x = \chi_\cycl(g) x
\quad \text{and} \quad
g y = y + c(g) x
\end{equation}
for all $g \in G_{\Qp}$. The pair $(0,1)$ is obviously a solution 
of~\eqref{eq:Bdrst}. Taking Teichmüller representatives and then
passing to the logarithm in~\eqref{eq:gpflat}, we find that
$(t, \log [p^\flat])$ (where we recall that $t = \log [\Ueps]$) 
is formally another solution of~\eqref{eq:Bdrst}. 
It remains to justify that
$\log [p^\flat]$ makes sense in $\BdR$. To this end, we observe
that it can be defined as follows:
$$\log [p^\flat] = \log \frac{[p^\flat]} p =
- \sum_{i=1}^\infty \frac 1 i \cdot \Bigg( 1 - \frac{[p^\flat]} p
\Bigg)^i.$$
(here we have chosen the convention that $\log p = 0$).
Note that the series converges in $\Fil^1 \BpdR$ because $1 - 
\frac{[p^\flat]} p \in \Fil^1 \BpdR$).
The space $D$ is two dimensional and spanned by $(0,1)$ and $(t, \log 
[p^\flat])$. Hence $V^\star$ is de Rham. The fact that $\log [p^\flat] 
\not\in \Bcrys$, \emph{i.e.} that $V^\star$ is not crystalline can be 
checked as follows. Assume by contradiction that $\log [p^\flat] \in 
B_\crys$. Then, it would lies in $\Fil^1 B_\crys$, so that $a = 
\frac{\log [p^\flat]} t \in \Fil^0 B_\crys$. Moreover, we would have 
$\varphi(a) = a$ since the Frobenius takes $[p^\flat]$ to $[p^\flat]^p$.
By Proposition~\ref{prop:bmuphi1}, this would implies that $a \in \Qp$.
Applying Galois to the relation $\log [p^\flat] = a t$, we would 
obtain $a + c(g) = \chi_\cycl(g)$ for all $g \in G_K$, which is
obviously not true. Finally, we deduce that $V^\star$ is not 
crystalline.
\end{ex}

\begin{rem}
The representation $V$ of the previous example is the prototype of 
\emph{semi-stable} representations. On the geometric side, it
corresponds to the Tate curve, which is the prototype of elliptic
curve without good reduction. Semi-stable representations
will be introduced and widely discussed in Brinon's lecture.
In particular, it will be proved in~\cite[Proposition~2.7]{brinon} 
is actually transcendantal over $\Frac B_\crys$.
\end{rem}

\paragraph{About $\Bmu$-admissibility.}

Recall that, in \S\ref{sec:dRcrys}, we have introduced a whole family of 
rings $\Bmu$'s (where $\mu \geq 1$ is a real paramater); there rings
serve as variants of $\Bcrys$, which have the advantage of exhibiting 
more pleasant properties from the algebraic and analytic point of view. 
The next theorem shows that changing $\Bcrys$ by $\Bmu$ does not affect 
the notion of crystalline representation.

\begin{theo}
\label{theo:bmuadm}
Let $\mu \geq 1$ and 
let $V$ be a finite dimension $\Qp$-linear representation
of $G_K$. Then $V$ is \emph{crystalline} if and only if it
is $\Bmu$-admissible.
\end{theo}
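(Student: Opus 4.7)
The plan is to combine two structural inputs: the inclusions among the rings $\Bmu$ and $\Bcrys$ recalled in \S\ref{sssec:Amu}, and the Frobenius isomorphism $\varphi : \Bmu \xrightarrow{\sim} B_{p\mu}$. Set $D_\mu(V) = (\Bmu \otimes_{\Qp} V)^{G_K}$ and $d = \dim_{\Qp} V$. By Theorem~\ref{theo:bmugk} and Fontaine's formalism, $V$ is $\Bmu$-admissible iff $\dim_{K_0} D_\mu(V) = d$, and $V$ is crystalline iff $\dim_{K_0} D_\crys(V) = d$; moreover both dimensions are bounded above by $d$.

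The first step I would carry out is to show that $\dim_{K_0} D_\mu(V)$ is independent of $\mu \in [1,+\infty)$. The ring isomorphism $\varphi : \Bmu \to B_{p\mu}$ commutes with the $G_K$-action, so tensoring with $V$ and taking $G_K$-invariants produces a bijection $D_\mu(V) \to D_{p\mu}(V)$ which is $\varphi$-semilinear with respect to the Frobenius on $K_0$. Since $k$ is perfect, this Frobenius is an automorphism of $K_0$, hence the bijection forces $\dim_{K_0} D_\mu(V) = \dim_{K_0} D_{p\mu}(V)$. On the other hand, the inclusions $\Bmu \subset B_{\mu'}$ for $\mu \geq \mu' \geq 1$ give $D_\mu(V) \subset D_{\mu'}(V)$, so $\mu \mapsto \dim_{K_0} D_\mu(V)$ is non-increasing on $[1,+\infty)$. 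Combined with the equality at $\mu$ and $p\mu$, this forces the function to be constant.

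The second step combines this constancy with the comparisons $\Bmu \subset \Bcrys$ (for $\mu > p{-}1$) and $\Bcrys \subset B_{p-1} \subset \Bmu$ (for $1 \leq \mu \leq p{-}1$). If $V$ is crystalline, then for any $\mu_0 \in [1, p{-}1]$ we have $D_\crys(V) \subset D_{\mu_0}(V)$, whence
$$d = \dim_{K_0} D_\crys(V) \,\leq\, \dim_{K_0} D_{\mu_0}(V) \,\leq\, d;$$
by constancy, $\dim_{K_0} D_\mu(V) = d$ for every $\mu \geq 1$, so $V$ is $\Bmu$-admissible. Conversely, if $V$ is $\Bmu$-admissible, pick $n$ large enough that $p^n\mu > p{-}1$; by constancy, $\dim_{K_0} D_{p^n\mu}(V) = d$, and the inclusion $D_{p^n\mu}(V) \subset D_\crys(V)$ forces $\dim_{K_0} D_\crys(V) = d$, so $V$ is crystalline.

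The main (mild) obstacle is the very first point: checking that Frobenius genuinely induces an isomorphism of \emph{invariant} spaces. This rests on the facts, already available, that $\varphi$ on $\Bmu$ is bijective onto $B_{p\mu}$ and commutes with $G_K$, together with the perfectness of $k$ ensuring $\varphi$ is invertible on $K_0$. Beyond this verification, the argument is a short diagram chase with the nested inclusions and Fontaine's uniform bound $\dim_{K_0} D_\bullet(V) \leq d$.
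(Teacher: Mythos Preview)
Your proof is correct and follows essentially the same approach as the paper: both rely on the Galois-equivariant Frobenius isomorphism $\varphi : \Bmu \xrightarrow{\sim} B_{p\mu}$ together with the chain of inclusions $\Bmu \subset \Bcrys \subset B_{p-1}$ (for $\mu > p{-}1$) to deduce that all notions of $\Bmu$-admissibility and $\Bcrys$-admissibility coincide. Your write-up is more detailed---in particular your observation that the semilinear bijection $D_\mu(V) \to D_{p\mu}(V)$ preserves $K_0$-dimension because Frobenius is an automorphism of $K_0$, and your packaging of the argument via the constancy of the non-increasing function $\mu \mapsto \dim_{K_0} D_\mu(V)$---but the mathematical content is the same.
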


\begin{proof}
Since the $\Bmu$'s form a decreasing sequence of rings and
$\Bmu \subset \Bcrys \subset B_{p-1}$ for each $\mu < p{-}1$,
it is enough to show that $\Bmu$-admissibility implies 
$B_{p\mu}$-admissibility for all $\mu\geq 1$. But the latter
assertion follows from the fact that the Frobenius induces a 
Galois equivariant ring isomorphism $\Bmu \stackrel\sim\to 
B_{p\mu}$ and therefore an isomorphism
$(\Bmu \otimes_{\Qp} V)^{G_K} \simeq (B_{p\mu} \otimes_{\Qp} V)^{G_K}$.
\end{proof}

\paragraph{The functor $D_\crys$.}

When $V$ is a crystalline representation of $G_K$, we set:
$$D_\mu(V) = \big(\Bmu \otimes_{\Qp} V\big)^{G_K}
= \Hom_{\Qp[G_K]}\big(V^\star,\,\Bmu\big)$$
for $\mu \geq 1$, $\mu = \crys$ or $\mu = \max$ (which is, we
recall, a redundant notation for $\mu = 1$).
By Theorem~\ref{theo:bmuadm}, $D_\mu(V)$ is a vector space over 
$K_0$ of dimension $\dim_{\Qp} V$.
Observe in addition that $D_\mu(V)$ is equipped with a Frobenius map 
$\varphi : D_\mu(V) \to D_\mu(V)$ which is semi-linear with respect to 
the Frobenius on~$K_0$. Moreover, one checks easily that:
$$K \otimes_{K_0} D_\mu(V) 
= \big(\BmuK \otimes_{\Qp} V\big)^{G_K}
= \big(\BdR \otimes_{\Qp} V\big)^{G_K} = D_\dR(V).$$
Therefore $K \otimes_{K_0} D_\mu(V)$ comes equipped with a
filtration, namely the de Rham filtration.

The inclusion $\Bmu \subset \Bmax$ induces an injective $K_0$-linear 
mapping $f_\mu : D_\mu(V) \to D_\max(V)$, which commutes with all 
additional structures.
Since the source and the target of $f_\mu$ are both $K_0$-vector spaces 
of dimension $\dim_{\Qp} V$, we conclude that $f_\mu$ is an isomorphism. 
In other words, the functor $D_\mu$ does not depend on the choice of 
$\mu$; in what follows, we will prefer the notation $D_\crys$ (in order 
to make apparent the fact that we are considering the crystalline case) 
but the reader should keep in mind that $D_\crys = D_\mu$ for all $\mu$.

The above constructions motivate the following definition.

\begin{deftn}
A \emph{filtered $\varphi$-module} over $K$ is a $K_0$-vector space $D$ 
equipped with a semi-linear endomorphism $\varphi : D \to D$ and a 
nonincreasing, exhaustive and separated filtration on $K \otimes_{K_0} 
D$.
\end{deftn}

We denote by $\MF_K(\varphi)$ the category of filtered
$\varphi$-modules over~$K$ (the morphisms are the $K_0$-linear
mappings commuting with $\varphi$ and preserving the filtration
after scalar extension to $K$).
We have a natural functor $\MF_K(\varphi) \to \MF_K$ taking $D$ to $K 
\otimes_{K_0} D$ equipped with its filtration.
Besides, the previous constructions give rise to a functor
$$D_\crys : \Rep^\crys_{\Qp}(G_K) \to \MF_K(\varphi)$$
whose composite with $\MF_K(\varphi) \to \MF_K$ is $D_\dR$.

\begin{theo}
\label{theo:dcrys}
The function $D_\crys$ is exact and fully faithful.
\end{theo}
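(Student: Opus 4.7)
The plan is to establish exactness, faithfulness, and fullness separately, in that order of increasing difficulty.

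For exactness, I would start from a short exact sequence $0 \to V_1 \to V_2 \to V_3 \to 0$ in $\Rep^\crys_{\Qp}(G_K)$. Since $\Bcrys$ is flat over $\Qp$, tensoring gives an exact sequence of $G_K$-modules, and taking $G_K$-invariants (left exact) yields left exactness of $0 \to D_\crys(V_1) \to D_\crys(V_2) \to D_\crys(V_3)$. Surjectivity on the right is then forced by dimension count, since each $V_i$ being crystalline implies $\dim_{K_0} D_\crys(V_i) = \dim_{\Qp} V_i$. Compatibility of the resulting sequence with $\varphi$ and with the filtration on $K \otimes_{K_0} D_\crys(\cdot)$ is automatic from functoriality and the exactness of $K \otimes_{K_0} (\cdot)$.

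For faithfulness, suppose $f : V \to V'$ satisfies $D_\crys(f) = 0$. The comparison isomorphism $\Bcrys \otimes_{\Qp} V \simeq \Bcrys \otimes_{K_0} D_\crys(V)$ (and similarly for $V'$) identifies $\id \otimes f$ with $\id \otimes D_\crys(f) = 0$, so $\id \otimes f$ vanishes on $\Bcrys \otimes_{\Qp} V$. Since $\Qp \hookrightarrow \Bcrys$, the canonical map $V \to \Bcrys \otimes_{\Qp} V$ is injective, so $f = 0$.

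Fullness is the main obstacle. Using that $\Rep^\crys_{\Qp}(G_K)$ is stable under duals and tensor products (by Fontaine's formalism), the representation $W = V^\star \otimes V'$ is crystalline, with $D_\crys(W) \simeq \Hom_{K_0}(D_\crys(V), D_\crys(V'))$ as filtered $\varphi$-modules (an identification one verifies by extending scalars to $\Bcrys$). Under this identification, a morphism in $\MF_K(\varphi)$ from $D_\crys(V)$ to $D_\crys(V')$ corresponds exactly to an element of $D_\crys(W)^{\varphi = 1}$ whose image in $D_\dR(W) = K \otimes_{K_0} D_\crys(W)$ lies in $\Fil^0 D_\dR(W)$, while a morphism $V \to V'$ in $\Rep^\crys_{\Qp}(G_K)$ is an element of $W^{G_K}$. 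Fullness therefore reduces to the key identity
$$W^{G_K} \,=\, D_\crys(W)^{\varphi=1} \,\cap\, \Fil^0 D_\dR(W).$$
To establish this, I would tensor the fundamental exact sequence $0 \to \Qp \to \Bcrys^{\varphi = 1} \to \BdR/\BpdR \to 0$ with $W$ over $\Qp$ and take $G_K$-invariants, producing the left exact sequence
$$0 \to W^{G_K} \to D_\crys(W)^{\varphi = 1} \to \bigl(\BdR/\BpdR \otimes_{\Qp} W\bigr)^{G_K}.$$
The kernel of the last arrow is computed using the exact sequence $0 \to \BpdR \otimes W \to \BdR \otimes W \to \BdR/\BpdR \otimes W \to 0$: after taking $G_K$-invariants, the kernel is $(\BpdR \otimes W)^{G_K} \cap D_\crys(W)^{\varphi=1} = \Fil^0 D_\dR(W) \cap D_\crys(W)^{\varphi=1}$. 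The hard part is really packaged in the fundamental exact sequence (stated at the very end of \S\ref{sec:dRcrys}) together with the careful compatibility of $D_\crys$ with duals, tensor products, and the filtration; once these are in place, the conclusion follows from a purely formal diagram chase.
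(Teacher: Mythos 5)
Your proof is correct, and it reaches full faithfulness by a route that is formally different from the paper's, although both ultimately rest on the same core input, Proposition~\ref{prop:bmuphi1}. The paper proceeds by defining the candidate quasi-inverse functor $V_\crys$ via formula~\eqref{eq:vcrys} and verifying $V_\crys \circ D_\crys = \id$; full faithfulness is then read off from that identity. You instead prove faithfulness directly from the injectivity of $V \hookrightarrow \Bcrys\otimes_{\Qp} V$, and for fullness you reduce to the internal Hom $W = V^\star \otimes V'$ and establish the fixed-point identity $W^{G_K} = D_\crys(W)^{\varphi=1} \cap \Fil^0 D_\dR(W)$. This is the same identity the paper's formula~\eqref{eq:vcrys} gives after taking $G_K$-invariants, so the two arguments are two packagings of the same computation. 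One remark on your appeal to the fundamental exact sequence: as you use it, only the left exactness $0 \to \Qp \to (\Fil^0 \Bcrys)^{\varphi=1}$ enters (you never need surjectivity onto $\BdR/\BpdR$), and this left exactness is precisely Proposition~\ref{prop:bmuphi1}, so you are not actually leaning on the unproved part of that sequence --- but it is worth saying so explicitly, since the paper only states the fundamental exact sequence in a remark without proof. What the paper's route buys, beyond the theorem, is the explicit functor $V_\crys : \MF_K(\varphi) \to \Rep_{\Qp}(G_K)$, which is reused downstream (e.g.\ in the admissibility criterion of Theorem~\ref{theo:phimodadm}); what your route buys is a cleaner, more morphism-level proof of full faithfulness that does not require checking that $V_\crys$ really is a quasi-inverse on the essential image, a point the paper glides over.
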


\begin{proof}
The fact that $D_\crys$ is exact follows directly by a dimension
argument.

Let $V \in \Rep^\crys_{\Qp}(G_K)$ and set $D = D_\crys(V)$.
We then have a canonical isomorphism
$\Bcrys \otimes_{\Qp} V \simeq \Bcrys \otimes_{K_0} V$
which commutes with Frobenius and respects the filtration after 
extending scalars to $\BdR$. Taking the $\Fil^0$ and the fixed 
points under the Frobenius and using Proposition~\ref{prop:bmuphi1},
we obtain:
\begin{equation}
\label{eq:vcrys}
V = \big(\Bcrys \otimes_{K_0} D\big)^{\varphi=1} \,\cap\,
\Fil^0\big(\BdR \otimes_K D_K\big)
\end{equation}
(where the supscript ``$\varphi{=}1$'' means that we are taking
fixed points under the Frobenius).
Formula~\eqref{eq:vcrys} defines a functor $V_\crys : \MF_K(\varphi)
\to \Rep_{\Qp}(G_K)$ and we have just proved that $V_\crys \circ
D_\crys$ is the identity. This is enough to ensure that 
$D_\crys$ is fully faithful.
\end{proof}

\begin{rem}
In the proof of Theorem~\ref{theo:dcrys}, instead of $\BdR$, we could 
have used the smaller ring $\BcrysK$. Similarly, we could have 
replaced everywhere the subscript ``$\crys$'' by $\mu$ for any real 
number $\mu \geq 1$.
\end{rem}

\begin{rem}
Let $A$ is an abelian variety over $K$ with good reduction and let 
$A[p^\infty]$ be the $p$-divisible groups of its points of 
$p^\infty$-torsion.
The étale cohomology (resp. the crystalline cohomology) of $A$ is 
then identified with the Tate module (resp. the Dieudonné module) 
of $A[p^\infty]$. The fact that $D_\crys$ is fully faithful then
reflects the fact that Dieudonné modules (equipped with the de Rham
filtration) classify $p$-divisible groups.
\end{rem}

\paragraph{Admissibility for $\varphi$-modules.}

We say that a filtered $\varphi$-module over $K$ is \emph{admissible} 
if it belongs to the essential image of $D_\crys$, and we denote by 
$\MF_K^\adm(\varphi)$ the full subcategory of $\MF_K(\varphi)$ 
consisting of admissible filtered $\varphi$-modules. 
Theorem~\ref{theo:dcrys} indicates that $D_\crys$ induces an equivalence 
of categories $\Rep^\crys_{\Qp}(G_K) \simeq \MF_K^\adm (\varphi)$. 
This result provides a very concrete description of crystalline 
representations as soon as we are able to recognize admissible filtered 
$\varphi$-modules among all filtered $\varphi$-modules.

This is actually possible: there exists an easy numerical criterium that 
caracterizes admissibility. We would like to conclude this article by
stating it (without proof).
Let $D \in \MF_K(\varphi)$ and set $d = \dim_{K_0} D$. The maximal 
exterior product $\det\:D = \bigwedge^d D$ has a natural structure of 
filtered $\varphi$-module: the Frobenius on it is $\bigwedge^d \varphi$ 
(where the latter $\varphi$ is the Frobenius acting to $D$) and:
$$\Fil^m (K \otimes_{K_0} \det\:D) = \sum_{m_1 + \ldots + m_d = m}
\Fil^{m_1} D_K \wedge \Fil^{m_2} D_K \wedge \cdots \wedge \Fil^{m_d} 
D_K$$
where we have set $D_K = K \otimes_{K_0} D$.
Since $\det\:D$ is one dimensional, there exists a unique integer
$m$ for which $\Fil^m (K \otimes_{K_0} \det\:D) = K \otimes_{K_0}
\det\:D$ and $\Fil^{m+1} (K \otimes_{K_0} \det\:D) = 0$. This
integer is called the \emph{Hodge number} of $D$ and is usually
denoted by $t_H(D)$. It is an easy exercise to check that we have 
the following alternative formula for $t_H(D)$:
$$t_H(D) = \sum_{m \in \Z} m \cdot \dim_K \gr^m D_K.$$
Similarly, we can assign an integer to $D$ measuring the action
of the Frobenius. Precisely, if $v$ is nonzero element of $\det\:D$, 
we have $\bigwedge^d
\varphi(v) = \lambda v$ for some $\lambda\in K_0$. One checks
easily that $v_p(\lambda)$ does not depend on the choice of $V$.
We call it the \emph{Newton number} of $D$ and denote by $t_N(D)$.

\begin{theo}
\label{theo:phimodadm}
A filtered $\varphi$-module $D$ over $K$ is admissible if and
only if the two following conditions hold:

\vspace{-2mm}

\begin{enumerate}[(i)]
\renewcommand{\itemsep}{0pt}
\item $t_H(D) = t_N(D)$
\item for all sub-$K_0$-vector space
$D' \subset D$ stable by the Frobenius, we have $t_H(D') \leq 
t_N(D')$, where $D'$ is endowed with the induced filtration
defined by:
$$\Fil^m (K \otimes_{K_0} D') = (K \otimes_{K_0} D') \cap
\Fil^m (K \otimes_{K_0} D)
\qquad (m \in \Z).$$
\end{enumerate}
\end{theo}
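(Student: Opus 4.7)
This is the celebrated weakly-admissible-implies-admissible theorem due to Colmez and Fontaine. Its proof is substantial and I only sketch the main lines.

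\textbf{The easy direction} consists in showing that any admissible $D = D_\crys(V)$ satisfies (i) and (ii). The key tool is that $D_\crys$, being constructed via $G_K$-invariants of $-\otimes\Bcrys$, commutes with tensor products and with $\det$. Thus $\det D = D_\crys(\det V)$, which reduces (i) to the one-dimensional case. By compactness a one-dimensional crystalline representation of $G_K$ takes values in $\Zp^\times$, so is of the form $\mu_\lambda\cdot\chi_\cycl^{-n}$ with $\lambda\in\Zp^\times$ and $n\in\Z$ (\emph{cf}\ Proposition~\ref{prop:charGqp}); a direct computation of $D_\crys$ on such a character, using the relations $\varphi(t)=pt$ and $gt=\chi_\cycl(g)\,t$, then gives $t_H = t_N = n$. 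For (ii), given a sub-$\varphi$-module $D'\subset D$ of dimension $d'$, I would pass to the $d'$-th exterior power to obtain a $\varphi$-stable line $\det D'\subset\bigwedge^{d'}D = D_\crys(\bigwedge^{d'}V)$; by additivity of $t_H$ and $t_N$ under $\det$, the problem reduces to showing $t_H(L)\leq t_N(L)$ for any $\varphi$-stable line $L\subset D_\crys(W)$ inside an admissible module. Writing $L=K_0\,e$ with $\varphi(e)=\lambda e$ ($\lambda\in K_0^\times$), one considers $t^{-t_H(L)}e$, which lies in $W\otimes_{\Qp}\Fil^0\Bcrys$ and is an eigenvector of $\varphi$ of eigenvalue $p^{-t_H(L)}\lambda$; a $p$-adic analytic argument in the spirit of Proposition~\ref{prop:bmuphi1} shows that no such eigenvector can exist unless $v_p(p^{-t_H(L)}\lambda)\geq 0$, \emph{i.e.}\ $t_H(L)\leq t_N(L)$.

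\textbf{The hard direction} is to show that (i) and (ii) imply admissibility. Given $D\in\MF_K(\varphi)$ satisfying these conditions, one defines
\[ V_\crys(D) \,=\, \big(\Bcrys\otimes_{K_0} D\big)^{\varphi=1}\,\cap\,\Fil^0\big(\BdR\otimes_K D_K\big), \]
which is manifestly a $\Qp$-linear subspace of $\Bcrys\otimes_{K_0}D$ stable under $G_K$. Fontaine's general formalism, applied to $\Bcrys$ and $\BdR$, yields the inequality $\dim_{\Qp} V_\crys(D)\leq\dim_{K_0} D$, with admissibility equivalent to equality.

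\textbf{The main obstacle}---and by far the hardest step---is establishing the reverse inequality $\dim_{\Qp} V_\crys(D)\geq\dim_{K_0} D$. The classical strategy proceeds by induction on $\dim_{K_0}D$: condition (ii) ensures the existence, when $D$ is not ``simple'', of a proper sub-$\varphi$-module $D'\subsetneq D$ saturating the inequality $t_H(D')=t_N(D')$, and condition (i) together with the exact sequence $0\to D'\to D\to D/D'\to 0$ then allows one to propagate the numerical conditions to $D'$ and $D/D'$. A careful diagram chase, using the fundamental exact sequence
\[ 0\longrightarrow\Qp\longrightarrow\Bcrys^{\varphi=1}\longrightarrow\BdR/\BpdR\longrightarrow 0 \]
mentioned in the remark after Proposition~\ref{prop:bmuphi1} (tensored with the relevant modules), then glues together the dimension counts for $D'$ and $D/D'$ into the required count for $D$. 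An alternative, more geometric approach due to Fargues--Fontaine interprets admissibility as the triviality of an associated vector bundle on the Fargues--Fontaine curve; conditions (i) and (ii) translate exactly into the slope-semistability of this bundle at slope zero, and the result follows from the classification of semistable bundles on the curve. In either strategy, the substantive difficulty is the simultaneous control of the Frobenius, the de Rham filtration and the Galois action---precisely the package of structures encoded in the rings $\Bcrys$, $\Bmu$ and $\BdR$ constructed in \S\ref{sec:dRcrys}.
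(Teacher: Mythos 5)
The paper does not prove Theorem~\ref{theo:phimodadm}: it states it, records that it was conjectured by Fontaine in~\cite{fontaine-rennes} and first proved by Colmez--Fontaine in~\cite{colmez-fontaine}, and explicitly defers the proof to Brinon's lecture~\cite{brinon} (following Kisin~\cite{kisin}) and to the other references~\cite{berger,fargues-fontaine}. So there is no in-paper proof to compare against; what you have written is an outline of the Colmez--Fontaine strategy (plus a gesture toward the Fargues--Fontaine curve).

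As a proof plan your easy direction is essentially right, but note that the key vanishing you invoke --- that $\Fil^0\Bcrys$ contains no nonzero $\varphi$-eigenvector with eigenvalue of negative valuation --- is strictly stronger than Proposition~\ref{prop:bmuphi1}, which only treats $\varphi=1$; you would need to prove this generalization (it is true, and is closely tied to the fundamental exact sequence, but it is not a formal consequence of what the paper establishes).

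The genuine gap is in the hard direction. Your induction reduces matters to the case where $D$ is simple in the category of weakly admissible filtered $\varphi$-modules (no proper nonzero $\varphi$-stable $D'$ with $t_H(D')=t_N(D')$), and at that point the induction cannot proceed: this simple case is precisely the crux of the Colmez--Fontaine argument and requires a substantial separate input (in their proof, the theory of almost $\C_p$-representations and a fine analysis of $B$-pairs; in Kisin's proof, the theory of $\varphi$-modules over the Robba ring; in Fargues--Fontaine, the classification of vector bundles on the curve). Your sketch also silently uses that the category of admissible modules is closed under extensions, which is again a nontrivial point that must be proved using the fundamental exact sequence. As a high-level roadmap your outline is serviceable, but the step you label ``a careful diagram chase'' is in fact the entire content of the theorem, and the base case of your induction is left unaddressed.
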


\noindent
Theorem~\ref{theo:phimodadm} was first conjectured by Fontaine 
in~\cite{fontaine-rennes}. It has been proved first by Colmez and Fontaine in 
\cite{colmez-fontaine} about twenty years later. Today, other proofs are 
been proposed by different authors~\cite{berger,kisin,fargues-fontaine}, 
but Theorem~\ref{theo:phimodadm} remains a difficult result in all
cases.
Kisin's proof~\cite{kisin} will be sketched in Brinon's lecture in this 
volume~\cite{brinon} (in the more general framework of filtered 
$(\varphi,N)$-modules).

\begin{ex}
As an easy example, let us give a complete classification of 
filtered $\varphi$-modules of dimension~$1$. Let then $D \in
\MF_K(\varphi)$ with $\dim_{K_0} D = 1$; write $D_K = K 
\otimes_{K_0} D$.
Let $e$ be a basis of $D$. Then, there exists $\lambda \in K_0$ such 
that $\varphi(e) = \lambda e$. Observe that if $e$ is changed to $u e$ 
(with $u \in K_0$), $\lambda$ becomes $\lambda \cdot \frac 
{\varphi(u)}{u}$. By Hilbert's theorem 90, the elements of the form $\frac 
{\varphi(u)}{u}$ are exactly the elements of norm~$1$ over $\Qp$. 
Therefore $N_{K_0/\Qp}(\lambda)$ does not depend on a choice of $e$ and 
is a complete invariant classying the possible $\varphi$'s on~$D$.
Concerning the filtration, there exists a unique integer $r$ such 
that $\Fil^m D_K = D_K$ if $m \leq r$ and $\Fil^m D_K = 0$ otherwise.

One sees immediately that the $D$ is admissible if and only if 
$v_p(\lambda) = r$. Moreover, when admissibility holds, an easy 
computation shows that the attached Galois representation $V_\crys(D)$ 
is given by the character $\chi_\cycl^{-r} \cdot \mu_\alpha^{-1}$ with 
$\alpha = N_{K_0/\Qp}(p^{-r}\lambda)$.
\end{ex}

\begin{ex}
We now investigate the admissible filtered $\varphi$-modules of 
dimension~$2$ over $\Qp$.
We then consider $D \in \MF^\adm_{\Qp}(\varphi)$ with $\dim_{\Qp} D = 
2$.
The filtration on $D$ is easy to describe: there exist two integers 
$r$ and $s$ with $r \leq s$ and a line $L \subset D$ such that
$\Fil^m D = D$ if $m \leq r$, $\Fil^m D = L$ if $r < m \leq s$
and $\Fil^m D = 0$ if $m > s$.
If $r = s$, it follows from Proposition~\ref{prop:crysunram} that the 
crystalline representation associated to $D$ (if $D$ is admissible) has 
the form $V(\chi_\cycl^{-r})$ for an unramified representation $V$.
We leave this case to the reader and assume now that $r < s$.
Then $L$ is uniquely determined.

We want to describe the action of the Frobenius $\varphi : D \to D$. 
Let us first notice that $\varphi$ is a \emph{linear} mapping because the 
Frobenius acts trivially on $\Qp$.
We first assume that $L$ is stable by the Frobenius. We let
$\alpha \in \Qp$ be the scalar by which $\varphi$ acts on $L$ and
we let $\beta$ be the second eigenvalue of~$\varphi$. 
From the admissibility condition, we deduce $v_p(\alpha) + v_p(\beta) = 
r+s$ and $v_p(\alpha) \geq s$. Therefore $v_p(\beta) \leq r < s$.
Hence $\alpha \neq \beta$ and $\varphi$ is diagonalizable. If $L'$
denotes the eigenspace associated to $\beta$, we have $t_N(L') =
v_p(\beta)$ and $t_H(L') = r$. By the admissibility condition,
this implies that $v_p(\beta) = r$ and then $v_p(\alpha) = s$.
Then $L$ and $L'$ are themselves \emph{admissible} filtered $\varphi$-modules 
of dimension~$1$ and $D$ splits as a direct sum $D = L \oplus L'$.
The attached Galois representation is then a direct sum of two
crystalline characters.

We now assume that $L$ is not stable under $\varphi$.
Let $e_1$ be a nonzero vector in $L$. Define $e_2 \in D$ by the
equality $\varphi(e_1) = p^s e_2$. The family $(e_1, e_2)$ is a
basis of $D$ in which the matrix of $\varphi$ has the form:
$$\Phi = 
\left( \begin{matrix} 0 & p^r a \\ p^s & p^r b \end{matrix}\right)$$
for $a,b \in \Qp$. The admissibility condition implies 
$v_p(\det\:\Phi) = r + s$, and then $a \in \Zp^\times$. It also
implies that any eigenvalue of $\Phi$ must have valuation at 
least $r$. But if $v_p(b) < 0$, we see on the Newton polygon
of the characteristic polynomial of $\Phi$, that $\Phi$ has an
eigenvalue of valuation strictly less than $r$. Therefore, we conclude 
that $v_p(b) \geq 0$, \emph{i.e.} $b \in \Zp$. When $b \in \Zp^\times$,
$\varphi$ has two eigenvalues of valuation $r$ and $s$ respectively.
Let $L_r$ and $L_s$ be the corresponding eigenspaces. Since $e_1$
is not an eigenvector, we have $t_H(L_r) = t_H(L_s) = r$. Hence,
$L_r$ is admissible and we have the exact sequence 
$0 \to L_r \to D \to D/L_r \to 0$ is $\MF^\adm_{\Qp}(\varphi)$.
Passing to Galois representations, we find that $V_\crys(D)$
is a non split extension of $\Qp(\chi_\cycl^{-s}\mu_\alpha)$ by
$\Qp(\chi_\cycl^{-r}\mu_\beta)$ with $\alpha, \beta \in \Zp^\times$.

On the contrary, when $v_p(b) > 0$, $D$ is admissible and irreducible in 
the category $\MF^\adm_{\Qp}(\varphi)$. It then gives rise to an 
irreducible crystalline representation of dimension~$2$ of $G_{\Qp}$, 
whose Hodge--Tate weights are $r$ and~$s$.
\end{ex}

%%%%%%%%%%%%%%%%%%%%%%%%%%%%%%%%%%%%%%%%%%%%%%%%%%%%%%%%%%%%%%%%%%%%%%

{\small
}

\end{document}